\documentclass[11pt,a4paper,reqno]{amsart}

\usepackage[top=2.5cm,bottom=2.5cm,left=2.5cm,right=2.5cm]{geometry}
\usepackage{fancyhdr}
\usepackage{nccmath}

\usepackage{amsmath}
\usepackage{amssymb}
\usepackage{amsfonts}
\usepackage{amsthm}
\usepackage{amscd}
\usepackage{mathrsfs}
\usepackage{bm}
\usepackage{latexsym}
\usepackage{colonequals}

\usepackage{graphicx}
\usepackage{float}
\usepackage{tikz-cd}
\usepackage[all]{xy}
\usepackage{pstricks}

\usepackage{indentfirst}
\usepackage{color}
\usepackage{pifont}
\usepackage{aliascnt}
\usepackage{appendix}
\usepackage{todonotes}
\usepackage{ulem}
\usepackage{setspace}

\let\oldthebibliography\thebibliography
\let\endoldthebibliography\endthebibliography
\renewenvironment{thebibliography}[1]{%
    \oldthebibliography{#1}%
    \onehalfspacing  
    \setlength{\parskip}{0.2\baselineskip}
}{%
    \endoldthebibliography
}

\usepackage[CJKbookmarks=true,colorlinks,linkcolor=red,anchorcolor=blue,citecolor=blue]{hyperref}
 \usepackage[hyperpageref]{backref}
 
\usepackage{xcolor}
\hypersetup{
	colorlinks,
	linkcolor={red},
	citecolor={blue},
	urlcolor={blue}
}
\usepackage{cleveref}

\newtheorem{thm}{Theorem}[section]
\newtheorem{example}[thm]{{Example}}
\newtheorem{corollary}[thm]{{Corollary}}
\newtheorem{lemma}[thm]{{Lemma}}
\newtheorem{definition}[thm]{{Definition}}
\newtheorem{remark}[thm]{{Remark}}

\newtheorem{proposition}[thm]{Proposition}

\newtheorem{question}[thm]{{Question}}

\theoremstyle{definition}

\newtheorem*{acknowledgement*}{Acknowledgements}
\newtheorem*{theorem*}{Theorem}
\theoremstyle{remark}

\newcommand{\p}{\partial}
\newcommand{\C}{\mathbb C}
\newcommand{\A}{\mathcal A}

\newcommand{\End}{\operatorname{End}}
\newcommand{\cc}{\frac{1}{2}}
\newcommand{\st}{{\star_{h_0}}}
\newcommand{\E}{{\mathcal E}}
\newcommand{\Ch}{D_{h_0}}
\newcommand{\bg}{g^{\star_{h_0}}}
\newcommand{\id}{\operatorname{id}}
\newcommand{\ob}{\operatorname{ob}}
\newcommand{\barHb}{{\mathbb H^1(\overline{X_0}, (\End \overline E^\vee,\operatorname{ad}(\theta^\st)))}}
\DeclareMathOperator{\KS}{KS}
\DeclareMathOperator{\Dol}{Dol}
\DeclareMathOperator{\dR}{dR}
\newcommand{\NHC}{\mathrm{NHC}}

\newcommand{\Betti}{\mathrm B}

\newcommand{\Ext}{\operatorname{Ext}}

\title[]{Higher order isomonodromic deformation of Higgs bundles and a characterization of the non-abelian Noether-Lefschetz locus}

\subjclass[2010]{14D22,14C30}
\keywords{}
\author[Tianzhi Hu]{Tianzhi Hu}
\address{ School of Mathematics and Statistics, Wuhan University, Luojiashan, Wuchang, Wuhan, Hubei, 430072, P.R. China}
\email{hutianzhi@whu.edu.cn}

\author{Ruiran Sun}
 \address{School of Mathematical Sciences, Xiamen University, Xiamen 361005, China}
\email{ruiransun@xmu.edu.cn}

\author{Jinbang Yang}
\address{School of Mathematical Sciences, University of Science and Technology of China, Hefei, Anhui 230026, PR China}
\email{yjb@mail.ustc.edu.cn}

\author[Kang Zuo]{Kang Zuo}
\address{ School of Mathematics and Statistics, Wuhan University, Luojiashan, Wuchang, Wuhan, Hubei, 430072, P.R. China; Institut f\"ur Mathematik, Universit\"at Mainz, Mainz, Germany, 55099}
\email{zuok@uni-mainz.de}
\begin{document}

\begin{abstract}The purpose of this paper is to establish a local theory of the non-abelian Noether–\\Lefschetz locus.  Given a family of projective manifolds over a complex variety $S$, the isomonodromic deformation of the initial $\C$-PVHS defines a holomorphic family of flat bundles and defines a real analytic family of Higgs bundles by the non-abelian Hodge correspondence. The non-abelian Noether–Lefschetz locus exactly consists of those points in $S$ on which the isomonodromic deformed Higgs bundle underlies a graded structure. Esnault-Kerz ask whether the non-abelian Noether–Lefschetz locus is precisely the maximal complex analytic subvariety on which the real analytic isomonodromic deformation of Higgs bundles becomes holomorphic.

Our main result gives an affirmative answer to this question. The proof is based on the deformation equation of the harmonic metric solved by the non-abelian Hodge correspondence, and we use it to study higher order deformation class of the isomonodromic deformation of a graded Higgs bundle, which is expressed in terms of the differential graded Lie algebra of the joint real analytic deformation. We introduce a sequence of obstruction classes measuring the failure of holomorphicity and show that their vanishing forces the graded structure to lift to arbitrary finite order. This yields a local characterization of the non-abelian Noether–Lefschetz locus in terms of the holomorphicity of the isomonodromic deformation of Higgs bundles.
\end{abstract}
\maketitle

\setcounter{tocdepth}{1}
\hypersetup{linkcolor=black}
\tableofcontents
\hypersetup{linkcolor=red}

\section{Introduction} \label{sec_intro}

The classical Noether--Lefschetz theorem concerns the variation of algebraic
cycles in a family of smooth projective varieties.  For instance, if
$\mathcal X\to B$ is a family of smooth hypersurfaces in $\mathbb P^3$, the
Noether--Lefschetz locus consists of those points $t\in B$ for which the
Picard rank of $X_t$ jumps.  Equivalently, it is the locus where some
flat cohomology class remains of Hodge type $(1,1)$.  More generally, for a
polarized $\mathbb Q$-variation of Hodge structures
$(\mathbb V,F^\bullet,\nabla,Q)$ of even weight $2k$ over $B$, and for a flat (multi-valued) 
section $\gamma$ with $\gamma(0)\in F^k\mathcal V:=F^k(\mathbb V\otimes\mathcal O_B)$, the Hodge locus is
\[
  \mathrm{HL}_\gamma
  :=
  \{\, t\in B \mid \gamma(t)\in F^k\mathcal V \,\}.
\]

We have the following two different viewpoints to study the Hodge locus:
\begin{enumerate}
\item The first one is a {\it global criterion}, known as Deligne-Griffiths fixed part theorem: let $\gamma$ be a flat section over $B$ with $\gamma(0)\in F^k\mathcal V$ and suppose $B$ is quasi-projective and the monodromy orbit of $\gamma$ is finite, then $\gamma$ is always of type $(k,k)$.
\item The second one is a {\it local study}, stating  that its Zariski tangent space is controlled by the
Higgs field: (cf. \cite[Section 5.3.2.]{VoisinII})
\[
  T^{\mathrm{Zar}}_0 \mathrm{HL}_\gamma
  =
  \ker\bigl((\theta\circ \gamma)(0):T_0B\to E\bigr),
\]
where $(E:=\operatorname{gr}_F \mathcal V,\theta:=\operatorname{gr}_F\nabla)$ is the associated Higgs bundle.  
\end{enumerate}
The non-abelian analogue of the first one has been extensively studied in the literature and will be discussed later. The non-abelian analogue of the second one is the starting point of our research.
\medskip

Let
\[
  f:X\longrightarrow S
\]
be a smooth proper family of smooth projective varieties, and fix a point
$0\in S$.  Following Simpson \cite{Simp94I,Simp94II}, one has the relative
de Rham and Dolbeault moduli spaces
\[
\begin{tikzcd}
M_{\dR}(X/S) \arrow[rd] & & M_{\Dol}(X/S) \arrow[ld] \\
& S &
\end{tikzcd}
\]
whose fibers over $s\in S$ are respectively the moduli spaces of flat bundles
and Higgs bundles on $X_s$.  The non-abelian Hodge correspondence, developed primarily by Donaldson \cite{Don} and Uhlenbeck-Yau \cite{UY} in the vector bundle setting and by Hitchin \cite{Hit}, Corlette \cite{Corl} and Simpson \cite{Simp88,Simp92} in the Higgs bundle setting,  gives a
{\it real analytic} isomorphism
\[
  \NHC:M_{\dR}(X/S)\xrightarrow{\ \sim\ } M_{\Dol}(X/S)
\]
covering the identity of $S$.  We shall also use that this correspondence is
real analytic in families; see Theorem~\ref{R-analyticity}.

\medskip
Now let $(\mathbb V,\mathcal F^\bullet,\nabla,Q)$ be a polarized
$\mathbb C$-variation of Hodge structures on the central fiber $X_0$ over $0\in S$. 
For every nearby fiber $X_s$, the underlying flat bundle
$(\mathbb V\otimes \mathcal O_{X_0},\nabla)$ admits an {\it isomonodromic deformation}
$(\mathbb V\otimes \mathcal O_{X_s},\nabla_s)$.  Equivalently, this yields a holomorphic section
\[
  \sigma_{\dR}:S\longrightarrow M_{\dR}(X/S).
\]

Simpson introduced the corresponding {\it non-abelian Noether--Lefschetz locus}
\begin{align}\label{eq_NLlocus}
  \mathcal{NL}
  :=
  \bigl\{
    s\in S
    \mid
    (\mathbb V\otimes \mathcal O_{X_s},\nabla_s)
    \text{ underlies a polarized } \mathbb C\text{-VHS}
  \bigr\}.
\end{align}

\medskip
The non-abelian Deligne's fixed part theorem, proved in \cite{JostZuo,KP,EK}, gives a {\it global geometric criterion} that the isomonodromic deformation of a $\C$-PVHS again underlies a $\C$-PVHS. More precisely, when the base is quasi-projective and the monodromy orbit of the isomonodromic deformation is finite, the isomonodromic deformed local system always underlies a $\C$-PVHS. 

\medskip
We now turn to the {\it local study} of $\mathcal{NL}$. Composing $\sigma_{\dR}$ with the relative non-abelian Hodge correspondence gives a
real analytic section
\[
  \sigma_{\Dol}:=\NHC\circ \sigma_{\dR}
  :
  S\longrightarrow M_{\Dol}(X/S).
\]
If $(E,\theta)$ is the graded Higgs bundle associated with the initial
variation of Hodge structures on $X_0$, then $\sigma_{\Dol}(0)=[(E,\theta)]$.
By non-abelian Hodge theory, a flat bundle underlies a polarized
$\mathbb C$-VHS precisely when the corresponding Higgs bundle is graded, or
equivalently fixed by the natural $\mathbb C^*$-action on the Dolbeault
moduli space \cite{Simp92,Simp97,Simp10}.  Hence, if we let
\[
  \mathcal{GR}
  :=
  \bigl\{
    s\in S
    \mid
    \sigma_{\Dol}(s)
    \text{ is represented by a graded Higgs bundle}
  \bigr\},
\]
then
\[
  \mathcal{NL}=\mathcal{GR}.
\]
Thus $\mathcal{NL}$ can be studied purely in terms of Higgs bundles.
Simpson proved that $\mathcal{NL}$ is a complex analytic subvariety of $S$
and that the restriction
\[
  \sigma_{\Dol}|_{\mathcal{NL}}
  :
  \mathcal{NL}\longrightarrow M_{\Dol}(X/S)
\]
is holomorphic \cite[Theorem 12.1]{Simp97}.  Esnault and Kerz asked whether this property
characterizes the non-abelian Noether--Lefschetz locus:

\begin{question}[Esnault--Kerz]\label{EsnKer}
Let $(\mathbb V,\mathcal F^\bullet,\nabla,Q)$ be a polarized
$\mathbb C$-VHS on $X_0$, and let $U\subset S$ be a closed complex analytic
subvariety passing through $0$.  Suppose that
\[
  \sigma_{\Dol}|_U:U\longrightarrow M_{\Dol}(X/S)
\]
is holomorphic.  Must one have
\[
  U\subset \mathcal{NL}?
\]
Equivalently, is $\mathcal{NL}$ the maximal complex analytic subvariety of
$S$ on which the real analytic section $\sigma_{\Dol}$ becomes holomorphic?
\end{question}

It is worth noting that Question~\ref{EsnKer}, stated in terms of holomorphicity, is a {\it local characterization} of the non-abelian Noether-Lefschetz locus. 
The purpose of this paper is to give an affirmative answer to Question~\ref{EsnKer}.  Our main
theorem is the following.

\begin{thm}\label{thm_main}
Let $(\mathbb V,\mathcal F^\bullet,\nabla,Q)$ be a polarized
$\mathbb C$-VHS on $X_0$.  Let $U\subset S$ be a closed complex analytic
subvariety such that
\[
  \sigma_{\Dol}|_U:U\longrightarrow M_{\Dol}(X/S)
\]
is holomorphic.  Then
\[
  U\subset \mathcal{NL}.
\]
\end{thm}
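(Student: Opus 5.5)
We reduce the statement to a formal (jet-by-jet) assertion along arcs in $U$ and then argue by induction on the order. Since $\mathcal{NL}=\mathcal{GR}$ is a closed complex analytic subvariety of $S$ by Simpson, it suffices to prove $U\subset\mathcal{NL}$ locally near each point of $U\cap\mathcal{NL}$, starting at $0$. By the curve selection lemma, or by comparing completed local rings, it is enough to show the following. For every holomorphic arc, or more generally every Artinian thickening $\operatorname{Spec}\mathcal O_{U,0}/\mathfrak m^{n+1}\to U$, the pulled-back Higgs bundle $\sigma_{\Dol}|_U$ admits a grading, i.e.\ a $\mathbb C^*$-fixed structure, extending the grading of $(E,\theta)$. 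Indeed, if this holds for all $n$, then the ideal of $\mathcal{NL}\cap U$ is contained in $\bigcap_n\mathfrak m^{n+1}=0$ in $\mathcal O_{U,0}$ by Krull, and hence $U\subset\mathcal{NL}$ near $0$. Repeating the argument at all points of $U\cap\mathcal{NL}$ together with connectedness of $U$ (or working component by component through $0$) gives the global statement.

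To set up the deformation theory, we trivialize $X\to S$ as a $C^\infty$ family near $0$. The isomonodromic family $\nabla_s$ is then a fixed flat connection $D$ with varying complex structure $J_s$. By the deformation equation for the harmonic metric (Theorem~\ref{R-analyticity} and its proof), we write the harmonic metric $h_s=h_0e^{u(s)}$ with $u$ real analytic in $s$. The Higgs data is $(\bar\partial_s,\theta_s)$, obtained by splitting $D$ into types relative to $J_s$ and $h_s$. The family of Higgs bundles over $U$ is then encoded in the Dolbeault DGLA $A^{\bullet}(\End E)$ with differential $\bar\partial+\operatorname{ad}\theta$, twisted by the Kodaira--Spencer data of $J_s$. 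This DGLA carries the weight decomposition $\End E=\bigoplus_p \End^{p}$ coming from the grading of $(E,\theta)$.

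For the induction, suppose the grading has been lifted to order $n-1$ along $U$. The order-$n$ term of $\sigma_{\Dol}$ is then a class in $\mathbb H^1(X_0,(\End E,\operatorname{ad}\theta))$ with coefficients in $\operatorname{Sym}^n$ of the relevant cotangent space. We split it into its $\partial_s$ and $\bar\partial_s$ components, that is, holomorphic and antiholomorphic in $s$. Holomorphicity of $\sigma_{\Dol}|_U$ says exactly that the $\bar s$-dependent part, which we call the obstruction class $\ob_n$, vanishes in $\mathbb H^1$ modulo lower-order gauge. We then need to show that the vanishing of $\ob_n$ forces the $\mathbb C^*$-weight-nonzero part of the holomorphic order-$n$ class to vanish, which is exactly the condition that the grading extends to order $n$. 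The mechanism is the harmonic-metric equation. The $\bar s$-derivative of $\theta_s$ involves $\theta^{\star_{h}}$ and $\partial_s u$. Using the Hodge-type symmetry of the VHS, namely that $\theta^\st$ has weight $+1$ where $\theta$ has weight $-1$ and $h_0$ makes the weight pieces orthogonal, the antiholomorphic component at order $n$ becomes, modulo exact terms, the $h_0$-adjoint of the weight-shifted holomorphic component. Vanishing of the former in the conjugate hypercohomology $\barHb$ then kills the latter.

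The main obstacle is this last identification at higher order. At first order it is the classical statement that $\bar\partial_s$ of the Higgs field corresponds to the $\star$-conjugate of the Kodaira--Spencer/Higgs class. At order $n$, however, the lower-order terms of $u(s)$ and of the gauge corrections contribute Maurer--Cartan brackets. One has to show these brackets are of pure weight, or are exact in the DGLA, once the grading holds to order $n-1$. We would first try to use the $\mathbb C^*$-action, which by induction is defined on the $(n-1)$-jet, to put all lower-order terms in weight $0$. We would then show the bracket contributions lie in weight $0$, so that only the weight-$\pm$ parts of the order-$n$ term matter. Finally, we would invoke the Kähler identities and formality ($\partial\bar\partial$-lemma for $\mathbb C$-PVHS) to conclude that a weight-nonzero class which is $\bar\partial_s$-closed and conjugate to a vanishing class must itself vanish.
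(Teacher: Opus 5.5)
Your reduction to jets (closedness of $\mathcal{NL}$ plus Krull, or curve selection) is fine, and arguably cleaner than Lemma~\ref{lem_reduce_to_tru}. The inductive framing can also be made precise: the deformation complex splits by $\mathbb C^*$-weight, so graded order-$n$ lifts of a graded $(n-1)$-jet exist. The weight-nonzero part of the difference between such a lift and the actual holomorphic $n$-jet is then a well-defined class. The proof nevertheless has a genuine gap, exactly at the step you flag as the main obstacle, and your proposed way around it fails.

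In the isomonodromic family, the link between the $t$- and $\bar t$-coefficients of $(\bar\partial_t,\theta_t)$ is the harmonic metric $h_t=h_0(\id+\sum t^ig_i+\sum\bar t^i\bg_i+\cdots)$ on the bundle with the fixed flat connection $D$ (Proposition~\ref{prop_Taylor_Exp_DolHig}). Its coefficients are not of weight zero, even on $\mathcal{NL}$, because they record how the Hodge decomposition moves.
\begin{itemize}
\item $\bg_1$ lies in weight $+1$ (Corollary~\ref{coro_g1}) and is nonzero whenever $\eta_1(\theta)\neq0$, by \eqref{eq_ob1exact}.
\item For $N\geq2$, the weight-$N$ piece of $\bg_k$ is the nonlinear expression \eqref{thmbg} in the lower $(\bg_i)^{1,-1}$ (Proposition~\ref{1,-1}).
\end{itemize}
The $\mathbb C^*$-action on the graded $(n-1)$-jet lets you make the holomorphic Higgs data weight zero. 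It does not do this for these metric terms, nor for $\theta$ and $\theta^{\st}$, which have weights $\mp1$. The isomonodromy constraint, which is what produces the $\bar t$-dependence at all, mixes weights. So the order-$n$ bracket contributions are not of weight zero. Your claim that the antiholomorphic part equals the adjoint of the weight-shifted holomorphic part modulo exact terms holds only at first order (Proposition~\ref{prop_firstob}). Already the order-$k$ obstruction components in Proposition~\ref{prop_ob} contain the quadratic terms $\bar\eta_{i_1}(\bar\partial\bg_{i_2})$ and $[[\theta,\bg_{i_1}],\bg_{i_2}]$. There is also no general principle that a weight-nonzero class ``conjugate to a vanishing class'' must vanish. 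The Kähler identities enter only through the harmonicity Lemma~\ref{vanlem}, applied to specific equations that must first be derived.

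What is missing is explicit control of these nonlinear terms. The paper restricts to the pure $\bar t^k$ coefficients, i.e.\ the modulo-$(t)$ gauge equations \eqref{hol}, rather than all $\bar s$-dependent terms as in your $\ob_n$. Applying Lemma~\ref{vanlem} and Corollary~\ref{coro_harm} order by order, it shows three things.
\begin{itemize}
\item $\bg_k$ has only positive weights (Proposition~\ref{wtprop}).
\item $(\bg_k)^{1,-1}$ satisfies \eqref{vanisheq_1}--\eqref{vanisheq_2}.
\item All higher pieces are forced by the closed recursion \eqref{thmbg} with the Beta-distribution coefficients $b_{i_1,\ldots,i_N}$ (Proposition~\ref{1,-1}).
\end{itemize}
Taking $\st$ turns these into constraints on $g_k$, and hence on the holomorphic coefficients $\alpha_k,\beta_k$. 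The paper then writes down the explicit holomorphic gauge \eqref{thmv}. Using the combinatorial identities of the appendix, it checks that this gauge moves $\alpha'_k,\beta'_k$ into the graded pieces for all $k\le n$. Your proposal needs to supply this explicit nonlinear cancellation; a weight-zero vanishing of brackets does not exist to replace it.
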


The first-order case already illustrates the mechanism.  Let
\[
  \tau_0:T_0S\longrightarrow H^1(X_0,T_{X_0})
\]
be the Kodaira--Spencer map of the family $X\to S$.  The Higgs field
$\theta$ induces a morphism of deformation complexes, denoted as $\theta:(T_{X_0},0)\to (\End\E,\operatorname{ad}(\theta))$, hence a map
\begin{align}\label{eq_nabHiggs}
  \theta_{*}:H^1(X_0,T_{X_0})
  \longrightarrow
  \mathbb{H}^1\bigl(X_0,(\End E,\operatorname{ad}(\theta))\bigr).
\end{align}
The non-abelian analogue of the classical Zariski tangent space formula for Hodge
loci is
\[
  T^{\mathrm{Zar}}_0\mathcal{NL}
  =
  \ker
  \Bigl(
    \theta_{*}\circ \tau_0:
    T_0S
    \longrightarrow
    \mathbb{H}^1\bigl(X_0,(\End E,\operatorname{ad}(\theta))\bigr)
  \Bigr),
\]
which is proved in Theorem~\ref{thm_Zar_NL}.

Indeed, as observed in \cite{HSZ,CTW}, the section  $\sigma_{\Dol}$ is real analytic rather than holomorphic. We have the following result on the first order derivative of $\sigma_{\Dol}$
\begin{theorem*}[{\cite[Theorem A]{HSZ}}]
For $v\in T_0S,$ the
$(1,0)$-part derivative $\Pi^{1,0}\sigma_{\Dol,*}(v)$ can be expressed explicitly in terms of the first variation of
the harmonic metric, while the $(0,1)$-part derivative $\Pi^{0,1}\sigma_{\Dol,*}(v)$ is exactly the complex conjugation of 
$\theta_{*}\circ \tau_0(v)$ (see also Proposition~\ref{prop_firstob}). 
\end{theorem*}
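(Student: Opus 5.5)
The plan is to compute $\sigma_{\Dol,*}(v)$ in a fixed $C^\infty$ model and split it according to the two sources of variation: the change of complex structure and the change of harmonic metric.

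\textbf{Setup.} By Ehresmann I would trivialize $X\to S$ near $0$ as $X_s\simeq X_0$ as $C^\infty$ manifolds. The complex structure $J_s$ is then encoded by a Beltrami differential $\mu(s)\in A^{0,1}(X_0,T^{1,0}_{X_0})$. This $\mu(s)$ depends holomorphically on $s$, and $\dot\mu:=\partial_v\mu(0)$ is a Dolbeault representative of $\tau_0(v)$.

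Isomonodromy means the $C^\infty$ bundle $V$ and the flat connection $D$ stay fixed. Only two things move: the harmonic metric $h_s$, which Theorem~\ref{R-analyticity} shows is real analytic in $s$, and the type decomposition. Writing $D=\partial_{h_s}+\bar\partial_{h_s}+\theta_s+\theta_s^{*_{h_s}}$ with respect to $J_s$, we get $\sigma_{\Dol}(s)=[(\bar\partial_{h_s},\theta_s)]$, where $D''_s=\bar\partial_{h_s}+\theta_s$.

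Tangent vectors to $M_{\Dol}(X/S)$ over $0$ are then represented by first-order variations $(\dot{\bar\partial},\dot\theta)$. I will first use the holomorphic trivialization of the family to record the base direction $v$, and then read off the fibre component as a class in $\mathbb H^1(X_0,(\End E,\operatorname{ad}\theta))$.

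\textbf{Differentiation.} Differentiating $D''_s$ along $v$ splits it into two pieces.
\begin{enumerate}
\item \emph{Metric variation.} Put $\psi=h_0^{-1}\dot h$, which is $h_0$-Hermitian. The general identity for the dependence of $D''_h$ on $h$ with $D$ fixed shows that this piece is built from $\bar\partial\psi$ and $[\theta,\psi]$, i.e.\ essentially $\tfrac12 D''_{h_0}$ applied to $\psi$. Since $\psi$ is only real analytic in $s$, it enters both $\partial_s$ and $\partial_{\bar s}$. Here I use the linearized Hitchin--Simpson equation, obtained by differentiating $F_{D_{h_s}}=0$ in the sense of harmonicity. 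It shows that the $\partial_{\bar s}$-part of $\psi$ contributes only a $D''_{h_0}$-exact term, hence vanishes in hypercohomology. The $\partial_s$-part is retained; it constitutes the explicit formula for $\Pi^{1,0}\sigma_{\Dol,*}(v)$ in terms of the first variation of the metric.
\item \emph{Complex-structure variation.} With respect to $J_s$, the new $(0,1)$-directions are $\bar\partial+\mu\cdot\partial$, and the new $(1,0)$-directions are corrected by $\bar\mu$. Since $\mu$ is holomorphic in $s$, the $\partial_s$-derivative produces terms $\iota_{\dot\mu}(\partial_{h_0}+\theta)$ in $\dot{\bar\partial}$. Together with the base direction $v$, these are exactly what the holomorphic trivialization absorbs, so they give no extra $(0,1)$-contribution. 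The $\partial_{\bar s}$-derivative only sees $\overline{\dot\mu}$, and it produces in $\dot\theta$ the term $\iota_{\overline{\dot\mu}}\theta^{*_{h_0}}$.
\end{enumerate}

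\textbf{Identification of the $(0,1)$-part.} It remains to show that $\iota_{\overline{\dot\mu}}\theta^{*_{h_0}}$ is the complex conjugate of $\theta_*\tau_0(v)=[\iota_{\dot\mu}\theta]$. I would use the grading of $E$ coming from the $\C$-VHS: $h_0$ is the Hodge metric, and the $h_0$-adjoint, followed by the Hodge involution, swaps $\theta$ and $\theta^{*}$. Under the harmonic-theoretic (Kähler--Higgs identities) identification of $\mathbb H^1$ with harmonic forms, the resulting conjugate-linear involution sends the harmonic representative of $[\iota_{\dot\mu}\theta]$ to that of $[\iota_{\overline{\dot\mu}}\theta^{*}]$. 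I would take $\dot\mu$ harmonic so that the representatives are directly comparable. This also recovers Proposition~\ref{prop_firstob}.

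\textbf{Main obstacle.} I expect the difficulty to be bookkeeping rather than conceptual. The steps to get right are: making precise how the holomorphic trivialization separates the base direction from the fibre class; verifying that the $\bar s$-metric term is genuinely $D''_{h_0}$-exact, which needs the linearized harmonicity equation and not merely the gauge formula; and checking the signs and factors of $\tfrac12$ in the conjugation identification.
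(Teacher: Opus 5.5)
\textbf{There is a gap in your treatment of the metric variation.}

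With $D$ fixed and $h=h_0(\id+\epsilon\psi)$, the first variation of the Higgs pair is not $\tfrac12 D''_{h_0}\psi$. It is
\[
\delta\bar\partial=\cc\bigl(\bar\partial\psi-[\theta^{\st},\psi]\bigr),\qquad
\delta\theta=\cc\bigl([\theta,\psi]-\Ch^{1,0}\psi\bigr),
\]
that is, $\tfrac12 D''_{h_0}\psi-\tfrac12 D'_{h_0}\psi$ sorted by type. These are exactly the metric terms in $\varphi_1,\psi_1,\alpha_1,\beta_1$ of Proposition~\ref{prop_Taylor_Exp_DolHig}.

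Only the $D''$-half is removed by a gauge transformation. For the $\bar t$-coefficient $\bg_1$ of the metric, the surviving half $-\cc\bigl([\theta^{\st},\bg_1],\Ch^{1,0}\bg_1\bigr)$ (in the $(\delta\bar\partial,\delta\theta)$-slots) is not $D''_{h_0}$-exact. In general it is not even $D''_{h_0}$-closed. So your claim that linearized harmonicity makes the $\bar s$-part exact is false.

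Your version is also internally inconsistent. If the metric term were $\tfrac12 D''_{h_0}\psi$ for every $\psi$, the $\partial_s$-part would be gauge-trivial too, and the $(1,0)$-part could not depend on the metric variation.

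Dually, your representative $\iota_{\overline{\dot\mu}}\theta^{*}=\bar\eta_1(\theta^{\st})$, placed in the $\delta\theta$-slot, is $D'_{h_0}$-closed but in general not $D''_{h_0}$-closed. So it does not define a class in $\mathbb H^1(X_0,(\End E,\operatorname{ad}\theta))$, and taking $\dot\mu$ harmonic does not repair this. Only its sum with $-\cc D'_{h_0}\bg_1$ is $D''_{h_0}$-closed. That is the content of the linearized integrability condition, whose $(1,1)$-part is $\bar\partial\Ch^{1,0}\bg_1=2\bar\partial(\bar\eta_1(\theta^{\st}))-[\theta,[\theta^{\st},\bg_1]]$.

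\textbf{The conjugation step is tautological; the comparison step is what is missing.}

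The conjugation needs neither the grading nor the Hodge involution. The $h_0$-adjoint is, up to signs, a conjugate-linear isomorphism of complexes
\[
(\A^\bullet(\End\E),\bar\partial+\operatorname{ad}\theta)\longrightarrow(\A^\bullet(\End\E),\Ch^{1,0}+\operatorname{ad}\theta^{\st}).
\]
It sends $[\eta_1(\theta)]$ to $[(\bar\eta_1(\theta^{\st}),0)]\in\barHb$.

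What your proposal lacks is the comparison between the $\bar t$-variation $(\psi_1,\varphi_1)$ and this class. This is the content of Proposition~\ref{prop_firstob}:
\begin{enumerate}
\item The gauge equations \eqref{g1} read $\bar\partial(\cc\bg_1+u_1)=\cc[\theta^{\st},\bg_1]$ and $[\theta,\cc\bg_1+u_1]=\cc\Ch^{1,0}\bg_1-\bar\eta_1(\theta^{\st})$.
\item The harmonicity Lemma~\ref{vanlem} forces $u_1=-\cc\bg_1$, hence $(\bar\eta_1(\theta^{\st}),0)=d^{0c}(\cc\bg_1)$.
\item Conversely, if $(\bar\eta_1(\theta^{\st}),0)=d^{0c}f_1$, the linearized integrability equation and Lemma~\ref{vanlem} give $\bg_1=2f_1+c\cdot\id$, and $u_1=-\cc\bg_1$ solves \eqref{g1}.
\end{enumerate}
So the $D''$-half of the metric term is absorbed by gauge, and the $D'$-half by $d^{0c}$ in the conjugate complex.

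You need this argument in place of your exactness claim. An equivalent route is the $D'D''$-lemma statement that $(0,\bar\eta_1(\theta^{\st}))-\cc D'_{h_0}\bg_1$ is $D''_{h_0}$-cohomologous to the harmonic representative of $[(\bar\eta_1(\theta^{\st}),0)]$.
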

  It turns out that first-order holomorphicity of
$\sigma_{\Dol}$ along a tangent vector is equivalent to first-order
liftability of the graded structure. Thus Theorem~\ref{thm_main} is a higher-order generalization of this equivalence. 

 \medskip
 The first difficulty in proving Theorem~\ref{thm_main} is that higher-order holomorphicity of a real analytic map involves many conditions, such as the vanishing of derivatives in the directions \(t\bar t\), \(\bar t^2\), and so on. We divide all non-holomorphic derivatives into two types:
\begin{enumerate}
    \item purely anti-holomorphic derivatives, i.e., those with respect to \(\bar t, \bar t^2, \cdots\);
    \item mixed derivatives, i.e., those with respect to \(t\bar t, t\bar t^2, t^2\bar t, \cdots\).
\end{enumerate}
Rather than attempting to handle all these derivatives simultaneously, we isolate the purely anti-holomorphic ones. It is crucial that, under the setup of Question~\ref{EsnKer}, these purely anti-holomorphic derivatives are essentially related to the liftability of the initial graded structure. This leads to a sequence of partial obstruction classes whose vanishing detects whether the isomonodromic deformation becomes increasingly holomorphic in the \(\bar t\)-directions.

\medskip
We now describe the proof of Theorem~\ref{thm_main}. The first step is the truncation argument. Let
\[
  A_n=\mathbb C[t]/(t^{n+1})
\]
and let $X_n\to \operatorname{Spec}A_n$ be an $n$-th order deformation of
$X_0$ obtained by restricting the family $X\to S$ to an $n$-jet in the base.
We regard $X_n$ as a deformation of the complex structure on the fixed
underlying differentiable manifold of $X_0$.  

Let $(E,\theta)$ a stable graded Higgs bundle  on $X_0$ and let $\E$ be the smooth model of the holomorphic bundle $E$.  We may use the Dolbeault operator $\bar\p$ of $\E$ on $X_0$ to represent the complex structure of $E$. Thus we have $(\E,\bar\partial,\theta)=(E,\theta)$. 
By truncating its isomonodromic deformation $\sigma_{\Dol}$, we have a real analytic deformation on $X_n$, denoted by 
\[
  (\E,\bar\partial_t,\theta_t)
\]
where $\bar\partial_t$ defines a family of complex structures on $\E$ over $X_n$ and
$\theta_t$ is a family of Higgs fields, both depending on 
$t$ and $\bar t$ (Definition~\ref{def_real_ana_deform}).  Such a deformation is holomorphic if, it depends only on $t$ (Definition~\ref{def_holo_deform}).  

Secondly, to select partial obstructions of holomorphicity, we introduce the following notion. For an integer $k=1,2,\cdots,n$, and an ideal
$(t,\bar t^{k+1})\subset \mathbb C[t,\bar t]/(t,\bar t)^{n+1}$, we say that the
deformation is \textit{modulo-$(t,\bar t^{k+1})$-holomorphic} if it depends only on $A_n+(t,\bar t^{k+1})$, i.e. after modulo the ideal $(t,\bar t^{k+1})$, it depends only on $t$ (Definition~\ref{def_modulo_hol}). If $(\E,\bar\p_t,\theta_t)$ is modulo-$(t,\bar t^k)$-holomorphic,
we can find an obstruction class
\[
  \ob_k
  \in
  \mathbb H^1\bigl(
    \overline{X_0},
    (\End \overline E^\vee,\operatorname{ad}(\theta^{\st}))
  \bigr)
\]
whose vanishing is equivalent to modulo-$(t,\bar t^{k+1})$-holomorphicity (proved in Proposition~\ref{prop_existence_ob}).
Consequently, the successive vanishing of
\[
  \ob_1,\ob_2,\ldots,\ob_n
\]
is equivalent to the modulo-$(t)$-holomorphicity, i.e. the disappearance of all pure anti-holomorphic derivatives up to
order $n$.  By definition, $\ob_1$ is exactly given by $\theta_{*}\circ \tau_0$.

 We explain the relationship between the vanishing of $\ob_1,\ob_2,\ldots,\ob_n$ and the liftability of the initial graded structure, which is the main step in our proof. Let
$h_t$ be the harmonic metric of the deformed Higgs bundle $ (\E,\bar\partial_t,\theta_t)$.  After choosing
the fixed smooth bundle $\E$, we may write
\[
  h_t
  =
  h_0\left(
    \id
    +\sum_{i=1}^n t^i g_i
    +\sum_{i=1}^n \bar t^i g_i^{\st}
    +\text{mixed terms}
  \right),
\]
where $h_0$ is the Hodge metric of the initial graded Higgs bundle and
$g_i\in \A^0(\End \E)$. These $g_i$ can be found by solving the harmonic metric equation for the isomonodromic deformation. The harmonic-map formula
\[
  \theta_t
  =
  \left(
    -\frac12 h_t^{-1}dh_t
  \right)^{1,0}
\]
shows that the Taylor coefficients $t,t^2,\cdots,t^n$ and $\bar t,\bar t^2,\cdots,\bar t^n$ of the deformed Higgs field $\theta_t$ and
Dolbeault operator $\bar\p_t$ are determined by the coefficients $g_i$ and
$g_i^{\st}$ of the metric (for detailed expressions, see Proposition~\ref{prop_Taylor_Exp_DolHig}).  In particular, the holomorphic deformation is
controlled by the $g_i$, while the obstruction classes $\ob_1,\ldots,\ob_n$ are controlled by the
adjoint coefficients $g_i^{\st}$.  Since all of them are determined by the same endomorphisms $g_i$ (or their adjoints), the vanishing of these obstruction classes
imposes certain restrictions on the holomorphic part of the deformation, which may help to confirm Question~\ref{EsnKer}.

A technical issue arises in our proof: we need to express $\ob_1,\ldots,\ob_n$ in terms of $\bg_i$, but there is no clean algorithm to do so. Instead, we employ a gauge transformation. We remark that any two deformations differ by a gauge transformation should be viewed as a same one. With this in mind, the proof of Theorem~\ref{thm_main} proceeds in the following steps:
\begin{enumerate}
\item[(i)] The condition of modulo-$(t)$-holomorphicity---equivalently, the vanishing of $\ob_1,\ldots,\ob_n$---can be expressed directly via $n$ gauge equations \eqref{hol} together with the Taylor expansion Proposition~\ref{prop_Taylor_Exp_DolHig} obtained from the deformed harmonic metric. 
\item[(ii)] We prove that the above gauge equations impose concrete and complete restrictions on $g_1,\ldots,g_n$ in the harmonic metric $h_t$: in summary, Proposition~\ref{wtprop}, Proposition~\ref{prop_ob} and Proposition~\ref{1,-1}. 
\item[(iii)] Using these restrictions, along with the initial graded decomposition of $\End \E$, we choose a gauge transformation \eqref{eq_gauge_gr} and \eqref{thmv} in section~\ref{sec_proof_main} such that the transformed Dolbeault operator and Higgs field preserve the graded structure up to order $n$. This establishes that the initial graded structure lifts to $X_n$.
\end{enumerate}
In summary, we prove the following key truncated statement.

\begin{thm}[Truncated version]\label{thm_main_tru}
Let $(\E,\bar\partial,\theta)$ be a graded stable Higgs bundle on $X_0$.
Let
\[
  (\E,\bar\partial_t,\theta_t)
\]
be the isomonodromic deformation of $(\E,\bar\partial,\theta)$ over an
$n$-th order deformation $X_n$ of $X_0$.  If this real analytic deformation
is holomorphic, then it is graded.
\end{thm}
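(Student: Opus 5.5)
We argue by induction on the order $n$, following the three-step outline (i)--(iii) of the introduction.

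\textbf{Setup.} We take the Taylor expansion of the harmonic metric
\[
  h_t=h_0\Bigl(\id+\sum_{i\ge1}t^ig_i+\sum_{i\ge1}\bar t^i g_i^{\st}+\text{mixed terms}\Bigr).
\]
From $\theta_t=(-\tfrac12 h_t^{-1}dh_t)^{1,0}$ and its conjugate formula for $\bar\p_t$, we read off the coefficients of $t^i$ and $\bar t^i$ in $\bar\p_t$ and $\theta_t$. Those of $t^i$ are expressed through $\bar\p g_i$, $[\theta,g_i]$ and the Kodaira--Spencer data of $X_n$. Those of $\bar t^i$ are expressed through $\p_{h_0}g_i^{\st}$ and $[\theta^{\st},g_i^{\st}]$, together with lower-order products.

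\textbf{Step 1: holomorphicity as gauge equations.} If the deformation is holomorphic, it is in particular modulo-$(t)$-holomorphic. Hence the classes $\ob_1,\dots,\ob_n$ vanish successively. Concretely, for each $k$ there is an infinitesimal gauge transformation $u_k$ killing the $\bar t^k$-coefficients. This gives $n$ equations of the form
\[
  (\text{$\bar t^k$-coefficient built from }g_1^{\st},\dots,g_k^{\st})=\Ch'' u_k+\cdots,
\]
read in $\barHb$. Taking adjoints with respect to $h_0$ turns them into equations on $g_1,\dots,g_k$ in the complex $(\End\E,\bar\p,\operatorname{ad}\theta)$.

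\textbf{Step 2: weight constraints on the $g_i$.} Decompose
\[
  \End\E=\bigoplus_p \End^{p}\E
\]
by Hodge weight, where $\theta$ has weight $-1$ and $\theta^{\st}$ has weight $+1$. Project the adjoint equations to each weight. By induction on $k$, we aim to show the following:
\begin{enumerate}
\item[(a)] the components of $g_k$ of nonzero weight are, modulo $\ker$, exact up to terms in lower $g_j$;
\item[(b)] specifically, the weight $\pm1$ parts of $g_k$ are controlled by $\ob_k=0$.
\end{enumerate}
The key input is the Hodge--Kähler identities for $\Ch=\Ch'+\Ch''$ with respect to $h_0$: $\Ch'$-exact and $\Ch''$-exact harmonic pieces are orthogonal. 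This lets a $\p$-type equation on $g_k^{\st}$ force a $\bar\p$-type statement on $g_k$ (the $\p\bar\p$-lemma for the harmonic bundle). Stability of $(E,\theta)$ ensures the only $\Ch''$-closed sections of $\End\E$ are scalars, which removes ambiguity in solving for gauges.

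\textbf{Step 3: construct the graded gauge.} Using the constraints from Step 2, define a gauge
\[
  \phi_t=\id+\sum_i t^i v_i,
\]
with $v_i$ assembled from the nonzero-weight parts of $g_i$ and lower-order corrections. Choose it so that $\phi_t\circ\bar\p_t\circ\phi_t^{-1}$ has only weight-$0$ components and $\phi_t\theta_t\phi_t^{-1}$ has only weight-$(-1)$ components, modulo $t^{n+1}$. Then the Hodge grading of $\E$ is preserved, so the deformation is graded over $X_n$.

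\textbf{Main obstacle.} The main obstacle is Step 2 at higher order. The $\bar t^k$-coefficient involves nonlinear products of $g_j^{\st}$, $j<k$, and of mixed terms, and there is no clean formula for $\ob_k$ directly. I would handle this by induction. After applying the gauge from Step 3 up to order $k-1$, the lower $g_j$ can be assumed weight-graded. The nonlinear terms then have a controlled weight, and the order-$k$ equation reduces to a linear one of the same shape as the first-order case, Proposition~\ref{prop_firstob}. That case is settled by $\theta_*\circ\tau_0$ together with the Hodge-theoretic orthogonality above.
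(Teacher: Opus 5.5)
Your architecture is the paper's: rewrite modulo-$(t)$-holomorphicity as the gauge equations \eqref{hol}, extract constraints on $\bg_k$ from the ``harmonicity'' of Lemma~\ref{vanlem} and Corollary~\ref{coro_harm}, apply $\st$ to move them to the $g_k$ that govern the $t$-part, then build a graded $t$-holomorphic gauge. However, the step that carries the proof is missing, and the mechanism you propose for it fails.

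At order $k\ge2$ nothing reduces to a linear problem of first-order shape settled by $\theta_*\circ\tau_0$. That map only sees $\eta_1$. At order $k$ the relevant quantities are $\ob_{k,1}^{2,-2}$ and $\ob_{k,2}^{3,-3}$ of Proposition~\ref{prop_ob}. Their nonlinear terms $\bar\eta_{i_1}(\bar\p\bg_{i_2})$, $[[\theta,\bg_{i_1}],\bg_{i_2}]$, $[\bar\p\bg_{i_1},\bg_{i_2}]$ sit in exactly the same weights as $\bar\eta_k(\theta^\st)$, $\Ch^{1,0}\bg_k$, $[\theta^\st,\bg_k]$, and they do not drop out. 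Their vanishing is not something you check against $\tau_0$; it is an output of holomorphicity. The paper uses it to pin down $(\bg_k)^{1,-1}$ via \eqref{vanisheq_1}--\eqref{vanisheq_2}, and part (i) of that proposition already needs Lemma~\ref{comp1}.

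Nor can you assume the lower $g_j$ are ``weight-graded'' after a gauge. The expansions of Proposition~\ref{prop_Taylor_Exp_DolHig} are derived in the isomonodromic frame, where the flat connection $D$ is independent of $t$. In that frame the $g_j$ are uniquely determined and inhomogeneous; for example $(g_2)^{-2,2}=\frac12\bigl((g_1)^{-1,1}\bigr)^2$ by \eqref{thmg}. Conjugating by a $t$-dependent $\phi_t$ makes $D$ depend on $t$, so those formulas no longer apply, and neither does your inductive reduction.

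Consequently your Step~3 only asserts that suitable $v_i$ exist. What makes a graded $t$-holomorphic gauge possible are three facts your Step~2 does not identify:
\begin{enumerate}
\item[(a)] One-sidedness (Proposition~\ref{wtprop}): $g_k\in\bigoplus_{l<0}\A^0((\End\E)^{l,-l})$, i.e.\ the weight $\ge0$ parts of $g_k$ vanish up to scalars. This yields \eqref{eq_wt_alphabeta}.
\item[(b)] The partial equations \eqref{vanisheqst_1}--\eqref{vanisheqst_2} for $(g_k)^{-1,1}$.
\item[(c)] The recursive formula \eqref{thmg} of Proposition~\ref{1,-1}. It determines every $(g_k)^{-N,N}$, $N\ge2$, as $\sum_{|I_N|=k}b_{i_1,\dots,i_N}(g_{i_1}\cdots g_{i_N})^{-N,N}$, with the Beta-distribution coefficients \eqref{bN}.
\end{enumerate}

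Without (c), the parts $(g_k)^{-N,N}$ with $N\ge2$ enter $\alpha_k,\beta_k$ through $-\frac12\Ch^{1,0}g_k$ and $-\frac12[\theta^\st,g_k]$. These have non-graded weights and are not of the form $([\theta,v],\bar\p v)$, so nothing in your argument produces a gauge removing them.

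With (c), the paper writes down the explicit gauge \eqref{thmv}, $v_i^{-N,N}=\frac{(-1)^N}{2^N}\sum_{|I_N|=i}\mathbb P(\xi_1<\cdots<\xi_N)(g_{i_1}\cdots g_{i_N})^{-N,N}$. It then verifies $\alpha'_k\in\A^1((\End\E)^{-1,1})$ and $\beta'_k\in\A^{0,1}((\End\E)^{0,0})$ through Lemma~\ref{lem_simpl_ab}, Lemma~\ref{betaterms} and the identities \eqref{abN}, \eqref{recursiveb}, \eqref{altersumprob}.

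Your statements that ``nonzero-weight parts are exact modulo kernel'' and ``weight $\pm1$ parts are controlled by $\ob_k$'' capture neither (a) nor (c). The proposal therefore stops where the actual proof begins.
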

To deduce Theorem~\ref{thm_main} from Theorem~\ref{thm_main_tru}, we work
locally on a resolution of the analytic subvariety $U$.  If
$\sigma_{\Dol}|_U$ is holomorphic, then every formal arc in $U$ gives a
holomorphic truncated isomonodromic deformation.  By
Theorem~\ref{thm_main_tru}, the graded structure lifts along every such
truncation.  Hence the Higgs bundles parametrized by $\sigma_{\Dol}|_U$ are
graded, so $U\subset \mathcal{GR}=\mathcal{NL}$.  This proves the
Esnault--Kerz characterization of the non-abelian Noether--Lefschetz locus.

\bigskip

A related question is: when \(X/S\) is the universal curve \(\mathcal{C}/\mathcal{T}_g\) over the Teichm\"uller space, can \(\sigma_{\mathrm{Dol}}\) be holomorphic over the entire \(\mathcal{T}_g\)? For rank‑2 and rank‑3 non‑unitary Higgs bundles, the answer is negative, as shown in \cite{biswas,HSZII}. However, for non‑unitary Higgs bundles of high rank, \cite{biswas} provides examples where \(\sigma_{\mathrm{Dol}}\) is a family of graded Higgs bundles and is indeed holomorphic over the whole \(\mathcal{T}_g\). 

\bigskip

Furthermore, we point out that the above research method for local deformation theory—namely, characterizing local deformations of complex structures on $X_0$ via the Kodaira-Spencer differential graded Lie algebra and employing the harmonic theory of the Hodge correspondence—has also been applied to study the classical Hodge locus; see \cite{LiuShen26II}.
\bigskip

\begin{acknowledgement*}
We are deeply grateful to Hélène Esnault and Moritz Kerz for communicating their conjecture (Question~\ref{EsnKer}) to us and for suggesting that the obstruction theoretic approach using higher-order deformations could be the right tool to characterise the non-abelian Noether--Lefschetz locus. Their insights and encouragement have been instrumental to the development of this work. We also express our gratitude to Sebastian Heller, Lin Weng and Shing-Tung Yau for their interest in this work, their valuable comments and suggestions. We are grateful to Runze Zhang for discussions on higher-order deformation theory and for providing the reference \cite{Ono}.
\end{acknowledgement*}

\section*{Notations}

\begin{itemize}
\item Unless otherwise stated, all indices $i$ (including $i_1, i_2, \dots$) appearing in this paper are positive integers and we define $|I_N|:=i_1+i_2+\cdots+i_N$.

\item Let $(\E,\bar\p,\theta)$ be a polystable Higgs bundle on $X_0$ carrying a harmonic metric $h_0$ and $\Ch=\Ch^{1,0}+\bar\p$ be the associated Chern connection. For any $g\in\A^0(\End\E)$, we let $\bg:=h_0^{-1}\bar g^T h_0$.

\item  Let $A_n = \mathbb{C}[t]/(t^{n+1})$ and $B_n = \mathbb{C}[t, \bar{t}]/(t^{n+1}, t^n\bar{t}, \dots, \bar{t}^{n+1})$ be the Artin rings of truncated holomorphic and real analytic functions on the complex plane $\mathbb{C}$ at the origin, with maximal ideals $(t)$ and $(t, \bar{t})$, respectively.

\end{itemize}

\newpage

\section{Isomonodromic deformation of a stable Higgs bundle with trivial Chern classes}
\label{sec_isom_defm_Higgs}

In this section, we investigate the isomonodromic deformations of a stable Higgs bundle with trivial Chern classes defined on a smooth projective variety. We adopt the framework of infinitesimal deformations over Artin rings and derive explicit Taylor series expansions for the deformed Dolbeault operator, Higgs field, and associated harmonic metric. The principal result of this section is Proposition~\ref{prop_Taylor_Exp_DolHig}, which demonstrate that the coefficients occurring in these expansions depend solely on the initial Higgs bundle and the infinitesimal deformations of the underlying smooth projective variety. These results will play a fundamental role in the following sections.

\medskip 
\subsection{Deformation theory of a smooth projective variety} \label{sec_def_projvar}

Let $X_0$ be a smooth projective variety. Recall that $A_n = \mathbb{C}[t]/(t^{n+1})$ is the Artin ring of truncated holomorphic on the complex plane at the origin with the maximal ideal $\mathfrak m:=(t)$. Let $$D_{X_0}(A_n)=\{\text{deformations } X_n\to\operatorname{Spec}A_n \text{ of }X_0 \}/\sim$$ 
be the set of isomorphic classes of deformations $X_n\to\operatorname{Spec}A_n$ of $X_0$ to $\operatorname{Spec}A_n$. 
Using the theory of \textbf{differential graded Lie algebra} (see \cite[Remark 14.8-14.9]{Gro}), we view a deformation $X_n\to \operatorname{Spec} A_n$ as a family of complex structures on \textbf{a fixed differential manifold} $X_0$ (forgetting the initial complex structure of $X_0$). This gives 
\begin{align*}D_{X_0}(A_n)\cong\frac{\{\eta\in \A^{0,1}(T_{X_0})\otimes\mathfrak m\mid \bar\p_{T_{X_0}}\eta+\cc[\eta,\eta]=0\}}{\text{gauge equivalence}}.\end{align*}
Precisely, for any $\eta=\sum_{i=1}^n \eta_i t^i\in \A^{0,1}(T_{X_0})\otimes \mathfrak m$ satisfying the above integrability condition, the corresponding deformation is the ringed space $X_n = (X_0^{\rm Top}, \mathcal{O}_{X_n})$ over $\operatorname{Spec} A_n$, where $\mathcal{O}_{X_n} \subset \mathcal{C}^\infty(X_0) \otimes A_n$ is the subsheaf of functions annihilated by the operator $\bar{\partial}_{X_0} + \eta \circ \partial_{X_0}$. The $A_n$-algebra structure on $\mathcal{O}_{X_n}$ induces the structural morphism $X_n \to \operatorname{Spec} A_n$.

Let $B_n = \mathbb{C}[t, \bar{t}]/(t,\bar{t})^{n+1}$ be  the Artin ring of $n$-th order truncated real analytic function germs at the origin of the complex plane.
Let $\mathcal C^\infty(X_n):=\mathcal C^\infty(X_0)\otimes B_n$ be the \textbf{sheaf of smooth functions} on $X_n$. We define the \textbf{holomorphic cotangent bundle} $\Omega^{1}(X_n/A_n)$ as the locally free sheaf of $\mathcal O_{X_n}$-modules locally generated by $df$ for any $f$ being a local holomorphic function of $X_n$. Let $\Omega^{1,0}(X_n/A_n):=\Omega^{1}(X_n/A_n)\otimes_{\mathcal O_{X_n}}\mathcal C^\infty(X_n) $ be the \textbf{smooth $(1,0)$ cotangent bundle}, which is a subsheaf of the \textbf{smooth cotangent bundle} $\mathbb CT_{X_n}^*:=\A^1(X_0)\otimes B_n$. The \textbf{anti-holomorphic cotangent bundle} $\Omega^{0,1}(X_n/A_n)$ is defined to be the complex conjugation of $\Omega^{1,0}(X_n/A_n)\subset \mathbb CT^*_{X_n}$.

\subsection{Deformation of a Higgs bundle and the gauge theory} \label{sec_def_Higg_and_gauge}

Let \((E,\theta)=(\E,\bar\p,\theta)\) be a stable Higgs bundle (we always assume that such bundle have trivial Chern classes) on \(X_0\), where \(\E\) denotes the underlying smooth vector bundle obtained by forgetting the holomorphic structure of \(E\). To study the deformation theory of the triple \((X_0,E,\theta)\), we fix \textbf{the smooth model} \((X_0,\E)\) and equip it with a family of complex structures \((\eta,\bar\p_t)\) together with a family of Higgs fields \(\theta_t\), subject to the following definition.

\begin{definition} \label{def_real_ana_deform} 
For any order $n$ deformation of $X_0$ denoted by $X_n\in D_{X_0}(A_n)$, we define a \textbf{real analytic deformation} of the initial stable Higgs bundle $(\E,\bar\p,\theta)$ on $X_0$ to $X_n$ as a triple $(\E,\bar\p_t,\theta_t)$ with
\begin{align*}
\bar\p_t:&\E\to\E\otimes_{\mathcal C^\infty(X_0)}\Omega^{0,1}(X_n/A_n);\\
\theta_t:&\E\to\E\otimes_{\mathcal C^\infty(X_0)}\Omega^{1,0}(X_n/A_n),
\end{align*}
satisfying the following  conditions:
\begin{enumerate}
\item $\bar\p_t$ is $\mathbb C$-linear and satisfies the Leibniz rule as a $(0,1)$ connection; $\theta_t$ is $\mathcal C^\infty(X_n)$-linear;
\item Modulo $t,\bar t$, the deformation triple reduces to the initial Higgs bundle, i.e. $\bar\p_t\equiv\bar\p$ and $\theta_t\equiv\theta$;
\item $(\bar\p_t,\theta_t)$ satisfies the integrable conditions
\begin{align}\label{eq_compatibility}
\bar\p_t^2=0;\quad\theta_t\wedge\theta_t=0;\quad \bar\p_t\theta_t=0.
\end{align}
\end{enumerate}
\end{definition}

\begin{example}\label{eg_isomo}
Let $(\E,\bar\p_s,\theta_s)$ be the isomonodromic deformation of the initial Higgs bundle $(\E,\bar\p,\theta)$ on $X_0$ to the family $X/S$. By \cite[Theorem 4.23]{CTW} when the fibers of $X/S$ are compact Riemann surfaces, and by Theorem~\ref{R-analyticity} in the general case, $(\E,\bar\p_s,\theta_s)$ is a real analytic deformation of Higgs bundles. We consider any order-$n$ germ of $S$ at $0$, i.e. a morphism $\gamma:\operatorname{Spec}A_n\to S$ mapping $0\in\operatorname{Spec}A_n$ to $0\in S$.
The pull-back of $X/S$ via $\gamma:\operatorname{Spec}A_n\to S$ gives an $X_n\in D_{X_0}(A_n)$. The pull-back of $(\E,\bar\p_s,\theta_s)$ via $\gamma$ gives a real analytic deformation of Higgs bundles.
\end{example}

We can explicitly expand the deformed operators in Definition~\ref{def_real_ana_deform} in terms of the deformation parameters $t,\ \bar t$, as in the following lemma.

\begin{lemma}\label{lem_expanding} 
Let $\eta=\sum\limits_{i=1}^n \eta_i t^i\in \A^{0,1}(T_{X_0})\otimes \mathfrak m$ represent $X_n$. Then there exist $\alpha_i,\beta_i,\varphi_i,\psi_i\in \A^1(\End \E)$ such that \begin{align*}\bar\p_t&=\bar\p-\sum_{i=1}^n\bar t^i\bar\eta_i\circ\bar\p+\sum_{i=1}^n t^i\eta_i\circ\Ch^{1,0}+\sum_{i=1}^nt^i\beta_i+\sum_{i=1}^n\bar t^i\psi_i\pmod{t\bar t};\\
\theta_t&=\theta+\sum_{i=1}^nt^i\alpha_i+\sum_{i=1}^n\bar t^i\varphi_i\pmod{t\bar t},
\end{align*}
where $h_0$ is the harmonic metric of the initial Higgs bundle $(\E,\bar\p,\theta)$ on $X_0$ and $\Ch=\Ch^{1,0}+\bar\p$ is the Chern connection of $(\E,\bar\p,h_0)$.
\end{lemma}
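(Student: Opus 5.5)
The plan is to reduce to a pointwise linear-algebra statement about the $(0,1)$ and $(1,0)$ type decomposition with respect to the deformed complex structure, and then Taylor-expand. Write $\bar\p_t-\bar\p$ and $\theta_t-\theta$ as elements of $\A^1(\End\E)\otimes(t,\bar t)B_n$; this is possible since both sides are first order operators with the same symbol modulo $(t,\bar t)$ (Leibniz rule and $\mathcal C^\infty$-linearity). The only non-tensorial input is that $\bar\p_t$ must take values in $\Omega^{0,1}(X_n/A_n)$ rather than in $\mathbb CT^*_{X_n}$. So I first describe $\Omega^{1,0}(X_n/A_n)$ and $\Omega^{0,1}(X_n/A_n)$ explicitly in terms of $\eta$, modulo $t\bar t$.

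First, local holomorphic functions on $X_n$ are annihilated by $\bar\p_{X_0}+\eta\circ\p_{X_0}$. A local computation then shows that $\Omega^{1,0}(X_n/A_n)$ is spanned over $\mathcal C^\infty(X_n)$ by forms $dz^j+\eta(dz^j)$, meaning $dz^j - \sum_i t^i\,\iota_{\eta_i}\ldots$ up to sign convention. More concretely, $\Omega^{1,0}(X_n/A_n)$ is the graph of the map $\Omega^{1,0}_{X_0}\to\Omega^{0,1}_{X_0}\otimes\mathfrak m$ induced by $\eta$ (with sign dictated by the equation $(\bar\p+\eta\p)f=0$). By conjugation, $\Omega^{0,1}(X_n/A_n)$ is the graph of $\bar\eta$, involving only $\bar t$. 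Modulo $t\bar t$, all products of $\eta$ and $\bar\eta$ terms drop out, so no mixed corrections appear.

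Second, I take the smooth $(0,1)$-connection $\bar\p_t$ and split $\bar\p_t-\bar\p$ into its $\End\E$-valued tensorial part and the part forced by the type change. The component of the pure-$t$ part must lie in the $(0,1)$ forms of $X_n$. The $(0,1)$ projection of $\Ch$ (namely $\bar\p$) onto the new $(0,1)$ space is $\bar\p-\bar t\bar\eta\circ\bar\p+t\eta\circ\Ch^{1,0}$ modulo $t\bar t$: the $\bar\eta$-term corrects $\bar\p$ itself, and the $\eta$-term adds the $(1,0)$ part of the Chern connection contracted with $\eta$. So I define the reference operator
\[
\bar\p^{\rm ref}_t:=\bar\p-\sum_i\bar t^i\bar\eta_i\circ\bar\p+\sum_i t^i\eta_i\circ\Ch^{1,0}
\]
and verify directly that it satisfies the Leibniz rule and lands in $\E\otimes\Omega^{0,1}(X_n/A_n)$ modulo $t\bar t$. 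Then $\bar\p_t-\bar\p^{\rm ref}_t$ is $\mathcal C^\infty(X_n)$-linear, so it is a section of $\End\E\otimes\mathbb CT^*_{X_n}$ vanishing modulo $(t,\bar t)$. Expanding it in monomials and discarding $t\bar t$ terms gives the coefficients $\beta_i$ (of $t^i$) and $\psi_i$ (of $\bar t^i$) in $\A^1(\End\E)$. Since $\theta_t$ is already $\mathcal C^\infty(X_n)$-linear, its expansion gives $\alpha_i,\varphi_i$ in the same way. The integrability conditions are not needed; they would only constrain these coefficients.

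The main obstacle is the sign and convention check in the first and second steps: confirming that the graph description of $\Omega^{0,1}(X_n/A_n)$ yields exactly $-\bar\eta_i\circ\bar\p$ and $+\eta_i\circ\Ch^{1,0}$, with contraction of a $T$-valued $(0,1)$-form against the $(1,0)$-part of a connection. This requires $\Ch^{1,0}$ to be well defined independently of $t$, which holds since $h_0$ is fixed. I would do this in local holomorphic coordinates and a local $h_0$-unitary-adapted frame, where $\Ch^{1,0}=\p+h_0^{-1}\p h_0$, and check the identity directly.
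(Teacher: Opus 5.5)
Your proposal is correct and is essentially the paper's proof: the paper also takes the $(0,1)$-part $\pi''_\eta\Ch$ of the Chern connection with respect to $X_n$. It observes that modulo $t\bar t$ this equals your reference operator $\bar\p-\sum_i\bar t^i\bar\eta_i\circ\bar\p+\sum_i t^i\eta_i\circ\Ch^{1,0}$, and then expands the tensorial difference $\bar\p_t-\pi''_\eta\Ch$ (and $\theta_t$ itself) in monomials. The only slip is your opening claim that $\bar\p_t-\bar\p$ is already an element of $\A^1(\End\E)\otimes(t,\bar t)B_n$: this is false because $\bar\p_t$ obeys the Leibniz rule with respect to $\pi''_\eta d$ rather than $\bar\p$, but your second step, subtracting $\pi''_\eta\Ch$ instead of $\bar\p$, is precisely what repairs it.
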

\begin{proof}We consider the $(0,1)$-part of $\Ch$ with respect to the complex structure of $X_n$, denoted by $\pi''_\eta\Ch$ and one may verify directly as operators that  \begin{align}\label{eq_piCh}
\pi''_\eta\Ch\equiv \bar\p-\sum_{i=1}^n\bar t^i\bar\eta_i\circ\bar\p+\sum_{i=1}^n t^i\eta_i\circ\Ch^{1,0}\pmod{t\bar t}.
\end{align}
Then $\bar\p_t-\pi''_\eta\Ch$ is a section of $\End\E\otimes_{\mathcal C^\infty(X_0)} \Omega^{0,1}(X_n/A_n)$. 
This proves our claim.
\end{proof}

\begin{definition} \label{def_holo_deform}
Let $(\E,\bar\p,\theta)$ be a Higgs bundle on $X_0$. 
A \textbf{(holomorphic) deformation} of $(\E,\bar\p,\theta)$ is a relative Higgs bundle $(\E,\bar\p_t,\theta_t)$ over $X_n/A_n$ with central fiber $(\E,\bar\p,\theta)$. 
There is a natural forgetful functor from the category of holomorphic deformations to the category of real analytic deformations, induced by extending the coefficient sheaf from the holomorphic to the smooth setting. 
A real analytic deformation is said to be \textbf{holomorphic} if it lies in the essential image of this functor.
\end{definition}

In the following, we give a criterion of holomorphicity of real analytic deformation. Before that we need to introduce some notations. 
Let $\pi'_{\eta}:\C T^*_{X_n}\to \Omega^{1,0}(X_n/A_n)$ and $\pi''_{\eta}:\C T^*_{X_n}\to \Omega^{0,1}(X_n/A_n)$ be two natural projections. By the trivial extension as smooth forms, we have $\Omega^{1,0}(X_0)\hookrightarrow \C T^*_{X_n}$ and $\Omega^{0,1}(X_0)\hookrightarrow \C T^*_{X_n}$. We define 
\begin{align*}P_\eta':=&\pi'_{\eta}|_{\Omega^{1,0}(X_0)}:\Omega^{1,0}(X_0)\to \Omega^{1,0}(X_n/A_n);\\ P_\eta'':=&\pi''_{\eta}|_{\Omega^{0,1}(X_0)}:\Omega^{0,1}(X_0)\to \Omega^{0,1}(X_n/A_n)
\end{align*}
For any $\alpha\in \Omega^{1,0}(X_0)$, by \cite[P75]{Gro}, we have
\begin{align}\label{eq_Peta}
P_\eta'(\alpha)=\alpha-\sum_{i=1}^n t^i\eta_i(\alpha) \quad \text{and}\quad P_\eta''(\bar\alpha)=\bar\alpha-\sum_{i=1}^n \bar t^i\bar\eta_i(\bar\alpha),
\end{align}
where $\eta(\alpha)\in\Omega^{0,1}(X_0)$ is the contraction. 
We have the following canonical isomorphisms of $\mathcal C^\infty(X_0)$-modules:
\begin{align*}  
B_n \otimes P'_\eta \Omega^{1,0}(X_0)\cong \Omega^{1,0}(X_n/A_n) 
 \quad \text{and}\quad
B_n \otimes P''_\eta  \Omega^{0,1}(X_0) \cong  \Omega^{0,1}(X_n/A_n).
\end{align*}
Henceforth, we will always identify these sheaves via these canonical isomorphisms. Now for a real analytic deformation $(\E,\bar\p_t,\theta_t)$ in Definition~\ref{def_real_ana_deform}, one can view $\bar\p_t-\pi_\eta''\Ch$ as an element in $B_n \otimes P''_\eta  \Omega^{0,1}(X_0) \otimes_{\mathcal C^\infty(X_0)} \End\E$ and $\theta_t$ as an element in $B_n \otimes P'_\eta \Omega^{1,0}(X_0) \otimes_{\mathcal C^\infty(X_0)} \End\E$. 

\begin{proposition} \label{prop_criterion_holo_deform}
 The real analytic deformation $(\E,\bar\p_t,\theta_t)$
 is holomorphic if and only if there exists a gauge transformation $\mathscr U$ such that 
\begin{equation}\label{eq_hol}
\begin{aligned}
 \mathscr U^{-1} \circ \bar\p_t \circ \mathscr U - \pi_\eta''\Ch & \in  A_n \otimes P''_\eta  \Omega^{0,1}(X_0) \otimes_{\mathcal C^\infty(X_0)} \End(\E);
 \\  
\mathscr U^{-1}\circ\theta_t \circ \mathscr U & \in A_n \otimes P'_\eta \Omega^{1,0}(X_0) \otimes_{\mathcal C^\infty(X_0)} \End(\E).
\end{aligned}
\end{equation}

\end{proposition}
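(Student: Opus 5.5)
The plan is to prove the two implications separately. Throughout, ``gauge transformation'' means a $B_n$-linear automorphism $\mathscr U$ of $\E\otimes B_n$ with $\mathscr U\equiv\id\pmod{t,\bar t}$. Such a $\mathscr U$ acts on real analytic deformations by conjugation, and two deformations related this way are isomorphic.

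For the forward direction, suppose $(\E,\bar\p_t,\theta_t)$ is holomorphic. By Definition~\ref{def_holo_deform} there is a relative Higgs bundle $(E_n,\theta_n)$ on $X_n/A_n$ with central fiber $(E,\theta)$, together with an isomorphism between its smooth extension and $(\E\otimes B_n,\bar\p_t,\theta_t)$.
\begin{enumerate}
\item I would first show that the underlying $\mathcal C^\infty(X_0)\otimes A_n$-module of $E_n\otimes_{\mathcal O_{X_n}}(\mathcal C^\infty(X_0)\otimes A_n)$ is isomorphic to $\E\otimes A_n$, by an isomorphism that is the identity modulo $t$. Smooth bundles over an Artinian thickening are rigid: lift a trivialization order by order, using partitions of unity, since smooth $\End\E$ is a fine sheaf.
\item Transported through this identification, the holomorphic structure becomes a $(0,1)$-operator $\bar\p'_t$ on $\E\otimes A_n$. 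Because the transition data are $t$-dependent only, $\bar\p'_t-\pi''_\eta\Ch$ has coefficients in $A_n$; the same holds for the transported Higgs field $\theta'_t$.
\item Extending scalars to $B_n$ and composing with the given isomorphism yields $\mathscr U$ with $\mathscr U^{-1}\bar\p_t\mathscr U=\bar\p'_t$ and $\mathscr U^{-1}\theta_t\mathscr U=\theta'_t$. This gives \eqref{eq_hol}.
\end{enumerate}

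For the converse, suppose \eqref{eq_hol} holds. Set $\bar\p'_t:=\mathscr U^{-1}\bar\p_t\mathscr U$ and $\theta'_t:=\mathscr U^{-1}\theta_t\mathscr U$.
\begin{enumerate}
\item By \eqref{eq_piCh}, the operator $\pi''_\eta\Ch$ itself has only $t$-coefficients when acting on $\E\otimes A_n$: its $\bar t$-terms $-\bar t^i\bar\eta_i\circ\bar\p$ are absorbed by writing forms in the basis $P''_\eta\Omega^{0,1}(X_0)$, and the mixed terms come from this change of frame as well. Hence $\bar\p'_t$ restricts to an operator $\E\otimes A_n\to \E\otimes A_n\otimes P''_\eta\Omega^{0,1}(X_0)$. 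Similarly, $\theta'_t$ is an $A_n$-valued $(1,0)$-form.
\item The integrability conditions \eqref{eq_compatibility} are preserved by conjugation, and they restrict to the $A_n$-submodule.
\item Define $E_n:=\ker\bar\p'_t$ on $\E\otimes A_n$. This is a locally free $\mathcal O_{X_n}$-module deforming $E$, by the relative Newlander--Nirenberg / Koszul--Malgrange theorem over Artin rings, proved order by order via the $\bar\p$-Poincaré lemma on polydiscs.
\item $\theta'_t$ restricts to a holomorphic Higgs field on $E_n$. Its smooth extension recovers $(\bar\p'_t,\theta'_t)$, which is gauge equivalent to the original triple, so the deformation lies in the essential image.
\end{enumerate}

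The main obstacle is the bookkeeping in the identification $B_n\otimes P''_\eta\Omega^{0,1}(X_0)\cong\Omega^{0,1}(X_n/A_n)$. One must check that ``coefficients in $A_n$'' is an intrinsic condition: the projections $P'_\eta$ and $P''_\eta$ in \eqref{eq_Peta} involve $\bar t$, yet $\pi''_\eta$ applied to $A_n$-valued objects must stay $A_n$-valued. I would first verify this for $\pi''_\eta\Ch$ directly, as the differential $\bar\p_{X_0}+\eta\circ\p$ on $\mathcal C^\infty(X_0)\otimes A_n$ twisted by the Chern connection. The remaining step, the local existence of holomorphic frames over $A_n$, I would handle by induction on $n$, solving at each order an equation $\bar\p u=f$ with $\bar\p f=0$ on small polydiscs.
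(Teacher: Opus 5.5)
The gap is exactly the bookkeeping step you flag, and the resolution you sketch for it is false.

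Let $P'_\eta,P''_\eta$ denote the maps given by the formulas \eqref{eq_Peta}. Write $\eta\bar\eta=\eta\circ\bar\eta$ (acting on $(0,1)$-forms) and $\bar\eta\eta=\bar\eta\circ\eta$ (acting on $(1,0)$-forms). Decomposing a $1$-form as $\omega=P'_\eta(a)+P''_\eta(b)$ and comparing types gives
\[
b=(1-\eta\bar\eta)^{-1}\bigl(\omega^{0,1}+\eta(\omega^{1,0})\bigr),\qquad
a=(1-\bar\eta\eta)^{-1}\bigl(\omega^{1,0}+\bar\eta(\omega^{0,1})\bigr).
\]
So \eqref{eq_Peta} computes the restricted projections only modulo $t\bar t$, and
$\pi''_\eta\Ch=P''_\eta\circ(1-\eta\bar\eta)^{-1}\circ(\bar\p+\eta\circ\Ch^{1,0})$.
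Likewise, \eqref{eq_piCh} is only a congruence modulo $t\bar t$; it cannot dispose of the mixed terms.

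Suppose the holomorphic structure is $\ker(\bar\p+\eta\circ\Ch^{1,0}+A)$ with $A\in\A^{0,1}(\End\E)\otimes A_n$. In a local holomorphic frame $e_k$, both $\bar\p'_t$ and $\pi''_\eta\circ(\bar\p+\eta\circ\Ch^{1,0}+A)$ send $\sum f_ke_k$ to $\sum\pi''_\eta(df_k)\,e_k$. Hence
$\bar\p'_t-\pi''_\eta\Ch=P''_\eta\bigl((1-\eta\bar\eta)^{-1}A\bigr)$,
which has $t^i\bar t^j$-coefficients with $j\geq1$ from order $t^2\bar t$ on. So ``the transition data depend only on $t$'' does not give $A_n$-coefficients in the frame $P''_\eta\Omega^{0,1}(X_0)$.

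No gauge repairs this in general. Take $X_0=\C/\Lambda$, $\eta=t\mu\,d\bar z\otimes\p_z$ with $\mu\neq0$, $n=3$, the trivial line bundle with flat metric, $\theta=0$, and $A=tc\,d\bar z$ with $c\neq0$. Then
$\bar\p_t-\pi''_\eta d=tc(1+t\bar t|\mu|^2)\,P''_\eta(d\bar z)$.
For an arbitrary gauge write $\mathscr U^{-1}d\mathscr U=\sum\omega_{ij}t^i\bar t^j$, with $\omega_{ij}$ exact for $(i,j)\neq(0,0)$ and $\omega_{00}$ having periods in $2\pi\sqrt{-1}\,\mathbb Z$. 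Require the $t\bar t$- and $t^2\bar t$-coefficients of $\mathscr U^{-1}\circ\bar\p_t\circ\mathscr U-\pi''_\eta d$ in the frame $P''_\eta(d\bar z)$ to vanish, and integrate over $X_0$: this forces $c=0$. Likewise, $\pi''_\eta d+tc\,P''_\eta(d\bar z)$ satisfies \eqref{eq_hol} with $\mathscr U=\id$ but is not holomorphic.

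Suppose instead you read $P''_\eta$ literally as $\pi''_\eta|_{\Omega^{0,1}(X_0)}$, i.e.\ $\bar\alpha\mapsto P''_\eta\bigl((1-\eta\bar\eta)^{-1}\bar\alpha\bigr)$. Then your $(0,1)$-step becomes correct, but the Higgs step breaks symmetrically. Holomorphic relative $1$-forms are $\alpha-\eta(\alpha)$ with $\alpha$ $A_n$-valued, whereas $\pi'_\eta(\alpha)=P'_\eta\bigl((1-\bar\eta\eta)^{-1}\alpha\bigr)$. On the same curve, consider the holomorphic deformation $(\mathcal O_{X_2},a\,dw)$ of $(\mathcal O,a\,dz)$, with $w=z-t\mu\bar z$ and $a\neq0$. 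It has $a\,dw=a(1-t\bar t|\mu|^2)\,\pi'_\eta(dz)$, and conjugation by $\mathscr U$ cannot change a rank-one Higgs field.

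What your strategy does prove is the criterion with the asymmetric normalization
\[
\mathscr U^{-1}\circ\theta_t\circ\mathscr U=\Theta-\eta(\Theta),\qquad
\mathscr U^{-1}\circ\bar\p_t\circ\mathscr U=\pi''_\eta\circ\bigl(\bar\p+\eta\circ\Ch^{1,0}+A\bigr),
\]
with $\Theta\in\A^{1,0}(\End\E)\otimes A_n$ and $A\in\A^{0,1}(\End\E)\otimes A_n$. That is, the $(1,0)$-side uses the frame of \eqref{eq_Peta} and the $(0,1)$-side uses the genuine projection. Here one checks that $\pi''_\eta(\alpha)=\pi''_\eta(\eta(\alpha))$ for $(1,0)$-forms, so $\pi''_\eta\Ch=\pi''_\eta\circ(\bar\p+\eta\circ\Ch^{1,0})$.

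With this formulation your rigidity, kernel and Koszul--Malgrange steps go through unchanged. Unlike the paper's proof, which defers entirely to Ono's result plus gauge invariance, your direct route makes this normalization explicit. Modulo $(t)$ all these normalizations agree, so Lemma~\ref{lem_gauge} and Definition~\ref{def_modulo_hol} are unaffected.
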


\begin{proof}
By \cite{Ono}, the condition \eqref{eq_hol} is equivalent to the holomorphicity of $(\E,\mathscr U^{-1} \circ \bar\p_t \circ \mathscr U ,\mathscr U^{-1} \circ \theta_t \circ \mathscr U )$. Therefore $(\E,\bar\p_t,\theta_t)$ is also a holomorphic deformation because it differs with $(\E,\mathscr U^{-1} \circ \bar\p_t \circ \mathscr U ,\mathscr U^{-1} \circ \theta_t \circ \mathscr U )$ by a gauge equivalence. 
\end{proof}

\begin{definition} \label{def_modulo_hol}
For the ideal $(t,\bar t^{k+1})$ of $B_n$, a real analytic deformation $(\E,\bar\p_t,\theta_t)$
 is call \textbf{modulo-$(t,\bar t^{k+1})$-holomorphic} if there exists a gauge transformation $\mathscr U$ such that 
\begin{equation}\label{eq_hol_b}
\begin{aligned}
 \mathscr U^{-1} \circ \bar\p_t \circ \mathscr U - \pi_\eta''\Ch & \in  (A_n+(t,\bar t^{k+1})) \otimes P''_\eta  \Omega^{0,1}(X_0) \otimes_{\mathcal C^\infty(X_0)} \End(\E);
 \\  
\mathscr U^{-1}\circ\theta_t \circ \mathscr U & \in (A_n+(t,\bar t^{k+1})) \otimes P'_\eta \Omega^{1,0}(X_0) \otimes_{\mathcal C^\infty(X_0)} \End(\E).
\end{aligned}
\end{equation}

\end{definition}

\begin{remark}
\begin{enumerate}
    \item Let $(t,\bar t^{k+1}) \subset (t,\bar t^{k})$ be two ideals of $B_n$. If $(\E,\bar\p_t,\theta_t)$  is modulo-$(t,\bar t^{k+1})$-holomorphic, then it is modulo-$(t,\bar t^{k})$-holomorphic. Moreover, all real analytic deformations are modulo-$(t,\bar t)$-holomorphic.
    \item  By replacing $\mathscr U$ with $\mathscr U \mathscr U_0^{-1}$ in \eqref{eq_hol}, we may assume the gauge transformation $\mathscr U\in\A^0(\End \E)\otimes B_n$ satisfying $\mathscr U \equiv \mathrm{id} \pmod{(t,\bar t)}$, where $\mathscr U_0$ is the constant term of $\mathscr U$. 
\end{enumerate}

\end{remark}

\bigskip

Let $(\E,\bar\p_t,\theta_t)$  be a modulo-$(t)$-holomorphic deformation and let 
\begin{equation} \label{eq_Taylor_U}
\mathscr{U} = \operatorname{id} + \sum_{i=1}^n \bar{t}^i u_i + \sum_{i=0}^{n-1} \sum_{j=1}^{n-i} t^j \bar{t}^i u_{ij}\in\A^0(\End\E)\otimes B_n
\end{equation} 
be the Taylor expansion of a gauge transformation such that 
$\mathscr U^{-1}\circ\bar\p_t\circ \mathscr U$ and $\mathscr U^{-1}\circ\theta_t\circ\mathscr U$ satisfies \eqref{eq_hol_b} for $k=n$. Then we have the following lemma describing this kind of holomorphicity as a systems of equations of $u_i$.

\begin{lemma}[Equations for gauge transformation] \label{lem_gauge} 
Let $(\E,\bar\p_t,\theta_t)$  be a real analytic deformation. Then it is modulo-$(t)$-holomorphic if and only if all terms $u_i$'s in gauge transformation $\mathscr{U}$ given in \eqref{eq_Taylor_U} satisfy the following system of equations for $m=1,\cdots,n$:
\begin{align}\label{hol}
\begin{cases}
\varphi_{m}+\sum\limits_{j=1}^{m-1}\varphi_{j}u_{m-j}+[\theta,u_{m}]=0;\\
\psi_{m}+\sum\limits_{j=1}^{m-1}\psi_{j}u_{m-j}+\bar\p u_{m}-\sum\limits_{j=1}^{m-1}\bar\eta_{j}(\bar\p u_{m-j})=0,
\end{cases}
\end{align}
where $\{\varphi_i,\psi_i\}_{i=1}^n$ are defined in Lemma~\ref{lem_expanding}.
 \end{lemma}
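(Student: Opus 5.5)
The plan is to work modulo the ideal $(t)$ throughout. Modulo-$(t)$-holomorphicity is the case $k=n$ of Definition~\ref{def_modulo_hol}. Since $A_n+(t)=A_n+(t,\bar t^{n+1})$ (because $\bar t^{n+1}=0$ in $B_n$), the condition \eqref{eq_hol_b} says exactly that the pure $\bar t^m$-coefficients ($m=1,\dots,n$) of $\mathscr U^{-1}\circ\bar\p_t\circ\mathscr U-\pi''_\eta\Ch$ and of $\mathscr U^{-1}\circ\theta_t\circ\mathscr U$ vanish. All terms involving $t$ are absorbed by $A_n+(t)$. So we reduce everything modulo $(t)$. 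Then $\mathscr U\equiv \id+\sum_i\bar t^iu_i$, and by Lemma~\ref{lem_expanding},
\[
\bar\p_t\equiv \bar\p-\sum_i\bar t^i\bar\eta_i\circ\bar\p+\sum_i\bar t^i\psi_i,\qquad \theta_t\equiv\theta+\sum_i\bar t^i\varphi_i \pmod{t}.
\]
The mixed coefficients $u_{ij}$ with $j\ge1$ play no role.

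For the Higgs field we work with $\mathscr U\cdot(\mathscr U^{-1}\theta_t\mathscr U)=\theta_t\mathscr U$, which avoids inverting $\mathscr U$. Because $\mathscr U$ is invertible, the condition that $\mathscr U^{-1}\theta_t\mathscr U$ has no pure $\bar t$-terms is equivalent, by induction on $m$, to the condition that $\theta_t\mathscr U-\mathscr U\theta$ has vanishing $\bar t^m$-coefficient for all $m$. Here one should take care that $\theta$ is a $P'_\eta$-form, but modulo $t$ we have $P'_\eta=\id$ on $\Omega^{1,0}(X_0)$ by \eqref{eq_Peta}. Expanding, the $\bar t^m$-coefficient is
\[
\varphi_m+\sum_{j=1}^{m-1}\varphi_ju_{m-j}+\theta u_m-u_m\theta,
\]
which gives the first equation of \eqref{hol}.

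For the Dolbeault operator we similarly consider $\bar\p_t\circ\mathscr U-\mathscr U\circ\pi''_\eta\Ch$. Modulo $t$ we have $\pi''_\eta\Ch\equiv\bar\p-\sum\bar t^i\bar\eta_i\circ\bar\p$ by \eqref{eq_piCh}. The operators $\bar\p-\sum\bar t^i\bar\eta_i\circ\bar\p$ applied to $\mathscr U s$ give, by the Leibniz rule, $(\bar\p-\sum\bar t^i\bar\eta_i\bar\p)(u)\,s+u(\bar\p-\sum\bar t^i\bar\eta_i\bar\p)s$. The second summand cancels against $\mathscr U\circ\pi''_\eta\Ch$. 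Collecting the $\bar t^m$-coefficients then yields
\[
\psi_m+\sum_{j=1}^{m-1}\psi_ju_{m-j}+\bar\p u_m-\sum_{j=1}^{m-1}\bar\eta_j(\bar\p u_{m-j}).
\]
The term $\bar\eta_j\circ\bar\p$ acting on $u_{m-j}$ appears as the contraction $\bar\eta_j(\bar\p u_{m-j})$, and the $j=m$ term drops out since $u_0=0$. This is the second equation.

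It remains to check that the vanishing of $\bar t^m$-coefficients of $\mathscr U X-Y\mathscr U$ is equivalent to the vanishing of those of $\mathscr U^{-1}X\mathscr U-Y$. This holds because left multiplication by $\mathscr U^{-1}\equiv\id \pmod{(t,\bar t)}$ preserves the submodule $(A_n+(t))\otimes(\cdots)$ modulo $(t)$, which is a triangular induction on $m$. Both directions of the ``if and only if'' follow, since every gauge transformation can be normalized as in the Remark to have the form \eqref{eq_Taylor_U}.

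The main obstacle is the bookkeeping of the $P''_\eta$-identification. One must verify that writing elements of $\Omega^{0,1}(X_n/A_n)$ as $B_n\otimes P''_\eta\Omega^{0,1}(X_0)$ turns $\bar\p u$ into $P''_\eta(\bar\p u)$, with the $-\bar t^j\bar\eta_j$ correction arising precisely from \eqref{eq_Peta}. One must also confirm that the $\psi_j$, being already expressed in $P''_\eta$-coordinates, contribute no extra $\bar\eta$-terms.
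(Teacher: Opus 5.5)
Your proposal is correct and is essentially the paper's proof: reduce modulo $(t)$, turn the conjugation conditions into the intertwining relations $\bar\p_t\circ\mathscr U\equiv\mathscr U\circ(\bar\p-\sum_i\bar t^i\bar\eta_i\circ\bar\p)$ and $\theta_t\circ\mathscr U\equiv\mathscr U\circ\theta$ using \eqref{eq_piCh}, expand with Lemma~\ref{lem_expanding}, and compare $\bar t^m$-coefficients. Two small corrections. First, the $\psi_i$ of Lemma~\ref{lem_expanding} are ambient coefficients in $\A^1(\End\E)$, not $P''_\eta$-coordinates, and that is precisely why no extra $\bar\eta$-terms appear. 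Second, the would-be term $\bar\eta_m(\bar\p u_0)$ vanishes because the constant term of $\mathscr U$ is $\id$ and $\bar\p\,\id=0$, not because $u_0=0$.
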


\begin{proof}
By Lemma~\ref{lem_expanding}, we have \begin{align*}\bar\p_t&\equiv\bar\p-\sum_{i=1}^n\bar t^i\bar\eta_i\circ\bar\p+\sum_{i=1}^n\bar t^i\psi_i\pmod{(t,\bar t^{n+1})};\\
\theta_t&\equiv\theta+\sum_{i=1}^n\bar t^i\varphi_i\pmod{(t,\bar t^{n+1})}.
\end{align*}
By the definition of $\mathscr U$ in \eqref{eq_Taylor_U}, we have \eqref{eq_hol_b} for $k=n$. Modulo $(t)$ in these two identities and using \eqref{eq_piCh}, we have 
\begin{align*}\bar{\partial}_t \circ \mathscr{U}&=\mathscr U\circ (\bar\p-\sum_{i=1}^n\bar t^i\bar\eta_i\circ\bar\p)\pmod{(t,\bar t^{n+1})};\\
\theta_t\circ\mathscr U&=\mathscr U\circ\theta\pmod{(t,\bar t^{n+1})}.
\end{align*}After expanding the above expressions and comparing the coefficient of $\bar t^k$, we have \eqref{hol}.
\end{proof}

\subsection{Harmonic metric and the isomonodromic deformation} \label{sec_HarmMet_IsomDef}

Let $(\E,\bar\p_s,\theta_s)$ be the isomonodromic deformation of the initial Higgs bundle $(\E,\bar\p,\theta)$ on $X_0$ to the family $X/S$. In this subsection, we always denote $(\E,\bar\p_t,\theta_t)$ as some truncated real analytic deformation on $X_n$ defined in Example~\ref{eg_isomo}.

Let $h_t\in A^0(\bar E^\vee\otimes E^\vee)\otimes B_n$ be truncated metric of the harmonic metric of $(\E,\bar\p_s,\theta_s)$. There exist $g_i,g_{i\bar j}$ lie in $\A^0(\End E)$ such that 
\begin{align}\label{metric}
h_t=h_0 \cdot \underbrace{\left(\operatorname{id}+\sum_{i=1}^n t^ig_i+\sum_{i=1}^n \bar t^ig_i^\st+\sum_{i=1}^{n-1}\sum_{j=1}^{n-i}t^i\bar t^j g_{i\bar j}\right)}_{g(t,\bar{t})}.
\end{align}
where $\bg_i$ is the Hodge star of $g_i$ defined in Notations. Those $g_i,g_{i\bar j}$ characterize the \textbf{deformation} of the harmonic metric. Let $g(t,\bar t)$ denote the expression in parentheses in the above equation, which is invertible as $g(t,\bar t)-\mathrm{id}$ is nilpotent. We remark $g(t,\bar t)$ must be of this form to ensure that $h_t$ is Hermitian.

\medskip

 View $h_t$ as a harmonic map defined on the universal cover $\widetilde X_n$  of $X_n$. Let $\Psi_t:=-\cc h_t^{-1}dh_t$. 
 Let $(\mathcal V,D)$ on $X_0$ be the associated flat bundle given by the nonabelian Hodge correspondence, where $D$ is the smooth flat connection.
 By a similar argument as in the proof of \cite[Lemma 9.13]{Gui}, substituting the metric \eqref{metric} into $\Psi_t=-\cc h_t^{-1}dh_t$ yields
\begin{equation}\label{PSI}\begin{aligned} 
\Psi_t =& -\cc g(t,\bar t)^{-1} \cdot (h_0^{-1}dh_0) \cdot g(t,\bar t)-\cc g(t,\bar t)^{-1} \cdot D^{\End} \Big(g(t,\bar t)\Big)\\
=&g(t,\bar t)^{-1}(\theta+\theta^\st)g(t,\bar t)-\cc g(t,\bar t)^{-1}D^{\End} \Big(g(t,\bar t)\Big).
\end{aligned}\end{equation}

By \cite[Lemma 9.13]{Gui}, the $(1,0)$-part of $\Psi_t$ with respect to $X_n$ is $\theta_t$ and the $(0,1)$-part of $\Psi_t$ with respect to $X_n$ is $\theta_t^{\star_{h_t}}$.
By comparing the $(1,0)$ and $(0,1)$ parts of both sides of  \eqref{PSI}, we obtain our main result in this section: when the deformation is isomonodromic, it enables us to express the $\varphi_i, \psi_i, \alpha_i, \beta_i$ from Lemma~\ref{lem_expanding} exclusively in terms of the graded stable Higgs bundle on $X_0$ (equivalently, the initial data $(\E, \bar\p, \theta, h_0, \Ch^{1,0})$) and the order $n$ deformation $X_n$ of $X_0$ (equivalently, the series $\eta = \sum\limits_{i=1}^n t^i \eta_i$).

We introduce the following notation, which will greatly simplify the writing of the expressions: (note that the following summation indices $i_1,i_2,\cdots,i_m$ are all positive integers)
\begin{align}\label{eq_altersum_S}
S_k:=S_k(g_1,\cdots,g_k)=\sum_{m=1}^k\sum_{i_1+i_2+\cdots+i_m=k}(-1)^{m-1}g_{i_1}g_{i_2}\cdots g_{i_m}.
\end{align}
and for any positive integer $N\geq 1$,
\begin{align}\label{eq_IN}
|I_N|:=i_1+i_2+\cdots+i_N
\end{align}
\begin{proposition} \label{prop_Taylor_Exp_DolHig}
Suppose the deformation is isomonodromic. 
For any $1\leq i\leq n$, $g_i$ is uniquely determined by $(\E,\bar\p,\theta,h_0,\Ch^{1,0})$ and $\sum\limits_{i=1}^kt^i\eta_i$.
And the explicit formulas for  $\varphi_i, \psi_i, \alpha_i, \beta_i$ (given in Lemma~\ref{lem_expanding}) are:
\begin{flalign} \label{varphi} 
(i) \quad \varphi_i &  = \bar\eta_i(\theta^\st)+\cc[\theta,\bg_i]-\cc\Ch^{1,0}\bg_i +\cc\sum_{ |I_2| =i} S^\st_{i_1}\big\{\Ch^{1,0}\bg_{i_2}-[\theta,\bg_{i_2}]\big\}&\notag\\
& \quad +\cc\sum_{|I_2|= i}\bar\eta_{i_1}\big([\theta^\st,\bg_{i_2}]-\bar\p\bg_{i_2}\big)-\cc\sum_{|I_3| = i}\bar\eta_{i_1}\Big\{S_{i_2}^\st\big([\theta^\st,\bg_{i_3}]-\bar\p\bg_{i_3}\big)\Big\}. &
\end{flalign}

\begin{flalign} \label{psi} 
(ii) \quad \psi_i=&-\cc[\theta^\st,\bg_i]+\cc\bar\p\bg_i+\cc\sum_{ |I_2| =i} S^\st_{i_1}\big\{[\theta^\st,\bg_{i_2}]-\bar\p\bg_{i_2}\big\}  \notag &\\
&+\cc\sum_{|I_2| = i}\bar\eta_{i_1}\big([\theta^\st,\bg_{i_2}]-\bar\p\bg_{i_2}\big)-\cc\sum_{|I_3|= i}\bar\eta_{i_1}\Big\{S_{i_2}^\st\big([\theta^\st,\bg_{i_3}]-\bar\p\bg_{i_3}\big)\Big\}.&
\end{flalign}

\begin{flalign} \label{alpha} 
(iii) \quad \alpha_i= & -\eta_i(\theta)+\cc[\theta,g_i]-\cc\Ch^{1,0}g_i +\cc\sum_{ |I_2| =i} S_{i_1}\big\{\Ch^{1,0}g_{i_m}-[\theta, g_{i_m}]\big\} & \notag \\
&+\cc\sum_{ |I_2|= i} \eta_{i_1}\big(\Ch^{1,0}g_{i_2}-[\theta, g_{i_2}]\big)-\sum_{ |I_3|=i} \eta_{i_1}\Big\{S_{i_2}\big(\Ch^{1,0}g_{i_3}-[\theta, g_{i_3}]\big)\Big\}. &
\end{flalign} 

\begin{flalign} \label{beta} 
(iv) \quad \beta_i=&-\cc[\theta^\st,g_i]+\cc\bar\p g_i+\cc\sum_{ |I_2|=i} S_{i_1}\big\{[\theta^\st, g_{i_2}]-\bar\p g_{i_2}\big\} \notag & \\
&+\cc\sum_{ |I_2| = i} \eta_{i_1}\big(\Ch^{1,0}g_{i_2}-[\theta, g_{i_2}]\big)-\sum_{ |I_3|=i} \eta_{i_1}\Big\{S_{i_2}\big(\Ch^{1,0}g_{i_3}-[\theta, g_{i_3}]\big)\Big\}. &
\end{flalign} 
In those formulas, the operators $\Ch^{1,0},\bar\p$ are connections of $\End\E$ induced by those of $\E$. And we remark that $\eta_i:\A^{1,0}(\End\E)\to \A^{0,1}(\End\E)$ and $\bar\eta_i:\A^{0,1}(\End\E)\to \A^{1,0}(\End\E)$ are contractions.
\end{proposition}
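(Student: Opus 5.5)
The formulas come from expanding \eqref{PSI} in powers of $t,\bar t$ modulo $t\bar t$, splitting the result into types with respect to $X_n$, and comparing with Lemma~\ref{lem_expanding}. Separately, the uniqueness of $g_i$ is obtained by induction from the harmonic metric equation.

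\textbf{Expansion of \eqref{PSI}.} Modulo $t\bar t$ we have $g(t,\bar t)=\id+G+G^{\st}$ with $G=\sum_i t^ig_i$. Since every product of a $t$-term with a $\bar t$-term vanishes, the inverse is
\[
g^{-1}\equiv \id-\sum_{k}t^kS_k-\sum_k\bar t^kS_k^{\st}.
\]
This is exactly where the alternating sums \eqref{eq_altersum_S} come from. We substitute this into
\[
\Psi_t=g^{-1}(\theta+\theta^{\st})g-\tfrac12 g^{-1}D^{\End}g ,
\qquad D^{\End}=\Ch^{1,0}+\bar\p+\operatorname{ad}(\theta+\theta^{\st}).
\]
Taking the coefficient of $t^i$ gives a $(1,0)$-part with respect to $X_0$ equal to
\[
[\theta,g_i]\cdot\tfrac12\ \text{(after combining)}-\tfrac12\Ch^{1,0}g_i+\tfrac12\sum S_{i_1}\{\Ch^{1,0}g_{i_2}-[\theta,g_{i_2}]\},
\]
together with an analogous $(0,1)$-part. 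The $\bar t^i$ coefficients are the $\st$-adjoints of these, because $\Psi_t$ is $h_t$-self-adjoint in the appropriate sense.

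\textbf{Splitting by type on $X_n$.} Next we decompose the $X_0$-types into $X_n$-types. Using \eqref{eq_Peta}, a $(1,0)$-form $\alpha$ on $X_0$ satisfies
\[
\alpha=P'_\eta(\alpha)+\sum_i t^i\eta_i(\alpha),
\]
where the correction $\eta_i(\alpha)$ is of type $(0,1)$ and must in turn be written as $P''_\eta(\cdot)$. Symmetrically, a $(0,1)$-form $\bar\beta$ satisfies
\[
\bar\beta=P''_\eta(\bar\beta)+\sum_i\bar t^i\bar\eta_i(\bar\beta),
\]
with $\bar\eta_i(\bar\beta)$ of type $(1,0)$. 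Feeding these in produces the $\eta_{i_1}(\cdots)$ and $\bar\eta_{i_1}(\cdots)$ terms, and the triple sums $|I_3|=i$ arise from composing one contraction with one $S_{i_2}$ factor.

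We then identify coefficients:
\begin{itemize}
\item The $(1,0)_{X_n}$-part of $\Psi_t$ is $\theta_t$. Reading off the $t^i$ and $\bar t^i$ coefficients gives $\alpha_i$ and $\varphi_i$. The term $\bar\eta_i(\theta^{\st})$ in $\varphi_i$ comes from splitting $\theta^{\st}$ itself, and the term $-\eta_i(\theta)$ in $\alpha_i$ from splitting $\theta$.
\item The $(0,1)_{X_n}$-part of $\Psi_t$ is $\theta_t^{\star_{h_t}}$. This gives $\bar\p_t$ through the relation $\bar\p_t=(\text{$(0,1)$-part of }d_{h_t})-\theta_t^{\star_{h_t}}$.
\end{itemize}
Equivalently, one writes the flat connection
\[
D=\bar\p_t+\partial_{h_t}+\theta_t+\theta_t^{\star_{h_t}}
\]
and compares $(0,1)_{X_n}$-parts with \eqref{eq_piCh}, which yields $\beta_i$ and $\psi_i$. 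Here the term $t^i\eta_i\circ\Ch^{1,0}$ already sits in $\pi''_\eta\Ch$ and cancels.

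\textbf{Uniqueness of $g_i$.} This is by induction on $i$. The isomonodromic deformation fixes the flat connection $D$ on $\E$, and $h_t$ is harmonic for $(D,X_n)$. At order $t^i$, the harmonicity equation $\bar\p_t\theta_t=0$, equivalently the pluriharmonic-map equation, becomes a linear equation
\[
L g_i=R_i\bigl(g_1,\dots,g_{i-1},\eta_1,\dots,\eta_i\bigr),
\]
where $L$ is the Laplacian $\Delta_{h_0}$-type operator $D''D'$ on $\End\E$ linearized at $h_0$. Stability of $(E,\theta)$ makes the kernel of $L$ consist only of scalars. The scalars are removed by normalizing $\det h_t$, or equivalently by the fact that a harmonic metric is unique up to a constant, so $g_i$ is unique.

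The main obstacle is bookkeeping: keeping track of which products survive modulo $t\bar t$, and of the nested type-splittings that generate the $|I_3|$ sums. I would verify the formulas first for $i=1,2$ and then argue by induction on the number of factors.
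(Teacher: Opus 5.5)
Your route is the paper's. You expand \eqref{PSI} modulo $t\bar t$ with $g^{-1}\equiv \id-\sum_k t^kS_k-\sum_k\bar t^kS_k^{\st}$ and split into $X_n$-types; the paper packages this as $\pi'_\eta\omega\equiv\omega^{1,0}-\eta(\omega^{1,0})+\bar\eta(\omega^{0,1})$. You then read off $\theta_t=\pi'_\eta\Psi_t$ and obtain $\bar\p_t=\pi''_\eta D-\theta_t^{\star_{h_t}}$ with $\pi''_\eta D=D''+\eta\circ D'-\bar\eta\circ D''$.

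Your inductive linearization for the uniqueness of $g_i$ is a more explicit form of the paper's appeal to uniqueness of the harmonic metric. The operator hitting $g_i$ in the $t^i$-coefficient of $\bar\p_t\theta_t$ is a nonzero multiple of $\bar\p\Ch^{1,0}+[\theta,[\theta^{\st},-]]$ from \eqref{exact_harm2}, so Lemma~\ref{vanlem} gives the scalar kernel. Solvability is automatic, since the $g_i$ are Taylor coefficients of the actual harmonic metric.

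The one step that would fail as written is the shortcut ``the $\bar t^i$-coefficients are the $\st$-adjoints of the $t^i$-coefficients.''

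\textbf{Why it fails.} Self-adjointness of $\Psi_t$ is with respect to $h_t$, i.e.\ $\Psi_t=g^{-1}\Psi_t^{\st}g$, not $\Psi_t=\Psi_t^{\st}$. So the $\star_{h_0}$-adjoint of the $t^i$-coefficient is the $\bar t^i$-coefficient of $g\Psi_tg^{-1}$, not of $\Psi_t$. Used literally, it gives wrong formulas. We have $[\theta^{\st},g]^{\st}=-[\theta,g^{\st}]$ while $(\bar\p g)^{\st}=\Ch^{1,0}g^{\st}$. Hence the adjoint of the $(0,1)$-part $\tfrac12([\theta^{\st},g_i]-\bar\p g_i)$ of the $t^i$-coefficient is $-\tfrac12[\theta,\bg_i]-\tfrac12\Ch^{1,0}\bg_i$. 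But the $(1,0)$-part of the $\bar t^i$-coefficient, which feeds $\varphi_i$, is $+\tfrac12[\theta,\bg_i]-\tfrac12\Ch^{1,0}\bg_i$. Products $S_{i_1}X$ would also come out in reversed order, as $X^{\st}S_{i_1}^{\st}$.

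\textbf{The fix.} The correct mechanism is a formal symmetry. From \eqref{PSI},
\[
\Psi_t=\theta+\theta^{\st}+\tfrac12\,g^{-1}\bigl([\theta+\theta^{\st},g]-\Ch g\bigr)
\]
depends only on $g$. Modulo $t\bar t$ one has $g=\id+G+G^{\st}$, which is invariant under $t\leftrightarrow\bar t$, $g_j\leftrightarrow\bg_j$. So the $\bar t^i$-coefficient is the $t^i$-coefficient with $g_j$ replaced by $\bg_j$: no adjoint and no change of type. The only asymmetry between \eqref{varphi}, \eqref{psi} and \eqref{alpha}, \eqref{beta} then comes from the type splitting ($\bar\eta$ versus $\eta$). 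With this repair your computation goes through.

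\textbf{A note on the printed formulas.} When you carry the computation out, you will get the $|I_3|$-sums in \eqref{alpha} and \eqref{beta} with coefficient $-\tfrac12$, exactly as in \eqref{varphi} and \eqref{psi}. For $i=3$ the term is $-\tfrac12\eta_1\bigl(g_1(\Ch^{1,0}g_1-[\theta,g_1])\bigr)$. This is also what the later substitution of \eqref{eqb8} into \eqref{alpha} in Lemma~\ref{lem_simpl_ab} uses. So the printed $-1$ appears to be a misprint, not a problem with your method.
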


\begin{proof}
In this proof, we always modulo the ideal $(t\bar t)$. Firstly, we prove the above four identities. Let 
\begin{align*}
\pi_\eta':&B_n\otimes \A^1(\End\E)\to  B_n\otimes\Omega^{1,0}(X_n/A_n)\otimes \End\E; \\\pi_\eta'':&B_n\otimes \A^1(\End\E)\to  B_n\otimes\Omega^{0,1}(X_n/A_n)\otimes \End\E
\end{align*}
be two projections according to types. Then we have $\pi'_\eta\Psi_t=\theta_t$ and $\pi''_\eta\Psi_t=\theta_t^{\star_{h_t}}$. Using \eqref{PSI}, we have
{\fontsize{10}{12}\selectfont \begin{equation} \label{eq_theta_t}
\begin{aligned}
\theta_t=&g(t,\bar t)^{-1}(\theta-\eta(\theta)+\bar\eta(\theta^\st))g(t,\bar t)-\cc g(t,\bar t)^{-1}(D'-\eta\circ D'+\bar\eta\circ D'')^{\End}\Big(g(t,\bar t)\Big);\\
\theta_t^{\star_{h_t}}=&g(t,\bar t)^{-1}(\theta^\st+\eta(\theta)-\bar\eta(\theta^\st))g(t,\bar t)-\cc g(t,\bar t)^{-1}(D''+\eta\circ D'-\bar\eta\circ D'')^{\End}\Big(g(t,\bar t)\Big),
\end{aligned}
\end{equation}}
where $D':A^0(X_0,\End\E)\to A^{1,0}(X_0,\End\E)$ and $D'':A^0(X_0,\End\E)\to A^{0,1}(X_0,\End\E)$ with $D=D'+D''$. 
Note that we have $D'=\Ch^{1,0}+\theta$ and $D''=\bar\p+\theta^\st$. By comparing the coefficient of $t^i,\ i=0,1,2,\cdots,n$ in the above identity of $\theta_t$, we have
\begin{align*}\theta+\sum_{i=1}^nt^i\alpha_i =&(\id+\sum_{j=1}^n t^j g_j)^{-1}(\theta-\sum_{l=1}^nt^l\eta_l(\theta))(\id+\sum_{k=1}^n t^k g_k)\\
&-\cc(\id+\sum_{j=1}^n t^j g_j)^{-1}(\Ch^{1,0}+\theta-\sum_{l=1}^nt^l\eta_l\circ\Ch^{1,0}-\sum_{l=1}^nt^l\eta_l(\theta))^{\End}\sum_{k=1}^n t^k g_k.
\end{align*}
Substituting $(\id+\sum_{j=1}^n t^j g_j)^{-1}=\id-\sum_{j=1}^nt^j g_j+\sum_{j_1+j_2\leq n}t^{j_1+j_2}g_{j_1}g_{j_2}-\cdots$ into the above identity, we obtain the expression \eqref{alpha}. By a similar argument, we have \eqref{varphi}. Now we explain how to derive the rest two. By a direct check, we have $\pi_\eta'' D=D''+\eta\circ D'-\bar\eta\circ D''.$ Thus \begin{align*}\bar\p_t=\pi_\eta'' D-\theta_t^{\star_{h_t}}.\end{align*}
By substituting \eqref{eq_theta_t} into the above identity and comparing the coefficient of $t^i,\ i=1,2,\cdots,n$ in the above identity, we have
\begin{align*}\sum_{i=1}^nt^i\beta_i =&\sum_{l=1}^nt^l\eta_l(\theta)-(\id+\sum_{j=1}^n t^j g_j)^{-1}(\theta^\st+\sum_{l=1}^nt^l\eta_l(\theta))(\id+\sum_{k=1}^n t^k g_k)\\
&+\cc(\id+\sum_{j=1}^n t^j g_j)^{-1}(\bar\p+\theta^\st+\sum_{l=1}^nt^l\eta_l\circ\Ch^{1,0}+\sum_{l=1}^nt^l\eta_l(\theta))^{\End}\sum_{k=1}^n t^k g_k.
\end{align*}
This proves the expression \eqref{beta}. By a similar argument, we have \eqref{psi}.

Substituting the above expressions into the integrable conditions \eqref{eq_compatibility}, we get several PDEs on $g_i,\ i=1,2,\cdots,n$. By the nonabelian Hodge correspondence and the uniqueness of the harmonic metric, the above PDEs on $g_i$ are all solvable and uniquely determine $g_i,\ i=1,2,\cdots,n$.
\end{proof}

\newpage

\section{Obstruction classes of holomorphicity} \label{sec_ob_hol}

In this section, we investigate the holomorphicity of the real analytic isomonodromic deformations discussed in the previous section. We introduce a sequence of obstruction classes that measure the failure of an isomonodromic deformation to be holomorphic. These obstruction classes lie in a certain cohomology group, which we explicitly describe using the Dolbeault resolution. Proposition~\ref{prop_ob} is the main result in this section, in which we derive explicit formulas for all higher-order obstruction classes, and show that their vanishing is a necessary condition for the deformation to be holomorphic. 

\medskip

\subsection{Obstruction classes of modulo-$(t)$-holomorphicity} \label{sec_ob_mod_t_hol}
\subsubsection{Obstruction class group}
Recall the computation of the hypercohomology group $$\barHb$$ via the Dolbeault resolution. Firstly, we have $\Omega_{\overline{X_0}}^{1,0}=\Omega_{X_0}^{0,1}$ and $\Omega_{\overline{X_0}}^{0,1}=\Omega_{X_0}^{1,0}$, where $\overline{X_0}$ is the complex manifold conjugate to $X_0$. Hence $(\E,\Ch^{1,0},\theta^\st)$ is a Higgs bundle on $\overline{X_0}$. For any two $\omega_1\in\A^k(\End\E),\ \omega_2\in \A^l(\End\E)$, we define the following Lie brackets.
\begin{align}\label{eq_Liebracket_2}[\omega_1,\omega_2]:=\omega_1\circ\omega_2-(-1)^{kl}\omega_2\circ\omega_1.
\end{align}

We have the following Dolbeault resolution:
\[
\begin{tikzcd}
\vdots & \vdots & \vdots \\
C^{2,0}:=\mathcal A^{2,0}(\End E) 
\arrow[r, "{\operatorname{ad}( \theta^{\star_{h_0}})}"] \arrow[u]
& C^{2,1}:=\mathcal A^{2,0}(\End E \otimes \Omega_{X_0}^{0,1}) 
\arrow[r, "{\operatorname{ad}( \theta^{\star_{h_0}})}"] \arrow[u]
& C^{2,2}:=\mathcal A^{2,0}(\End E \otimes \Omega_{X_0}^{0,2}) 
\arrow[r] \arrow[u] & \cdots \\
C^{1,0}:=\mathcal A^{1,0}(\End E) 
\arrow[r, "{\operatorname{ad}( \theta^{\star_{h_0}})}"] 
\arrow[u, "\Ch^{1,0}"] 
& C^{1,1}:=\mathcal A^{1,0}(\End E \otimes \Omega_{X_0}^{0,1}) 
\arrow[r, "{\operatorname{ad}( \theta^{\star_{h_0}})}"] 
\arrow[u, "\Ch^{1,0}"] 
& C^{1,2}:=\mathcal A^{1,0}(\End E \otimes \Omega_{X_0}^{0,2}) 
\arrow[r] \arrow[u, "\Ch^{1,0}"] & \cdots \\
C^{0,0}:=\mathcal A^{0,0}(\End E) 
\arrow[r, "{\operatorname{ad}( \theta^{\star_{h_0}})}"] 
\arrow[u, "\Ch^{1,0}"] 
& C^{0,1}:=\mathcal A^{0,0}(\End E \otimes \Omega_{X_0}^{0,1}) 
\arrow[r, "{\operatorname{ad}( \theta^{\star_{h_0}})}"] 
\arrow[u, "\Ch^{1,0}"] 
& C^{0,2}:=\mathcal A^{0,0}(\End E \otimes \Omega_{X_0}^{0,2}) 
\arrow[r] \arrow[u, "\Ch^{1,0}"] & \cdots
\end{tikzcd}
\]
which gives the following truncated complex 
\[C^{0,0} \overset{d^{0c}}{\longrightarrow} C^{1,0} \oplus C^{0,1} \overset{d^{1c}}{\longrightarrow} C^{2,0}\oplus C^{1,1}\oplus C^{0,2}{\longrightarrow} \cdots\] 
where 
\begin{align*}
 d^{0c}(g) &= (\Ch^{1,0} g, [\theta^{\star_{h_0}},g])\in C^{1,0}\oplus C^{0,1} \quad \text{for } g \in C^{0,0},\\
 d^{1c}(\varphi, \psi) &= (\Ch^{1,0}\varphi,\Ch^{1,0} \psi +[\theta^{\star_{h_0}},\varphi],[\theta^\st,\psi])\in C^{2,0}\oplus C^{1,1}\oplus C^{0,2} \quad \text{for } (\varphi, \psi) \in C^{1,0} \oplus C^{0,1}.
\end{align*}
Hence
$$\barHb=\frac{\mathrm{Ker\ }d^{1c}}{\mathrm{Im\ }d^{0c}}.$$
We will see in the next subsection, this $\barHb$ is the desired obstruction group.

\subsubsection{Existence of obstruction classes}
Given a real analytic deformation of $(\E,\bar\p,\theta)$ on $X_0$ to $X/S$, let $(\E,\bar\p_t,\theta_t)$ be the truncation to $X_n$ via a holomorphic $n$-jet $c:\operatorname{Spec}A_n\to S,\ c(0)=0$.
\begin{proposition}\label{prop_existence_ob}Suppose $(\E,\bar\p_t,\theta_t)$ is modulo-$(t,\bar t^{k})$-holomorphic with $k<n$ (defined in Definition~\ref{def_modulo_hol}). Then there exists a class $\ob_k \in \barHb$, such that $\ob_k$ vanishes if and only if $(\E,\bar\p_t,\theta_t)$  is modulo-$(t,\bar t^{k+1})$-holomorphic. In particular, $\ob_1,\ob_2,\cdots,\ob_n$ all vanish one-by-one if and only if $(\E,\bar\p_t,\theta_t)$ is modulo-$(t)$-holomorphic.
 
\end{proposition}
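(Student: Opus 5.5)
Assume $(\E,\bar\p_t,\theta_t)$ is modulo-$(t,\bar t^{k})$-holomorphic. By Remark~(2) after Definition~\ref{def_modulo_hol} we choose a gauge $\mathscr U\equiv\id \pmod{(t,\bar t)}$ realizing \eqref{eq_hol_b} at level $k-1$, and replace the deformation by $\mathscr U^{-1}\circ\bar\p_t\circ\mathscr U$, $\mathscr U^{-1}\circ\theta_t\circ\mathscr U$. Working modulo $(t,\bar t^{k+1})$ and using Lemma~\ref{lem_expanding}, the transformed deformation then reads
\[
\bar\p-\sum_{i\le k}\bar t^i\bar\eta_i\circ\bar\p+\bar t^k\psi'_k,\qquad \theta+\bar t^k\varphi'_k ,
\]
for some $\varphi'_k\in\A^{1,0}(\End\E)$ and $\psi'_k\in\A^{0,1}(\End\E)$. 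Here $\varphi'_k$ and $\psi'_k$ are the lowest-order obstructions to holomorphicity; in the notation of \eqref{hol} they are the left-hand sides at $m=k$ with $u_k=0$. We define
\[
\ob_k:=\bigl[(\overline{\psi'_k}^{\,?},\overline{\varphi'_k}^{\,?})\bigr],
\]
or more intrinsically, the class of the pair $(\psi'_k,\varphi'_k)\in C^{1,0}\oplus C^{0,1}$. Note that $\psi'_k$ is of type $(0,1)$ on $X_0$, hence of type $(1,0)$ on $\overline{X_0}$, and $\varphi'_k$ is of type $(1,0)$ on $X_0$, hence of type $(0,1)$ on $\overline{X_0}$. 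The identification with $\Omega_{\overline{X_0}}$ is exactly why the group is $\barHb$: the correction terms are coefficients of $\bar t^k$, so they are antiholomorphic data.

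\textbf{Cocycle condition.} Expand the integrability conditions \eqref{eq_compatibility} modulo $(t,\bar t^{k+1})$ and extract the coefficient of $\bar t^k$. The lower-order terms are holomorphic in the sense of \eqref{eq_piCh}, so they cancel against the integrability of $\pi''_\eta\Ch$ and of $\theta$ over $X_n$ restricted to the $\bar t$-direction. What remains should be
\[
\bar\p\psi'_k=0,\quad \bar\p\varphi'_k+[\theta,\psi'_k]=0,\quad [\theta,\varphi'_k]=0 .
\]
These equations say that $(\psi'_k,\varphi'_k)$ is a cocycle for the deformation complex of $(E,\theta)$. As written, that complex uses $\bar\p$ and $\operatorname{ad}\theta$, whereas $\barHb$ uses $\Ch^{1,0}$ and $\operatorname{ad}\theta^{\st}$. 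The two are matched by $h_0$-adjoint/complex conjugation $\omega\mapsto\omega^{\st}$, which turns $\bar\p$ into $\Ch^{1,0}$ and $\operatorname{ad}\theta$ into $\operatorname{ad}\theta^{\st}$. So we set $\ob_k:=[((\varphi'_k)^{\st},(\psi'_k)^{\st})]$, up to signs. With $\Ch$ as the Chern connection of the harmonic metric, this conjugation intertwines $d^{1}$ with $d^{1c}$, so $d^{1c}\ob_k=0$.

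\textbf{Well-definedness and the vanishing criterion.} Next we analyse how the gauge can be changed while preserving level $k-1$ holomorphicity. The freedom is $\mathscr U\mapsto\mathscr U(\id+\bar t^k u+\text{holomorphic gauge}+\cdots)$. A holomorphic gauge only changes the holomorphic part. The new term $\bar t^k u$ changes $(\psi'_k,\varphi'_k)$ by $(\bar\p u,[\theta,u])$, which is precisely the $m=k$ equation of \eqref{hol}. The conjugate of this change lies in $\operatorname{Im}d^{0c}$, so the class $\ob_k$ is independent of the choice of gauge. Conversely, suppose $\ob_k=0$. Then $(\psi'_k,\varphi'_k)=(\bar\p u,[\theta,u])$ up to sign, and the gauge $\id-\bar t^k u$ kills the $\bar t^k$ terms, which gives modulo-$(t,\bar t^{k+1})$-holomorphicity. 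If instead the deformation is modulo-$(t,\bar t^{k+1})$-holomorphic, then comparing the two gauges shows that the quotient gauge has the form above, so $\ob_k$ is a coboundary. The final statement follows by induction on $k$, starting from Remark~(1), and using Lemma~\ref{lem_gauge} for the case $k=n$.

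\textbf{Main obstacle.} The main difficulty is the cocycle computation. One has to check carefully that the terms of mixed order, in particular the $\bar\eta_j$-contractions and the cross terms $\psi_j u_{k-j}$, cancel at order $\bar t^k$ using the lower-order equations. This cancellation is also what makes the result land in the complex $(\End\E,\operatorname{ad}\theta)$ with the $\bar\eta\circ\bar\p$ twist removed. I would first treat it via the DGLA formalism: after the gauge, the $\bar t$-part is a Maurer–Cartan element $\bar\eta\circ\bar\p + (\text{error})$, and the error's leading term is automatically a cocycle of the linearized differential. I would fall back to a direct coefficient expansion only where the contraction terms interfere.
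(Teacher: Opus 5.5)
Your argument is correct, and it takes a genuinely different route from the paper.

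\textbf{The paper's route.} The paper argues on the moduli side. It complexifies the real analytic jet $\sigma$ to a holomorphic map on $\operatorname{Spec}(A_n\otimes\C[\bar t]/(\bar t^{n+1}))$ and sets $t=0$. Modulo-$(t,\bar t^{k})$-holomorphicity makes the $(k-1)$-st $\bar t$-jet constant, so the $k$-th jet lies in the kernel of jet truncation, i.e.\ in a tangent space identified with $T^{0,1}_o M_{\Dol}(X/S)$. Holomorphicity of $\pi_{\Dol}\circ\sigma$ then puts it in the fibre tangent space $T^{0,1}_oM_{\Dol}(X_0)\cong\barHb$. This explains conceptually why the obstruction group is the $(0,1)$-tangent space of the fibre moduli. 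However, it relies on a dictionary between gauge classes of deformations and jets into the moduli space, and the equivalence with the next level of holomorphicity is read off from that dictionary rather than proved.

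\textbf{Your route.} You work directly with Definition~\ref{def_modulo_hol}. The lowest surviving $\bar t^k$-coefficient of the gauged deformation is a cocycle, gauge changes move it by coboundaries, and both directions of the criterion come from composing gauges. This is explicit and self-contained. The price is that your class naturally lives in $\mathbb H^1(X_0,(\End E,\operatorname{ad}(\theta)))$ and must then be transported to $\barHb$.

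\textbf{Remark on your ``main obstacle''.} It disappears, and the reason plays the role of the paper's use of $\pi_{\Dol}\circ\sigma$. Modulo $t$ one has $P'_\eta\equiv\id$ and, exactly, $\pi''_\eta\Ch=P''_\eta\circ\bar\p$ (see \eqref{eq_Peta}, \eqref{eq_piCh}). So in the $P''_\eta$-frame, the reduction mod $t$ of the gauged deformation is just a deformation of $(E,\theta)$ on the \emph{fixed} $X_0$ over $\C[\bar t]/(\bar t^{n+1})$. The cocycle identities are then immediate. The $\bar\eta_j(\bar\p u_{m-j})$-terms in \eqref{hol} are only $\bar\eta$-contractions of the lower-order equations. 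For well-definedness you should also allow lower-order gauge terms $\bar t^jw_j$ with $j<k$. These satisfy $\bar\p w_j=[\theta,w_j]=0$, so they do not affect the $\bar t^k$-coefficient.

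\textbf{Remark on the ordering and normalization of the pair.} The pair should be ordered $((\psi'_k)^{\st},(\varphi'_k)^{\st})\in C^{1,0}\oplus C^{0,1}$. Since $\omega\mapsto\omega^{\st}$ is conjugate-linear, your $\ob_1$ is not the paper's $[(\bar\eta_1(\theta^\st),0)]$ but its image under a conjugate-linear automorphism of $\barHb$. Concretely, at $k=1$ your cocycle $(\psi_1,\varphi_1)$ differs by the coboundary of $\cc\bg_1$ from $(-\cc[\theta^\st,\bg_1],\,\bar\eta_1(\theta^\st)-\cc\Ch^{1,0}\bg_1)$. The same two forms, read as $(\bar\eta_1(\theta^\st)-\cc\Ch^{1,0}\bg_1,\,-\cc[\theta^\st,\bg_1])\in C^{1,0}\oplus C^{0,1}$, differ from $(\bar\eta_1(\theta^\st),0)$ by $d^{0c}(\cc\bg_1)$. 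So the normalization of Proposition~\ref{prop_firstob} corresponds to the $\C$-linear identification through representatives closed in both complexes. For the existence statement proved here, either choice is fine.
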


\begin{remark}\begin{enumerate}
\item Modulo-$(t)$-holomorphic is strictly weaker than the holomorphicity, implying that further obstructions must exist to achieve full holomorphicity.
\item This existence proposition does not help us express the obstruction class explicitly, even for the obstruction class of the isomonodromic deformation. Hence, in the following subsections we will not use this proposition. Instead, we will only need to use the criterion Lemma~\ref{lem_gauge}, which also detects the obstructions of modulo-$(t)$-holomorphicity.
\end{enumerate}
\end{remark}
\begin{proof}

By definition, 
\((\mathcal E,\bar\partial_t,\theta_t)\) is given by a map
\[
\sigma:\operatorname{Spec}\mathbb C[t,\bar t]/(t,\bar t)^{n+1}
\longrightarrow M_{\mathrm{Dol}}(X/S)=:\mathcal{M}
\]
with $
\sigma(0)=(\mathcal E,\bar\partial,\theta)$ on \(X_0\). 

By the complexification argument in section~\ref{sec_joint_realanalytic}, $\sigma$ uniquely extends to a morphism $g$ such that the following diagram commutes
\[
\begin{tikzcd}
\operatorname{Spec}\mathbb C[t,\bar t]/(t,\bar t)^{n+1} \arrow[r,"\sigma"] \arrow[d,hook] & \mathcal M \arrow[d,hook]\\
\operatorname{Spec}(\mathbb C[t]/(t^{n+1})\otimes \C[\bar t]/(\bar t^{n+1}))\arrow[r, "g"] & M_{\Dol}((X\times\overline X)/(S\times \overline S)) 
\end{tikzcd}
\]

Define the jet spaces of maps sending  \(0\) to $o:=
(X_0,\mathcal E,\bar\partial,\theta)\in \mathcal M$:
\[
\operatorname{Hom}\left(
\operatorname{Spec}\mathbb C[t,\bar t]/(t,\bar t)^{k+1},
\mathcal M
\right)
:= \mathbb{C}J_k\mathcal M,
\]
and
\[
\operatorname{Hom}\left(
\operatorname{Spec}\mathbb C[\bar t]/(\bar t^{k+1}),
M_{\Dol}(\overline X/\overline S)
\right)
:= \overline{ J_k\mathcal M} .
\]

Then $g$ induces a $k$-jet map $\operatorname{Spec}(\C[\bar t]/(\bar t^{k+1}))\to M_{\Dol}(\overline X/\overline S) $ by base changing via $$0\times\operatorname{Spec}(\C[\bar t]/(\bar t^{k+1}))\hookrightarrow  0\times \operatorname{Spec}(\C[\bar t]/(\bar t^{n+1}))\hookrightarrow\operatorname{Spec}(\mathbb C[t]/(t^{n+1})\otimes \C[\bar t]/(\bar t^{n+1})).$$
We denote this $k$-jet map by $p_k([\sigma])\in \overline{ J_k\mathcal M}$.

For the jet \([\sigma]\), we have the following commutative diagram:
\[
\begin{tikzcd}
 & & \mathbb{C}J_k\mathcal M
 \arrow[r, "\Pi_{k-1}^k"]
 \arrow[d, "p_k"']
 & \mathbb{C}J_{k-1}\mathcal M
 \arrow[d, "p_{k-1}"]
 & \\
0 \arrow[r]
& T^{1,0}_{o}M_{\Dol}(\overline X/\overline S) \arrow[r]
& \overline{ J_k\mathcal M}
 \arrow[r, "\pi_{k-1}^k"]
& \overline{ J_{k-1}\mathcal M}
 \arrow[r]
& 0 
\end{tikzcd}
\]

By modulo-$(t,\bar t^{k})$-holomorphicity, \([\sigma]\) maps to zero under
\[
p_{k-1}\circ \Pi_{k-1}^k .
\]
Thus \(p_k([\sigma])\) is given by an element in \(T^{1,0}_{o}M_{\Dol}(\overline X/\overline S)\). Let $\mathbb CT_o\mathcal M=T^{1,0}_{o}\mathcal M\oplus T^{0,1}_{o}\mathcal M$ be the complexification of the real Zariski tangent space of a real analytic variety $\mathcal M$ at $o$. One can prove directly there is natural isomorphism $T^{1,0}_{o}M_{\Dol}(\overline X/\overline S)\cong T^{0,1}_o M_{\Dol}(X/S)$ by using the argument in Proposition~\ref{prop:KS-real-analytic-manifold}. Thus we obtain \(p_k([\sigma])\) is given by an element in $T^{0,1}_o \mathcal M$.
\bigskip

Now we prove that \(p_k([\sigma])\) is given by an element in
\(T^{0,1}_oM_{\mathrm{Dol}}(X_0)\). 
By definition, the composition
\[
\operatorname{Spec}\mathbb C[t,\bar t]/(t,\bar t)^{n+1}
\xrightarrow{\ \sigma\ }
\mathcal M
\xrightarrow{\ \pi_{\mathrm{Dol}}\ }
S
\]
is independent of $\bar t$ and is equal to the holomorphic \(n\)-germ $c$ of \(S\) through $0$. Therefore 
\[
p_k\bigl([\pi_{\mathrm{Dol}}\circ \sigma]\bigr)=0
\qquad
\text{in } \quad\overline{ J_k S}
\]
by holomorphicity, and we obtain
\[
p_k\bigl([\pi_{\mathrm{Dol}}\circ \sigma]\bigr)
=
(\pi_{\mathrm{Dol},*})\bigl(p_k([\sigma])\bigr)
=0 .
\]
Therefore $
p_k([\sigma])\in \ker(\pi_{\mathrm{Dol},*}),$ 
and hence $
p_k([\sigma])\in T^{0,1}_oM_{\mathrm{Dol}}(X_0),$ which gives \[\ob_k:=p_k([\sigma])\in T^{0,1}_oM_{\mathrm{Dol}}(X_0)\cong \barHb.\qedhere \]
\end{proof}

We give the following Lemma about ``harmonicity'', which will be repeatedly used later.
\begin{lemma} \label{vanlem}
Let $(\E, \bar\partial, \theta)$ be a stable Higgs bundle. In particular, it admits a harmonic metric $h_0$ and thus it is a harmonic bundle. 
 Suppose  $g\in \A^0(\End \E)$ satisfies either of the following two equations:
\begin{align}\label{exact-harm}
\Ch^{1,0}\bar\p g+[\theta^\st,[\theta,g]]=0,
\end{align}
or 
\begin{align}\label{exact_harm2}
\bar\p\Ch^{1,0} g+[\theta,[\theta^\st,g]]=0,
\end{align}
Then $g=c\cdot\id$ for some $c\in\C.$
\end{lemma}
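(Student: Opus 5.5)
We integrate the equation against $g$ using the $L^2$ inner product induced by $h_0$ and a Kähler form $\omega$ on $X_0$, and reduce to the statement that the Higgs bundle has only scalar endomorphisms. We give the argument for \eqref{exact-harm}; \eqref{exact_harm2} is handled identically with the roles of $(\Ch^{1,0},\theta)$ and $(\bar\p,\theta^\st)$ exchanged, or by applying $\star_{h_0}$ to reduce to the first case.

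First, take the trace against $\omega^{d-1}$, with $d=\dim X_0$, and pair with $g$. Since $h_0$ is a harmonic metric, $\Ch$ is unitary and $(\theta)^\st$ is the adjoint of $\theta$. Integrating by parts on the compact manifold $X_0$, the quantity $\int_{X_0}\operatorname{tr}\bigl((\Ch^{1,0}\bar\p g+[\theta^\st,[\theta,g]])\,g^\st\bigr)\wedge\omega^{d-1}$ becomes, up to a fixed nonzero constant and sign, the sum $\|\bar\p g\|^2_{L^2}+\|[\theta,g]\|^2_{L^2}$. Concretely, $\sqrt{-1}\Lambda\Ch^{1,0}\bar\p$ is $\bar\p^*\bar\p$ (Kähler identities), and $\sqrt{-1}\Lambda[\theta^\st,[\theta,\cdot]]$ is $\operatorname{ad}(\theta)^*\operatorname{ad}(\theta)$. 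Both terms enter with the same sign, because the pairing $(1,0)$–$(0,1)$ in $\operatorname{tr}(\alpha\wedge\alpha^\st)\wedge\omega^{d-1}$ has a definite sign, equal for the two types.

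From the vanishing of this sum we get $\bar\p g=0$ and $[\theta,g]=0$. So $g$ is a holomorphic endomorphism of $E$ commuting with $\theta$, i.e.\ an endomorphism of the Higgs bundle $(E,\theta)$. Because $(E,\theta)$ is stable, any such endomorphism is a scalar $c\cdot\id$ with $c\in\C$: the usual argument takes an eigenvalue $c$ and applies stability to $\ker(g-c)$ and $\operatorname{im}(g-c)$, which are $\theta$-invariant subsheaves of slope zero.

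\textbf{Main obstacle.} The key step is the sign bookkeeping in the integration by parts. The two terms $\Ch^{1,0}\bar\p g$ and $[\theta^\st,[\theta,g]]$ must produce $L^2$ norms of the same sign after contraction with $\Lambda$; otherwise the argument fails. This is a Kähler-identity check: $\sqrt{-1}[\Lambda,\Ch^{1,0}]=-(\bar\p)^*$, and similarly for $\theta$. We would verify it first in the curve case, where $\Lambda$ is just division by the area form. We also need a cross term to vanish, which holds because $\theta$ is parallel-compatible with $h_0$ in the appropriate sense ($\Ch^{1,0}\theta$, $\bar\p\theta^\st$). For \eqref{exact_harm2} the same computation yields $\|\Ch^{1,0}g\|^2+\|[\theta^\st,g]\|^2=0$. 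Taking $\star_{h_0}$ of this then gives $\bar\p g^\st=0$ and $[\theta,g^\st]=0$, so $g^\st$ is scalar, and hence $g$ is scalar as well.
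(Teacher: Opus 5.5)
Your proposal is correct and follows the paper's proof: pair the equation with $g^{\star_{h_0}}$, integrate by parts (Kähler identity) in the $\Ch^{1,0}\bar\partial g$ term, and use a pointwise trace identity in the $[\theta^{\star_{h_0}},[\theta,g]]$ term. This gives $-\|\bar\partial g\|^2-\|[\theta,g]\|^2=0$, so $g$ is an endomorphism of the Higgs bundle and hence scalar by stability; no cross term ever arises, since each summand is handled separately, and your $\star_{h_0}$-reduction of the second equation is valid because $\star_{h_0}$ carries it exactly to the first equation for $g^{\star_{h_0}}$.
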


\begin{remark}
There is a Hodge theoretic interpretation of ``harmonicity'' in Lemma~\ref{vanlem}:
\begin{enumerate}\item  The class $[([\theta,g],\bar\p g)]\in \mathbb H^1(X_0,(\End E,\operatorname{ad}(\theta)))$ is exact. 
\item  The equation \eqref{exact-harm} implies $[([\theta,g],\bar\p g)]\in \mathbb H^1(X_0,(\End E,\operatorname{ad}(\theta)))$ is harmonic.
\end{enumerate}
By Hodge decomposition theory, an exact and harmonic class must be zero.
\end{remark}

\begin{proof}[Proof Lemma~\ref{vanlem}]
Assuming \eqref{exact-harm}, we have
\begin{align*}\sqrt{-1}\int_{X_0}\operatorname{tr}(g^\st\Ch^{1,0}\bar\p g)+\sqrt{-1}\int_{X_0}\operatorname{tr}(g^\st[\theta^\st,[\theta,g]])=0.
\end{align*}
By the K\"ahler identity (see \cite{Simp92} and \cite[Remark 9.2]{Gui}), we have
 $$\sqrt{-1}\int_{X_0}\operatorname{tr}(g^\st\Ch^{1,0}\bar\p g)=-\sqrt{-1}\int_{X_0}\operatorname{tr}((\bar\p g)^\st\bar\p g)\leq 0.$$ One may verify directly that $$\sqrt{-1}\int_{X_0}\operatorname{tr}(g^\st[\theta^\st,[\theta,g]])=-\sqrt{-1}\int_{X_0}\operatorname{tr}([\theta,g]\wedge [\theta,g]^\st)\leq 0.$$
Thus
we have $\bar\p g=0 $ and $[\theta,g]=0.$ This means that $g\in\mathbb H^0(X_0,(\End E,\operatorname{ad}(\theta)))$ and by the stability $g=c\cdot\id$ for some $c\in\C$.

If $g$ satisfies \eqref{exact_harm2}, one can prove $g\in\mathbb H^0(X_0,(\End E,\operatorname{ad}(\theta)))$ similarly.
\end{proof}

\begin{corollary}\label{coro_harm}
Let $(\E, \bar\partial, \theta)$ be a stable Higgs bundle. Suppose $g,f\in \A^0(\End \E)$ satisfies the following system of equations
\begin{align*}\bar\p g=[\theta^{\star_{h_0}},f];\qquad
[\theta,g]=D_{h_0}^{1,0}f.
\end{align*}
Then $g=c_1\cdot\id$ and  $f=c_2\cdot\id$ for some constant $c_1,c_2\in\mathbb C$. 
\end{corollary}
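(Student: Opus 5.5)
We reduce the corollary to Lemma~\ref{vanlem} by eliminating one of the two unknowns. The idea is to apply a suitable first-order operator to each equation so that $f$ cancels and only a second-order equation in $g$ of the form \eqref{exact-harm} or \eqref{exact_harm2} remains. The identities we will use are those valid on the harmonic bundle $(\E,\bar\p,\theta,h_0)$:
\[
\bar\p\theta=0,\qquad \Ch^{1,0}\theta^\st=0,\qquad \theta\wedge\theta=0,\qquad \theta^\st\wedge\theta^\st=0,
\]
together with the flatness of $D=\Ch+\theta+\theta^\st$. In bidegree $(1,1)$, flatness gives
\[
\Ch^{1,0}\bar\p+\bar\p\Ch^{1,0}+[\theta,[\theta^\st,\cdot\,]]+[\theta^\st,[\theta,\cdot\,]]=0
\]
on $\A^0(\End\E)$, with the graded bracket \eqref{eq_Liebracket_2}.

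The first step is to apply $\Ch^{1,0}$ to $\bar\p g=[\theta^\st,f]$. Since $\Ch^{1,0}\theta^\st=0$, the graded Leibniz rule gives
\[
\Ch^{1,0}\bar\p g=\Ch^{1,0}[\theta^\st,f]=-[\theta^\st,\Ch^{1,0}f],
\]
up to the sign convention for the bracket of a $1$-form with a $1$-form. Next we substitute the second equation $\Ch^{1,0}f=[\theta,g]$. This yields
\[
\Ch^{1,0}\bar\p g+[\theta^\st,[\theta,g]]=0,
\]
which is exactly \eqref{exact-harm}. We must check that the resulting sign is $+$. If it instead comes out as $-$, we eliminate $g$ rather than $f$. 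Concretely, we apply $\bar\p$ to $[\theta,g]=\Ch^{1,0}f$, use $\bar\p\theta=0$ and $\bar\p g=[\theta^\st,f]$, and arrive at $\bar\p\Ch^{1,0}f+[\theta,[\theta^\st,f]]=0$ up to sign, which is \eqref{exact_harm2} for $f$.

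Whichever form appears, Lemma~\ref{vanlem} gives that one of $g,f$ is a scalar multiple of $\id$. Suppose $g=c_1\id$. Then $\bar\p g=0$ and $[\theta,g]=0$, so $[\theta^\st,f]=0$ and $\Ch^{1,0}f=0$. Taking conjugates with respect to $h_0$ gives $\bar\p f^\st=0$ and $[\theta,f^\st]=0$, so $f^\st$ lies in $\mathbb H^0(X_0,(\End E,\operatorname{ad}\theta))$. By stability $f^\st$ is scalar, hence so is $f$. The case where $f$ is scalar first is symmetric: it gives $\bar\p g=0$ and $[\theta,g]=0$, and stability then forces $g=c_1\id$.

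The main obstacle is the sign bookkeeping in the graded commutators. We need the composite term to appear with the correct sign, so that the positivity argument inside Lemma~\ref{vanlem} applies. If neither elimination produces the correct sign directly, we will fall back on the integration argument from the proof of Lemma~\ref{vanlem}. Pairing $\Ch^{1,0}\bar\p g$ with $g^\st$ and integrating by parts, using the two equations, should then show that $\int|\bar\p g|^2$ and $\int|[\theta,g]|^2$ are bounded by the negative of a nonnegative quantity, so both vanish.
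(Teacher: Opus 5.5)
Your proof is correct, and your elimination of $f$ is exactly the paper's derivation of \eqref{exact-harm}. You apply $\Ch^{1,0}$ to the first equation, use $\Ch^{1,0}\theta^\st=0$, and substitute the second equation. The sign does come out as $+$, so neither fallback plan is needed.

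The only difference is in handling $f$. The paper applies $[\theta,-]$ and $\bar\p$ to the two equations to obtain \eqref{exact_harm2} for $f$, then invokes the second case of Lemma~\ref{vanlem}. You instead substitute $g=c_1\id$ back into the system and take $h_0$-adjoints, using $(\Ch^{1,0}f)^\st=\bar\p f^\st$ and $[\theta^\st,f]^\st=-[\theta,f^\st]$. Simplicity of the stable Higgs bundle then finishes the argument. This is equally valid and slightly more economical, since it needs only the first case of Lemma~\ref{vanlem}.
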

\begin{proof}Applying $\Ch^{1,0}(-)$ to the first equation and $[\theta^\st,-]$ to the second and summing them, we have
\begin{align*}\Ch^{1,0}\bar\p g+[\theta^\st,[\theta,g]]=0,
\end{align*}
which implies $g=c_1\cdot\id$ by the ``harmonicity'' in Lemma~\ref{vanlem}. Applying $[\theta,-]$ to the first equation and $\bar\p(-)$ to the second and summing them, we have
\begin{align*}\bar\p\Ch^{1,0} f+[\theta,[\theta^\st,f]]=0,
\end{align*}
which implies $f=c_2\cdot\id$ by the ``harmonicity'' in Lemma~\ref{vanlem}.
\end{proof}

\subsection{Obstruction classes of modulo-\texorpdfstring{$(t)$}{(t)}-holomorphicity of the isomonodromic deformation of a graded Higgs bundle} \label{sec_ob_mod_t_hol_isom_grad}

Let $(\E,\bar\p,\theta)$ be a graded stable Higgs bundle on $X_0$ with weight $w$. In this section we always let $(\E,\bar\p_t,\theta_t)$ be the isomonodromic deformation of $(\E,\bar\p,\theta)$ on $X_0$ to $X_n$, which is a real analytic deformation as in Definition~\ref{def_real_ana_deform}. We try to investigate the obstruction classes of modulo-$(t)$-holomorphicity defined in Proposition~\ref{prop_existence_ob} and give some necessary conditions of the vanishing of those obstruction classes.

Firstly, we have canonical decomposition
$\End \E=\bigoplus\limits_{l=-w}^w(\End\E)^{i,-i}$,
where $(\End \E)^{i,-i}:=\{f\in \End \E\mid f(\E^{p,k-p})\subset \E^{p+i,k-p-i}\}$.
We extend the grading by setting $(\End \E)^{i,-i}=0$ for all $|i|>w$, so that we have the direct sum decomposition
\begin{align*}
\End \E = \bigoplus_{i\in \mathbb{Z}} (\End\E)^{i,-i}.
\end{align*}
This grading naturally extends to differential forms: any $\End\E$-valued $l$-form $f$ decomposes uniquely as $f = \bigoplus_i f^{i,-i}$ with $f^{i,-i} \in \mathcal{A}^l((\End\E)^{i-l,-i+l})$. We refer to $f^{i,-i}$ simply as the \textbf{$(i,-i)$-grading piece} of $f$. In particular,
\[\theta = \theta^{0,0},\quad \theta^\st=(\theta^\st)^{2,-2},\quad \bar\p (f^{i,-i})=(\bar\p f)^{i+1,-i-1},\quad \Ch^{1,0} (f^{i,-i})=(\Ch^{1,0}  f)^{i+1,-i-1},\]
and $\eta(f^{i,-i})=(\eta(f))^{i,-i}$ for any $f\in \A^{1,0}(\End\E),\ \eta\in \A^{0,1}(T_{X_0})$.
\subsubsection{First order obstruction class}
To study the first order holomorphicity, let $X_1:=X_n\times_{\operatorname{Spec}A_n} \operatorname{Spec}A_1$ and we may pull-back $(\E,\bar\p_t,\theta_t)$ to $X_1$. In this case, modulo-$(t,\bar t^2)$-holomorphicity on $X_n$ is equivalent to the holomorphicity on $X_1$ (after base changing). 
\begin{proposition}\label{prop_firstob}
Let $(\E,\bar\p_t,\theta_t)$ be the real analytic deformation in Definition~\ref{def_real_ana_deform} of $(\E,\bar\p,\theta)$ on $X_0$ along $X_n$. If it is isomonodromic, then the obstruction class $\ob_1$ of modulo-$(t,\bar t^2)$-holomorphicity is 
\begin{align*}\ob_1=[(\bar\eta_1(\theta^\st),0)]\in\barHb.
\end{align*}
\end{proposition}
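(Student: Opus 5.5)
The plan is to compute $\ob_1$ directly from the $m=1$ case of the gauge criterion, Lemma~\ref{lem_gauge}, combined with the $i=1$ case of the Taylor formulas of Proposition~\ref{prop_Taylor_Exp_DolHig}. I will not use the abstract construction in the proof of Proposition~\ref{prop_existence_ob} beyond the fact that $\ob_1$ is the $\bar t$-linear part of the deformation, read as a class in $\barHb$. For $i=1$ all sums over $|I_2|=1$ or $|I_3|=1$ are empty, so
\begin{align*}
\varphi_1&=\bar\eta_1(\theta^\st)+\cc[\theta,\bg_1]-\cc\Ch^{1,0}\bg_1,\\
\psi_1&=-\cc[\theta^\st,\bg_1]+\cc\bar\p\bg_1 .
\end{align*}
Write $d^{0c}(g)=(\Ch^{1,0}g,[\theta^\st,g])$ for the differential of the conjugate complex, and $d^0(u)=([\theta,u],\bar\p u)$ for the differential of the Dolbeault complex of $(E,\theta)$ on $X_0$. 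The formulas then split as
\[
(\varphi_1,\psi_1)=(\bar\eta_1(\theta^\st),0)-\cc\, d^{0c}(\bg_1)+\cc\, d^{0}(\bg_1).
\]

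First I check that $(\bar\eta_1(\theta^\st),0)$ is a $d^{1c}$-cocycle. We need $\Ch^{1,0}\bar\eta_1(\theta^\st)=0$ and $[\theta^\st,\bar\eta_1(\theta^\st)]=0$. These are the conjugates of $\bar\p(\eta_1(\theta))=0$, which follows from $\bar\p\eta_1=0$ and $\bar\p\theta=0$, and of $[\theta,\eta_1(\theta)]=0$, which follows from $\theta\wedge\theta=0$. Both use the $h_0$-adjoint relations $(\bar\p\,\cdot)^\st=\Ch^{1,0}(\cdot^\st)$ and $[\theta,\cdot]^\st=\pm[\theta^\st,\cdot^\st]$.

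Next I substitute $u_1=-\cc\bg_1+v$ into the $m=1$ gauge equations $\varphi_1+[\theta,u_1]=0$ and $\psi_1+\bar\p u_1=0$. The $d^0(\bg_1)$ part is absorbed, and the system becomes
\[
(\bar\eta_1(\theta^\st),0)-\cc\, d^{0c}(\bg_1)+d^0(v)=0 .
\]
So $\ob_1$ vanishes if and only if this system has a solution $v$. The $\bar t$-tangent vector of the truncated section $\sigma$ in $T^{0,1}_o M_{\Dol}(X_0)$ is represented by $(\varphi_1,\psi_1)$ modulo gauge, that is, modulo $d^0$. Under the identification $T^{0,1}_o\cong\barHb$ used in Proposition~\ref{prop_existence_ob}, which is given by harmonic representatives, the part $\cc d^0(\bg_1)$ is gauge and $-\cc d^{0c}(\bg_1)$ is $d^{0c}$-exact. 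This gives $\ob_1=[(\bar\eta_1(\theta^\st),0)]$.

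The main obstacle is justifying this identification rigorously: a $1$-cochain has to be considered modulo the images of both $d^0$ and $d^{0c}$. My plan is to use Simpson's harmonic theory. The Laplacians of $D''=\bar\p+\theta$ and of $D'_{h_0}=\Ch^{1,0}+\theta^\st$ agree up to a factor by the K\"ahler identities, so both hypercohomologies are represented by the same harmonic $1$-forms. A $1$-form orthogonal to the harmonic space then decomposes into a $D''$-exact piece, a $D'$-exact piece, and $D''D'$-type pieces; the $\partial\bar\partial$-lemma for harmonic bundles collapses the last type. Consequently, a $d^{0c}$-cocycle $\omega$ is $d^{0c}$-exact if and only if its harmonic projection vanishes, if and only if $\omega\in\operatorname{Im}d^0+\operatorname{Im}d^{0c}$.

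It remains to check that the resulting decomposition is compatible with the single gauge parameter $v$ in the displayed system; in particular, the extra $d^{0c}$-term coming from $\bg_1$ must not impose a second independent condition. For this I would use Corollary~\ref{coro_harm}: it forces any pair $(g,f)$ with $d^0 g=d^{0c}f$ to be scalar, so $\operatorname{Im}d^0\cap\operatorname{Im}d^{0c}$ contributes nothing. That is what makes ``$\ob_1=0$'' equivalent to solvability of the $m=1$ system in Lemma~\ref{lem_gauge}.
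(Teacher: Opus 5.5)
Your decomposition $(\varphi_1,\psi_1)=\omega-\cc\,d^{0c}(\bg_1)+\cc\,d^{0}(\bg_1)$ with $\omega:=(\bar\eta_1(\theta^\st),0)$ is correct. So are the cocycle check for $\omega$ and the reduction of the $m=1$ gauge equations to $\omega-\cc\,d^{0c}(\bg_1)+d^0(v)=0$. The direction ``solvable $\Rightarrow[\omega]=0$'' also works: applying the $(1,1)$-part of $d^{1c}$ to the system gives $\Ch^{1,0}\bar\p v+[\theta^\st,[\theta,v]]=0$, so $d^0v=0$ by Lemma~\ref{vanlem}. This is the paper's Step~1.

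The converse, however, is not established. Suppose $\omega=d^{0c}(f_1)$. Then the system reads $d^0v=d^{0c}(\cc\bg_1-f_1)$. Because $\operatorname{Im}d^0\cap\operatorname{Im}d^{0c}=0$ (Corollary~\ref{coro_harm}), both sides must vanish. So you need $\cc\bg_1$ itself to be a $d^{0c}$-primitive of $\omega$, not merely some $f_1$. The $\bg_1$-term therefore does impose a second condition; your last paragraph draws the opposite conclusion from Corollary~\ref{coro_harm}.

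No step of your argument uses an equation satisfied by $g_1$. It would run verbatim with $\bg_1$ replaced by an arbitrary endomorphism, and then the implication is false. For example, take $\eta_1=0$ and $\bg_1$ non-scalar: then $\omega=0$, yet the system forces $d^{0c}\bg_1=0$.

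The harmonic-representative identification does not rescue this. The harmonic projection of $\varphi_1+\psi_1$ equals that of $\omega$ for every $g_1$. Its vanishing forces $(\varphi_1,\psi_1)\in\operatorname{Im}d^0$ only when $(\varphi_1,\psi_1)$ is known to be closed in the Dolbeault complex of $(E,\theta)$. Your sentence ``the $\bar t$-tangent vector is represented by $(\varphi_1,\psi_1)$ modulo $d^0$'' silently assumes exactly this closedness.

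That closedness is the missing analytic input, and the paper supplies it in Step~2. The $\bar t$-linear part of the integrability condition $\bar\p_t\theta_t=0$, i.e.\ first-order harmonicity of $h_t$, gives (\cite[Proposition 4.4]{HSZ})
\[
\bar\p\Ch^{1,0}\bg_1+[\theta,[\theta^\st,\bg_1]]=2\,\bar\p\bigl(\bar\eta_1(\theta^\st)\bigr).
\]
This is precisely the $(1,1)$-component of the closedness of $(\varphi_1,\psi_1)$. Combined with $\bar\eta_1(\theta^\st)=\Ch^{1,0}f_1$ and $[\theta^\st,f_1]=0$, it yields $\bar\p\Ch^{1,0}(\bg_1-2f_1)+[\theta,[\theta^\st,\bg_1-2f_1]]=0$. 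Hence $\bg_1=2f_1+c\cdot\id$ by Lemma~\ref{vanlem}, and $v=0$, i.e.\ $u_1=-\cc\bg_1$, solves the system.

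If you add this input, together with the $(2,0)$- and $(0,2)$-components coming from $\theta_t\wedge\theta_t=0$ and $\bar\p_t^2=0$, your harmonic-projection route also closes. It then identifies the two classes themselves, which is slightly more than the equivalence of vanishing that the paper proves.
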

\begin{proof}\textbf{Step1:} modulo-$(t,\bar t^2)$-holomorphicity implies the vanishing of the class $[(\bar\eta_1(\theta^\st),0)]$. By equations \eqref{hol} for gauge transformation, the equations \eqref{varphi} and \eqref{psi} of $\varphi_1$ and $\psi_1$, we have
\begin{equation} \label{g1}
\begin{aligned}
\cc\bar\p \bg_1-\cc[\theta^{\star_{h_0}},\bg_1]+\bar\p u_1&=0;\\
\bar \eta_1(\theta^{\star_{h_0}})+\cc[\theta,\bg_1]-\cc D_{h_0}^{1,0} \bg_1+[\theta,u_1]&=0.
\end{aligned}
\end{equation}
Hence
\begin{align*}
\Ch^{1,0}(\bar\p (\cc\bg_1+u_1)-\cc[\theta^{\star_{h_0}},\bg_1])=0;\qquad[\theta^\st,[\theta,\cc\bg_1+u_1]+\bar \eta_1(\theta^{\star_{h_0}})-\cc D_{h_0}^{1,0} \bg_1]=0.
\end{align*}
Note that $[\theta^\st,\bar\eta_1(\theta^\st)]=0$ and $\Ch^{1,0}([\theta^\st,\bg_1])=-[\theta^\st,\Ch^{1,0}\bg_1]$, we have 
\begin{align*}
&\Ch^{1,0}(\bar\p (\cc\bg_1+u_1)-\cc[\theta^{\star_{h_0}},\bg_1])+
[\theta^\st,[\theta,\cc\bg +u_1]+\bar \eta_1(\theta^{\star_{h_0}})-\cc D_{h_0}^{1,0} \bg_1]\\
=&\Ch^{1,0}\bar\p (\cc\bg_1+u_1)+
[\theta^\st,[\theta,\cc\bg +u_1]]=0.
\end{align*}
By the ``harmonicity'' in Lemma~\ref{vanlem}, we have $\bar\p (\cc\bg_1+u_1)=0$ and $[\theta,\cc\bg_1 +u_1]=0$. This implies $\cc\bg_1+u_1\in \mathbb H^0(X_0,(\End E,\operatorname{ad}(\theta)))$. The stability gives $u_1=-\cc\bg_1$ (up to adding a term $c\cdot\id$ with $c\in\C$, which we may ignore since it does not affect \eqref{g1}). Thus \eqref{g1} reduces to
\begin{align}\label{eq_ob1exact}0=\cc[\theta^{\star_{h_0}},\bg_1];\qquad\ 
\bar \eta_1(\theta^{\star_{h_0}})=\cc D_{h_0}^{1,0} \bg_1,\end{align} 
i.e. $\ob_1\in\barHb$ vanishes. 

\textbf{Step2:} the vanishing of the class $[(\bar\eta_1(\theta^\st),0)]$ implies modulo-$(t,\bar t^2)$-holomorphicity. We aim to prove the solvability of \eqref{g1} on $u_1$. Since $[(\bar\eta_1(\theta^\st),0)]$ vanishes, there exists $f_1\in \A^0(\End\E)$ such that 
\begin{align}\label{eq_ob1vanish}0=[\theta^{\star_{h_0}},f_1];\qquad\ 
\bar \eta_1(\theta^{\star_{h_0}})= D_{h_0}^{1,0} f_1.
\end{align}
By \cite[Proposition 4.4 (4.4)]{HSZ}, the condition  $\bar\p_t\theta_t\equiv 0\pmod{(t,\bar t)^2}$ implies that 
\begin{align*}\bar\p\Ch^{1,0}\bg_1=2\bar\p(\bar\eta(\theta^\st))-[\theta,[\theta^\st,\bg_1]].
\end{align*}
This together with the assumption \eqref{eq_ob1vanish} gives 
\begin{align*}\bar\p\Ch^{1,0}(\bg_1-2f_1)+[\theta,[\theta^\st,\bg_1-2f_1]]=0,
\end{align*}
which implies $\bg_1=2f_1+c\cdot\id$ for some $c\in\mathbb C$ by the ``harmonicity'' in Lemma~\ref{vanlem}.
Substituting this and \eqref{eq_ob1vanish} into \eqref{g1}, we have
\begin{align*}
\bar\p (\cc\bg_1+u_1)=0;\qquad
[\theta,\cc\bg_1+u_1]=0.
\end{align*}
Thus $u_1=-\cc\bg_1$ is the solution of \eqref{g1}.
\end{proof}

\begin{corollary}\label{coro_g1}If $\ob_1$ in Proposition~\ref{prop_firstob} vanishes, we have
\begin{align}\label{eq_g1u1}
 \bg_1\in \A^0((\End \E)^{1,-1})\qquad\text{and}\qquad \ u_1=-\cc\bg_1.\end{align}
\end{corollary}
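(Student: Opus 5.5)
The corollary has two claims: the formula for $u_1$, and the grading of $\bg_1$. I will read off the first from the proof of Proposition~\ref{prop_firstob}, and then use the grading of the initial Higgs bundle to get the second.

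\textbf{The gauge term.} Suppose $\ob_1$ vanishes. By Proposition~\ref{prop_firstob} the deformation is modulo-$(t,\bar t^2)$-holomorphic, so some $u_1$ solves \eqref{g1}. Step~1 of that proof then forces $u_1=-\cc\bg_1$ up to adding $c\cdot\id$, and we discard this constant. Equivalently, Step~2 produces exactly this solution. Substituting $u_1=-\cc\bg_1$ back into \eqref{g1} gives the two reduced identities \eqref{eq_ob1exact}:
\[
[\theta^\st,\bg_1]=0,\qquad \bar\eta_1(\theta^\st)=\cc\Ch^{1,0}\bg_1 .
\]

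\textbf{The grading of $\bg_1$.} Decompose $\bg_1=\sum_i (\bg_1)^{i,-i}$ along $\End\E=\bigoplus_i(\End\E)^{i,-i}$. The Kodaira--Spencer contraction preserves the grading, and $\theta^\st$ lies in grading piece $(2,-2)$ as a form. Hence $\bar\eta_1(\theta^\st)$ is a $(1,0)$-form with values in $(\End\E)^{1,-1}$, i.e.\ it is concentrated in a single grading piece. The operator $\Ch^{1,0}$ raises the form grading by one and preserves the endomorphism degree. So comparing grading pieces in the second identity gives $\Ch^{1,0}(\bg_1)^{i,-i}=0$ for $i\neq 1$. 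The first identity is homogeneous, so $[\theta^\st,(\bg_1)^{i,-i}]=0$ for every $i$. Thus for each $i\ne1$ the piece $f:=(\bg_1)^{i,-i}$ satisfies
\[
\Ch^{1,0}f=0,\qquad [\theta^\st,f]=0 .
\]
Then $\bar\p\Ch^{1,0}f+[\theta,[\theta^\st,f]]=0$, which is \eqref{exact_harm2}. Lemma~\ref{vanlem} gives $f=c\cdot\id$. Since $\id$ lies in $(\End\E)^{0,0}$, this yields $(\bg_1)^{i,-i}=0$ for $i\neq0,1$, while $(\bg_1)^{0,0}$ is a constant multiple of $\id$. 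The scalar $c\cdot\id$ is exactly the ambiguity already discarded in Proposition~\ref{prop_firstob}; it changes neither \eqref{g1} nor the metric up to normalization. Normalizing it to zero gives $\bg_1\in\A^0((\End\E)^{1,-1})$.

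\textbf{Main obstacle.} The delicate point is the bookkeeping of the grading conventions for forms, $f^{i,-i}\in\A^l((\End\E)^{i-l,-i+l})$. One must check that $\bar\eta_1(\theta^\st)$ really lands in the same piece as $\Ch^{1,0}$ applied to $(\End\E)^{1,-1}$. This follows from the stated rules $\theta^\st=(\theta^\st)^{2,-2}$, $\Ch^{1,0}(f^{i,-i})=(\Ch^{1,0}f)^{i+1,-i-1}$ and $\eta(f^{i,-i})=(\eta(f))^{i,-i}$. The only other thing to handle correctly is the constant: it is removed by the same $c\cdot\id$ normalization used in Proposition~\ref{prop_firstob}.
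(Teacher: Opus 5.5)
Your proposal is correct and follows essentially the same route as the paper. You read off $u_1=-\cc\bg_1$ and \eqref{eq_ob1exact} from the proof of Proposition~\ref{prop_firstob}, compare grading pieces to get $\Ch^{1,0}(\bg_1)^{l,-l}=0$ and $[\theta^\st,(\bg_1)^{l,-l}]=0$ for $l\neq 1$, conclude these pieces are scalar, and discard the scalar by normalization. You also make explicit two steps the paper leaves implicit: the appeal to Lemma~\ref{vanlem} via \eqref{exact_harm2}, and the fact that $\id\in(\End\E)^{0,0}$, so only the $(0,0)$-piece can survive as a scalar. Your description of $\bar\eta_1(\theta^\st)$ as a $(1,0)$-form valued in $(\End\E)^{1,-1}$ is the accurate version of the paper's shorthand.
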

\begin{proof}Using \eqref{eq_ob1exact} and the fact $\bar\eta_1(\theta^\st)\in \A^0((\End \E)^{2,-2})$, we have 
\begin{align*}[\theta^\st,(\bg_1)^{l,-l}]=0;\ 
 \Ch^{1,0} (\bg_1)^{l,-l}=0\ \text{ for }l\ne1.
 \end{align*}
 Thus $\bg_1=(\bg_1)^{1,-1}+c\cdot\operatorname{id}$ for some $c\in\C$. Since $c\cdot\operatorname{id}$ in \eqref{metric} can be eliminated by a gauge transformation, we have the first claim in \eqref{eq_g1u1}. The second claim in \eqref{eq_g1u1} is derived in the proof of Proposition~\ref{prop_firstob}.
\end{proof}

\subsubsection{The Zariski tangent space of the non-abelian Noether-Lefschetz locus}

Recall the setup for the non-abelian Noether-Lefschetz locus given in equation \eqref{eq_NLlocus}. Let $$\theta_*:H^1(T_{X_0})\to \mathbb{H}^1\bigl(X_0,(\End E,\operatorname{ad}(\theta))\bigr)$$ be the non-abelian Higgs field defined in \eqref{eq_nabHiggs} and let $\tau_0:T_0^{1,0}S\to H^1(T_{X_0})$ is the Kodaira-Spencer map of the family $X/S$ at $0$. By \cite[Theorem A]{HSZ}, the condition $\theta_*\circ\tau_0(v)=0$ is equivalent to the first-order holomorphicity of the Dolbeault $\sigma_{\Dol}$ along any tangent vector $v\in T_0S$. By virtue of this result, the first-order truncated case of Theorem~\ref{thm_main_tru} reduces to the following theorem. For all higher-order truncated cases, we will employ the same strategy to establish the full statement of Theorem~\ref{thm_main_tru} in section~\ref{sec_holo_gauge_tran} and section~\ref{sec_proof_main}.

\begin{thm}\label{thm_Zar_NL}
The Zariski tangent space of $\mathcal{NL}$ at $0\in S$ is  
\begin{align*}T^{\mathrm{Zar}}_0\mathcal{NL}=\{v\in T^{1,0}_0S\mid  v\in \ker(\theta_*\circ\tau_0)\}.
\end{align*}
\end{thm}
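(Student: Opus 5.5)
The plan is to prove both inclusions by translating membership of $v$ in $T^{\mathrm{Zar}}_0\mathcal{NL}$ into a first-order statement about the truncation $(\E,\bar\p_t,\theta_t)$ on $X_1$. Here $X_1$ is the first-order deformation of $X_0$ determined by $v$, represented by $\eta=t\eta_1$ with $[\eta_1]=\tau_0(v)$. By Simpson, $\mathcal{NL}=\mathcal{GR}$ is the preimage under $\sigma_{\Dol}$ of the $\C^*$-fixed locus of $M_{\Dol}(X/S)$, with its natural analytic structure, and $\sigma_{\Dol}|_{\mathcal{NL}}$ is holomorphic \cite[Theorem 12.1]{Simp97}. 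I will therefore take the following as the working description of the Zariski tangent space: $v\in T^{\mathrm{Zar}}_0\mathcal{NL}$ if and only if the jet $\operatorname{Spec}A_1\to S$ in direction $v$ lifts to a first-order \emph{holomorphic graded} deformation of $(E,\theta)$ over $X_1$ which agrees with the truncated isomonodromic deformation. Making this identification precise (scheme structure of the fixed locus versus first-order graded liftability of the $\sigma_{\Dol}$-jet) is the first step, and I expect it to be the main conceptual obstacle.

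For the inclusion $\subseteq$, take $v\in T^{\mathrm{Zar}}_0\mathcal{NL}$. Then the first-order jet of $\sigma_{\Dol}$ in direction $v$ factors through $\mathcal{NL}$, on which $\sigma_{\Dol}$ is holomorphic. So the truncated deformation on $X_1$ is holomorphic, and in particular modulo-$(t,\bar t^2)$-holomorphic. Proposition~\ref{prop_existence_ob} and Proposition~\ref{prop_firstob} then give $\ob_1=[(\bar\eta_1(\theta^\st),0)]=0$ in $\barHb$. To conclude I identify this class with the complex conjugate of $\theta_*\tau_0(v)=[(-\eta_1(\theta),0)]\in\mathbb H^1(X_0,(\End E,\operatorname{ad}\theta))$. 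The identification comes from conjugation by $h_0$, i.e. the map $\omega\mapsto\omega^\st$, which swaps the Dolbeault resolutions of the two complexes and uses harmonic representatives; this is exactly the content of \cite[Theorem A]{HSZ}. Hence $\theta_*\tau_0(v)=0$.

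For the inclusion $\supseteq$, assume $\theta_*\tau_0(v)=0$. The same identification gives $\ob_1=0$, so Proposition~\ref{prop_firstob} shows the truncation to $X_1$ is holomorphic. Corollary~\ref{coro_g1} gives $\bg_1\in\A^0((\End\E)^{1,-1})$ and $u_1=-\cc\bg_1$, so $g_1$ lies in the $(-1,1)$ piece. Substituting into Proposition~\ref{prop_Taylor_Exp_DolHig}(iii),(iv) with $n=1$ gives
\[
\alpha_1=-\eta_1(\theta)+\cc[\theta,g_1]-\cc\Ch^{1,0}g_1,\qquad \beta_1=-\cc[\theta^\st,g_1]+\cc\bar\p g_1 .
\]
I will sort the terms by grading pieces. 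The goal is to exhibit a gauge transformation $\mathscr U=\id+t\,v_1$, with $v_1$ built from $g_1$ and, if needed, from the solution $f_1$ of \eqref{eq_ob1vanish} that comes from the vanishing of $\theta_*\tau_0(v)$. After this gauge, $\mathscr U^{-1}\circ\bar\p_t\circ\mathscr U$ should preserve the Hodge grading $\E=\bigoplus\E^{p,k-p}$, transported to $X_1$ via $P''_\eta$, and $\mathscr U^{-1}\circ\theta_t\circ\mathscr U$ should shift it by exactly one step. This makes the $\C^*$-action lift to first order, so $v\in T^{\mathrm{Zar}}_0\mathcal{NL}$.

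The off-grading components to be killed are the $(0,0)$-type pieces $\tfrac12\bar\p g_1$ and $-\tfrac12\Ch^{1,0}g_1$, together with the commutator pieces. Here I expect to use \eqref{eq_ob1exact} (for instance $[\theta^\st,\bg_1]=0$, hence $[\theta,g_1]=0$ after taking adjoints) and the integrability conditions \eqref{eq_compatibility} at first order. These should show that the leftover terms are exactly $\bar\p v_1+[\,\cdot\,,v_1]$-coboundaries; the natural first choice is $v_1=\cc g_1$. This gauge bookkeeping is the computational heart of the argument, but it is routine once the grading constraints on $g_1$ are in hand.
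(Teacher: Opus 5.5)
Your plan is sound in outline, and your working description of $T^{\mathrm{Zar}}_0\mathcal{NL}$ as first-order graded liftability along $v$ is also what the paper implicitly uses. For $\supseteq$ you follow the paper: applying $\st$, the vanishing of $\theta_*\tau_0(v)$ is the vanishing of $\ob_1$. Corollary~\ref{coro_g1} then gives $g_1\in\A^0((\End\E)^{-1,1})$ and $u_1=-\cc\bg_1$, and one gauges and checks degrees.

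For $\subseteq$ your route differs from the paper's. The paper quotes \cite[Theorem C]{HSZ}: if $\theta_*\tau_0(v)\neq0$, the deformation along $v$ is not even nilpotent to first order. You instead argue that a Zariski tangent direction gives a holomorphic first-order jet, and then apply Step~1 of Proposition~\ref{prop_firstob}. Under your working definition this is valid and self-contained; the paper's route gives the stronger non-nilpotency statement. Also, identifying $\ob_1$ with the conjugate of $\theta_*\tau_0(v)$ needs no harmonic representatives, since $\omega\mapsto\omega^\st$ is already a conjugate-linear isomorphism between the two Dolbeault complexes.

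The real problem is your last paragraph, which is the step that actually proves gradedness, and it would not work as written.

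\textbf{$\Ch^{1,0}g_1$ does not need to be killed.} Since $g_1$ has degree $(-1,1)$, $\Ch^{1,0}g_1\in\A^{1,0}((\End\E)^{-1,1})$. This is the same degree as $\theta$ and $\eta_1(\theta)$, so $-\cc\Ch^{1,0}g_1$ is an allowed first-order term of a graded Higgs field. Trying to remove it would fail anyway: a first-order gauge changes $\alpha_1$ only by the $\operatorname{ad}\theta$-exact term $[\theta,v_1]$.

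\textbf{$[\theta^\st,g_1]$ is also allowed.} The term $-\cc[\theta^\st,g_1]$ has degree $(0,0)$, which is the correct degree for the Dolbeault operator.

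\textbf{Only two terms are off-grading.} They are $\cc\bar\p g_1$ in $\beta_1$ and $\cc[\theta,g_1]$ in $\alpha_1$. The latter is zero, because $[\theta^\st,\bg_1]=0$ by \eqref{eq_ob1exact}. So you need neither $f_1$ nor the first-order integrability conditions beyond what Corollary~\ref{coro_g1} already gives.

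\textbf{Your gauge has the wrong sign.} The gauge acts by $\beta_1\mapsto\beta_1+\bar\p v_1$, so you need $v_1=-\cc g_1$. With $v_1=+\cc g_1$ the term $\bar\p g_1$ survives and the result is not graded.

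Take the combined gauge $\mathscr U=\id-\frac{t}{2}g_1-\frac{\bar t}{2}\bg_1$. The $\bar t$-terms $\varphi_1=\cc[\theta,\bg_1]$ and $\psi_1=\cc\bar\p\bg_1$ are removed by $u_1=-\cc\bg_1$. What remains is $\beta_1'=-\cc[\theta^\st,g_1]$, of degree $(0,0)$, and $\alpha_1'=-\eta_1(\theta)-\cc\Ch^{1,0}g_1$, of degree $(-1,1)$. Finally, $\pi''_\eta\Ch$ preserves the grading because $\Ch$ does. This is the paper's computation \eqref{eq_firstdeforma}, up to sign conventions.
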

\begin{proof}
By \cite[Theorem C]{HSZ}, for any $v\in T^{1,0}_0S$, if $v\notin \ker(\theta_*\circ\tau_0)$, then the isomonodromic deformed Higgs bundles $\sigma_{\Dol}$ is not graded (in fact not nilpotent) along $v$. Thus $v\notin T^{\mathrm{Zar}}_0\mathcal{NL}$.

If $v\in \ker(\theta_*\circ\tau_0)$, we prove the isomonodromic deformed Higgs bundles $\sigma_{\Dol}$ coincides with a holomorphic family of graded Higgs bundles up to first order along $v$. Let $[\eta_1]:=\tau_0(v)\in H^1(T_{X_0})$. The condition $\theta_*([\eta_1])=0$ is equivalent to the existence of $f_1\in \A^0(\End\E)$ such that \eqref{eq_ob1vanish} holds by taking $\st$. Thus by the proof of Proposition~\ref{prop_firstob} Step2, we know that $\bg_1=2f_1+c\cdot \id$, for some $c\in\C$. Substituting this into the deformation terms Proposition~\ref{prop_Taylor_Exp_DolHig}, we have
\begin{align*}\varphi_1=\cc[\theta,\bg_1],&\quad \psi_1=\cc\bar\p\bg_1,\\
\alpha_1=-\eta_1(\theta)-\cc\Ch^{1,0}g_1,&\quad \beta_1=-\cc[\theta^\st,g_1]+\bar\p g_1.
\end{align*}
By taking $\mathscr U:=\id-\frac{t}{2}g_1-\frac{\bar t}{2}\bg_1$, we have on $X_1$
\begin{align}\label{eq_firstdeforma}\mathscr U^{-1}\circ\bar\p_t\circ\mathscr U=\pi''_{\eta}\Ch^{1,0}+t\cdot\cc[\theta^\st,g_1],\qquad
\mathscr U^{-1}\circ\theta_t\circ\mathscr U=\theta-t\eta_1(\theta)-t\cdot\cc\Ch^{1,0}g_1.
\end{align}

Note that the condition $\theta_*([\eta_1])=0$ is equivalent to the vanishing of the first-order obstruction class $\ob_1$ stated in Proposition~\ref{prop_firstob}, which in turn implies $g_1\in \mathcal{A}^0((\End\E)^{-1,1})$ as established in Corollary~\ref{coro_g1}. It follows immediately that
\begin{align*}
[\theta^\st,g_1]\in\mathcal{A}^{1}((\End\E)^{0,0}) \qquad \text{and} \qquad \eta_1(\theta)+\cc\Ch^{1,0}g_1\in \mathcal{A}^{1}((\End\E)^{-1,1}).
\end{align*}
This and \eqref{eq_firstdeforma} imply that the isomonodromically deformed Higgs bundle preserves a graded structure up to first order.\end{proof}

In summary, we have the following slogan:
\[
\begin{gathered}
\text{First order holomorphicity of the isomonodromic deformation } \;\Leftrightarrow\; 
\ob_1 \text{ in Proposition~\ref{prop_firstob} vanishes} \\ 
\Downarrow \\ 
\text{a full restriction on } \bg_1: \eqref{eq_ob1exact} \;\Leftrightarrow\; \text{a full restriction on } g_1 \\ 
\Downarrow \\
\text{a full restriction on the first order holomorphic deformation: }  \eqref{eq_firstdeforma}\\ 
\Downarrow \\
\text{the first order liftablity of the initial graded structure.}
\end{gathered}
\]
We will extend this method to higher orders to answer Question~\ref{EsnKer}.

\subsubsection{A necessary condition on grading pieces}
\begin{proposition} \label{wtprop}
Suppose the isomonodromic deformation of the initial graded stable Higgs bundle is modulo-$(t,\bar t^{k+1})$-holomorphic. Then we have for any $m=1,2,\cdots,k$
\begin{align*}\bg_m\in\bigoplus_{l>0} \A^0((\End \E)^{l,-l}),
\end{align*}
where $\bg_k$ is defined in \eqref{metric}.
 Equivalently, $g_m\in \bigoplus_{l<0} \A^0((\End \E)^{l,-l})$ for such $m$.
\end{proposition}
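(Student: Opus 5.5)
The plan is to argue by induction on $m$, mimicking the first-order computation in Proposition~\ref{prop_firstob} and Corollary~\ref{coro_g1}. The case $m=1$ is exactly Corollary~\ref{coro_g1}, which gives $\bg_1\in\A^0((\End\E)^{1,-1})$ up to a multiple of $\id$; the scalar part is absorbed by a gauge normalization of \eqref{metric}. For the inductive step, assume modulo-$(t,\bar t^{k+1})$-holomorphicity. Then by the remark after Definition~\ref{def_modulo_hol} it also holds modulo $(t,\bar t^{m+1})$ for every $m\le k$. Lemma~\ref{lem_gauge}, truncated at order $m$, gives the gauge equations \eqref{hol} for $u_1,\dots,u_m$. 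The induction hypothesis is that $\bg_j$ has only positive grading pieces and $u_j$ is determined by $\bg_1,\dots,\bg_j$ for $j<m$; concretely, $u_j$ is a noncommutative polynomial in the $\bg$'s of total index $j$, generalizing $u_1=-\cc\bg_1$. Under this hypothesis I substitute \eqref{varphi} and \eqref{psi} into the $m$-th equation of \eqref{hol}.

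The key step is to isolate the terms involving $\bg_m$ and $u_m$. Writing $v_m:=\cc\bg_m+u_m$, the $m$-th equation takes the form
\begin{align*}
\bar\p v_m-\cc[\theta^\st,\bg_m]&=R_m,\\
[\theta,v_m]-\cc\Ch^{1,0}\bg_m&=Q_m,
\end{align*}
where $R_m,Q_m$ collect all contributions from $\bar\eta_j$, the $S^\st_j$-products and the lower $u_j,\varphi_j,\psi_j$. By induction every such lower term is built from products of $\bg_j$ ($j<m$) lying in positive grading, together with $\theta^\st$ (grading $+2$), $\bar\p$, $\Ch^{1,0}$ (each raising grading by one) and $\bar\eta_j$ (grading preserving on the $\theta^\st$-type pieces). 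I then check that all grading pieces of $R_m$ and $Q_m$ lie in degree $\ge 2$; more precisely, $R_m$ and $Q_m$ are forms whose grading, in the paper's convention for forms, is $\ge2$, as for $\bar\eta_1(\theta^\st)$ at first order. The one exception is the term $\sum\bar\eta_{i_1}(\cdots)$ with $\theta$ paired against $\bg$, and I will verify that $\theta$ (grading $0$) always appears there commuted against a positive-grading $\bg_j$, so the output is still of positive grading.

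Next I decompose $v_m$ and $\bg_m$ into grading pieces and take the components of degree $\le 1$. In those degrees $R_m$ and $Q_m$ vanish, so for each $l\le0$ the pieces $(v_m)^{l,-l}$ and $(\bg_m)^{l,-l}$ satisfy the homogeneous system. Here I must track how $\bar\p$, $\theta$, $\theta^\st$ and $\Ch^{1,0}$ shift degree: the first equation links $(v_m)^{l}$ with $(\bg_m)^{l-1}$, and the second links $(v_m)^{l}$ with $(\bg_m)^{l}$. Combining them as in Corollary~\ref{coro_harm}, by applying $\Ch^{1,0}$ and $[\theta^\st,\cdot]$ and using the ``harmonicity'' Lemma~\ref{vanlem}, I conclude that $\bar\p v_m=0$, $[\theta,v_m]=0$, $[\theta^\st,\bg_m]=0$ and $\Ch^{1,0}\bg_m=0$ on these low-degree pieces. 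Stability then forces those pieces of $\bg_m$ to be scalar, hence zero in nonzero degree. The degree-$0$ scalar is again removed by gauge. This yields $\bg_m\in\bigoplus_{l>0}\A^0((\End\E)^{l,-l})$ and determines $u_m$, which propagates the induction. The equivalent statement for $g_m$ follows by taking $\st$, since adjoint with respect to the Hodge metric $h_0$ reverses the grading.

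The main obstacle I expect is the bookkeeping that shows $R_m$ and $Q_m$ carry no components of degree $\le1$. This is not automatic: the products $S^\st_{i_1}\{\Ch^{1,0}\bg_{i_2}-[\theta,\bg_{i_2}]\}$ and the lower terms $\varphi_ju_{m-j}$ mix in $[\theta,\cdot]$, which does not raise degree. To handle this I would first prove, as part of the induction, an explicit closed form for $u_j$, for example $\mathscr U\equiv(\id+\sum\bar t^j\bg_j)^{-1/2}$ modulo $t$. With that closed form, the combination $\varphi_j+[\theta,\cdot]$ telescopes into conjugates of $\bar\eta$-terms and $\Ch^{1,0}$-terms, whose degrees are manifestly positive.
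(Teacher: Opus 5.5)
Your overall route is the paper's: induction on $m$, the combination $v_m=\cc\bg_m+u_m$, the low-degree pieces of the $m$-th system in \eqref{hol}, and Corollary~\ref{coro_harm}. There is a genuine gap, though: the degree bookkeeping at the decisive step is off, and as written the argument never reaches $(\bg_m)^{0,0}$.

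In the paper's convention $[\theta,\cdot]$ preserves degree while $\Ch^{1,0}$ raises it by one. So the degree-$l$ piece of your $\theta$-equation is $[\theta,(v_m)^{l,-l}]-\cc\Ch^{1,0}(\bg_m)^{l-1,-l+1}=Q_m^{l,-l}$. It couples $(v_m)^{l,-l}$ with $(\bg_m)^{l-1,-l+1}$, not with $(\bg_m)^{l,-l}$, exactly as the $\bar\p$-equation does in degree $l+1$. Corollary~\ref{coro_harm} therefore applies to $g=(v_m)^{l,-l}$ and $f=\cc(\bg_m)^{l-1,-l+1}$. To see that $(\bg_m)^{0,0}$ is scalar you need $l=1$, i.e.\ the $\bar\p$-equation in degree $2$.

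Knowing that $R_m$ has degree $\ge2$ does not help there. Its degree-$2$ piece is $\cc\sum_{i_1+i_2=m}\bar\eta_{i_1}(\bar\p\bg_{i_2})^{2,-2}+\sum_{j=1}^{m-1}\bar\eta_j(\bar\p u_{m-j})^{2,-2}$, coming from \eqref{psi} and from the last sum in \eqref{hol}. Neither sum vanishes for degree reasons. Restricting to degrees $\le1$, as you propose, only yields $(\bg_m)^{l,-l}=0$ for $l\le-1$.

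There are two ways to fix this.
\begin{enumerate}
\item Carry $u_j^{1,-1}=-\cc(\bg_j)^{1,-1}$ in the induction hypothesis, as the paper does in \eqref{u-wt1}; this is exactly what the $l=1$ step outputs. The two sums above then cancel.
\item Observe that both sums are $(1,0)$-forms, and use only the $(0,1)$-component of the second equation of \eqref{hol}.
\end{enumerate}

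The obstacle you single out is not a real one, and the planned closed form should be dropped. Suppose $\bg_j$ and $u_j$ have only positive grading pieces for $j<m$. Then \eqref{varphi} and \eqref{psi} show that $\varphi_j$ and $\psi_j$ live in degrees $\ge1$ and $\ge2$ respectively. Hence $\varphi_ju_{m-j}$ and $S^\st_{i_1}\{\Ch^{1,0}\bg_{i_2}-[\theta,\bg_{i_2}]\}$ already have degree $\ge2$, even though $[\theta,\cdot]$ does not raise degree. Note also that the $\bar\eta$-terms involve $\theta^\st$ and $\bar\p$, not $\theta$. You need neither an explicit formula for $u_j$ nor the claim that it is a noncommutative polynomial in the $\bg_i$; only \eqref{u-wt1} is needed.

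Moreover, $(\id+\sum_j\bar t^j\bg_j)^{-1/2}$ is not what \eqref{hol} produces. The $(2,-2)$-piece of its $\bar t^3$-coefficient is $-\cc(\bg_3)^{2,-2}+\frac38(\bg_1\bg_2+\bg_2\bg_1)^{2,-2}$, which is symmetric in $\bg_1,\bg_2$. By contrast, \eqref{u-wt2} gives the coefficients $\frac5{12}$ and $\frac13$. The discrepancy is $\frac1{24}[\bg_1,\bg_2]^{2,-2}$, which need not vanish.
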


\begin{proof}The case $k=1$ has been proved in \eqref{eq_g1u1}. We inductively assume that \begin{align}\label{u-wt1}\bg_m,u_m\in\bigoplus_{l>0} \A^0((\End \E)^{l,-l})\qquad\text{and }\qquad u_m^{1,-1}=-\cc\bg_m\end{align}
 for $m=1,2,\cdots,k-1$ and prove this for $\bg_k,u_k$.

By induction and the expressions \eqref{varphi}, \eqref{psi}, we have for $1\leq m\leq k-1$
\begin{align}\label{eq_wtphipsi}\varphi_m\in \bigoplus_{l\geq1} \A^0((\End \E)^{l,-l})\quad\text{ and }\quad\psi_m\in \bigoplus_{l\geq2} \A^0((\End \E)^{l,-l}).\end{align} 
By the equations \eqref{hol} for gauge transformation with $m=k$,
we have that for any integer $l\leq 1$,
 \begin{align*}
 & 0=(\varphi_k+\sum_{j=1}^{k-1} \varphi_{j}u_{k-j}+[\theta,u_k])^{l,-l}
 =(\cc[\theta,\bg_k]- \cc \Ch^{1,0}\bg_k+[\theta,u_k])^{l,-l};\\
 & 0=(\psi_{k}+\sum_{j=1}^{k-1}\psi_{j}u_{k-j}+\bar\p u_{k}-\sum_{j=1}^{k-1}\bar\eta_{j}(\bar\p u_{k-j}))^{l+1,-l-1}
 = (\cc\bar\p \bg_k-\cc[\theta^\st,\bg_k]+\bar\p u_k)^{l+1,-l-1}.
 \end{align*}
 By the ``harmonicity'' in Corollary~\ref{coro_harm}, this implies for any integer $l\leq0$
\begin{align*}(\bg_k)^{l,-l}=0\quad \text{and}&\quad u_k^{l,-l}=0 ;\\
 u_k^{1,-1}=-\cc (&\bg_k)^{1,-1},\end{align*}
 (note that any constant multiple of $\id$ in $\bg_k$ can be eliminated by a gauge transformation). Thus \eqref{eq_wtphipsi} also holds for $m=k$.
\end{proof}

\subsubsection{Higher order holomorphicity}
\begin{proposition}[Partial equations for the deformed harmonic metric]
\label{prop_ob}
Suppose the isomonodromic deformation of the initial graded stable Higgs bundle is modulo-$(t,\bar t^k)$-holomorphic. Denote 
\begin{equation}\begin{aligned}
\ob_{k,1}^{2,-2} := & \bar\eta_{k}(\theta^\st)-\cc\sum_{ |I_2|= k}\bar\eta_{i_1}(\bar\p\bg_{i_2})^{2,-2}+\frac{1}{4}\sum_{ |I_2| = k}\frac{i_2}{i_2+i_1}\cdot[[\theta,\bg_{i_1}],\bg_{i_2}]^{2,-2};\\
\ob_{k,2}^{3,-3} := & \frac{1}{4} \sum_{ |I_2| = k}\frac{i_2}{i_2+i_1}\cdot[\bar\p\bg_{i_1},\bg_{i_2}]^{3,-3}, 
\end{aligned}
\end{equation}
Then 
\begin{enumerate}
\item[$(i)$] $[\theta^\st,\ob_{k,1}^{2,-2}]  +\Ch^{1,0}\ob_{k,2}^{3,-3} =0$;

\item[$(ii)$] if the isomonodromic deformation is in addition modulo-$(t,\bar t^{k+1})$-holomorphic, then the identities
\begin{equation}\label{obnvanish}
\ob_{k,1}^{2,-2} = \cc(\Ch^{1,0}\bg_{k})^{2,-2} \quad \text{and} \quad
\ob_{k,2}^{3,-3} = \cc[\theta^\st,\bg_{k}]^{3,-3}
\end{equation}
hold.

\end{enumerate}
\end{proposition}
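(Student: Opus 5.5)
We argue by induction on $k$, in the same way as Proposition~\ref{prop_firstob} and Proposition~\ref{wtprop}. Modulo-$(t,\bar t^k)$-holomorphicity gives a gauge transformation whose coefficients $u_1,\dots,u_{k-1}$ solve \eqref{hol} for $m\le k-1$. By Proposition~\ref{wtprop}, applied with $k-1$, we have $\bg_m,u_m\in\bigoplus_{l>0}\A^0((\End\E)^{l,-l})$ and $u_m^{1,-1}=-\cc\bg_m^{1,-1}$ for $m<k$. Part (ii) and part (i) are then both read off from the $(2,-2)$ and $(3,-3)$ grading pieces of the $m=k$ equations of \eqref{hol}, after substituting \eqref{varphi} and \eqref{psi}.

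For (ii), take the $\varphi$-equation of \eqref{hol} at $m=k$ and project it to grading $(2,-2)$. The $\psi$-equation is projected to $(3,-3)$. The weight bounds \eqref{eq_wtphipsi} show which terms survive. In $\varphi_k$ these are $\bar\eta_k(\theta^\st)$, the term $-\cc(\Ch^{1,0}\bg_k)^{2,-2}$, the pieces $\cc S^\st_{i_1}\{\Ch^{1,0}\bg_{i_2}-[\theta,\bg_{i_2}]\}$ in which only $(1,-1)$-parts enter, and $-\cc\bar\eta_{i_1}(\bar\p\bg_{i_2})^{2,-2}$. In $\sum_j\varphi_ju_{k-j}$ only the $(1,-1)$-parts of $\varphi_j$ and $u_{k-j}$ survive. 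Finally $[\theta,u_k]^{2,-2}=[\theta,u_k^{2,-2}]$.

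The $(1,-1)$-level equations already give $\cc\bg_k^{1,-1}+u_k^{1,-1}=0$ by Corollary~\ref{coro_harm}, exactly as in Proposition~\ref{wtprop}. Next one must determine $u_k^{2,-2}$. We expect it to equal $-\cc\bg_k^{2,-2}$ plus an explicit quadratic expression $\frac18\sum\bg_{i_1}^{1,-1}\bg_{i_2}^{1,-1}$-type in lower terms. This is the step where the weights $\frac{i_2}{i_1+i_2}$ appear. They should come from symmetrizing the quadratic terms $S^\st_{i_1}(\cdot)\,\bg_{i_2}$ against $\varphi_j u_{k-j}$, together with the identity $\Ch^{1,0}(\bg_{i_1}\bg_{i_2})=\Ch^{1,0}\bg_{i_1}\,\bg_{i_2}+\bg_{i_1}\Ch^{1,0}\bg_{i_2}$. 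Summing over the pairs $(i_1,i_2)$ and $(i_2,i_1)$ should turn each quadratic expression into a bracket $[[\theta,\bg_{i_1}],\bg_{i_2}]$ plus exact pieces $\Ch^{1,0}(\cdot)$ and $[\theta,\cdot]$. These exact pieces are then absorbed into $u_k^{2,-2}$. After this absorption the $(2,-2)$-equation reads $\ob_{k,1}^{2,-2}=\cc(\Ch^{1,0}\bg_k)^{2,-2}$. The same bookkeeping applied to the $(3,-3)$ part of the $\psi$-equation gives $\ob_{k,2}^{3,-3}=\cc[\theta^\st,\bg_k]^{3,-3}$. The main obstacle is this combinatorial bookkeeping: identifying $u_k^{2,-2}$ correctly so that the averaged coefficients $\frac{i_2}{i_1+i_2}$ come out exactly. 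To find the right ansatz I would first work out $k=2$ by hand, using $u_1=-\cc\bg_1$ and $u_1^{2,-2}$ from that case.

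For (i), only modulo-$(t,\bar t^k)$ holds, so we cannot use $u_k$. Instead we use the integrability condition $\bar\p_t\theta_t=0$ (and $\theta_t\wedge\theta_t=0$) at order $\bar t^k$, in grading $(3,-3)$, as in the use of \cite[Proposition 4.4]{HSZ} in Step 2 of Proposition~\ref{prop_firstob}. Into this condition we substitute the expansions \eqref{varphi} and \eqref{psi}. The top-order terms in $\bg_k$ combine into $\cc\bigl(\Ch^{1,0}[\theta^\st,\bg_k]+[\theta^\st,\Ch^{1,0}\bg_k]\bigr)$, and this vanishes by the anticommutation $\Ch^{1,0}[\theta^\st,\cdot]=-[\theta^\st,\Ch^{1,0}\cdot]$. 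The lower-order terms are rewritten using the already-established relations \eqref{obnvanish} for all $m<k$, together with $[\theta^\st,\bar\eta(\theta^\st)]=0$, holomorphicity $\bar\p\theta=0$, and flatness $(\Ch^{1,0})^2=0$ and $\Ch^{1,0}\theta^\st=0$. After these substitutions the condition should collapse to $[\theta^\st,\ob_{k,1}^{2,-2}]+\Ch^{1,0}\ob_{k,2}^{3,-3}=0$. Equivalently, $(\ob_{k,1}^{2,-2},\ob_{k,2}^{3,-3})$ is a $d^{1c}$-cocycle. Part (ii) then says that its class in $\barHb$ vanishes, which is consistent with its role as $\ob_k$ in Proposition~\ref{prop_existence_ob}.
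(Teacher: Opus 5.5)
There is a genuine gap. Your overall frame is fine: induction on $k$, Proposition~\ref{wtprop}, and $u_k^{1,-1}=-\cc(\bg_k)^{1,-1}$ via Corollary~\ref{coro_harm}. But your argument for (i) fails, and (i) is exactly the input your argument for (ii) is missing.

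\textbf{Part (i).} The condition $\bar\p_t\theta_t=0$ at order $\bar t^k$ applies $\bar\p$ and $\operatorname{ad}(\theta)$ to $(\varphi_k,\psi_k)$, not $\Ch^{1,0}$ and $\operatorname{ad}(\theta^\st)$. The $\bg_k$-part of $(\varphi_k,\psi_k)$ is $\cc([\theta,\bg_k],\bar\p\bg_k)-\cc(\Ch^{1,0}\bg_k,[\theta^\st,\bg_k])$.
\begin{itemize}
\item The first pair is indeed killed.
\item The second pair leaves $-\cc\bigl(\bar\p\Ch^{1,0}\bg_k+[\theta,[\theta^\st,\bg_k]]\bigr)$. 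By Lemma~\ref{vanlem} this vanishes only for constant $\bg_k$. It is the elliptic equation that determines $\bg_k$; for $k=1$ it is the relation $\bar\p\Ch^{1,0}\bg_1=2\bar\p(\bar\eta_1(\theta^\st))-[\theta,[\theta^\st,\bg_1]]$ used in Step 2 of Proposition~\ref{prop_firstob}.
\item Likewise $\bar\eta_k(\theta^\st)$ enters as $\bar\p(\bar\eta_k(\theta^\st))$, not as $[\theta^\st,\bar\eta_k(\theta^\st)]=0$.
\end{itemize}
So integrability is a $(\bar\p,\operatorname{ad}\theta)$-closedness statement that still contains the unknown $\bg_k$. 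Statement (i) is $(\Ch^{1,0},\operatorname{ad}\theta^\st)$-closedness, and the former does not collapse to the latter. The conjugate condition for $(\partial_{h_t},\theta_t^{\star_{h_t}})$ does not help either: it leaves the other Laplacian $-\cc(\Ch^{1,0}\bar\p\bg_k+[\theta^\st,[\theta,\bg_k]])$.

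The observation you need is that $\ob_{k,1}^{2,-2}$ and $\ob_{k,2}^{3,-3}$ involve only $\bg_1,\dots,\bg_{k-1}$ and $\bar\eta_k(\theta^\st)$, and $[\theta^\st,-]$ kills the latter. So (i) is an identity in lower-order data, which the paper verifies directly:
\begin{itemize}
\item eliminate $\bar\eta_i(\theta^\st)$ and $[\theta^\st,\bg_i]^{3,-3}$ for $i<k$ using the inductive relations \eqref{vanisheq_1}, \eqref{vanisheq_2};
\item then use \eqref{eq_C2}, and the Jacobi identity combined with $[[\theta,\theta^\st],g]=-(\Ch^{1,0}\bar\p+\bar\p\Ch^{1,0})g$, i.e.\ \eqref{eq_C1};
\item finally, the weights $\frac{i_2}{i_1+i_2}$ cancel by Lemma~\ref{comp1}.
\end{itemize}

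\textbf{Part (ii).} ``Absorbing exact pieces into $u_k^{2,-2}$'' is not an argument. $u_k$ enters only through $[\theta,u_k]$ and $\bar\p u_k$, so a $\Ch^{1,0}$-exact term cannot be absorbed. You must instead show that the $(\bar\p,\operatorname{ad}\theta)$-exact part of the two graded equations vanishes.

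The missing step is to apply $[\theta^\st,-]$ to the $(2,-2)$-piece of the $\varphi$-equation and $\Ch^{1,0}$ to the $(3,-3)$-piece of the $\psi$-equation, and add, as in \eqref{holwt2}. This is where your anticommutation belongs: it cancels $\cc(\Ch^{1,0}\bg_k)^{2,-2}$ against $\cc[\theta^\st,\bg_k]^{3,-3}$. What remains is $\square(u_k+\cc\bg_k)^{2,-2}$, with $\square=[\theta^\st,[\theta,-]]+\Ch^{1,0}\bar\p$, equal to lower-order terms. Using (i) for this same $k$, those terms equal $\square$ of $\frac14\sum_{|I_2|=k}(1+\frac{i_2}{i_1+i_2})(\bg_{i_1}\bg_{i_2})^{2,-2}$. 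Lemma~\ref{vanlem} then determines $u_k^{2,-2}$ as in \eqref{u-wt2}, and substituting back into the two graded equations gives \eqref{obnvanish}.

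Thus (i) must be proved first, under the weaker modulo-$(t,\bar t^k)$ hypothesis, and then fed into (ii). Treating the two parts independently leaves $u_k^{2,-2}$ as an unproved ansatz.
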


\begin{remark}
\begin{enumerate}
\item We explain why $[(\ob_{k,1}^{2,-2},\ob_{k,2}^{3,-3})]$ indeed forms a class in $\barHb$. If the fiber $X_0$ has dimension $1$, the conclusion of (i) is sufficient. If $\dim X_0>1$, one may further prove $\Ch^{1,0}\ob_{k,1}^{2,-2}=0$ and $[\theta^\st,\ob_{k,2}^{3,-3}]=0
$ by a detailed computation. However, this fact is not used elsewhere in this paper, and we therefore omit the computation.

\item Consequently, by (ii), the modulo-$(t,\bar t^{k+1})$-holomorphicity implies the vanishing of the class $[(\ob_{k,1}^{2,-2},\ob_{k,2}^{3,-3})]$.  We thus believe $[(\ob_{k,1}^{2,-2},\ob_{k,2}^{3,-3})]$ coincides exactly with the first graded piece of the complete obstruction class $\ob_k$ defined in Proposition~\ref{prop_existence_ob}. This assertion has been verified by direct computation for small $k$. For arbitrary $k$, the full calculation of $\ob_k$ is prohibitively complex, and a general proof will be deferred to future work. This is the rationale behind our notational choice.
\item By definition and Proposition~\ref{wtprop}, the reader may verify that this proposition involves only $\{(\bg_i)^{1,-1}\}_{i=1}^k$, i.e. no terms of the form $\{(\bg_i)^{l,-l}\}_{i=1}^k$ with $l\ne1$ appear. In conclusion, the modulo-$(t,\bar t^{k+1})$-holomorphicicty yields a precise equation \eqref{obnvanish} on $(\bg_k)^{1,-1}$ and indeed determines $(\bg_k)^{1,-1}$ by the ``harmonicity'' in Lemma~\ref{vanlem}. Also,  \eqref{obnvanish} is a generalization of the first order restriction \eqref{eq_ob1exact}.
\end{enumerate}
\end{remark}

\begin{proof}[Proof of Proposition~\ref{prop_ob}]To keep the flow of the proof clear, we defer the verification of some technical, computational identities to the end of this section, where the reader may consult them.

When $k=1$, our claim holds by \eqref{eq_ob1exact} in the proof of 
Proposition~\ref{prop_firstob}. We inductively assume (i)(ii) hold for $1,2,\cdots,k-1$ and verify this for $k\geq 2$. 
By induction, we have the following equations coming from modulo-$(t,\bar t^{m+1})$-holomorphic, where $m=1,2,\cdots,k-1$:

\begin{align}
\cc(\Ch^{1,0}\bg_m)^{2,-2}=&\bar\eta_{m}(\theta^\st)-\cc\sum_{i=1}^{m-1}\bar\eta_i(\bar\p\bg_{m-i})^{2,-2}+\frac{1}{4}\sum_{i=1}^{m-1} \frac{i}{m}\cdot[[\theta,\bg_{m-i}],\bg_i]^{3,-3}; \label{vanisheq_1}\\
\cc[\theta^\st,\bg_m]^{3,-3}=&\frac{1}{4}\sum_{i=1}^{m-1} \frac{i}{m}\cdot[\bar\p\bg_{m-i},\bg_i]^{3,-3}. \label{vanisheq_2}
\end{align}

Now we prove the following identity:

\begin{flalign}\label{ab(1)}
&0=[\theta^\st,\ob_{k,1}^{2,-2}]  +\Ch^{1,0}\ob_{k,2}^{3,-3} =   \sum_{|I_2| = k}[\theta^\st,\Big(-\cc\bar\eta_{i_1}(\bar\p\bg_{i_2}) 
+ \frac14\cdot\frac{i_2}{i_2+i_1} \cdot[[\theta,\bg_{i_1}],\bg_{i_2}]\Big)^{2,-2}] \notag &\\
& \makebox[5cm]{}+\sum_{ |I_2|= k}\Ch^{1,0}(\frac{1}{4} \cdot\frac{i_2}{i_2+i_1}\cdot[\bar\p\bg_{i_1},\bg_{i_2}])^{3,-3}.&
\end{flalign}
Taking the following two pieces in \eqref{ab(1)}
\begin{align*}
\textbf{Eq1} & :=\sum\limits_{|I_2|=k}[\theta^\st,-\cc\bar\eta_{i_1}(\bar\p\bg_{i_2})]^{4,-4} \\
\textbf{Eq2} & := \sum\limits_{|I_2| = k}\big([\theta^\st,\frac{1}{4}\cdot\frac{i_2}{i_2+i_1}\cdot[[\theta,\bg_{i_1}],\bg_{i_2}]]+\Ch^{1,0}(\frac{1}{4} \cdot\frac{i_2}{i_2+i_1}\cdot[\bar\p\bg_{i_1},\bg_{i_2}])\big)^{4,-4}.\\
\end{align*}
We reduce \eqref{ab(1)} to prove $\textbf{Eq1}+\textbf{Eq2}=0$. Firstly, 
{\fontsize{10}{12}\selectfont
\begin{equation} \label{ab(1)1}
\begin{aligned}
\textbf{Eq1} = & \cc\sum_{|I_2|= k}[\bar\eta_{i_1}(\theta^\st),\bar\p\bg_{i_2}]^{4,-4}\\
\overset{\eqref{vanisheq_1}}{=}
& \frac{1}{4}\sum_{|I_3|= k}[\bar\eta_{i_1}(\bar\p\bg_{i_3})-\cc \frac{i_3}{i_1+i_3}[[\theta,\bg_{i_1}],\bg_{i_3}],\bar\p\bg_{i_2}]^{4,-4}+\frac{1}{4}\sum_{|I_2| = k}[\Ch^{1,0}\bg_{i_1},\bar\p\bg_{i_2}]^{4,-4}\\
= & -\frac{1}{8}\sum_{|I_3|=k} \frac{i_3}{i_1+i_3}[[[\theta,\bg_{i_1}],\bg_{i_3}],\bar\p\bg_{i_2}]^{4,-4}+\frac{1}{4}\sum_{|I_2|=k}[\Ch^{1,0}\bg_{i_1},\bar\p\bg_{i_2}]^{4,-4}.
\end{aligned}
\end{equation}}
The third equality follows from the fact  

\begin{align*}
\sum_{|I_3|=k}[\bar\eta_{i_1}(\bar\p\bg_{i_3}),\bar\p\bg_{i_2}]^{4,-4}
& =\cc\sum_{|I_3|=k}\Big([\bar\eta_{i_1}(\bar\p\bg_{i_3}),\bar\p\bg_{i_2}]+[\bar\eta_{i_1}(\bar\p\bg_{i_2}),\bar\p\bg_{i_3}]\Big)^{4,-4}\\
& \overset{\eqref{eq_C2}}=  \cc\sum_{|I_3|=k}\Big([\bar\eta_{i_1}(\bar\p\bg_{i_3}),\bar\p\bg_{i_2}]-[\bar\p\bg_{i_2},\bar\eta_{i_1}(\bar\p\bg_{i_3})]\Big)^{4,-4}\\
& =0.\end{align*}

By taking $g = \bg_{i_1}$ and $h = \bg_{i_2}$ in the identity \eqref{eq_C1}, we have 
{\small \begin{align*}
&[\theta^\st,  [[\theta,\bg_{i_1}],\bg_{i_2}]]  +\Ch^{1,0}([\bar\p \bg_{i_1},\bg_{i_2}])\\
&= 
-[[\theta,[\theta^\st,\bg_{i_1}]],\bg_{i_2}]
-[[\theta,\bg_{i_1}],[\theta^\st,\bg_{i_2}]]
-\bar\p([\Ch^{1,0}\bg_{i_1},\bg_{i_2}])
-[\bar\p \bg_{i_1},\Ch^{1,0}\bg_{i_2}]
-[\bar\p \bg_{i_2},\Ch^{1,0}\bg_{i_1}]
.\end{align*} }
substituting this into \textbf{Eq2}, we obtain
{\fontsize{10}{12}\selectfont
\begin{equation} \begin{aligned} \label{ab(1)2}
\textbf{Eq2} = & 
-\frac{1}{4} \sum_{|I_2| = k} \frac{i_2}{i_2+i_1} \cdot \Big(
[[\theta,[\theta^\st,\bg_{i_1}]],\bg_{i_2}]
+[[\theta,\bg_{i_1}],[\theta^\st,\bg_{i_2}]]
+\bar\p([\Ch^{1,0}\bg_{i_1},\bg_{i_2}])  \\
& \makebox[6.3cm]{} +[\bar\p\bg_{i_1},\Ch^{1,0}\bg_{i_2}] +[\bar\p\bg_{i_2},\Ch^{1,0}\bg_{i_1}]
\Big)^{4,-4}\\
= & -\frac{1}{4}\sum_{|I_2| = k} \frac{i_2}{i_2+i_1} \cdot \Big(
[[\theta,[\theta^\st,\bg_{i_1}]],\bg_{i_2}]
+ [[\theta,\bg_{i_1}],[\theta^\st,\bg_{i_2}]] 
+\bar\p([\Ch^{1,0}\bg_{i_1},\bg_{i_2}] 
\Big)^{4,-4} \\
& \makebox[6cm]{}  -\frac{1}{4} \sum_{|I_2|= k} [\bar\p\bg_{i_1},\Ch^{1,0}\bg_{i_2}]^{4,-4}.
\end{aligned}\end{equation}}
The second equality follows from the equality:
\begin{align*}
\sum\limits_{|I_2|= k} \frac{i_2}{i_2+i_1}\cdot\big([\bar\p\bg_{i_1},\Ch^{1,0}\bg_{i_2}]+[\bar\p\bg_{i_2},\Ch^{1,0}\bg_{i_1}]\big)^{4,-4} 
= &\sum_{|I_2| = k} (\frac{i_2}{i_2+i_1}+\frac{i_1}{i_1+i_2})[\bar\p\bg_{i_1},\Ch^{1,0}\bg_{i_2}]^{4,-4} \\
= & \sum_{|I_2| = k} [\bar\p\bg_{i_1},\Ch^{1,0}\bg_{i_2}]^{4,-4}.
\end{align*}

Now, we consider third piece in $\textbf{Eq2}$:
{\fontsize{10}{12}\selectfont
\begin{align*}&\sum_{|I_2| = k}\frac{i_2}{i_2+i_1}\cdot\bar\p([\Ch^{1,0}\bg_{i_1},\bg_{i_2}])^{4,-4} \\
& \overset{\eqref{vanisheq_1}}=  \sum_{|I_2| = k} \frac{i_2}{i_2+i_1} \bar\p([2\bar\eta_{i_1}(\theta^\st),\bg_{i_2}])^{2,-2}+\sum_{|I_3| = k}\big\{\frac{i_2}{i_1+i_2+i_3}\bar\p([-\bar\eta_{i_1}(\bar\p\bg_{i_3})+\frac{1}{2}\frac{i_3}{i_1+i_3}[[\theta,\bg_{i_1}],\bg_{i_3}],\bg_{i_2}])\big\}^{4,-4}\\
& \overset{\eqref{vanisheq_2}}= \sum_{|I_3| = k} \Big\{\frac{i_2+i_3}{i_1+i_2+i_3}\frac{i_2}{i_2+i_3} \bar\p([\bar\eta_{i_1}(\bar\p g_{i_3}),\bg_{i_2}])-\frac{i_2}{i_1+i_2+i_3}\bar\p([\bar\eta_{i_1}(\bar\p\bg_{i_3})-\frac{1}{2}\frac{i_3}{i_1+i_3}[[\theta,\bg_{i_1}],\bg_{i_3}],\bg_{i_2}])\Big\}^{4,-4}\\
& \overset{\ \ \ \ }= \cc \sum_{|I_3| = k}\frac{i_2}{i_1+i_2+i_3}\frac{i_3}{i_1+i_3}\bar\p([[[\theta,\bg_{i_1}],\bg_{i_3}],\bg_{i_2}])^{4,-4}.
\end{align*}}

Substituting this into \eqref{ab(1)2} and then substituting \eqref{ab(1)1} and \eqref{ab(1)2} into \eqref{ab(1)}, we have
\begin{align*}&[\theta^\st,\ob_{k,1}^{2,-2}]+\Ch^{1,0}\ob_{k,2}^{3,-3} \overset{\eqref{ab(1)}}= \textbf{Eq1} + \textbf{Eq2}\\
&\overset{\eqref{ab(1)1}\eqref{ab(1)2}}= -\frac{1}{8}\sum_{|I_2|=k}2\cdot\frac{i_2}{i_2+i_1}\cdot\big\{[[\theta,[\theta^\st,\bg_{i_1}]],\bg_{i_2}]+[[\theta,\bg_{i_1}],[\theta^\st,\bg_{i_2}]]\big\}^{4,-4}\\
&\makebox[1cm]{}-\frac18\sum_{|I_3|=k}\big\{\frac{i_2}{i_1+i_2+i_3}\frac{i_3}{i_1+i_3}\bar\p([[[\theta,\bg_{i_1}],\bg_{i_3}],\bg_{i_2}])+\frac{i_3}{i_1+i_3}[[[\theta,\bg_{i_1}],\bg_{i_3}],\bar\p \bg_{i_2}]\big\}^{4,-4}.
\end{align*}
By \eqref{vanisheq_2}, all the above terms can be reduced to terms expressed by $\bg,\bar\p \bg$ and $\theta$, so we should be able to directly compute that the above expression equals 0. We place this complicated computation in Lemma~\ref{comp1} at the end of this section. Thus we obtain \eqref{ab(1)} for $[(\ob_{k,1}^{2,-2},\ob_{k,2}^{3,-3})]$. This completes the proof of (i).

\medskip

Now we come to (ii). Since the isomonodromic deformation is further modulo-$(t,\bar t^{k+1})$-holomorphic, then by equations \eqref{hol} for gauge transformation we have 
\begin{align}\label{holwt2}[\theta^\st,(\varphi_{k}+\sum_{j=1}^{k-1}\varphi_{j}u_{k-j}+[\theta,u_{k}])^{2,-2}]+\Ch^{1,0}(\psi_{k}+\sum_{j=1}^{k-1}\psi_{j}u_{k-j}+\bar\p u_{k})^{3,-3}=0.
\end{align}
Now we prove \eqref{obnvanish}. By Corollary~\ref{coro_g1}, we may inductively assume for $m=1,\cdots,k-1$: \begin{align}\label{u-wt2}u_{m}^{2,-2}=-\cc(\bg_{m})^{2,-2}+\sum_{j=1}^{m-1}\frac14(1+\frac{m-j}{m})\cdot(\bg_j\bg_{m-j})^{2,-2}.\end{align}
And prove this for $m=k$ and then we use it to prove \eqref{obnvanish}. 

By explicit formulas for $\varphi_i$, $\psi_i$ in \eqref{varphi}, \eqref{psi} and positivity of weights of $\bg_{i}$ in Proposition~\ref{wtprop}, we have for any $i=1,2,\cdots,k-1$
{\small \begin{flalign}
&(\varphi_i)^{1,-1}=\cc[\theta,\bg_i]^{1,-1},\quad (\psi_i)^{2,-2}=\cc(\bar\p\bg_i)^{2,-2}+\text{a section of }\A^{1,0}(\End\E),&\notag\\&(\varphi_k)^{2,-2}=\bar\eta_k(\theta^\st)+\cc[\theta,\bg_k]^{2,-2}-\cc(\Ch^{1,0}\bg_k)^{2,-2}-\cc\sum_{|I_2|=k}(\bg_{i_1}[\theta,\bg_{i_2}])^{2,-2}-\cc\sum_{|I_2|=k}\bar\eta_{i_1}(\bar\p\bg_{i_2}])^{2,-2},&\notag\\
&(\psi_k)^{3,-3}=-\cc \big\{[\theta^\st,\bg_k]^{3,-3}-(\bar\p\bg_k)^{3,-3}+\makebox[-.2cm]{}\sum_{|I_2|=k}(\bg_{i_1}\bar\p\bg_{i_2})^{3,-3}\big\}+\text{a section of }\A^{1,0}(\End\E).&\label{eq_varphipsi_low_wt}
\end{flalign}}
Let $\square(-):=[\theta^\st,[\theta,-]]+\Ch^{1,0}\bar\p(-)$. By \eqref{eq_varphipsi_low_wt} and \eqref{u-wt1}, \eqref{eq_wtphipsi}, the $\mathcal A^{1,1}(\End\E)$-part of the equation \eqref{holwt2} can be rewritten as
\begin{align*}0=&\square\Big(u_{k}+\cc\bg_{k}\Big)^{2,-2}-\sum_{|I_2|=k}[\theta^\st,\cc\bar\eta_{i_1}(\bar\p\bg_{i_2})^{2,-2}]-\cc\sum_{|I_2|=k}[\theta^\st,\big(\bg_{i_1}[\theta,\bg_{i_2}]\big)^{2,-2}]\\
&-\frac{1}{4}\sum_{|I_2|=k}[\theta^\st,\big([\theta,\bg_{i_1}]\bg_{i_2}\big)^{2,-2}]-\cc\sum_{|I_2|=k}\Ch^{1,0}(\bg_{i_1}\bar\p\bg_{i_2})^{3,-3}-\frac{1}{4}\sum_{|I_2|=k}\Ch^{1,0}((\bar\p\bg_{i_1})\bg_{i_2})^{3,-3}.
\end{align*}
Combining the above identity with \eqref{ab(1)}, we have
\begin{align*}\square\Big(u_{k}+\cc\bg_{k}\Big)^{2,-2}=&\frac{1}{4}\sum_{|I_2|=k}(1+\frac{i_2}{i_2+i_1})\Big\{[\theta^\st,[\theta,(\bg_{i_1}\bg_{i_2})^{2,-2}]]+\Ch^{1,0}\bar\p(\bg_{i_1}\bg_{i_2})^{2,-2}\Big\}.\end{align*}
By ``harmonicity'' in Lemma~\ref{vanlem}, we have \eqref{u-wt2} for $m=k$. Substituting \eqref{u-wt1}, \eqref{u-wt2} and \eqref{eq_varphipsi_low_wt} into 
\begin{align*}
\begin{cases}
\big\{\varphi_{k}+\sum\limits_{j=1}^{k-1}\varphi_{j}u_{k-j}+[\theta,u_{k}]\big\}^{2,-2}=0;\\
\big\{\psi_{k}+\sum\limits_{j=1}^{k-1}\psi_{j}u_{k-j}+\bar\p u_{k}-\sum\limits_{j=1}^{k-1}\bar\eta_{j}(\bar\p u_{k-j})\big\}^{3,-3}=0,
\end{cases}
\end{align*}
given by the equations \eqref{hol} for gauge transformation, we have \eqref{obnvanish}. 
\end{proof}

\begin{lemma}For any $\omega_1,\omega_2\in\mathcal A^{0,1}(\End\E)$ and $\bar\eta\in\mathcal A^{1,0}(T^{0,1}({X_0}))$, we have
\begin{align}\label{eq_C2}[\bar\eta(\omega_1),\omega_2]=-[\omega_1,\bar\eta(\omega_2)].
\end{align}
\end{lemma}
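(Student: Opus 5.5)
The plan is a pointwise computation in local holomorphic coordinates, since both sides of \eqref{eq_C2} are $\mathcal C^\infty$-bilinear in $(\omega_1,\omega_2)$ and linear in $\bar\eta$. Choose local coordinates $z^1,\dots,z^d$ on $X_0$ and write
\begin{align*}
\bar\eta=\sum_{j,k}\bar\eta^{j}_{k}\,dz^k\otimes\frac{\partial}{\partial\bar z^j},\qquad \omega_1=\sum_j A_j\,d\bar z^j,\qquad \omega_2=\sum_l B_l\,d\bar z^l,
\end{align*}
with $A_j,B_l$ local sections of $\End\E$. The contraction gives $\bar\eta(\omega_1)=\sum_{j,k}\bar\eta^j_kA_j\,dz^k$, which is a $(1,0)$-form, and similarly $\bar\eta(\omega_2)=\sum_{l,k}\bar\eta^l_kB_l\,dz^k$.

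The next step is to expand both brackets using the graded bracket \eqref{eq_Liebracket_2} for two $1$-forms, $[a,b]=a\wedge b+b\wedge a$. For the left side I will use $d\bar z^l\wedge dz^k=-dz^k\wedge d\bar z^l$, and bring the right side to the same basis $dz^k\wedge d\bar z^l$. This gives
\begin{align*}
[\bar\eta(\omega_1),\omega_2]&=\sum_{j,k,l}\bar\eta^j_k\,[A_j,B_l]\,dz^k\wedge d\bar z^l,\\
-[\omega_1,\bar\eta(\omega_2)]&=\sum_{j,k,l}\bar\eta^j_k\,[A_l,B_j]\,dz^k\wedge d\bar z^l,
\end{align*}
where in the second line I have relabelled the summation indices $j\leftrightarrow l$ and used the commutator $[A,B]=AB-BA$ of endomorphisms. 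So \eqref{eq_C2} reduces to comparing the coefficients $\sum_j\bar\eta^j_k[A_j,B_l]$ and $\sum_j\bar\eta^j_k[A_l,B_j]$ for each pair $(k,l)$.

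The main obstacle is exactly this index exchange $A_jB_l\leftrightarrow A_lB_j$. I expect it to hold as written in two situations.
\begin{enumerate}
\item When $\dim X_0=1$, there is only one index, so the two coefficients agree and the lemma follows directly.
\item In general, after symmetrizing in $(\omega_1,\omega_2)$, the discrepancy $\sum_j\bar\eta^j_k\bigl([A_j,B_l]-[A_l,B_j]\bigr)$ is antisymmetric in the pair $(\omega_1,\omega_2)$, so it cancels.
\end{enumerate}

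The second situation is how the lemma is actually used in the proof of Proposition~\ref{prop_ob}. There it is applied to the sum $\sum_{|I_3|=k}$, which is symmetric under $i_2\leftrightarrow i_3$, with $\omega_1=\bar\p\bg_{i_3}$ and $\omega_2=\bar\p\bg_{i_2}$. I will therefore first verify the identity pointwise by the computation above. If the index exchange is not automatic, I will restrict the statement to the curve case, or to its symmetrized form. I will then check that the symmetrized form is all that the derivation of \eqref{ab(1)1} requires.
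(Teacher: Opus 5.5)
Your coordinate computation is correct, and it shows more than you concluded: \eqref{eq_C2} is false once $\dim X_0\ge 2$, so no argument can prove it as stated. Take $\dim X_0=2$, $\bar\eta=dz^1\otimes\partial/\partial\bar z^1$, $\omega_1=X\,d\bar z^1$ and $\omega_2=Y\,d\bar z^2$, where $X,Y$ are local sections of $\End\E$ with $[X,Y]\neq0$. Then $\bar\eta(\omega_2)=0$, so the right-hand side vanishes. The left-hand side is $[\bar\eta(\omega_1),\omega_2]=[X,Y]\,dz^1\wedge d\bar z^2\neq0$.

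The paper's one-line proof hides exactly the index exchange you isolated. Its second equality needs $\bar\eta(\omega_1)\wedge\omega_2=-\omega_1\wedge\bar\eta(\omega_2)$ term by term, which moves the contracted $d\bar z$-index from $\omega_1$ to $\omega_2$. The sign rule $dz_i\wedge d\bar z_j=-d\bar z_j\wedge dz_i$ does not do this, and the termwise identity already fails for scalar forms ($\omega_1=d\bar z^1$, $\omega_2=d\bar z^2$ in the example). Your item (1) is therefore the true content: for curves there is only one index and the lemma holds.

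Your item (2) is wrong, and your plan relies on it in general. The defect $D(\omega_1,\omega_2):=[\bar\eta(\omega_1),\omega_2]+[\omega_1,\bar\eta(\omega_2)]$ is \emph{symmetric}, not antisymmetric, in $(\omega_1,\omega_2)$. This is because the graded bracket of two $1$-forms satisfies $[a,b]=a\wedge b+b\wedge a=[b,a]$. You can also read it off your own coefficient $\sum_j\bar\eta^j_k([A_j,B_l]-[A_l,B_j])$, which is invariant under $A\leftrightarrow B$.

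Conceptually, contraction with $\bar\eta$ extends to a degree-zero derivation $\iota_{\bar\eta}$ of $\mathcal A^\bullet(\End\E)$. Then $D(\omega_1,\omega_2)=\iota_{\bar\eta}[\omega_1,\omega_2]$, where $[\omega_1,\omega_2]=\sum_{j,l}[A_j,B_l]\,d\bar z^j\wedge d\bar z^l\in\mathcal A^{0,2}(\End\E)$. This space is zero precisely when $\dim X_0=1$.

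Hence symmetrizing over $i_2\leftrightarrow i_3$ in the proof of Proposition~\ref{prop_ob} gives
$\sum_{|I_3|=k}[\bar\eta_{i_1}(\bar\p\bg_{i_3}),\bar\p\bg_{i_2}]^{4,-4}=\tfrac12\sum_{|I_3|=k}\iota_{\bar\eta_{i_1}}[\bar\p\bg_{i_3},\bar\p\bg_{i_2}]^{4,-4}$, not $0$. To justify that step for $\dim X_0\ge2$ you would need a separate argument that the $(0,2)$-forms $\sum_{i_2+i_3=m}[\bar\p\bg_{i_2},\bar\p\bg_{i_3}]^{4,-4}$ vanish, or you must restrict to $\dim X_0=1$. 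What you can actually prove is the identity $D=\iota_{\bar\eta}[\omega_1,\omega_2]$. This gives \eqref{eq_C2} for curves, or whenever $[\omega_1,\omega_2]=0$.
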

\begin{proof}We first remark that $\bar\eta(\omega_1)$ is the contraction and lies in $\A^{1,0}(\End\E)$. Thus
\begin{align*}[\bar\eta(\omega_1),\omega_2]=\bar\eta(\omega_1)\circ\omega_2+\omega_2\circ\bar\eta(\omega_1)=-\omega_1\circ\bar\eta(\omega_2)-\bar\eta(\omega_2)\circ\omega_1=-[\omega_1,\bar\eta(\omega_2)],
\end{align*}
where the second equality follows from $dz_i\wedge d\bar z_j=-d\bar z_j\wedge d z_i$ for any local chart $(z_i)$ of $X_0$.
\end{proof}

\begin{lemma}
For any $g,h\in\A^0(\End\E)$, 
we have 
\begin{equation}\label{eq_C1}\begin{aligned}
&[\theta^\st,  [[\theta,g],h]]  +\Ch^{1,0}([\bar\p g,h])\\
&=  
-[[\theta,[\theta^\st,g]],h]
-[[\theta,g],[\theta^\st,h]]
-\bar\p([\Ch^{1,0}g,h])
-[\bar\p g,\Ch^{1,0}h]
-[\bar\p h,\Ch^{1,0}g]
.\end{aligned} \end{equation}
\end{lemma}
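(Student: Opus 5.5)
The identity \eqref{eq_C1} is purely algebraic and local. I plan to prove it by expanding both sides with the graded Leibniz rule and the graded Jacobi identity for the bracket \eqref{eq_Liebracket_2}, and then using the equations satisfied by the harmonic bundle $(\E,\bar\p,\theta,h_0)$. The input I intend to use is flatness of $D=\Ch^{1,0}+\bar\p+\theta+\theta^\st$. Splitting $D^2=0$ by type and by $\End$-degree gives:
\begin{itemize}
\item $\bar\p\theta=0$ and $\Ch^{1,0}\theta^\st=0$;
\item $\theta\wedge\theta=0=\theta^\st\wedge\theta^\st$;
\item the Hitchin equation $\Ch^{1,0}\bar\p+\bar\p\Ch^{1,0}=-\operatorname{ad}[\theta,\theta^\st]$ on sections of $\End\E$.
\end{itemize}
Throughout I keep track of Koszul signs: $g,h$ have degree $0$, while $\theta,\theta^\st,\bar\p g,\Ch^{1,0}g$ and $[\theta,g]$ all have degree $1$.

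First I expand the left-hand side. By graded Jacobi,
\[
[\theta^\st,[[\theta,g],h]]=[[\theta^\st,[\theta,g]],h]-[[\theta,g],[\theta^\st,h]],
\]
and by the Leibniz rule for $\Ch^{1,0}$,
\[
\Ch^{1,0}[\bar\p g,h]=[\Ch^{1,0}\bar\p g,h]-[\bar\p g,\Ch^{1,0}h].
\]
The left-hand side therefore equals
\[
\bigl[[\theta^\st,[\theta,g]]+\Ch^{1,0}\bar\p g,\;h\bigr]-[[\theta,g],[\theta^\st,h]]-[\bar\p g,\Ch^{1,0}h].
\]
The last two terms already appear on the right-hand side of \eqref{eq_C1}.

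The key step is to rewrite the degree-two endomorphism $[\theta^\st,[\theta,g]]+\Ch^{1,0}\bar\p g$. Graded Jacobi gives $[\theta^\st,[\theta,g]]=[[\theta^\st,\theta],g]-[\theta,[\theta^\st,g]]$, and the Hitchin equation gives $\Ch^{1,0}\bar\p g=-\bar\p\Ch^{1,0}g-[[\theta,\theta^\st],g]$. For $1$-forms the bracket is symmetric, $[\theta^\st,\theta]=[\theta,\theta^\st]$, so the two curvature terms cancel. This leaves
\[
[\theta^\st,[\theta,g]]+\Ch^{1,0}\bar\p g=-[\theta,[\theta^\st,g]]-\bar\p\Ch^{1,0}g.
\]
Bracketing with $h$, the first piece gives $-[[\theta,[\theta^\st,g]],h]$. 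For the second piece I use Leibniz for $\bar\p$:
\[
-[\bar\p\Ch^{1,0}g,h]=-\bar\p[\Ch^{1,0}g,h]-[\Ch^{1,0}g,\bar\p h].
\]
Since both $\Ch^{1,0}g$ and $\bar\p h$ are $1$-forms, $-[\Ch^{1,0}g,\bar\p h]=-[\bar\p h,\Ch^{1,0}g]$. Collecting all terms reproduces the right-hand side of \eqref{eq_C1} exactly.

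The main obstacle is the sign bookkeeping, and specifically the cancellation of the curvature term. It depends on the sign convention in $D^2=0$: one must check that $F_{h_0}=-[\theta,\theta^\st]$, with the bracket from \eqref{eq_Liebracket_2}, is the correct relation for the Chern connection of a harmonic bundle. I would verify this first by computing $(D^2)^{1,1}=\Ch^{1,0}\bar\p+\bar\p\Ch^{1,0}+\theta\theta^\st+\theta^\st\theta$ directly, and then confirm each Jacobi and Leibniz sign in a local frame.
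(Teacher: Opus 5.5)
Your proof is correct and follows essentially the same route as the paper. Both arguments apply the graded Jacobi identity twice, cancel the curvature term with the harmonic-metric relation $(\Ch^{1,0}\bar\p+\bar\p\Ch^{1,0})g=-[[\theta,\theta^\st],g]$, and finish with the graded Leibniz rules for $\Ch^{1,0}$ and $\bar\p$.

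Your intermediate signs are the correct ones, e.g.\ $[\theta^\st,[[\theta,g],h]]=[[\theta^\st,[\theta,g]],h]-[[\theta,g],[\theta^\st,h]]$. The paper's displayed Jacobi identity has cyclically permuted sign exponents, which produces two sign slips in its intermediate steps. These slips cancel, so the final identity is unaffected.
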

\begin{proof}
By the definition of Lie bracket \eqref{eq_Liebracket_2}, for $\omega_1\in \mathcal A^k(\End\E)$, $\omega_2\in \mathcal A^l(\End\E)$ and $\omega_3\in \mathcal A^j(\End\E)$, we have the following Jacobi identity
\begin{align}\label{eq_Liebracket_3}
(-1)^{kl} \bigl[ \omega_1, [\omega_2, \omega_3] \bigr]
+ (-1)^{lj} \bigl[ \omega_2, [\omega_3, \omega_1] \bigr]
+ (-1)^{jk} \bigl[ \omega_3, [\omega_1, \omega_2] \bigr] = 0.
\end{align}
By taking $\omega_1=\theta^\st,\ \omega_2=[\theta,g]$ and $\omega_3=h$ in the Jacobi identity \eqref{eq_Liebracket_3}, we have
 \begin{align}\label{eq_firststep_C1}
 [\theta^\st,  [[\theta,g],h]]=[[\theta,g],[h,\theta^\st]]+[h,[\theta^\st,[\theta,g]]].
 \end{align}
By applying the Jacobi identity \eqref{eq_Liebracket_3} again to $[\theta^\st,[\theta,g]]$, we have
 \begin{align*}[h,[\theta^\st,[\theta,g]]]=[h,[\theta,[\theta^\st,g]]]+[h,[g,[\theta^\st,\theta]]].
 \end{align*}
 Substituting this into \eqref{eq_firststep_C1}, we have
 \begin{align*} [\theta^\st,  [[\theta,g],h]]=&-[[\theta,g],[\theta^\st,h]]-[[\theta,[\theta^\st,g]],h]+[h,[g,[\theta^\st,\theta]]]\\
 =&-[[\theta,g],[\theta^\st,h]]-[[\theta,[\theta^\st,g]],h]+[h,(\Ch^{1,0}\bar\p+\bar\p\Ch^{1,0})g]
 \end{align*}
by the identity  
$[[\theta,\theta^\st],g]=-F(\Ch)g=-(\Ch^{1,0}\bar\p+\bar\p\Ch^{1,0})g$ which follows from the harmonic metric equation. Therefore
 \begin{align*} &[\theta^\st,  [[\theta,g],h]]+\Ch^{1,0}([\bar\p g,h])=-[[\theta,g],[\theta^\st,h]]-[[\theta,[\theta^\st,g]],h]-[(\Ch^{1,0}\bar\p+\bar\p\Ch^{1,0})g,h]+\Ch^{1,0}([\bar\p g,h])\\
 &=-[[\theta,g],[\theta^\st,h]]-[[\theta,[\theta^\st,g]],h]-[\bar\p\Ch^{1,0}g,h]-[\bar\p g,\Ch^{1,0}h]\\
 &= 
-[[\theta,[\theta^\st,g]],h]
-[[\theta,g],[\theta^\st,h]]
-\bar\p([\Ch^{1,0}g,h])
-[\bar\p g,\Ch^{1,0}h]
-[\bar\p h,\Ch^{1,0}g]. \qedhere
 \end{align*}
 \end{proof}

\begin{lemma}\label{comp1}
Assuming \eqref{vanisheq_2} for $m=1,\cdots,k-1$, we have
\begin{align*} &\sum_{|I_2|=k}2\cdot\frac{i_2}{i_2+i_1}\cdot\big\{[[\theta,[\theta^\st,\bg_{i_1}]],\bg_{i_2}]+[[\theta,\bg_{i_1}],[\theta^\st,\bg_{i_2}]]\big\}^{4,-4}\\
&\sum_{|I_3|=k}\big\{\frac{i_2}{i_1+i_2+i_3}\frac{i_3}{i_1+i_3}\bar\p([[[\theta,\bg_{i_1}],\bg_{i_3}],\bg_{i_2}])+\frac{i_3}{i_1+i_3}[[[\theta,\bg_{i_1}],\bg_{i_3}],\bar\p \bg_{i_2}]\big\}^{4,-4}=0.
\end{align*}
\end{lemma}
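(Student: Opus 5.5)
We have to show that the displayed sum of the two $(4,-4)$-pieces vanishes. The plan is a purely algebraic computation in the graded Lie algebra $\A^\bullet(\End\E)$, with \eqref{vanisheq_2} as the only analytic input.

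\textbf{Weight bookkeeping.} By Proposition~\ref{wtprop}, every $\bg_i$ with $i<k$ has only positive weights. Every bracket in the statement has total weight $4$ and contains either $\theta,\theta^\st$ (weights $0,2$) or $\bar\p$ (raising weight by $1$). So only the $(1,-1)$-pieces $(\bg_i)^{1,-1}$ can contribute, and in the triple terms $i_1,i_2,i_3<k$ automatically. We therefore replace every $\bg_i$ by $b_i:=(\bg_i)^{1,-1}$ throughout.

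\textbf{Eliminate $\theta^\st$.} In the first sum, the inner brackets $[\theta^\st,b_i]$ have weight $3$. We use \eqref{vanisheq_2}, in the form $[\theta^\st,b_m]=\frac12\sum_{i=1}^{m-1}\frac{i}{m}[\bar\p b_{m-i},b_i]$, to rewrite both $[\theta,[\theta^\st,b_{i_1}]]$ and $[[\theta,b_{i_1}],[\theta^\st,b_{i_2}]]$ as sums over triples $|I_3|=k$ of expressions built only from $\theta$, $\bar\p$ and the $b$'s. Some terms with $i_1=k$ or $i_2=k$ may look present; they drop out because $\bar\p$ and $\theta^\st$ applied to a single $b$ cannot reach weight $4$ without a second $b$.

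\textbf{Expand and collect.} In the second sum we expand $\bar\p([[[\theta,b_{i_1}],b_{i_3}],b_{i_2}])$ by the graded Leibniz rule, using $\bar\p\theta=0$. This gives three monomial types:
\[
[[[\theta,\bar\p b_{a}],b_{b}],b_{c}],\qquad [[[\theta,b_a],\bar\p b_b],b_c],\qquad [[[\theta,b_a],b_b],\bar\p b_c],
\]
up to signs from \eqref{eq_Liebracket_2}. Every term from the first sum, after substitution, is also a triple bracket of one $\theta$, one $\bar\p b$ and two $b$'s. We then use the Jacobi identity \eqref{eq_Liebracket_3} to bring all of them to a fixed normal form, say $[[[\theta,x],y],z]$ with a chosen ordering. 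Terms involving $[\theta,\bar\p b]$ are traded via $[\theta,\bar\p b]=-\bar\p[\theta,b]$.

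\textbf{Main obstacle: cancellation of coefficients.} For each ordered triple $(a,b,c)$ with $a+b+c=k$ and each normal-form monomial, we must show that the rational coefficients cancel. This amounts to partial-fraction identities such as
\[
\frac{b}{a+b+c}\cdot\frac{c}{a+c}+\frac{c}{a+c}=\ldots,
\qquad
\frac{j}{j+l}+\frac{l}{j+l}=1,
\]
after symmetrizing over permutations of the indices, as in the symmetrization step inside \eqref{ab(1)2}. I expect this combinatorial matching to be the hard step. I would first verify it by hand for $k=3$ and $k=4$ to pin down the correct normal form and the signs. For general $k$ I would group terms by the index carrying $\bar\p$ and sum over the remaining two indices, aiming to reduce each group to an identity of the form $\sum_{i+j=s}\frac{i}{s}=\frac{s-1}{2}$-type sums. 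If a direct match fails, a fallback is to use \eqref{vanisheq_2} once more, converting leftover $\bar\p b$-terms into $[\theta^\st,b]$-terms, and to check cancellation there instead.
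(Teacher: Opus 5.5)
Your reduction to $b_i=(\bg_i)^{1,-1}$ is correct, and the strategy works: it is the paper's computation run in the opposite direction. The paper expands the $\bar\p$-term by Leibniz and regroups with Jacobi until brackets $[\bar\p\bg_a,\bg_c]$ appear with weight $\frac{c}{a+c}$. It then contracts these to $2[\theta^\st,\bg_{a+c}]$ by \eqref{vanisheq_2}. You instead substitute \eqref{vanisheq_2} into the $\theta^\st$-terms and compare.

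The gap is that the proposal stops exactly where the content of the lemma lies. The cancellation is only conjectured, and the mechanism you anticipate is not what closes it: no summed identities of type $\sum_{i+j=s}\frac{i}{s}$, no second use of \eqref{vanisheq_2}, no trading of $[\theta,\bar\p b]$ for $-\bar\p[\theta,b]$. It closes termwise, as follows. With all sums over $|I_3|=k$, substitution turns the first sum into
\[
\sum_{|I_3|=k}\frac{i_2i_3}{k(i_1+i_3)}\,[[\theta,[\bar\p b_{i_1},b_{i_3}]],b_{i_2}]+\sum_{|I_3|=k}\frac{i_3}{k}\,[[\theta,b_{i_1}],[\bar\p b_{i_2},b_{i_3}]].
\]
The Leibniz rule with $\bar\p\theta=0$ turns the second sum into
\[
-\sum_{|I_3|=k}\frac{i_2i_3}{k(i_1+i_3)}\Big([[[\theta,\bar\p b_{i_1}],b_{i_3}],b_{i_2}]+[[[\theta,b_{i_1}],\bar\p b_{i_3}],b_{i_2}]\Big)+\sum_{|I_3|=k}\frac{i_3}{k}\,[[[\theta,b_{i_1}],b_{i_3}],\bar\p b_{i_2}].
\]
The last coefficient is $\frac{i_3}{i_1+i_3}-\frac{i_2}{k}\cdot\frac{i_3}{i_1+i_3}=\frac{i_3}{k}$. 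This is a difference, not the sum you wrote, because Leibniz puts a minus sign on the $\bar\p b_{i_2}$-term. Now use the two Jacobi moves
\begin{align*}
[[\theta,\bar\p b_a],b_c]&=[\theta,[\bar\p b_a,b_c]]+[[\theta,b_c],\bar\p b_a],\\
[[[\theta,b_a],b_c],\bar\p b_e]&=-[[\theta,b_a],[\bar\p b_e,b_c]]+[[[\theta,b_a],\bar\p b_e],b_c].
\end{align*}
The first terms on the right reproduce exactly minus the two families of the substituted first sum. What remains is
\[
-\sum_{|I_3|=k}\frac{i_2i_3}{k(i_1+i_3)}\Big([[[\theta,b_{i_3}],\bar\p b_{i_1}],b_{i_2}]+[[[\theta,b_{i_1}],\bar\p b_{i_3}],b_{i_2}]\Big)+\sum_{|I_3|=k}\frac{i_3}{k}\,[[[\theta,b_{i_1}],\bar\p b_{i_2}],b_{i_3}],
\]
and this vanishes. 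Swapping $i_1\leftrightarrow i_3$ in the first bracketed term and using $\frac{i_1}{i_1+i_3}+\frac{i_3}{i_1+i_3}=1$ turns the bracketed pair into $-\sum_{|I_3|=k}\frac{i_2}{k}[[[\theta,b_{i_1}],\bar\p b_{i_3}],b_{i_2}]$. Swapping $i_2\leftrightarrow i_3$ in the last sum gives the same expression with a plus sign. This is also the cancellation absorbed silently into the last equality of the paper's proof, so no fallback is needed.

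For the Jacobi moves, use the derivation form $[x,[y,z]]=[[x,y],z]+(-1)^{|x||y|}[y,[x,z]]$ rather than \eqref{eq_Liebracket_3}. With its signs as printed, that cyclic identity is wrong exactly when two of the three entries are odd. Both moves above are in this situation, since $\theta$, $[\theta,b_a]$ and $\bar\p b_i$ are odd, so using it literally would spoil the cancellation.
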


\begin{proof}
Using the Leibniz rule of $\bar\p$, we have
\begin{align*}&\sum_{|I_3|=k}\big\{\frac{i_2}{i_1+i_2+i_3}\frac{i_3}{i_1+i_3}\bar\p([[[\theta,\bg_{i_1}],\bg_{i_3}],\bg_{i_2}])+\frac{i_3}{i_1+i_3}[[[\theta,\bg_{i_1}],\bg_{i_3}],\bar\p \bg_{i_2}]\big\}^{4,-4}\\
&=-\sum_{|I_3|=k}\Big(\frac{i_2}{i_1+i_2+i_3}\frac{i_3}{i_1+i_3}[[[\theta,\bar\p\bg_{i_1}],\bg_{i_3}],\bg_{i_2}]
+\frac{i_2}{i_1+i_2+i_3}\frac{i_3}{i_1+i_3}[[[\theta,\bg_{i_1}],\bar\p\bg_{i_3}],\bg_{i_2}]
\\&\makebox[1.7cm]{}+(\frac{i_2}{i_1+i_2+i_3}\frac{i_3}{i_1+i_3}-\frac{i_3}{i_1+i_3})[[[\theta,\bg_{i_1}],\bg_{i_3}],\bar\p\bg_{i_2}]\Big)^{4,-4}
\\&=-\sum_{|I_3|=k}\Big(\frac{i_2}{i_1+i_2+i_3}\frac{i_3}{i_1+i_3}[[[\bar\p\bg_{i_1},\bg_{i_3}],\theta],\bg_{i_2}]+\frac{i_2}{i_1+i_2+i_3}\frac{i_3}{i_1+i_3}[[[\theta,\bg_{i_3}],\bar\p\bg_{i_1}],\bg_{i_2}]
\\&\makebox[1.7cm]{}+\frac{i_2}{i_1+i_2+i_3}\frac{i_3}{i_1+i_3}[[[\theta,\bg_{i_1}],\bar\p\bg_{i_3}],\bg_{i_2}]-\frac{i_3}{i_1+i_2+i_3}[[\bar\p\bg_{i_2},[\theta,\bg_{i_1}]],\bg_{i_3}]
\\&\makebox[1.7cm]{}-\frac{i_3}{i_1+i_2+i_3}[[\bg_{i_3},\bar\p\bg_{i_2}],[\theta,\bg_{i_1}]]\Big)^{4,-4}\quad\text{(where we use the Jacobi identity \eqref{eq_Liebracket_3})}
\\
&\stackrel{\eqref{vanisheq_2}}{=}-\sum_{|I_2|=k}2\cdot\frac{i_2}{i_2+i_1}\cdot\big\{[[\theta,[\theta^\st,\bg_{i_1}]],\bg_{i_2}]+[[\theta,\bg_{i_1}],[\theta^\st,\bg_{i_2}]]\big\}^{4,-4}.\qedhere
\end{align*}
\end{proof}

\newpage

\section{Holomorphicity and the gauge transformation}
\label{sec_holo_gauge_tran}

In Lemma~\ref{lem_gauge}, modulo-$(t)$-holomorphicity is equivalent to solvability of those equations in \eqref{hol} for gauge transformation. As a consequence of this solvability, we get the partial equations \eqref{vanisheq_1} and \eqref{vanisheq_2} for the deformed harmonic metric in Proposition~\ref{prop_ob}. In this section, We shall use them to obtain a recursive formula for the deformed harmonic metric along pure anti-holomorphic direction. Precisely, see the main result Proposition~\ref{1,-1}.

\noindent \textbf{Assumption.}
Let $\xi_{1},\cdots,\xi_{N}$ be independent continuous random variables with distributions  $$\operatorname{Beta}(i_1,1),\cdots,\operatorname{Beta}(i_N,1)$$
respectively, where $i_1,\cdots,i_N$ are positive integers. (For background on probability theory, see \cite{Ach}; for the definition of the Beta distribution, see \cite[Example 1.107]{Ach}.) 
By continuity, $\mathbb P(\xi_i = \xi_j) = 0$ for all $i \neq j$, so we may assume without loss of generality that the $\xi_i$ take pairwise distinct values.

Before presenting the recursive formula, we introduce a sequence of positive rational numbers
\begin{equation}\label{bN}
\begin{aligned}
b_{i_1,\cdots,i_N}:=\frac{1}{2^{N-1}}\bigl(&\mathbb P(\xi_{1}<\xi_{2}<\cdots<\xi_{N})+\mathbb P(\xi_{1}<\xi_{2}<\cdots<\xi_{{N-1}}>\xi_N)\\
&+\cdots+\mathbb P(\xi_{1}>\xi_{2}>\cdots>\xi_{N})\bigr).
\end{aligned}\end{equation}
For example
\begin{align*}
b_{i_1}=&1,\\
b_{i_1,i_2}=&\frac12,\\
b_{i_1,i_2,i_3}=&\frac14\bigl(\mathbb P(\xi_{1}<\xi_{2}<\xi_{3})+\mathbb P(\xi_{1}<\xi_{2}>\xi_{3})+\mathbb P(\xi_{1}>\xi_{2}>\xi_{3})\bigr),\\
b_{i_1,i_2,i_3,i_4}=&\frac18\bigl(\mathbb P(\xi_{1}<\xi_{2}<\xi_{3}<\xi_{4})+\mathbb P(\xi_{1}<\xi_{2}<\xi_{3}>\xi_{4})\\
&+\mathbb P(\xi_{1}<\xi_{2}>\xi_{3}>\xi_{4})+\mathbb P(\xi_{1}>\xi_{2}>\xi_{3}>\xi_{4})\bigr).
\end{align*}

An explicit expression of the probability $\mathbb P(\xi_1<\cdots<\xi_{l-1}<\xi_l>\xi_{l+1}>\cdots>\xi_N)$ can be found in \eqref{probmountain}. Hence, this gives an explicit expression of the sequence $b_{i_1,\cdots,i_N}$.

This sequence emerges naturally from the following question: what conditions must $\bg_k$ satisfy to guarantee modulo-$(t)$-holomorphicity? In the following Proposition~\ref{1,-1}, we will determine these conditions in a recursive way by solving \eqref{hol}, which fully determine $g_k^\st$:  in summary 
\begin{enumerate}
\item $\bg_k\in \A^0(\bigoplus_{l>0}(\End\E)^{l,-l})$ by Proposition~\ref{wtprop}; 
\item Equations \eqref{vanisheq_1} and \eqref{vanisheq_2} determine $(\bg_k)^{1,-1}$; 
\item Equation \eqref{thmbg} below determines  $(\bg_k)^{N,-N},\ N\geq 2.$
\end{enumerate}
\begin{proposition}[Recursive formula] \label{1,-1}
Let $(\E,\bar\p,\theta)$ be a graded stable Higgs bundle of weight $w$ on $X_0$. Let $(\E,\bar\p_t,\theta_t)$ be the isomonodromic deformation in Definition~\ref{def_real_ana_deform} of $(\E,\bar\p,\theta)$ on $X_0$ along $X_n$.
If the deformation is modulo-$(t)$-holomorphic,
then for any $k=1,2,\cdots,n$ and any positive integer $N$, we have
\begin{align}
& (\bg_k)^{N,-N}  = \sum_{|I_N|= k} b_{i_1,i_2,\cdots,i_N}(\bg_{i_1}\bg_{i_2}\cdots\bg_{i_N})^{N,-N}. \label{thmbg}\\
& u_k^{N,-N}  = \frac{(-1)^N}{2^N}\sum_{|I_N| = k} \mathbb P(\xi_1<\xi_2<\cdots<\xi_N)(\bg_{i_1}\bg_{i_2}\cdots\bg_{i_N})^{N,-N}, \label{thmu}
\end{align}
where $u_k$ is defined in the equations for gauge transformation \eqref{hol}.
\end{proposition}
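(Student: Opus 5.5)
We prove \eqref{thmbg} and \eqref{thmu} together by a double induction: outer on $k$, inner on $N$. For $N=1$, \eqref{thmbg} is a tautology, since $b_{i_1}=1$ and $|I_1|=k$ forces $i_1=k$. Formula \eqref{thmu} for $N=1$ is the identity $u_k^{1,-1}=-\cc(\bg_k)^{1,-1}$ established in the proof of Proposition~\ref{wtprop}, because $\mathbb P(\xi_1)=1$. The low-weight pieces $u_k^{l,-l}=0$ and $(\bg_k)^{l,-l}=0$ for $l\le 0$ also come from Proposition~\ref{wtprop}. So only $N\ge 2$ needs work.

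We then suppose \eqref{thmbg} and \eqref{thmu} hold for all $k'<k$ and all $N$, and for $k$ with all weights $<N$. We take the $(N,-N)$-graded piece of the first equation of \eqref{hol} and the $(N+1,-N-1)$-graded piece of the second equation, at order $m=k$. We substitute the explicit expansions \eqref{varphi} and \eqref{psi}. By Proposition~\ref{wtprop} every $\bg_i$ has only positive weights, and $\bar\eta_i(\theta^\st)$ has weight $2$. Hence in these graded pieces only finitely many terms survive. They are: (a) the linear terms $\cc[\theta,\bg_k]-\cc\Ch^{1,0}\bg_k+[\theta,u_k]$ and $\cc\bar\p\bg_k-\cc[\theta^\st,\bg_k]+\bar\p u_k$, taken in weight $N$ or $N+1$; (b) the terms $S^\st_{i_1}\{\cdots\}$ and $\varphi_ju_{k-j}$, $\psi_ju_{k-j}$, which involve only strictly lower orders or lower weights and so are known by induction; (c) the contraction terms with $\bar\eta_i$. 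We then apply the operator used in Proposition~\ref{prop_ob}: act with $[\theta^\st,-]$ on the first equation and with $\Ch^{1,0}$ on the second, and add. This yields an equation of the shape
\[
\square\bigl(u_k+\cc\bg_k\bigr)^{N,-N}=\square\bigl(\text{explicit polynomial in }\bg_{i}\text{ of total order }k\bigr)^{N,-N},
\]
as in the case $N=2$ already treated through \eqref{u-wt2}. The harmonicity Lemma~\ref{vanlem} then identifies $u_k+\cc\bg_k$ in weight $N$ with that polynomial. Separately, we isolate the part of the two equations in which $\bg_k^{N,-N}$ appears against $u_k^{N,-N}$ through $[\theta,\cdot]$ and $\Ch^{1,0}$ with different coefficients ($\cc$ versus $-\cc$). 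Applying Corollary~\ref{coro_harm} to the resulting pair gives a second linear relation between $(\bg_k)^{N,-N}$ and $u_k^{N,-N}$. Solving these two relations produces both formulas. To eliminate the $\bar\eta_i$ contraction terms, and mixed terms such as $[\theta,\bg]$, we use \eqref{vanisheq_1}, \eqref{vanisheq_2} and their higher-weight analogues supplied by the induction, exactly as in the proof of Proposition~\ref{prop_ob}.

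The main obstacle is combinatorial. After substitution, the coefficients of $(\bg_{i_1}\cdots\bg_{i_N})^{N,-N}$ arrive as nested sums of products of ratios like $\frac{i_2}{i_1+i_2}$. We must show that they assemble into $\mathbb P(\xi_1<\cdots<\xi_N)$ and into $b_{i_1,\dots,i_N}$. First we would verify the needed recursions directly. For $\xi_j\sim\operatorname{Beta}(i_j,1)$ with density $i_jx^{i_j-1}$, integrating out the largest variable gives
\[
\mathbb P(\xi_1<\cdots<\xi_N)=\frac{i_N}{|I_N|}\,\mathbb P(\xi_1<\cdots<\xi_{N-1}),
\]
since $\max$ of the first $N-1$ variables has density $\propto x^{|I_{N-1}|-1}$. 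This matches the factors $\frac{i_2}{i_1+i_2}$ and $\frac{i_2}{i_1+i_2+i_3}$ seen in Proposition~\ref{prop_ob}. Similarly, the unimodal ("mountain") probabilities appearing in \eqref{bN} satisfy a recursion obtained by splitting the word at $\xi_1$ or $\xi_N$. With these recursions in hand we would check that the coefficient recursion extracted from \eqref{hol} coincides with them. The identity $\bg$ versus $u$ then reduces to a sum over positions of the peak, which accounts for the factor $2^{-(N-1)}$. We would first test this against the cases $N=2$, where \eqref{u-wt2} gives $\frac14(1+\frac{i_2}{k})$, and $N=3$, before writing the general induction.
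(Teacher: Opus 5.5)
There are two genuine gaps: one structural, and one in the core computation. Your ingredients are nonetheless the right ones: graded pieces of \eqref{hol}, harmonicity, \eqref{vanisheq_1}--\eqref{vanisheq_2}, and Beta-variable combinatorics.

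\textbf{First gap: separating $(\bg_k)^{N,-N}$ from $u_k^{N,-N}$.} At order $k$, the only terms of \eqref{hol} involving the unknowns are $[\theta,\cc\bg_k+u_k]-\cc\Ch^{1,0}\bg_k$ in the first equation and $\bar\p(\cc\bg_k+u_k)-\cc[\theta^\st,\bg_k]$ in the second. Everything else involves only $\bg_i,u_i$ with $i<k$. Relative to $[\theta,-]$ and $\bar\p$, the operators $\Ch^{1,0}$ and $[\theta^\st,-]$ raise the grading by one more step. Consequently:
\begin{itemize}
\item In the $(N,-N)$-piece of the first equation and the $(N+1,-N-1)$-piece of the second, the $\Ch^{1,0}$ and $[\theta^\st,-]$ terms see only $(\bg_k)^{N-1,-N+1}$.
\item $(\bg_k)^{N,-N}$ and $u_k^{N,-N}$ enter only through the combination $(\cc\bg_k+u_k)^{N,-N}$.
\end{itemize}
So there is no second relation between them at this weight. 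Corollary~\ref{coro_harm} applied to this pair returns $(\cc\bg_k+u_k)^{N,-N}$, which your $\square$-argument already gives, and $(\bg_k)^{N-1,-N+1}$, which your inner hypothesis already assumes. Your induction on $N$ therefore does not close.

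The remedy, which is what the paper does, has two parts. First, drop the inner induction: all source terms at order $k$ involve only orders $<k$, so each weight can be treated independently. Second, look one weight higher. The $(N+1,-N-1)$-piece of the first equation and the $(N+2,-N-2)$-piece of the second read $[\theta,G]=\Ch^{1,0}F$ and $\bar\p G=[\theta^\st,F]$. Here $G=(\cc\bg_k+u_k-P)^{N+1,-N-1}$ and $F=\cc(\bg_k-Q)^{N,-N}$, for the explicit polynomials $P,Q$ of \eqref{exactphi}--\eqref{exactpsi}. Corollary~\ref{coro_harm} forces $F$ and $G$ to be multiples of $\id$, hence zero since they have nonzero weight. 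Then $F=0$ is \eqref{thmbg} at weight $N$, and $G=0$ combined with \eqref{thmbg} at weight $N+1$ gives \eqref{thmu} at weight $N+1$.

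\textbf{Second, larger gap: the production of $P$ and $Q$ is only announced.} Knowing the source terms ``by induction'' does not suffice. Harmonicity yields closed formulas only once you show the sources are exactly $[\theta,P]-\Ch^{1,0}Q$ and $\bar\p P-[\theta^\st,Q]$ for explicit $P,Q$. On your route, this means showing the right-hand side is literally $\square$ of an explicit polynomial. This exactness is the whole content of the proposition, and it needs much more than the recursion $\mathbb P(\xi_1<\cdots<\xi_N)=\frac{i_N}{|I_N|}\mathbb P(\xi_1<\cdots<\xi_{N-1})$. One needs:
\begin{itemize}
\item[(a)] the reduction $(S^\st_k)^{j,-j}=(-1)^{j-1}(\bg_k)^{j,-j}$. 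This rests on the shuffle identity $\sum_{\sigma\in A_{m;N}}b_{i_{\sigma(1)},\ldots,i_{\sigma(N)}}=b_{i_1,\ldots,i_m}b_{i_{m+1},\ldots,i_N}$ of Proposition~\ref{bkbN-k} and its alternating-sum consequences \eqref{eq_alt_sum_1}, \eqref{eq_alt_sum_2}, \eqref{redeqb1};
\item[(b)] rewriting the $S^\st$-terms as iterated commutators, re-expanded via \eqref{2ineq};
\item[(c)] treating the $\bar\eta$-contraction terms, which after substituting \eqref{vanisheq_1}--\eqref{vanisheq_2} comes down to \eqref{recursiveb} and \eqref{abN};
\item[(d)] absorbing the products $\varphi_ju_{k-j}$ and $\psi_ju_{k-j}$ via \eqref{altersumprob}. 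This is exactly what produces the extra coefficient $\frac{(-1)^N}{2^{N+1}}\mathbb P(\xi_1<\cdots<\xi_{N+1})$ in \eqref{exactphi}.
\end{itemize}
The shuffle identity in (a) is the genuinely nontrivial combinatorial input, and it does not follow from integrating out the maximum. Until (a)--(d) are carried out (the content of Lemma~\ref{recursivelem}, Lemma~\ref{lem_recur2} and Lemma~\ref{lem_simp_expre}), your argument is a strategy rather than a proof.
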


\begin{remark}
(1). Since $\bg_k\in \A^0(\bigoplus_{l>0}(\End\E)^{l,-l})$ by Proposition~\ref{wtprop}, we have 
\[(\bg_{i_1}\bg_{i_2}\cdots\bg_{i_N})^{N,-N}=(\bg_{i_1})^{1,-1}\cdots(\bg_{i_N})^{1,-1}\]
Thus each $\bg_k$ can be expressed in terms of $(\bg_1)^{1,-1},\cdots,(\bg_{k-1})^{1,-1}$.

(2). When $N>\min(w,k)$, the right hand sides of \eqref{thmbg} and \eqref{thmu} are both zero.
\end{remark}

\bigskip

We introduce the following notation, which will greatly simplify the writing of the expressions:
\begin{align}\label{eq_Lieopera}
\mathcal L_J(-):=[\cdots[-,x_1],x_2],\cdots],x_N],
\end{align}
where $J=(x_1,x_2 ,\cdots, x_N )$ is a ordered set with $x_1,\cdots,x_N\in \A^0(\End\E)$. Let 

\begin{align*}\Lambda_N^\st:=(\bg_{i_2}, \bg_{i_3},\cdots,\bg_{i_N})\end{align*}
be a ordered set of $\bg$. Besides, recall the notation \eqref{eq_altersum_S} and we have
\begin{align*}S_k^\st=\sum_{m=1}^k\sum_{i_1+i_2+\cdots+i_m=k}(-1)^{m-1}\bg_{i_1}\bg_{i_2}\cdots \bg_{i_m}.
\end{align*}

We firstly simplify a term appeared in the explicit formula \eqref{varphi} for $\varphi_i$ in the following lemma.
\begin{lemma} \label{recursivelem} 
Assuming \eqref{thmbg} holds for $k=1,2,\cdots,n-1$ and any positive integer, we have 
{\fontsize{10}{12}\selectfont
\begin{equation}\label{eqb1} 
\begin{aligned}
   \sum_{ |I_2| =i} (S^\st_{i_1}\Ch^{1,0}\bg_{i_2})^{N+1,-N-1} 
=   \sum_{|I_N|=n}{b}_{i_1,\cdots,i_N}
\left\{
\Ch^{1,0}(\bg_{i_1}\cdots\bg_{i_N})
-
\mathcal L_{\Lambda_N^\st} \Ch^{1,0}\bg_{i_1} 
\right\}^{N+1,-N-1}.
\end{aligned}\end{equation}}
\end{lemma}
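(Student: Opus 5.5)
The plan is to reduce \eqref{eqb1} to an identity between rational coefficients indexed by compositions, and then verify that identity using the Beta-variable interpretation of $b_{i_1,\dots,i_N}$. I read the index on the left-hand side as the same total order as on the right, i.e. $|I_2|=n$; this is the case in which the lemma is later applied to $\varphi_n$.

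\textbf{Step 1 (normal form of the left-hand side).} By Proposition~\ref{wtprop}, every $\bg_j$ has only positive grading pieces, and $\Ch^{1,0}$ raises the grading by one. Hence the $(N+1,-N-1)$-piece of a product $\bg_{j_1}\cdots\bg_{j_m}\Ch^{1,0}\bg_{i_2}$ is a sum over distributions of the total weight $N$ among the factors, with each factor getting weight at least $1$.

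I then substitute the inductive hypothesis \eqref{thmbg} (valid for orders $<n$) into every factor $(\bg_j)^{M,-M}$. The derivative factor $(\Ch^{1,0}\bg_{i_2})^{M+1,-M-1}=\Ch^{1,0}\big((\bg_{i_2})^{M,-M}\big)$ is treated the same way. When $i_2=n$ and $i_1=0$ does not occur, everything is of order $<n$. After this substitution, every term is a word in the weight-one pieces $(\bg_{i})^{1,-1}$ in which exactly one letter carries $\Ch^{1,0}$.

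For the left-hand side I would first record a closed formula
\[
(S^\st_k)^{M,-M}=\sum_{|I_M|=k} a_{i_1,\dots,i_M}\,(\bg_{i_1}\cdots\bg_{i_M})^{M,-M}.
\]
It comes from inverting the series $\id+\sum t^j\bg_j$ and inserting \eqref{thmbg}. I expect $a_I=(-1)^{M}\cdot(-2)\,b'_I$, where $b'_I$ is a descending-run analogue of $b_I$; I will identify it by computing $M=2,3$ first.

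\textbf{Step 2 (normal form of the right-hand side).} I expand $\Ch^{1,0}(\bg_{i_1}\cdots\bg_{i_N})$ by the Leibniz rule into $N$ words, with the derivative at position $j$. Next I expand the iterated commutator $\mathcal L_{\Lambda_N^\st}\Ch^{1,0}\bg_{i_1}$ into $2^{N-1}$ words: $\Ch^{1,0}\bg_{i_1}$ sits at some position, the letters $\bg_{i_2},\dots,\bg_{i_N}$ to its right appear in increasing index order, those to its left in decreasing order, and each word carries sign $(-1)^{\#\text{left}}$. Both sides then become linear combinations of the same words $w(J;p)$, where $J$ is a composition of $n$ into $N+1$ parts after relabeling and $p$ is the position of $\Ch^{1,0}$. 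The lemma reduces to equality of the coefficient of each $w(J;p)$.

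\textbf{Step 3 (the combinatorial identity; main obstacle).} Matching coefficients gives identities between sums of products of the numbers $a$ and $b$ on one side and sums of $b$'s over "shuffle-with-reversal" rearrangements on the other. I would prove them probabilistically. Realize $\xi_r\sim\operatorname{Beta}(i_r,1)$ as the maximum of $i_r$ independent uniforms. Concatenating blocks of uniforms then corresponds to adding indices, and a product $b_{I}b_{I'}$ corresponds to conditioning on which block holds the overall maximum. The mountain-shaped events in \eqref{bN} are exactly the events that split at the position of the maximum. The commutator expansion in Step 2 is likewise a sum over increasing-left / decreasing-right arrangements, so each coefficient identity should become a disjoint decomposition of a mountain event according to where the global maximum lies.

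This is the step I expect to be hardest, mainly because the signs coming from $S^\st$ and from the commutators must be tracked correctly. I would first check the identity by hand for $N=1,2$, using $b_{i_1}=1$ and $b_{i_1,i_2}=\frac12$, to fix the conventions. After that I would run an induction on $N$ that peels off the block containing the maximum, using the explicit formula \eqref{probmountain}.
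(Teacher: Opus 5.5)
You read the index as $|I_2|=n$, which is the reading the paper's proof uses. Your Steps 1 and 2 are the same reduction the paper performs. Both sides become combinations of words $(\bg_{i_1}\cdots(\Ch^{1,0}\bg_{i_p})\cdots\bg_{i_N})^{N+1,-N-1}$ in the weight-one pieces. Your left-decreasing/right-increasing expansion of $\mathcal L_{\Lambda_N^\st}\Ch^{1,0}\bg_{i_1}$ is exactly Lemma~\ref{brackets}, with the sets $V_{j;N}$ and $A'_{j;N}$ of Definition~\ref{AkN}. One small slip: the words have $N$ letters (compositions of $n$ into $N$ parts), since $\Ch^{1,0}$ itself carries one unit of weight.

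\textbf{The gap is Step~3.} It is the whole content of the lemma, and you leave it as an expectation without stating which identities among the $b$'s must hold.
\begin{itemize}
\item Your guessed form of $(S^\st_k)^{M,-M}$ is not right. In fact $(S^\st_k)^{M,-M}=(-1)^{M-1}(\bg_k)^{M,-M}$, i.e.\ $a_I=(-1)^{M-1}b_I$, with no factor $2$ and no descending-run variant.
\item Computing $M=2,3$ would suggest this formula but cannot confirm it, since there it holds trivially from $b_{i}=1$ and $b_{i_1,i_2}=\tfrac12$. The first case with content is $M=4$, which requires $2b_{i_1,i_2,i_3,i_4}=b_{i_1,i_2,i_3}+b_{i_2,i_3,i_4}-\tfrac14$.
\item The identities needed are alternating sums. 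So they do not arise directly as disjoint decompositions of mountain events by the position of the maximum.
\end{itemize}

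\textbf{The missing idea is the shuffle identity} of Proposition~\ref{bkbN-k}:
\[
\sum_{\sigma\in A_{k;N}}b_{i_{\sigma(1)},\ldots,i_{\sigma(N)}}=b_{i_1,\ldots,i_k}\,b_{i_{k+1},\ldots,i_N}.
\]
Your ``which block holds the maximum'' intuition proves it once made precise.
\begin{itemize}
\item $2^{N-1}b_{i_1,\ldots,i_N}$ is the probability that $(\xi_1,\ldots,\xi_N)$ is unimodal.
\item Take a total order in which both blocks $(\xi_1,\ldots,\xi_k)$ and $(\xi_{k+1},\ldots,\xi_N)$ are unimodal. It is unimodal along exactly two shuffles $\sigma\in A_{k;N}$. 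The block containing the global maximum is placed in a forced way, and the maximum of the other block may sit on either side of the global maximum, after which everything is forced.
\end{itemize}
Two consequences then give the lemma.
\begin{itemize}
\item Telescoping with $A''_{k;N}=A'_{k+1;N}$ gives \eqref{eq_alt_sum_1} and \eqref{eq_alt_sum_2}. Induction on $M$, using $S^\st_k=\bg_k-\sum_{|I_2|=k}\bg_{i_1}S^\st_{i_2}$ and \eqref{thmbg}, then yields the simplification of $(S^\st_k)^{M,-M}$ above.
\item It also gives \eqref{redeqb1}, which is precisely the coefficient match. For the word with $\Ch^{1,0}$ on the $j$-th letter, the left side contributes $\sum_{l=1}^{j-1}(-1)^{l-1}b_{i_1,\ldots,i_l}b_{i_{l+1},\ldots,i_N}$. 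The right side contributes $b_{i_1,\ldots,i_N}+(-1)^j\sum_{\sigma\in A'_{j;N}}b_{i_{\sigma(1)},\ldots,i_{\sigma(N)}}$.
\end{itemize}
Without these identities, or a proved substitute, your outline does not yet establish \eqref{eqb1}.
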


We expand the iterated Lie brackets appeared in \eqref{eqb1}.
\begin{lemma} \label{brackets}
For any $j\in\{1,2,\cdots,N\}$, let $A_{j;N}'$ and $V_{j;N}$ be the sets defined in Definition~\ref{AkN}. Then we have

\begin{align*}
&-\sum_{|I_N|=n}({b}_{i_1,\cdots,i_N}\mathcal L_{\Lambda_N^\st} \Ch^{1,0}\bg_{i_1} )^{N+1,-N-1}\\
=&\sum_{|I_N|=n}\sum_{j=1}^N(-1)^j b_{i_1,\cdots,i_N}\cdot\sum_{\sigma\in V_{j;N}}(\bg_{i_{\sigma(1)}}\cdots\bg_{i_{\sigma(j-1)}}(\Ch^{1,0}\bg_{i_{\sigma(j)}})\bg_{i_{\sigma(j+1)}}\cdots\bg_{i_{\sigma(N)}})^{N+1,-N-1}\\
=&\sum_{|I_N|=n}\sum_{j=1}^N(-1)^j (\sum_{\sigma\in A'_{j;N}}b_{i_{\sigma(1)},i_{\sigma(2)},\cdots,i_{\sigma(N)}})(\bg_{i_1}\cdots\bg_{i_{j-1}}(\Ch^{1,0}\bg_{i_j})\bg_{i_{j+1}}\cdots\bg_{i_N})^{N+1,-N-1}.
\end{align*}
\end{lemma}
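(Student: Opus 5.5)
The plan is to prove both equalities by a purely combinatorial expansion. No analytic input is needed beyond the grading bookkeeping of Proposition~\ref{wtprop}. I will assume (from Definition~\ref{AkN}) that $V_{j;N}$ is the set of orderings produced by expanding an iterated commutator with the distinguished factor landing in slot $j$, and that $A'_{j;N}$ is the corresponding set of relabelling permutations. The proof consists in verifying that these definitions match the expansion.

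\emph{Step 1: reduce to $(1,-1)$-pieces.} By Proposition~\ref{wtprop}, each $\bg_{i}$ lies in $\bigoplus_{l>0}\A^0((\End\E)^{l,-l})$. Also $\Ch^{1,0}$ raises the grading by one. Hence the $(N+1,-N-1)$-piece of any word in $\Ch^{1,0}\bg_{i_1},\bg_{i_2},\dots,\bg_{i_N}$ only sees $(\bg_{i_m})^{1,-1}$ and $(\Ch^{1,0}(\bg_{i_1})^{1,-1})^{2,-2}$. Consequently, taking the graded piece commutes with expanding brackets, and it suffices to expand $\mathcal L_{\Lambda_N^\st}\Ch^{1,0}\bg_{i_1}$ as a noncommutative polynomial.

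\emph{Step 2: expand the nested commutator.} Since $\Ch^{1,0}\bg_{i_1}$ is a $1$-form and the $\bg$'s are $0$-forms, the bracket \eqref{eq_Liebracket_2} is the ordinary commutator $[x,y]=xy-yx$. I will argue by induction on $N$. Passing from $\mathcal L_{(x_1,\dots,x_{m-1})}$ to $\mathcal L_{(x_1,\dots,x_m)}$ sends each monomial $w$ to $w\bg_{i_{m+1}}-\bg_{i_{m+1}}w$. Therefore each monomial of the full expansion corresponds to a choice, for every $m=2,\dots,N$, of placing $\bg_{i_m}$ at the far right (sign $+$) or at the far left (sign $-$). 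The factors placed left appear in decreasing order of index, and those placed right in increasing order. If $\Ch^{1,0}\bg_{i_1}$ ends up in slot $j$, then exactly $j-1$ factors went left, giving sign $(-1)^{j-1}$. Together with the overall minus sign, this yields $(-1)^j$. The resulting orderings $\sigma$, with $\sigma(j)=1$, are exactly the elements of $V_{j;N}$; these are the ``valley-shaped'' permutations, decreasing then increasing around the minimal label. This gives the first equality after multiplying by $b_{i_1,\dots,i_N}$ and summing.

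\emph{Step 3: reindex.} The outer sum over $|I_N|=n$ runs over all compositions and is invariant under permuting the labels. For each $\sigma\in V_{j;N}$, I will substitute $(i_1,\dots,i_N)\mapsto(i_{\sigma^{-1}(1)},\dots,i_{\sigma^{-1}(N)})$. This turns the monomial into the standard word $\bg_{i_1}\cdots(\Ch^{1,0}\bg_{i_j})\cdots\bg_{i_N}$ and turns the coefficient into $b_{i_{\tau(1)},\dots,i_{\tau(N)}}$ with $\tau=\sigma^{-1}$. Collecting terms gives the sum over $\tau\in A'_{j;N}=\{\sigma^{-1}:\sigma\in V_{j;N}\}$, which is the second equality.

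The main obstacle is purely notational. I must confirm that the convention in Definition~\ref{AkN}, namely whether $A'_{j;N}$ consists of the inverses of $V_{j;N}$ or of the elements of $V_{j;N}$ themselves, matches the direction of substitution used here. I will fix this by checking the case $N=2$. There $V_{1;2}=\{(1,2)\}$ and $V_{2;2}=\{(2,1)\}$, and $-[\Ch^{1,0}\bg_{i_1},\bg_{i_2}] = -\Ch^{1,0}\bg_{i_1}\,\bg_{i_2}+\bg_{i_2}\,\Ch^{1,0}\bg_{i_1}$. I will then adjust the signs or inverses accordingly.
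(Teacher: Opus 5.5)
Your proposal is correct and is essentially the paper's proof. The paper also proves the valley-shaped expansion of $-\mathcal L_{\Lambda_N^\st}\Ch^{1,0}\bg_{i_1}$ over $V_{j;N}$ by induction on $N$: applying $[-,\bg_{i_{N+1}}]$ puts the new factor at the far right with sign $+$ or at the far left with sign $-$. It then obtains the second equality by the same reindexing, and your direction is the right one, since Definition~\ref{AkN} sets $V_{j;N}=\{\sigma\mid\sigma^{-1}\in A'_{j;N}\}$.

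Two minor remarks. Step~1 is unnecessary, because the identity already holds before projecting to the $(N+1,-N-1)$-piece and that projection is linear. Also, your $N=2$ check could not have detected a $\sigma$ versus $\sigma^{-1}$ mismatch, because every element of $S_2$ is an involution.
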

\begin{proof}We only prove the first equality as the second follows directly by reordering the summation indices. For the first one, we prove inductively on $N\geq 1$ that \begin{align*}&-\mathcal L_{\Lambda_N^\st} \Ch^{1,0}\bg_{i_1} =\sum_{j=1}^N(-1)^j \sum_{\sigma\in V_{j;N}}(\bg_{i_{\sigma(1)}}\cdots\bg_{i_{\sigma(j-1)}}(\Ch^{1,0}\bg_{i_{\sigma(j)}})\bg_{i_{\sigma(j+1)}}\cdots\bg_{i_{\sigma(N)}}).
\end{align*}
For $N=1,2$, the above identity holds trivially. Assume inductively it holds for $N\geq 2$, and we prove it for $N+1$. By taking $[-,\bg_{i_{N+1}}]$ of the above identity, we have\begin{align*}&-\mathcal L_{\Lambda_{N+1}^\st}\Ch^{1,0}\bg_{i_1} 
=\sum_{j=1}^N(-1)^j \sum_{\sigma\in V_{j;N}}(\bg_{i_{\sigma(1)}}\cdots\bg_{i_{\sigma(j-1)}}(\Ch^{1,0}\bg_{i_{\sigma(j)}})\bg_{i_{\sigma(j+1)}}\cdots\bg_{i_{\sigma(N)}})\bg_{i_{N+1}}\\
&\makebox[3cm]{}-\sum_{j=1}^N(-1)^j\sum_{\sigma\in V_{j;N+1}}\bg_{i_{N+1}}(\bg_{i_{\sigma(1)}}\cdots\bg_{i_{\sigma(j-1)}}(\Ch^{1,0}\bg_{i_{\sigma(j)}})\bg_{i_{\sigma(j+1)}}\cdots\bg_{i_{\sigma(N)}}),
\end{align*}
and one can verify the right hand side of the above is exactly 
\[\sum_{j=1}^{N+1}(-1)^j \sum_{\sigma\in V_{j;N+1}}(\bg_{i_{\sigma(1)}}\cdots\bg_{i_{\sigma(j-1)}}(\Ch^{1,0}\bg_{i_{\sigma(j)}})\bg_{i_{\sigma(j+1)}}\cdots\bg_{i_{\sigma(N)}}\bg_{i_{\sigma(N+1)}}). \qedhere\]
\end{proof}
\begin{remark}By a similar argument as in the proof of Lemma~\ref{brackets}, we have for any sequence $a_{i_1,i_2,\cdots,i_N}$
\begin{equation}\label{eq_iterated_brac}\begin{aligned}
&\sum_{|I_{N+1}|=n} a_{i_1,i_2,\cdots,i_N}(\mathcal L_{\Lambda_{N+1}^\st} [\theta,\bg_{i_1}])^{N+1,-N-1}\\
&=\sum_{|I_{N+1}|=n}\sum_{l=1}^{N+1}(-1)^{l-1}(\sum_{\sigma\in A'_{l;N+1}}a_{i_{\sigma(1)},i_{\sigma(2)},\cdots,i_{\sigma(N+1)}))}(\bg_{i_1}\cdots[\theta,\bg_{i_l}]\cdots\bg_{i_{N+1}})^{N+1,-N-1}.
\end{aligned}\end{equation}
\end{remark}

\begin{proof}[Proof of Lemma~\ref{recursivelem}] 

All three terms in \eqref{eqb1} can be written as a linear combination of 
\begin{align*}&(\bg_{i_1}\cdots\bg_{i_{j-1}}(\Ch^{1,0}\bg_{i_j})\bg_{i_{j+1}}\cdots\bg_{i_N})^{N+1,-N-1}\\&=(\bg_{i_1})^{1,-1}\cdots(\bg_{i_{j-1}})^{1,-1}\big(\Ch^{1,0}(\bg_{i_j})^{1,-1}\big)(\bg_{i_{j+1}})^{1,-1}\cdots(\bg_{i_N})^{1,-1}\end{align*}
by the Leibniz rule of $\Ch^{1,0}$. In the following we compare their coefficients on both sides to prove \eqref{eqb1}. The main ingredient in this proof is the combinatorial property of $b_{i_1,\cdots,i_N}$ given in Proposition~\ref{bkbN-k}.

We firstly simplify the term in the left hand side of \eqref{eqb1}:
\begin{align} \label{eq_pf43_1}
& \sum_{ |I_2| =i} (S^\st_{i_1}\Ch^{1,0}\bg_{i_2})^{N+1,-N-1}
=\sum_{|I_2|=n}\sum_{j=1}^{N-1}(S^\st_{i_1})^{j,-j}(\Ch^{1,0}\bg_{i_2})^{N+1-j,-N-1+j}.
\end{align}

for any positive integer $j$ and $k=1,2,\cdots,n-1$. We prove by induction on $j$ that \begin{align}\label{SjN}(S^\st_k)^{j,-j}=(-1)^{j-1}(\bg_k)^{j,-j}.\end{align}
For $j=1,2$, \eqref{SjN} is clear by \eqref{thmbg}. Now we assume \eqref{SjN} holds for $2\leq j<N$ and prove it for $(S^\st_k)^{j+1,-j-1}$. If $j+1$ is odd, then 
\begin{align*}&(S^\st_k)^{j+1,-j-1}=(\bg_k)^{j+1,-j-1}+\sum_{l=1}^{j}\sum_{|I_2|= k} (\bg_{i_1})^{l,-l}(-S^\st_{i_2})^{j+1-l,-j-1+l}
\\&=(\bg_k)^{j+1,-j-1}+\sum_{l=1}^{j}\sum_{|I_2| = k} (-1)^{j+1-l}(\bg_{i_1})^{l,-l}(\bg_{i_2})^{j+1-l,-j-l+l}\\
&\overset{\eqref{thmbg}}=(\bg_k)^{j+1,-j-1}+\sum_{l=1}^{j}\sum_{|I_j|= k} (-1)^{j+1-l}(b_{i_{1},\cdots,i_l}\bg_{i_1}\cdots\bg_{i_l})^{l,-l}(b_{i_{l+1},\cdots,i_j}\bg_{i_{l+1}}\cdots\bg_{i_{j}})^{j+1-l,-j-l+l}
\\&\overset{\eqref{eq_alt_sum_2}}=(\bg_k)^{j+1,-j-1}.
\end{align*}
 Similar, if $j+1$ is even, we can also prove \eqref{SjN} by using \eqref{eq_alt_sum_1} in Corollary~\ref{altersum}. 
 
 Substituting \eqref{SjN} into \eqref{eq_pf43_1}, we obtain \begin{flalign}\label{coeffaltersum}
&\sum_{ |I_2|=i} (S^\st_{i_1}\Ch^{1,0}\bg_{i_2})^{N+1,-N-1}&\notag \\&=\sum_{|I_N|=n}\sum_{j=1}^{N-1}(-1)^{j-1}b_{i_1,\cdots,i_j}b_{i_{j+1},\cdots,i_N}\big(\bg_{i_1}\cdots \bg_{i_j}\Ch^{1,0}(\bg_{i_{j+1}}\cdots \bg_{i_N})\big)^{N+1,-N-1}&\notag\\
&=
\sum_{|I_N|=n}
\sum_{j=1}^{N}
\sum_{l=1}^{j-1}(-1)^{l-1}
b_{i_1,\cdots,i_l}b_{i_{l+1},\cdots,i_N}
(\bg_{i_1}\cdots\bg_{i_{j-1}}(\Ch^{1,0}\bg_{i_j})\bg_{i_{j+1}}\cdots\bg_{i_N})^{N+1,-N-1}.&
\end{flalign}
by expanding $\Ch^{1,0}(\bg_{i_{j+1}}\cdots \bg_{i_N})$ by Leibniz rule. 
To prove \eqref{eqb1}, it suffices to prove the coefficient of \begin{align*}(\bg_{i_1}\cdots\bg_{i_{j-1}}(\Ch^{1,0}\bg_{i_j})\bg_{i_{j+1}}\cdots\bg_{i_N})^{N+1,-N-1}\end{align*}
on both sides of \eqref{eqb1} are equal. Using \eqref{coeffaltersum} and Lemma~\ref{brackets}, we only need to prove for any $j=1,2,\cdots,N$
\begin{align*}\sum_{l=1}^{j-1}(-1)^{l-1}b_{i_1,\cdots,i_l}b_{i_{l+1},\cdots,i_N}=
b_{i_1,\cdots,i_N}-(-1)^j\sum_{\sigma\in A'_{j;N}}b_{i_{\sigma(1)},\cdots,i_{\sigma(N)}}.
\end{align*}
Note that the above holds by \eqref{redeqb1}.
\end{proof}

\begin{lemma}\label{lem_recur2}
Assuming \eqref{thmbg} holds for $k=1,2,\cdots,n-1$ and any positive integer $N$, we have 
{\fontsize{10}{12}\selectfont
\begin{flalign}
&\text{(i)}\quad \sum_{|I_2| = n}\big(S_{i_1}^\st[\theta,\bg_{i_2}]\big)^{N+1,-N-1}&\notag\\
&\qquad=\sum_{|I_{N+1}|=n} b_{i_1,\cdots,i_{N+1}}([\theta,\bg_{i_1}\cdots\bg_{i_{N+1}}]-\mathcal L_{\Lambda_{N+1}^\st} [\theta,\bg_{i_1}])^{N+1,-N-1}.& \label{eqb2}
\\[6pt]
&\text{(ii)}\quad \Big\{ -[\theta^\st,\sum_{|I_{N-1}|=n} b_{i_1,\cdots,i_{N-1}}\bg_{i_1}\cdots\bg_{i_{N-1}}]+\bar\p(\sum_{|I_{N}|=n} b_{i_1,\cdots,i_{N}}\bg_{i_1}\cdots\bg_{i_{N}}]) &\notag\\
&\qquad\makebox[5cm]{} +\sum_{i_1 + i_2 = n}\Big(S^\st_{i_1}([\theta^\st,\bg_{i_2}]-\bar\p\bg_{i_2})\Big)\Big\}^{N+1,-N-1}& \notag\\
&\qquad=\Big\{\sum_{|I_{N}|=n} b_{i_1,\cdots,i_{N}}\mathcal L_{\Lambda_N^\st}\bar\p\bg_{i_1}-\sum_{|I_{N-1}|=n} b_{i_1,i_2,\cdots,i_{N-1}}\mathcal L_{\Lambda_{N-1}^\st}[\theta^\st,\bg_{i_1}]\Big\}^{N+1,-N-1}.& \label{eqb3}
\\[6pt]
&\text{(iii)}\quad \big\{\sum_{|I_2|= n}\bar\eta_{i_1}\big([\theta^\st,\bg_{i_2}]-\bar\p\bg_{i_2}\big)-\sum_{|I_3|= n}\bar\eta_{i_1}\Big(S_{i_2}^\st([\theta^\st,\bg_{i_3}]-\bar\p\bg_{i_3})\Big)\big\}^{N+1,-N-1} \notag\\
&\qquad=\Big\{\sum_{|I_{N}|=n} b_{i_1,\cdots,i_{N-1}}\bar\eta_{i_N}\mathcal L_{\Lambda_{N-1}^\st} [\theta^\st,\bg_{i_1}]-\sum_{|I_{N+1}|=n} {b}_{i_1,\cdots,i_{N}}\bar\eta_{i_{N+1}}\mathcal L_{\Lambda_N^\st} \bar\p\bg_{i_1}\Big\}^{N+1,-N-1}.& \label{eqb4}
\end{flalign}
}
\end{lemma}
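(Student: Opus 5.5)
The plan is to prove Lemma~\ref{lem_recur2} the same way as Lemma~\ref{recursivelem}: rewrite both sides of each identity as linear combinations of elementary monomials and compare coefficients.

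\textbf{Reduction to monomials.} By Proposition~\ref{wtprop}, every $\bg_i$ has only positive weights. Since $\theta$ has weight $0$ and $\theta^\st$, $\bar\p$ each raise weight by one, the $(N+1,-N-1)$-piece of any expression in the statement is a sum of monomials of three types:
\[
\bg_{i_1}\cdots[\theta,\bg_{i_l}]\cdots\bg_{i_{N+1}},\qquad
\bg_{i_1}\cdots(\bar\p\bg_{i_l})\cdots\bg_{i_N},\qquad
\bg_{i_1}\cdots[\theta^\st,\bg_{i_l}]\cdots\bg_{i_{N-1}}.
\]
In each monomial every factor $\bg_i$ is replaced by its $(1,-1)$-piece, as in the Remark after Proposition~\ref{1,-1}. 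We then check each identity coefficient by coefficient.

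\textbf{Identities (i) and (ii).} The key input is \eqref{SjN}, that is $(S^\st_k)^{j,-j}=(-1)^{j-1}(\bg_k)^{j,-j}$. This was proved inside Lemma~\ref{recursivelem} from the inductive hypothesis \eqref{thmbg} together with the alternating-sum identities of Corollary~\ref{altersum}.

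For (i), first split
\[
\bigl(S^\st_{i_1}[\theta,\bg_{i_2}]\bigr)^{N+1,-N-1}=\sum_j (S^\st_{i_1})^{j,-j}[\theta,\bg_{i_2}]^{N+1-j,-N-1+j}.
\]
Then apply \eqref{SjN} and \eqref{thmbg} to both factors, and expand $[\theta,\bg_{i_{j+1}}\cdots\bg_{i_{N+1}}]$ by the Leibniz rule. This gives a coefficient $\sum_{l<j}(-1)^{l-1}b_{i_1,\dots,i_l}b_{i_{l+1},\dots,i_{N+1}}$ for each monomial carrying $[\theta,\bg_{i_j}]$ in slot $j$. On the right-hand side, the term $[\theta,\bg_{i_1}\cdots\bg_{i_{N+1}}]$ contributes $b_{i_1,\dots,i_{N+1}}$ to every slot. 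The iterated bracket is expanded by \eqref{eq_iterated_brac}, which contributes $(-1)^{l-1}\sum_{\sigma\in A'_{l;N+1}}b_{\sigma(\cdot)}$. Matching the two sides reduces exactly to the combinatorial identity \eqref{redeqb1} already used for \eqref{eqb1}.

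Identity (ii) is handled identically, treating the $\bar\p$-monomials and the $\theta^\st$-monomials separately. The $\bar\p$-part is the verbatim analogue of Lemma~\ref{recursivelem}, with $\Ch^{1,0}$ replaced by $\bar\p$. The $\theta^\st$-part is the same computation at length $N-1$, with the opposite sign.

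\textbf{Identity (iii).} Contraction $\bar\eta_{i}$ commutes with the grading, and it is $\End\E$-linear in the relevant sense because of the sign rule \eqref{eq_C2}. So we pull $\bar\eta_{i_1}$ outside. The bracket $\bigl([\theta^\st,\bg_{i_3}]-\bar\p\bg_{i_3}\bigr)-S^\st_{i_2}\bigl([\theta^\st,\bg_{i_3}]-\bar\p\bg_{i_3}\bigr)$, taken at total index $n-i_1$, is exactly the combination already simplified in (ii) at the lower order $n-i_1<n$, where the inductive hypothesis applies. Substituting that simplification and relabeling $i_1\mapsto i_N$ (respectively $i_{N+1}$) yields \eqref{eqb4}.

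\textbf{Main obstacle.} I expect the hardest step to be the sign and ordering bookkeeping when $\mathcal L_{\Lambda^\st}$ acts on a $1$-form such as $[\theta,\bg]$, $\bar\p\bg$ or $[\theta^\st,\bg]$. The graded bracket \eqref{eq_Liebracket_2} against $0$-forms carries no extra sign, but expanding it must reproduce the sets $V_{j;N}$ and $A'_{j;N}$ exactly as in Lemma~\ref{brackets}. I would first verify (i) by hand for $N=1,2$, then rely on the induction of Lemma~\ref{brackets} to settle the general sign pattern.
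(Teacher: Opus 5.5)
Your (i) and (ii) follow the paper's own argument. The paper also reruns the coefficient comparison from the proof of \eqref{eqb1} (\eqref{SjN}, the Leibniz rule, Lemma~\ref{brackets} and \eqref{eq_iterated_brac}, then \eqref{redeqb1}), with $\Ch^{1,0}$ replaced by $[\theta,-]$, $\bar\p$ and $[\theta^\st,-]$.

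You genuinely differ only in (iii). The paper handles (iii) by the same rerun, treating $\bar\eta\circ\bar\p$ and $\bar\eta\circ[\theta^\st,-]$ as Leibniz operators. You instead obtain (iii) from (ii) at the lower order $m=n-i_1$, and this reduction is valid. You should state one step explicitly: it uses \eqref{thmbg} at $k=m$ itself, which is available since $m\le n-1$. That hypothesis turns the first two terms on the left of \eqref{eqb3} at order $m$ into $\{-[\theta^\st,\bg_m]+\bar\p\bg_m\}^{N+1,-N-1}$. Hence the argument of $\bar\eta_{i_1}$ in (iii) is exactly minus the left side of \eqref{eqb3} at order $m$. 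Now apply $\bar\eta_{i_1}$, which preserves the grading and commutes with left and right multiplication by $0$-forms. Relabeling $i_1$ as $i_N$ (resp.\ $i_{N+1}$) then gives \eqref{eqb4}.

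That commutation is trivial and has nothing to do with \eqref{eq_C2}, which concerns products of two $1$-forms. Also, $\bar\eta_{i_1}$ is already outside in the statement, so nothing needs to be pulled out.

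What your route buys: (iii) becomes a formal consequence of (ii), which holds at every order $\le n$ under the same hypothesis, so no further combinatorics is needed. The paper's route keeps all three identities on a uniform footing.

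One small slip: in the paper's convention $\theta^\st=(\theta^\st)^{2,-2}$ raises total weight by two and $\bar\p$ by one. Your list of monomial types, with $N+1$, $N$ and $N-1$ factors respectively, is nevertheless correct.
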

\begin{proof}
(i) Note that in the proof of \eqref{eqb1}, we crucially the $\mathbb C$-linearity and the Leibniz rule of $\Ch^{1,0}$. And here $[\theta,-]$ also satisfies the $\mathbb C$-linearity and the Leibniz rule. Hence we just run a similar argument to prove \eqref{eqb4}.

\noindent (ii)(iii) Note that $[\theta^\st,-]$, $\bar\p(-)$ (and $\bar\eta\circ [\theta^\st,-]$, $\bar\eta\circ\bar\p(-)$) also satisfy the $\mathbb C$-linearity and the Leibniz rule. By a similar argument as in the proof of \eqref{eqb1}, we have \eqref{eqb3} (and \eqref{eqb4}).
\end{proof}

Now we use Lemma~\ref{recursivelem} and Lemma~\ref{lem_recur2} to simplify the expressions of $\varphi_n,\psi_n$ in \eqref{varphi} and \eqref{psi} under the modulo-$(t)$-holomorphicity condition. 
\begin{lemma}[Simplified expressions of $\varphi_n,\psi_n$]\label{lem_simp_expre} 
Under the assumption of Proposition~\ref{1,-1} and we assume \eqref{thmbg} holds for $k=1,2,\cdots,n-1$ and any positive integer $N$. Then for any non-negative integer $N$
{\small \begin{flalign}\label{varphired}\varphi_n^{N+1,-N-1}=&\cc[\theta,\bg_n-\sum_{|I_{N+1}|=n} b_{i_1,\cdots,i_{N+1}}\bg_{i_1}\cdots\bg_{i_{N+1}}]^{N+1,-N-1}&\notag\\
-&\cc\Big(\Ch^{1,0}(\bg_n-\sum_{|I_{N}|=n} b_{i_1,\cdots,i_{N}}\bg_{i_1}\cdots\bg_{i_{N}})\Big)^{N+1,-N-1}&\\
+&\frac{1}{2^{N+1}}\sum_{|I_{N+1}|=n}\sum_{l=1}^{N+1}(-1)^{l-1}\mathbb P(\xi_1<\cdots<\xi_l>\cdots>\xi_{N+1})(\bg_{i_1}\cdots[\theta,\bg_{i_l}]\cdots\bg_{i_{N+1}})^{N+1,-N-1}\notag&
\end{flalign}}
and we have for any positive integer $N$

{\small \begin{flalign}\label{psired}
\psi_n^{N+1,-N-1}=&\cc\bar\p(\bg_n-\sum_{|I_{N}|=n} b_{i_1,\cdots,i_{N}}\bg_{i_1}\cdots\bg_{i_{N}})^{N+1,-N-1}&\notag\\
&-\cc[\theta^\st,\bg_n-\sum_{|I_{N+1}|=n} b_{i_1,\cdots,i_{N-1}}\bg_{i_1}\cdots\bg_{i_{N-1}}]^{N+1,-N-1}&\\
&+\frac{1}{2^{N}}\sum_{|I_{N}|=n}\sum_{l=1}^{N}(-1)^{l-1}\mathbb P(\xi_1<\cdots<\xi_l>\cdots>\xi_{N}) (\bg_{i_1}\cdots(\bar\p\bg_{i_l})\cdots\bg_{i_{N}})^{N+1,-N-1}& \notag\\
&+\frac{1}{2^{N}}\sum_{|I_{N+1}|=n}\sum_{l=1}^{N}(-1)^{l}\mathbb P(\xi_1<\cdots<\xi_l>\cdots>\xi_{N})(\bg_{i_1}\cdots(\bar \eta_{i_{N+1}}(\bar\p\bg_{i_l}))\cdots\bg_{i_{N}})^{N+1,-N-1}.&\notag
\end{flalign}}
\end{lemma}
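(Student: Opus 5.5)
The plan is to start from the explicit formulas \eqref{varphi} and \eqref{psi} for $\varphi_n$ and $\psi_n$ and project each term onto the $(N+1,-N-1)$ grading piece. By Proposition~\ref{wtprop}, every $\bg_i$ lies in positive weights. Combined with the inductive hypothesis \eqref{thmbg} for $k<n$, this lets us rewrite every product of $\bg_i$'s of total weight $N$ as a weighted sum $\sum_{|I_N|=i}b_{i_1,\dots,i_N}\,\bg_{i_1}\cdots\bg_{i_N}$.

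We then treat the three groups of terms in \eqref{varphi} separately.

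\textbf{Linear terms.} The terms $\bar\eta_n(\theta^\st)$, $\cc[\theta,\bg_n]$ and $-\cc\Ch^{1,0}\bg_n$ are kept as they are. The term $\bar\eta_n(\theta^\st)$ has weight $2$. For $N\ge 2$ it does not contribute. For $N=1$ we absorb it using the relation \eqref{vanisheq_1} from Proposition~\ref{prop_ob}; we expect this to be the consistent reading of the statement.

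\textbf{Terms of the form $S^\st_{i_1}\{\Ch^{1,0}\bg_{i_2}-[\theta,\bg_{i_2}]\}$.} Here we invoke Lemma~\ref{recursivelem} for the $\Ch^{1,0}$ part and Lemma~\ref{lem_recur2}(i) for the $[\theta,-]$ part. This converts them into
\[
b\cdot\Ch^{1,0}(\bg\cdots\bg)-b\cdot\mathcal L_{\Lambda^\st}\Ch^{1,0}\bg
\qquad\text{and}\qquad
b\cdot[\theta,\bg\cdots\bg]-b\cdot\mathcal L_{\Lambda^\st}[\theta,\bg],
\]
respectively, where $b$ and $\bg\cdots\bg$ abbreviate $b_{i_1,\dots,i_N}$ and $\bg_{i_1}\cdots\bg_{i_N}$ as in those lemmas. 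The non-bracket pieces combine with the linear terms to produce the first two lines of \eqref{varphired}.

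\textbf{Terms involving $\bar\eta$.} By Lemma~\ref{lem_recur2}(iii), these become iterated brackets contracted with $\bar\eta$.

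This leaves iterated-bracket remainders. We expand them with Lemma~\ref{brackets} and \eqref{eq_iterated_brac}, which turns each of them into monomials $\bg_{i_1}\cdots[\theta,\bg_{i_l}]\cdots\bg_{i_{N+1}}$ (respectively with $\Ch^{1,0}$ in place of $[\theta,-]$). The coefficients of these monomials are signed sums $\sum_{\sigma\in A'_{l;N+1}}b_{i_{\sigma(1)},\dots,i_{\sigma(N+1)}}$.

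For $\psi_n$ we proceed in the same way, using Lemma~\ref{lem_recur2}(ii) for the $S^\st$-terms and Lemma~\ref{lem_recur2}(iii) for the $\bar\eta$-terms. The $\bar\eta_{i_{N+1}}\mathcal L\,\bar\p\bg$ pieces produce the last line of \eqref{psired}.

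The main obstacle is the combinatorial identity behind the third line of \eqref{varphired}. We need
\[
\sum_{\sigma\in A'_{l;N+1}}b_{i_{\sigma(1)},\dots,i_{\sigma(N+1)}}=\frac{1}{2^{N+1}}\,\mathbb P(\xi_1<\cdots<\xi_l>\cdots>\xi_{N+1}),
\]
up to a sign, and also that the $\Ch^{1,0}$-bracket remainders cancel exactly against the weight-$N$ part. We plan to prove it by interpreting $b$ through \eqref{bN} as a sum over "mountain" orderings of the Beta variables, which is presumably the content of Proposition~\ref{bkbN-k} and \eqref{redeqb1}. A permutation in $A'_{l;N+1}$ reorders the variables, and summing the corresponding probabilities recovers a single mountain event with peak at position $l$. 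We would verify this first for $N=1,2$ by hand, and then induct on $N$ by conditioning on the position of the maximum among $\xi_1,\dots,\xi_{N+1}$.
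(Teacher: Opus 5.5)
There is a genuine gap. After you apply Lemma~\ref{recursivelem} and Lemma~\ref{lem_recur2}, the part of $\varphi_n^{N+1,-N-1}$ not already in the first two lines of \eqref{varphired} is the $(N+1,-N-1)$-piece of
\begin{align*}
&\cc\sum_{|I_{N+1}|=n} b_{i_1,\dots,i_{N+1}}\mathcal L_{\Lambda_{N+1}^\st}[\theta,\bg_{i_1}]-\cc\sum_{|I_{N}|=n} b_{i_1,\dots,i_{N}}\mathcal L_{\Lambda_{N}^\st}\Ch^{1,0}\bg_{i_1}\\
&+\cc\sum_{|I_{N}|=n} b_{i_1,\dots,i_{N-1}}\bar\eta_{i_N}\mathcal L_{\Lambda_{N-1}^\st}[\theta^\st,\bg_{i_1}]-\cc\sum_{|I_{N+1}|=n} b_{i_1,\dots,i_{N}}\bar\eta_{i_{N+1}}\mathcal L_{\Lambda_{N}^\st}\bar\p\bg_{i_1}.
\end{align*}
The right-hand side of \eqref{varphired} contains only monomials with a single factor $[\theta,\bg_{i_l}]$. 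Monomials containing $\Ch^{1,0}\bg_{i_1}$, $\bar\eta(\bar\p\bg)$ or $\bar\eta([\theta^\st,\bg])$ are different differential expressions. No identity among coefficients can remove them, and there is no ``weight-$N$ part'' for the $\Ch^{1,0}$-brackets to cancel against.

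The missing idea is to substitute the holomorphicity relations \eqref{vanisheq_1} and \eqref{vanisheq_2} inside the iterated brackets. These hold for every $m\le n$ under the hypothesis of Proposition~\ref{1,-1}, by Proposition~\ref{prop_ob}(ii). Concretely, you replace $(\Ch^{1,0}\bg_{i_1})^{2,-2}$ by $2\bar\eta_{i_1}(\theta^\st)-\sum\bar\eta(\bar\p\bg)+\cc\sum\frac{i}{m}[[\theta,\bg],\bg]$, and $[\theta^\st,\bg_{i_1}]^{3,-3}$ by $\cc\sum\frac{i}{m}[\bar\p\bg,\bg]$. You used \eqref{vanisheq_1} only for the linear term $\bar\eta_n(\theta^\st)$ at $N=1$. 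At $N=1$ that is indeed enough: the factors $\frac{i}{n}$ it introduces are exactly what produce the coefficients $\frac14\mathbb P(\xi_1>\xi_2)$ and $-\frac14\mathbb P(\xi_1<\xi_2)$. For $N\ge 2$ the same substitution is needed inside every bracket. After it:
\begin{itemize}
\item the coefficient of $\bar\eta_{i_{N+1}}\mathcal L_{\Lambda_N^\st}\bar\p\bg_{i_1}$ is precisely the combination in \eqref{recursiveb}, hence zero;
\item the coefficient of $\mathcal L_{\Lambda_{N+1}^\st}[\theta,\bg_{i_1}]$ becomes $\cc\bigl(b_{i_1,\dots,i_{N+1}}-\cc\frac{i_2}{i_1+i_2}b_{i_1+i_2,i_3,\dots,i_{N+1}}\bigr)=\frac{1}{2^{N+1}}\mathbb P(\xi_1>\cdots>\xi_{N+1})$, by \eqref{abN}.
\end{itemize}
Only after this reduction does expanding with \eqref{eq_iterated_brac} give the third line.

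The combinatorial identity your plan rests on is false, even up to sign. For $N=1$, $l=1$ one has $A'_{1;2}=\{\id\}$, so the left side is $b_{i_1,i_2}=\frac12$. The right side is $\frac14\mathbb P(\xi_1>\xi_2)=\frac14\cdot\frac{i_1}{i_1+i_2}<\frac14$. What is true is \eqref{2ineq}:
\[
\sum_{\sigma\in A'_{l;N+1}}\mathbb P(\xi_{\sigma(1)}>\cdots>\xi_{\sigma(N+1)})=\mathbb P(\xi_1<\cdots<\xi_l>\cdots>\xi_{N+1}).
\]
This concerns the probability of a single total order, not the $b$'s. It applies only because the bracket coefficient has first been reduced to $\frac{1}{2^{N+1}}\mathbb P(\xi_1>\cdots>\xi_{N+1})$. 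The index-dependent factors $\frac{i_2}{i_1+i_2}$ that build these Beta probabilities enter only through the coefficients $\frac{i}{m}$ in \eqref{vanisheq_1}--\eqref{vanisheq_2}. So no manipulation of the $b$'s alone can produce them.

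The same correction is needed for $\psi_n$. Substitute \eqref{vanisheq_2} into the $[\theta^\st,\bg_{i_1}]$-brackets produced by Lemma~\ref{lem_recur2}(ii),(iii). The coefficients then reduce via \eqref{abN} to $-\frac{1}{2^{N-1}}\mathbb P(\xi_1>\cdots>\xi_N)$, and \eqref{eq_iterated_brac} together with \eqref{2ineq} give the last two lines of \eqref{psired}.
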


\begin{proof}
Substituting \eqref{eqb1}, \eqref{eqb2} and \eqref{eqb4} into explicit formula \eqref{varphi} of $\varphi_n$, we have
{\small \begin{align*}(\varphi_n)^{N+1,-N-1}=&\cc\Big\{[\theta,\bg_n-\sum_{|I_{N+1}|=n} b_{i_1,\cdots,i_{N+1}}\bg_{i_1}\cdots\bg_{i_{N+1}}]
\\&-\Ch^{1,0}(\bg_n-\sum_{|I_{N}|=n} b_{i_1,\cdots,i_N}\bg_{i_1}\cdots\bg_{i_N})\Big\}^{N+1,-N-1}\\
&+\cc\Big\{\sum_{|I_{N+1}|=n} b_{i_1,\cdots,i_{N+1}}(\mathcal L_{\Lambda_{N+1}^\st} [\theta,\bg_{i_1}])-\sum_{|I_{N}|=n}{b}_{i_1,\cdots,i_N}
(
\mathcal L_{\Lambda_N^\st} \Ch^{1,0}\bg_{i_1} 
)\Big\}^{N+1,-N-1}\\
&+\cc\Big\{\sum_{|I_{N}|=n} b_{i_1,\cdots,i_{N-1}}\bar\eta_{i_N}(\mathcal L_{\Lambda_{N-1}^\st} [\theta^\st,\bg_{i_1}])-\sum_{|I_{N+1}|=n} {b}_{i_1,\cdots,i_{N}}\bar\eta_{i_{N+1}}(\mathcal L_{\Lambda_N^\st} \bar\p\bg_{i_1})\Big\}^{N+1,-N-1}.
\end{align*}}
The last two lines of the above expression are denoted as \textbf{Eq3}. To prove \eqref{varphired}, it suffices to prove 
\[\textbf{Eq3}=\frac{1}{2^{N+1}}\sum_{|I_{N+1}|=n}\sum_{l=1}^{N+1}(-1)^{l-1}\mathbb P(\xi_1<\cdots<\xi_l>\cdots>\xi_{N+1})(\bg_{i_1}\cdots[\theta,\bg_{i_l}]\cdots\bg_{i_{N+1}})^{N+1,-N-1}.
\]

Substituting partial equation \eqref{vanisheq_2} into $[\theta^\st,\bg_{i_1}]$ below, we have
\begin{align*}&\sum_{|I_{N}|=n} b_{i_1,\cdots,i_{N-1}}\bar\eta_{i_N}(\mathcal L_{\Lambda_{N-1}^\st}[\theta^\st,\bg_{i_1}])^{N+1,-N-1}
\\&=\sum_{|I_{N+1}|=n}b_{i_1+i_2,i_3,\cdots,i_{N}}\cdot\frac{i_2}{i_2+i_1}\cdot\bar\eta_{i_{N+1}}(\mathcal L_{\Lambda_N^\st}\bar\p\bg_{i_1})^{N+1,-N-1},
\end{align*}
and using partial equations \eqref{vanisheq_1} and \eqref{vanisheq_2}, we have
\begin{align*}&-\sum_{|I_{N}|=n}{b}_{i_1,\cdots,i_N}
(
\mathcal L_{\Lambda_N^\st}\Ch^{1,0}\bg_{i_1} 
)^{N+1,-N-1}\\
&=\sum_{|I_{N+1}|=n} \Big\{-\frac{i_2}{i_2+i_1}\cdot b_{i_{N+1},i_1+i_2,i_3\cdots,i_{N}}\bar\eta_{i_{N+1}}(\mathcal L_{\Lambda_N^\st}\bar\p\bg_{i_1})+b_{i_{N+1}+i_1,i_2,\cdots,i_{N}}\bar\eta_{i_{N+1}}(\mathcal L_{\Lambda_N^\st}\bar\p\bg_{i_1})\\&\makebox[2cm]{}-\cc\cdot \frac{i_2}{i_2+i_1}\cdot b_{i_1+i_2,i_3,\cdots,i_{N+1}}\mathcal L_{\Lambda_{N+1}^\st} [\theta,\bg_{i_1}]\Big\}^{N+1,-N-1}.
\end{align*}
Substituting the above two identities into \textbf{Eq3}, we have
\begin{align*}\textbf{Eq3}=\cc\cdot&\Big\{\sum_{|I_{N+1}|=n}(b_{i_{N+1}+i_1,i_2,\cdots,i_{N}}-{b}_{i_1,i_2,\cdots,i_{N}}+ \frac{i_2}{i_2+i_1}\cdot(\cc\cdot b_{{i_1+i_2},i_3,\cdots,i_{N}}-b_{i_{N+1},i_1+i_2,i_3,\cdots,i_{N}}))\times
\\&\bar\eta_{i_{N+1}} (\mathcal L_{\Lambda_N^\st}\bar\p\bg_{i_1})
+\sum_{|I_{N+1}|=n} (b_{i_1,\cdots,i_{N+1}}-\frac12\cdot \frac{i_2}{i_2+i_1}\cdot b_{i_1+i_2,i_3,\cdots,i_{N+1}})(\mathcal L_{\Lambda_{N+1}^\st} [\theta,\bg_{i_1}])\Big\}^{N+1,-N-1}\end{align*}
Note that \eqref{recursiveb} gives: $b_{i_{N+1}+i_1,i_2,\cdots,i_{N}}-{b}_{i_1,i_2,\cdots,i_{N}}+ \frac{i_2}{i_2+i_1}\cdot(\cc\cdot b_{{i_1+i_2},i_3,\cdots,i_{N}}-b_{i_{N+1},i_1+i_2,i_3,\cdots,i_{N}})=0$, and thus we have
\begin{align*}
 \textbf{Eq3} \overset{\eqref{recursiveb}}= \cc\sum_{|I_{N+1}|=n} (b_{i_1,\cdots,i_{N+1}}-\frac12\cdot  \frac{i_2}{i_2+i_1}\cdot b_{i_1+i_2,i_3,\cdots,i_{N+1}})(\mathcal L_{\Lambda_{N+1}^\st} [\theta,\bg_{i_1}])^{N+1,-N-1}.
\end{align*}
Note that \eqref{abN} gives: $b_{i_1,\cdots,i_{N+1}}-\frac12 \cdot\frac{i_2}{i_2+i_1}\cdot b_{i_1+i_2,i_3,\cdots,i_{N+1}}=\frac{1}{2^{N}} \mathbb P(\xi_1>\xi_2>\cdots>\xi_{N+1})$, and thus we have
\begin{align*}& \textbf{Eq3} \overset{\eqref{abN}}= \frac{1}{2^{N+1}}\sum_{|I_{N+1}|=n} \mathbb P(\xi_1>\xi_2>\cdots>\xi_{N+1})(\mathcal L_{\Lambda_{N+1}^\st} [\theta,\bg_{i_1}])^{N+1,-N-1}\\
&\overset{\eqref{eq_iterated_brac}}=\sum_{|I_{N+1}|=n}\sum_{l=1}^{N+1}\frac{(-1)^{l-1}}{2^{N+1}}(\sum_{\sigma\in A'_{l;N+1}}\mathbb P(\xi_{\sigma(1)}>\xi_{\sigma(2)}>\cdots>\xi_{\sigma(N+1)}))(\bg_{i_1}\cdots[\theta,\bg_{i_l}]\cdots\bg_{i_{N+1}})^{N+1,-N-1}.
\end{align*}
Note that \eqref{2ineq} gives: $\sum\limits_{\sigma\in A'_{l;N+1}}\mathbb P(\xi_{\sigma(1)}>\xi_{\sigma(2)}>\cdots>\xi_{\sigma(N+1)}))=\mathbb P(\xi_1<\cdots<\xi_l>\cdots>\xi_{N+1})$, and thus we obtain \eqref{varphired}:
 \begin{align*} \textbf{Eq3} \overset{\eqref{2ineq}}= \sum_{|I_{N+1}|=n}\sum_{l=1}^{N+1}\frac{(-1)^{l-1}}{2^{N+1}}\mathbb P(\xi_1<\cdots<\xi_l>\cdots>\xi_{N+1})(\bg_{i_1}\cdots[\theta,\bg_{i_l}]\cdots\bg_{i_{N+1}})^{N+1,-N-1}.
 \end{align*}

Now we give a sketch proof of \eqref{psired} using a similar argument as above. By \eqref{eqb4}, we have 
\begin{align*}&\big\{\sum_{|I_2|= n}\bar\eta_{i_1}\big([\theta^\st,\bg_{i_2}]-\bar\p\bg_{i_2}\big)-\sum_{|I_3|= n}\bar\eta_{i_1}\Big(S_{i_2}^\st([\theta^\st,\bg_{i_3}]-\bar\p\bg_{i_3})\Big)\big\}^{N+1,-N-1} \\
&=\Big\{\sum_{|I_{N}|=n} b_{i_1,\cdots,i_{N-1}}\bar\eta_{i_N}\mathcal L_{\Lambda_{N-1}^\st} [\theta^\st,\bg_{i_1}]-\sum_{|I_{N+1}|=n} {b}_{i_1,\cdots,i_{N}}\bar\eta_{i_{N+1}}\mathcal L_{\Lambda_N^\st} \bar\p\bg_{i_1}\Big\}^{N+1,-N-1}
\\
&\overset{\eqref{vanisheq_2}}=\sum_{|I_{N+1}|=n} (\cc \cdot\frac{i_2}{i_2+i_1}\cdot b_{i_1+i_2,i_3,\cdots,i_{N}}-b_{i_1,\cdots,i_{N}})\bar\eta_{i_{N+1}}(\mathcal L_{\Lambda_N^\st}\bar\p\bg_{i_1})^{N+1,-N-1}\\
&\overset{\eqref{abN}}=-\frac{1}{2^{N-1}}\sum_{|I_{N+1}|=n}\mathbb P(\xi_1>\xi_2>\cdots>\xi_{N})\bar\eta_{i_{N+1}}(\mathcal L_{\Lambda_N^\st}\bar\p\bg_{i_1})^{N+1,-N-1}\\
&\overset{\eqref{eq_iterated_brac}}=-\frac{1}{2^{N-1}}\sum_{|I_{N+1}|=n}\sum_{l=1}^{N}(-1)^{l-1}(\sum_{\sigma\in A'_{l;N}}\mathbb P(\xi_{\sigma(1)}>\xi_{\sigma(2)}>\cdots>\xi_{\sigma(N)}))(\bg_{i_1}\cdots\bar\p\bg_{i_l}\cdots\bg_{i_{N}})^{N+1,-N-1}\\
&\overset{\eqref{2ineq}}=\frac{1}{2^{N-1}}\sum_{|I_{N+1}|=n}\sum_{l=1}^{N}(-1)^{l}\mathbb P(\xi_1<\cdots<\xi_l>\cdots>\xi_{N})\bar\eta_{i_{N+1}}(\bg_{i_1}\cdots(\bar\p\bg_{i_l})\cdots\bg_{i_{N}})^{N+1,-N-1}.
\end{align*}
Using \eqref{eqb3} and by a similar argument as above, we have 
\begin{align*}&\Big\{ -[\theta^\st,\sum_{|I_{N-1}|=n} b_{i_1,\cdots,i_{N-1}}\bg_{i_1}\cdots\bg_{i_{N-1}}]+\bar\p(\sum_{|I_{N}|=n} b_{i_1,\cdots,i_{N}}\bg_{i_1}\cdots\bg_{i_{N}}]) \\&\makebox[5cm]{} +\sum_{i_1 + i_2 = n}\Big(S^\st_{i_1}([\theta^\st,\bg_{i_2}]-\bar\p\bg_{i_2})\Big)\Big\}^{N+1,-N-1}\\
&=-\sum_{|I_{N}|=n} (\cc\cdot \frac{i_2}{i_2+i_1}\cdot b_{i_1+i_2,i_3,\cdots,i_{N}}-b_{i_1,\cdots,i_{N}})(\mathcal L_{\Lambda_N^\st}\bar\p\bg_{i_1})^{N+1,-N-1}\\
&=\frac{1}{2^{N-1}}\sum_{|I_{N}|=n}\sum_{l=1}^{N}(-1)^{l-1}\mathbb P(\xi_1<\cdots<\xi_l>\cdots>\xi_{N})(\bg_{i_1}\cdots(\bar\p\bg_{i_l})\cdots\bg_{i_{N}})^{N+1,-N-1}.
\end{align*}
Substituting the above two identities into explicit formula \eqref{psi} of $\psi_n$ gives the proof of \eqref{psired}.
\end{proof}
\begin{remark}We remark that in \eqref{varphired}, the integer $n$ may be replaced by any $ 1, 2, \dots, n-1$. The resulting equality still holds. In this case, the first two terms on the right hand side of \eqref{varphired} vanish by \eqref{thmbg} applied to $k = 1, \dots, n-1$. The same observation applies to \eqref{psired}.
\end{remark}

\bigskip

Now we prove the main result in this section.
\begin{proof}[Proof of Proposition~\ref{1,-1}] For any positive integer $N$, we prove \eqref{thmbg} and \eqref{thmu} by an induction argument on $k$ . Firstly, the statement holds for $k=1$ by \eqref{u-wt2}. Assume the statement holds for $k=1,2,\cdots,n-1$, we prove the case for $k=n$.

When $k=n,\ N=1$, \eqref{thmbg} holds trivially. When $k=n,\ N=1,2$, \eqref{thmu} holds by \eqref{u-wt1} and \eqref{u-wt2}. 
 For any positive integers $N\geq 2$, under the inductive assumption, the conditions of Lemma~\ref{lem_simp_expre} are fulfilled, hence the simplified expression \eqref{varphired} of $\varphi_n$ holds for $k=n$.
 
 The modulo-$(t)$-holomorphicity gives \eqref{hol}. Substituting \eqref{varphired} with $k\leq n$  and \eqref{thmu} with $k\leq n-1$ into the grading piece \begin{align*}\big(\varphi_n+\sum\limits_{k=1}^{n-1}\varphi_{k}u_{n-k}+[\theta,u_n]\big)^{N+1,-N-1}=0\end{align*} of the first equation in \eqref{hol}, we have
\begin{align*}&[\theta,u_n]^{N+1,-N-1}
+\cc\big\{[\theta,\bg_n-\sum_{|I_{N+1}|=n} b_{i_1,\cdots,i_{N+1}}\bg_{i_1}\cdots\bg_{i_{N+1}}]\\&\makebox[3.1cm]{}-\Ch^{1,0}(\bg_n-\sum_{|I_{N}|=n} b_{i_1,\cdots,i_{N}}\bg_{i_1}\cdots\bg_{i_{N}})\big\}^{N+1,-N-1}\\
&+\frac{1}{2^{N+1}}\sum_{|I_{N+1}|=n}\sum_{l=1}^{N+1}(-1)^{l-1}\mathbb P(\xi_1<\cdots<\xi_l>\cdots>\xi_{N+1})(\bg_{i_1}\cdots[\theta,\bg_{i_l}]\cdots\bg_{i_{N+1}})^{N+1,-N-1}\\
&+\frac{1}{2^{N+1}}\sum_{|I_{N+1}|=n} \sum_{j=0}^{N-1}\sum_{l=1}^{j+1}(-1)^{l-1}\mathbb P(\xi_1<\cdots<\xi_l>\cdots>\xi_{j+1})(\bg_{i_1}\cdots[\theta,\bg_{i_l}]\cdots\bg_{i_{j+1}})^{j+1,-j-1}\times
\\&\makebox[3.7cm]{}(-1)^{N-j}\mathbb P(\xi_{j+2}<\cdots<\xi_{N+1})(\bg_{i_{j+2}}\cdots\bg_{i_{N+1}})^{N-j,-N+j}=0.
\end{align*}
Hence for $l=1,\cdots,N+1$, the coefficient of $(\bg_{i_1}\cdots[\theta,\bg_{i_l}]\cdots\bg_{i_{N+1}})^{N+1,-N-1}$ in last three lines of the left hand side of the above is 
\begin{align*}\frac{1}{2^{N+1}}\sum_{j=l-1}^{N}(-1)^{N-j+l-1}\mathbb P(\xi_1<\cdots<\xi_l>\cdots>\xi_{j+1})\mathbb P(\xi_{j+2}<\cdots<\xi_{N+1})
\stackrel{\eqref{altersumprob}}{=}\frac{(-1)^N }{2^{N+1}}\mathbb P(\xi_1<\cdots<\xi_{N+1}),
\end{align*}
where we denote $\mathbb P(\xi_{j+2}<\cdots<\xi_{N+1}):=1$ when $j+2\geq N+1$. In summary, $\big(\varphi_n+\sum\limits_{k=1}^{n-1}\varphi_{k}u_{n-k}+[\theta,u_n]\big)^{N+1,-N-1}=0$ gives
{\fontsize{10}{12}\selectfont \begin{equation}\label{exactphi}\begin{aligned}&[\theta,\cc\bg_n+\sum_{|I_{N+1}|=n} (-\cc b_{i_1,\cdots,i_{N+1}}+\frac{(-1)^N}{2^{N+1}}\mathbb P(\xi_1<\cdots<\xi_{N+1}))\bg_{i_1}\cdots\bg_{i_{N+1}}+u_n]^{N+1,-N-1}\\
&-\cc\Ch^{1,0}(\bg_n-\sum_{|I_{N}|=n} b_{i_1,\cdots,i_{N}}\bg_{i_1}\cdots\bg_{i_{N}})^{N+1,-N-1}=0.
\end{aligned}\end{equation} }

Substituting \eqref{psired} with $k\leq n$  and \eqref{thmu} with $k\leq n-1$ into the grading piece \begin{align*}\big(\psi_{n}+\sum\limits_{k=1}^{n-1}\psi_{k}u_{n-k}+\bar\p u_{n}-\sum\limits_{k=1}^{n-1}\bar\eta_{k}(\bar\p u_{n-k})\big)^{N+2,-N-2}=0\end{align*} of the second equation in \eqref{hol}, we have

\begin{equation}\label{exactpsi}\begin{aligned}&\big(\psi_{n}+\sum\limits_{k=1}^{n-1}\psi_{k}u_{n-k}+\bar\p u_{n}-\sum\limits_{k=1}^{n-1}\bar\eta_{k}(\bar\p u_{n-k})\big)^{N+2,-N-2}\\
&=\big\{\bar\p\big(\cc\bg_n+\sum_{|I_{N+1}|=n} (-\cc b_{i_1,\cdots,i_{N+1}}+\frac{(-1)^N}{2^{N+1}}\mathbb P(\xi_1<\cdots<\xi_{N+1}))\bg_{i_1}\cdots\bg_{i_{N+1}}+u_n\big)\\
&-\cc[\theta^\st,\bg_n-\sum_{|I_{N}|=n} b_{i_1,\cdots,i_{N}}\bg_{i_1}\cdots\bg_{i_{N}}]\big\}^{N+2,-N-2}=0.
\end{aligned}\end{equation}
Applying the ``harmonicity'' in Corollary~\ref{coro_harm} to \eqref{exactphi} and \eqref{exactpsi}, we get
\begin{align*}u_n^{N+1,-N-1}&=\big(-\cc\bg_n+\sum_{|I_{N+1}|=n} (\cc b_{i_1,\cdots,i_{N+1}}+\frac{(-1)^{N+1}}{2^{N+1}}\mathbb P(\xi_1<\cdots<\xi_{N+1}))\bg_{i_1}\cdots\bg_{i_{N+1}}\big)^{N+1,-N-1},\\
(\bg_n)^{N,-N}&=\sum\limits_{|I_{N}|=n} b_{i_1,\cdots,i_{N}}(\bg_{i_1}\cdots\bg_{i_{N}})^{N,-N}.\qedhere
\end{align*}
\end{proof}

\newpage

\section{Proof of Theorem~\ref{thm_main}}\label{sec_proof_main}

In this section, we first reduce the main Theorem~\ref{thm_main} to its truncated version Theorem~\ref{thm_main_tru} in Lemma~\ref{lem_reduce_to_tru}. Then, under the assumption of Theorem~\ref{thm_main_tru}, we find a gauge transformation in \eqref{eq_gauge_gr}, \eqref{thmv} via the partial equations in Proposition~\ref{prop_ob} and the recursive formula in Proposition~\ref{1,-1} such that under this gauge transformation, the deformation Higgs bundle can be checked to persevere a graded structure.

\begin{lemma}[Reduce the main theorem to truncated case] \label{lem_reduce_to_tru}
Under the assumption of Theorem~\ref{thm_main}. Let $(\E,\bar\p,\theta)$ on $X_0$ be the associated graded stable Higgs bundle of $(\mathbb V,\nabla,\mathcal F^\bullet,Q)$ via the non-abelian Hodge correspondence. If for any positive integer $n$ and any order-n germ $$\gamma:\operatorname{Spec}A_n\to U$$
passing through $0\in U$, the pull-back deformation of Higgs bundles $\sigma_{\Dol}\circ\gamma$ is a family of graded Higgs bundles, then Theorem~\ref{thm_main} holds true.
\end{lemma}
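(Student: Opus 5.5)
The plan is to show that $\sigma_{\Dol}(s)$ is fixed by the $\mathbb C^*$-action on $M_{\Dol}(X_s)$ for every $s\in U$ near $0$. Then $U\subset\mathcal{GR}=\mathcal{NL}$ near $0$; since $U$ is closed analytic and $\mathcal{NL}$ is closed analytic by \cite[Theorem 12.1]{Simp97}, the inclusion propagates by the same argument applied at other points (or by analytic continuation on irreducible components). The locus in question is
\[
Z:=\{s\in U\mid \lambda\cdot\sigma_{\Dol}(s)=\sigma_{\Dol}(s)\ \text{for all }\lambda\in\mathbb C^*\}.
\]
It is the preimage, under the holomorphic map $\sigma_{\Dol}|_U$, of the fixed-point locus of the $\mathbb C^*$-action on $M_{\Dol}(X/S)$. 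That fixed-point locus is a closed analytic subset: it is the intersection over $\lambda$ of the equalizers of $\mathrm{id}$ and $\lambda\cdot(-)$. Hence $Z$ is a closed complex analytic subgerm of $(U,0)$, and it suffices to show $(Z,0)=(U,0)$ as germs.

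To compare the germs, I will use the standard fact that two closed analytic germs $Z\subset U$ coincide as reduced germs as soon as every formal arc of $U$ factors through $Z$. Here an arc means every $\gamma:\operatorname{Spec}A_n\to U$ with $\gamma(0)=0$, for all $n$; equivalently, the arc spaces agree, so by Artin approximation (or by passing to a resolution $\widetilde U\to U$, where the arcs through smooth points generate) $\mathcal O_{U,0}\to\mathcal O_{Z,0}$ is injective modulo nilpotents. For a given $\gamma$, the hypothesis says that $\sigma_{\Dol}\circ\gamma$ is a family of graded Higgs bundles over $X_n$. Such a family is a $\mathbb C^*$-fixed $A_n$-point: the grading $\E=\bigoplus\E^{p,q}$ gives a gauge isomorphism $\lambda^{p}$ between $(\E,\bar\p_t,\theta_t)$ and $(\E,\bar\p_t,\lambda\theta_t)$. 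Hence $\lambda\cdot(\sigma_{\Dol}\circ\gamma)=\sigma_{\Dol}\circ\gamma$ as $A_n$-points of the moduli space, which means $\gamma$ factors through $Z$.

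Finally, I pass from $Z$ to $\mathcal{NL}$. The reduced germ of $U$ equals that of $Z$, so every $s\in U$ near $0$ has $\sigma_{\Dol}(s)$ $\mathbb C^*$-fixed. Since $\sigma_{\Dol}(s)$ corresponds to the stable (irreducible) isomonodromic flat bundle, the fixed point is represented by a graded Higgs bundle \cite{Simp92}, so $s\in\mathcal{GR}=\mathcal{NL}$. The main obstacle is the arc argument: one must handle the possibly non-reduced structure and singularities of $U$, and make sure that $A_n$-points of the coarse moduli space correctly detect the fixed locus near the stable point. Near a stable point the moduli space is locally a fine quotient with trivial stabilizers modulo scalars, so I would argue there using the local Kuranishi model, where the fixed locus and points over Artin rings behave well.
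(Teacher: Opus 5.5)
Your argument is correct, but it takes a different route from the paper's.

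\textbf{The paper's route.} The paper first replaces $U$ by a resolution $\hat U\to U$, so that the base is smooth at a point $\hat u$ over $0$. It then asserts that, since $\sigma_{\Dol}|_U$ is holomorphic, graded liftability along every finite-order truncation gives the claim after a ``convergence'' step, which it does not spell out.

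\textbf{Your route.} You use the holomorphicity of $\sigma_{\Dol}|_U$ to make gradedness a closed analytic condition, namely $Z=(\sigma_{\Dol}|_U)^{-1}\bigl(M_{\Dol}(X/S)^{\mathbb C^*}\bigr)$. The formal-to-convergent passage then becomes automatic. If $c:(\Delta,0)\to(U,0)$ is a holomorphic arc all of whose truncations factor through $Z$, then $c^*g\equiv 0$ for every $g\in I_Z$. Curve selection supplies such an arc leaving $Z$ whenever $Z_{\mathrm{red}}\neq U_{\mathrm{red}}$ as germs, so $|Z|=|U|$ near $0$. No resolution of singularities is needed. Simpson's identification of $\mathbb C^*$-fixed stable points with systems of Hodge bundles then finishes the argument.

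\textbf{What your route requires.} You flag both points yourself.
\begin{enumerate}
\item $Z$ must carry the scheme-theoretic preimage structure, not $Z_{\mathrm{red}}$, for ``$\gamma$ factors through $Z$'' to be the right condition.
\item $A_n$-points of $M_{\Dol}$ near a stable point must come from families (local universality). Only then does the gauge $\lambda^{\pm p}$ on the graded family give $\lambda\cdot(\sigma_{\Dol}\circ\gamma)=\sigma_{\Dol}\circ\gamma$.
\end{enumerate}
For the arc criterion, curve selection plus the identity theorem is a cleaner justification than Artin approximation.

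\textbf{What each route buys.} The paper's route, if completed, would give more: an actual holomorphic graded family over $\hat U$. But it needs a genuine approximation statement for the finite-order graded lifts. Your route proves exactly the pointwise statement $U\subset\mathcal{GR}=\mathcal{NL}$ near $0$, with less machinery.

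\textbf{Globalization.} In both versions, passing beyond the germ at $0$ is left implicit. From the hypothesis at $0$ alone, the step that works is analytic continuation along irreducible components of $U$ through $0$, using that $\mathcal{NL}$ is closed analytic. ``The same argument at other points'' would require the truncated theorem at every graded point of $U$, not just the lemma's hypothesis at $0$.
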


\begin{proof}
Without loss of generality, we may assume $U$ is smooth near $0$. If $U$ is singular, we resolve the singularity, denoted as $\pi:\hat U\to U$. By pulling back \( X \) and \( \sigma_{\Dol} \) to \( \hat{U} \), we also have a real analytic family of Higgs bundles over \( X_{\hat{U}} := X \times_U \hat{U} \), denoted by \( \hat{\sigma}_{\Dol} : \hat{U} \to M_{\Dol}(X_{\hat{U}} / \hat{U}) \). Choose any point \( \hat{u} \) of \( \hat{U} \) such that \( \pi(\hat{u}) = 0 \in U  \). By the definition of pull-back, \( \hat{\sigma}_{\Dol}(\hat{u}) \) is a stable graded Higgs bundle. We only need to prove Theorem~\ref{thm_main} for $\hat U$. Thus we reduce to the case that the base $U$ is smooth.

By the holomorphic assumption, we only need to prove the claim of Theorem~\ref{thm_main} for any truncated case and then use the convergence. This is exactly Theorem~\ref{thm_main_tru}.
\end{proof}

Under the assumption of Theorem~\ref{thm_main_tru}, the isomonodromic section of Higgs bundles $\sigma_{\Dol}$ is holomorphic and in particular, modulo-$(t)$-holomorphic. Hence, by taking $\st$ of the weight equation in Proposition~\ref{wtprop} we have for $k=1,2,\cdots,n$
\begin{align*}g_k\in \bigoplus_{l<0}\mathcal A^0((\End\E)^{l,-l})
\end{align*}
Besides, we have partial equations \eqref{vanisheq_1} and \eqref{vanisheq_2} for $k=1,2,\cdots,n$. 
After taking $\st$ of those two equations, we get for $k=1,2,\cdots,n$
\begin{align}
\cc(\bar\p g_k)^{0,0}=&\eta_{k}(\theta)-\cc\sum_{i=1}^{k-1}\eta_i(\Ch^{1,0}g_{k-i})^{0,0}+\frac{1}{4}\sum_{i=1}^{k-1} \frac{i}{k}\cdot[[\theta^\st, g_{k-i}], g_i]^{0,0};\label{vanisheqst_1} \\
\cc[\theta,g_k]^{-1,1}=&\frac{1}{4}\sum_{i=1}^{k-1} \frac{i}{k}\cdot[\Ch^{1,0} g_{k-i}, g_i]^{-1,1}.\label{vanisheqst_2}
\end{align}
Lastly by taking $\st$ in the recursive formula \eqref{thmbg} and using the fact $b_{i_1,i_2,\cdots,i_N}=b_{i_N,\cdots,i_2,i_1}$, we have for $k=1,2,\cdots,n$ and any positive integer $N$
\begin{align}\label{thmg}
g_k^{-N,N}=\sum_{|I_N|= k} b_{i_1,\cdots,i_N}(g_{i_1}\cdots g_{i_N})^{-N,N}.
\end{align}
Consequently, we have by \eqref{alpha} and \eqref{beta}: for any $i=1,\cdots,n$
\begin{align}\label{eq_wt_alphabeta}
\alpha_i^{l,-l}=0\quad\text{for}\quad l\geq 1;\qquad \beta_i^{j,-j}=0\quad\text{for}\quad j\geq 2.
\end{align}

Let $\Lambda_N:=(g_{i_2},g_{i_3},\cdots,g_{i_N})$. Using the notation \eqref{eq_Lieopera}, we have
\begin{align*}
\mathcal L_{\Lambda_N}(-):=[\cdots[-,g_{i_2}],g_{i_3}],\cdots],g_{i_N}].
\end{align*}
Recall we have introduced the notion of alternative sum in \eqref{eq_altersum_S}.

The following lemma is a key ingredient in simplifying the holomorphic deformation terms $\alpha_n$, $\beta_n$ given in \eqref{alpha} and \eqref{beta}, which is an analogue of Lemma~\ref{brackets} and Lemma~\ref{lem_recur2} without $\st$.
\begin{lemma}
For any $k=1,2,\cdots,n$ and any $N \ge 2$, the following identities hold:
{\fontsize{10}{12}\selectfont
\begin{flalign}
&\text{(i)}\quad 
\Big\{-\Ch^{1,0}g_k+ \sum_{ |I_2| = k} S_{i_1}\Ch^{1,0}g_{i_2}\Big\}^{-N+1,N-1}
= - \sum_{|I_{N}|=k} b_{i_1,\dots,i_N} (\mathcal L_{\Lambda_N}\Ch^{1,0}g_{i_1})^{-N+1,N-1}. & \label{eqb5}
\\
&\text{(ii)}\quad 
\Big\{[\theta,g_k] -\sum_{ |I_2| = k} S_{i_1} [\theta, g_{i_2}]\Big\} ^{-N+1,N-1}
=\sum_{|I_{N-1}|=k} b_{i_1,\dots,i_{N-1}} (\mathcal L_{\Lambda_{N-1}}[\theta,g_{i_1}])^{-N+1,N-1}. & \label{eqb6}
\\
&\text{(iii)}\quad 
\Big\{\bar\partial g_k - [\theta^{\star},g_k]+\sum_{ |I_2|= k} S_{i_1}([\theta^\st, g_{i_2}]-\bar\p g_{i_2})\Big\}^{-N+1,N-1} \notag \\
&\qquad =\Big\{\sum_{|I_{N}|=k} b_{i_1,\cdots,i_{N}}\mathcal L_{\Lambda_N}\bar\p g_{i_1}-\sum_{|I_{N-1}|=k} b_{i_1,i_2,\cdots,i_{N-1}}\mathcal L_{\Lambda_{N-1}}[\theta^\st,g_{i_1}]\Big\}^{-N+1,N-1}. & \label{eqb7}
\\
&\text{(iv)}\quad 
\Big\{\sum_{|I_2|=k}\eta_{i_1}(\Ch^{1,0}g_{i_2}-[\theta, g_{i_2}]) -\sum_{|I_3|=k} \eta_{i_1}\big\{S_{i_2}\big(\Ch^{1,0}g_{i_3}-[\theta, g_{i_3}]\big)\big\}\Big\}^{-N+1,N-1} \notag \\
&\qquad =\Big\{\sum_{|I_{N+1}|=k} b_{i_1,\dots,i_{N}}
\eta_{i_{N+1}} \bigl( \mathcal L_{\Lambda_N}\Ch^{1,0}g_{i_1}\bigr)-\sum_{|I_N|=k}b_{i_1,\cdots,i_{N-1}}\eta_{i_N}(\mathcal L_{\Lambda_{N-1}}[\theta,g_{i_1}])\Big\}^{-N+1,N-1}. & \label{eqb8}
\end{flalign}
}
\end{lemma}
\begin{proof}By a similar argument as in the proof of Lemma~\ref{brackets} and Lemma~\ref{lem_recur2}.
\end{proof}

The following lemma is an analogue of Lemma~\ref{lem_simp_expre} for $\alpha_i$ and $\beta_i$. In one word, the holomorphic deformation terms $\alpha_i$, $\beta_i$ in \eqref{alpha}, \eqref{beta} can be expressed in terms of the operators $(\theta,\theta^\st,\Ch^{1,0},\bar\p)$ and $\{\eta_i,g_i^{-1,1}\}_{i=1}^n$. The main idea of the proof is the same, but we have to treat some cases for grading pieces with small indices separately.
\begin{lemma}[Simplified expressions of $\alpha_n,\beta_n$]\label{lem_simpl_ab} Under the assumption of Theorem~\ref{thm_main_tru}, for any integer $N\geq 2$, we have 
{\fontsize{10}{12}\selectfont \begin{flalign}&\makebox[5mm]{}\alpha_n^{0,0}=-\eta_n(\theta)-\cc(\Ch^{1,0}g_n)^{0,0}+\cc\sum_{|I_2|=n}\eta_{i_1}(\Ch^{1,0}g_{i_2})^{0,0},& \label{alpha-0}\\
&\makebox[5mm]{}\alpha_n^{-N+1,N-1}=\frac{1}{2^{N}}\sum_{|I_{N}|=n}\sum_{l=1}^{N}(-1)^{l}\mathbb P(\xi_1<\cdots<\xi_l>\cdots>\xi_{N})(g_{i_1}\cdots (\Ch^{1,0}g_{i_l})\cdots g_{i_{N}})^{-N+1,N-1}& \notag\\
&\makebox[.2cm]{}+\frac{1}{2^N}\sum_{|I_{N+1}|=n}\sum_{l=1}^{N}(-1)^{l-1}\mathbb P(\xi_1<\cdots<\xi_l>\cdots>\xi_{N})(g_{i_1}\cdots (\eta_{i_{N+1}}(\Ch^{1,0}g_{i_l}))\cdots g_{i_{N}})^{-N+1,N-1},&\label{alphared}
\end{flalign}}
and
{\fontsize{10}{12}\selectfont \begin{flalign}
&\makebox[5mm]{}\beta_n^{1,-1}=-\cc[\theta^\st,g_n]^{1,-1};&\label{beta1}\\
&\makebox[5mm]{}\beta_n^{0,0}=\cc(\bar\p g_n)^{0,0}-\frac{1}{4}\sum_{|I_2|=n}[[\theta^\st,g_{i_1}],g_{i_2}]^{0,0}+\sum_{|I_2|=n}\eta_{i_1}(\Ch^{1,0}g_{i_2})^{0,0}.&\label{beta0}\\
&\makebox[5mm]{}
\beta_n^{-N+1,N-1}=&\notag\\
&\makebox[1cm]{}\frac{1}{2^{N+1}}\makebox[-2mm]{}\sum_{|I_{N+1}|=n}\makebox[-1mm]{}\sum_{l=1}^{N+1}(-1)^{l}\mathbb P(\xi_1<\cdots<\xi_l>\cdots>\xi_{N+1})(g_{i_1}\cdots [\theta^\st,g_{i_l}]\cdots g_{i_{N+1}})^{-N+1,N-1}.&\label{betared}
\end{flalign}}

\end{lemma}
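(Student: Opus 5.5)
The plan is to run the computation of Lemma~\ref{lem_simp_expre} in the conjugate direction, now for the coefficients $\alpha_n,\beta_n$ of $t^n$. We start from the explicit formulas \eqref{alpha} and \eqref{beta} of Proposition~\ref{prop_Taylor_Exp_DolHig}. Under the assumption of Theorem~\ref{thm_main_tru} we have $g_k\in\bigoplus_{l<0}\A^0((\End\E)^{l,-l})$, the starred partial equations \eqref{vanisheqst_1}, \eqref{vanisheqst_2}, and the recursive formula \eqref{thmg}. We project each formula onto its grading pieces.

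\textbf{Low pieces.} For the low pieces ($\alpha_n^{0,0}$, $\beta_n^{1,-1}$, $\beta_n^{0,0}$) we argue by weight counting. Every $g_i$ has weight $\le -1$, $\theta$ has weight $0$ as a form-valued piece, $\theta^\st$ raises the weight by $2$, and $\Ch^{1,0}$, $\bar\p$ raise it by $1$. Hence in \eqref{alpha} only the linear terms $-\eta_n(\theta)-\cc\Ch^{1,0}g_n$ and the single-$g$ term of the $\eta_{i_1}$-sum survive in the $(0,0)$-piece. The bracket $[\theta,g_n]$ and all products $S_{i_1}(\cdots)$ drop out, because they lower the weight below $0$. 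This gives \eqref{alpha-0}. For \eqref{beta1}, only $-\cc[\theta^\st,g_n]$ can reach weight $1$.

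For \eqref{beta0} the weight-$0$ piece receives $\cc\bar\p g_n$, the quadratic term $\cc\sum S_{i_1}[\theta^\st,g_{i_2}]$ with $S_{i_1}$ replaced by $g_{i_1}$, and the $\eta$-sum. The quadratic term, combined with $-\cc[\theta^\st,g_n]^{0,0}$, has to be rewritten as $-\frac14\sum[[\theta^\st,g_{i_1}],g_{i_2}]^{0,0}$. We do this by using \eqref{thmg} with $N=2$ (namely $g_n^{-2,2}=\cc\sum(g_{i_1}g_{i_2})^{-2,2}$) to expand $[\theta^\st,g_n^{-2,2}]$ and then symmetrizing in $(i_1,i_2)$. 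The $\eta$-terms with $[\theta,g]$ vanish in weight $0$, and the surviving coefficient $1$ on $\eta_{i_1}(\Ch^{1,0}g_{i_2})$ comes from the two $\cc$-sums in \eqref{beta}.

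\textbf{Pieces with $N\ge2$.} Here we use \eqref{eqb5}–\eqref{eqb8} to convert the alternating sums $S_{i_1}(\cdots)$ into iterated brackets $\mathcal L_{\Lambda_N}$ weighted by the $b$'s. For $\alpha_n^{-N+1,N-1}$ we first substitute \eqref{vanisheqst_2} for $[\theta,g_{i_1}]^{-1,1}$ inside $\mathcal L_{\Lambda_{N-1}}[\theta,g_{i_1}]$. This turns every $\theta$-bracket term into a $\Ch^{1,0}$-bracket term with coefficient $\frac{i_2}{i_1+i_2}\,b_{i_1+i_2,\dots}$. Likewise we use \eqref{vanisheqst_1} in the $\eta$-terms, which converts $\eta_{i}(\theta)$ contributions into $\bar\p g$ and $\Ch^{1,0}g$ brackets. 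Collecting coefficients, the combinatorial identities \eqref{recursiveb} and \eqref{abN} from the appendix should reduce them to $\frac{\pm1}{2^{N-1}}\mathbb P(\xi_1>\cdots>\xi_N)$ in front of $\mathcal L_{\Lambda_N}\Ch^{1,0}g_{i_1}$ and $\eta_{i_{N+1}}(\mathcal L_{\Lambda_N}\Ch^{1,0}g_{i_1})$. Expanding the iterated brackets via the analogue of \eqref{eq_iterated_brac}, and using \eqref{2ineq} to turn $\sum_{\sigma\in A'_{l;N}}\mathbb P(\xi_\sigma\ \text{decreasing})$ into the mountain probability, then gives \eqref{alphared}. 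Note that $b_{i_1,\dots,i_N}=b_{i_N,\dots,i_1}$, so the reversal caused by taking $\st$ is harmless. The same procedure applied to \eqref{eqb7}, \eqref{eqb8} and \eqref{beta} yields \eqref{betared}. In that case the $\bar\p g$ terms and the $\eta(\Ch^{1,0}g-[\theta,g])$ terms should cancel after inserting \eqref{vanisheqst_1}, leaving only the $[\theta^\st,g]$ brackets.

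\textbf{Main obstacle.} The hard step is the cancellation bookkeeping for $\beta_n^{-N+1,N-1}$ and the $\eta$-part of $\alpha_n$. Because $\beta$ carries factor $1$ rather than $\cc$ on its $\eta$-sums, the coefficient identities needed are not literally \eqref{recursiveb}; they are shifted versions of it. My first attempt would be to check $N=2$ by hand to fix signs and factors of $2$, and then to prove the general coefficient identity by the same probabilistic argument (conditioning on the position of the maximal $\xi$) used for \eqref{recursiveb} and \eqref{abN}.
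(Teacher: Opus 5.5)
Your outline is the paper's own proof. The low pieces \eqref{alpha-0}, \eqref{beta1}, \eqref{beta0} come from weight counting in \eqref{alpha}, \eqref{beta} plus \eqref{thmg}; your $N=2$ symmetrization giving $-\frac14\sum[[\theta^\st,g_{i_1}],g_{i_2}]^{0,0}$ is right. The pieces with $N\ge 2$ come from \eqref{eqb5}--\eqref{eqb8}, the starred partial equations, \eqref{recursiveb}, \eqref{abN} and \eqref{2ineq}.

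The step that goes wrong is the one you call the main obstacle, because it takes misprinted coefficients at face value. You claim the coefficient $1$ of $\sum_{|I_2|=n}\eta_{i_1}(\Ch g_{i_2})^{0,0}$ in \eqref{beta0} ``comes from the two $\frac12$-sums''. That contradicts your own weight count: the $S$-weighted $\eta$-sum has weight $\le -1$, so only $\cc\sum_{|I_2|=n}\eta_{i_1}(\Ch g_{i_2})$ reaches weight $0$. And $\cc$ is the correct value. In the pure-$t$ direction $\pi''_\eta\omega=\omega''+\eta(\omega')$. Combining $\bar\p_t-\pi''_\eta\Ch=\pi''_\eta(\theta+\theta^\st)-\pi''_\eta\Psi_t$ with $[\eta(\theta),G]=\eta([\theta,G])$ gives
\[
\sum_{i}t^i\beta_i=g^{-1}\Big\{-\cc[\theta^\st,G]+\cc\,\bar\p G+\cc\,\eta\big(\Ch G-[\theta,G]\big)\Big\},\qquad G=\sum_j t^jg_j,\quad g=\id+G.
\]
Likewise $\theta_t=\pi'_\eta\Psi_t$ gives the same $\eta$-part for $\alpha$. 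So both $\eta$-sums in \eqref{alpha} and \eqref{beta} carry $\cc$; the $1$ in front of the $|I_3|$-sums and the $1$ in \eqref{beta0} are misprints. The paper's proof tacitly uses this when it applies \eqref{eqb8} inside an overall factor $\cc$. The later identity $\beta_n'^{0,0}=0$ in the proof of Theorem~\ref{thm_main_tru} also requires it, given $v_n^{-1,1}=-\cc g_n^{-1,1}$.

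Consequently your remedy, ``shifted versions'' of \eqref{recursiveb} absorbing a factor $1$, cannot succeed. With unequal weights, \eqref{eqb8} (whose left side has the two $\eta$-sums with equal weight) does not apply, and the $\eta$-terms fail to cancel. For $N=2$ they leave $-\cc\sum_{|I_3|=n}\big(g_{i_2}\,\eta_{i_1}(\Ch g_{i_3})\big)^{-1,1}$, which carries no $b$-coefficient, so no identity among the $b$'s can remove it.

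With the correct weight everything closes as in the paper. For $\alpha$, apply \eqref{vanisheqst_2} to $[\theta,g_{i_1}]^{-1,1}$ and then \eqref{abN}. For $\beta$, read the $\theta^\st$-term of \eqref{eqb7} as $\sum_{|I_{N+1}|=k}b_{i_1,\dots,i_{N+1}}\mathcal L_{\Lambda_{N+1}}[\theta^\st,g_{i_1}]$ so the weights match. Insert \eqref{vanisheqst_1} into $\mathcal L_{\Lambda_N}\bar\p g_{i_1}$, then \eqref{vanisheqst_2} into the resulting $\eta_{i_1}(\theta)$-brackets and into $\eta_{i_N}(\mathcal L_{\Lambda_{N-1}}[\theta,g_{i_1}])$. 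The total coefficient of $\eta_{i_{N+1}}(\mathcal L_{\Lambda_N}\Ch g_{i_1})$ is then $-\cc$ times the left side of \eqref{recursiveb}, hence zero. Next, \eqref{abN} turns the coefficient of $\mathcal L_{\Lambda_{N+1}}[\theta^\st,g_{i_1}]$ into $-\frac{1}{2^{N+1}}\mathbb P(\xi_1>\cdots>\xi_{N+1})$. Expanding the brackets with \eqref{2ineq} then gives \eqref{betared}.
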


\begin{proof}
By using \eqref{alpha}, \eqref{beta} and \eqref{thmg}, we get the simplified expressions \eqref{alpha-0} and \eqref{beta1}, \eqref{beta0} for grading pieces with small indices.  
We shall use the same strategy as in Lemma~\ref{lem_simp_expre} to prove the rest cases. Substituting \eqref{eqb6}, \eqref{eqb5} and \eqref{eqb8} into \eqref{alpha}, we have for $N\geq 2$
\begin{align*}&\alpha_n^{-N+1,N-1}=\cc\Big\{\sum_{|I_{N-1}|=n} b_{i_1,\dots,i_{N-1}} (\mathcal L_{\Lambda_{N-1}}[\theta,g_{i_1}]) - \sum_{|I_{N}|=n} b_{i_1,\dots,i_N} (\mathcal L_{\Lambda_N}\Ch^{1,0}g_{i_1})\\
&\makebox[2cm]{}+\sum_{|I_{N+1}|=n} b_{i_1,\dots,i_{N}}
\eta_{i_{N+1}} \bigl( \mathcal L_{\Lambda_N}\Ch^{1,0}g_{i_1}\bigr)-\sum_{|I_N|=n}b_{i_1,\cdots,i_{N-1}}\eta_{i_N}(\mathcal L_{\Lambda_{N-1}}[\theta,g_{i_1}])\Big\}^{-N+1,N-1}\\&\overset{\eqref{vanisheqst_1}\eqref{vanisheqst_2}}=\cc\sum_{|I_{N}|=n} (\cc\cdot\frac{i_2}{i_2+i_1}\cdot  b_{i_1+i_2,i_3,\cdots,i_N}-b_{i_1,\cdots,i_N})(\mathcal  L_{\Lambda_N}\Ch^{1,0}g_{i_1})^{-N+1,N-1}\\
&\makebox[1cm]{}+\cc\sum_{|I_{N+1}|=n} \bigl( b_{i_1,\dots,i_{N}} - \cc\cdot  \frac{i_2}{i_2+i_1}\cdot b_{i_1+i_2,i_3,\dots,i_{N}} \bigr) \,
\eta_{i_{N+1}} \bigl(\mathcal  L_{\Lambda_N}\Ch^{1,0}g_{i_1} \bigr)^{-N+1,N-1}\\
&\stackrel{\eqref{abN}}{=}\frac{1}{2^N}\Big\{-\sum_{|I_{N}|=n} \mathbb P(\xi_1>\cdots>\xi_N)\mathcal  L_{\Lambda_N}\Ch^{1,0}g_{i_1}+\sum_{|I_{N+1}|=n} \mathbb P(\xi_1>\cdots>\xi_N)\eta_{i_{N+1}}(\mathcal  L_{\Lambda_N}\Ch^{1,0}g_{i_1})\Big\}^{-N+1,N-1}.
\end{align*}
By expanding the iterated Lie brackets above and using \eqref{2ineq}, we get \eqref{alphared}.

Substituting \eqref{eqb7} and \eqref{eqb8} into \eqref{alpha} and using partial equations \eqref{vanisheqst_1}, \eqref{vanisheqst_2} and using \eqref{recursiveb}, we have for $N\geq 2$
\begin{align*}\beta_n^{-N,N}=\cc\sum_{|I_{N+1}|=n}(\cc\cdot \frac{i_2}{i_2+i_1}\cdot b_{i_1+i_2,i_3,\cdots,i_{N+1}}-b_{i_1,\cdots,i_{N+1}})(\mathcal L_{\Lambda_{N+1}}[\theta^\st, g_{i_1}])^{-N+1,N-1}.
\end{align*}
By expanding the iterated Lie brackets above and using \eqref{2ineq}, we get \eqref{alphared}.\end{proof}

\begin{proof}[Proof of Theorem~\ref{thm_main_tru}]Let \begin{align}\label{eq_gauge_gr}
\mathscr{U} = \operatorname{id} + \sum_{i=1}^n \bar{t}^i u_i + \sum_{i=0}^{n-1} \sum_{j=1}^{n-i} t^j \bar{t}^i u_{ij}\in\A^0(\End\E)\otimes B_n,\end{align}
such that $(\E,\mathscr U^{-1}\circ \bar\p_t\circ\mathscr U,\mathscr U^{-1}\circ \theta_t\circ\mathscr U)$ satisfies \eqref{eq_hol} (the existence of such $\mathscr U$ is guaranteed by the holomorphicity assumption).
We may choose $u_{i\bar 0}$ freely without affecting the validity of \eqref{eq_hol}. Thus we take $u_{i\bar 0}\in\mathcal \bigoplus_{l>0}A^0((\End\E)^{-l,l})$ and for any positive integer $N$
\begin{align}\label{thmv}u_{i\bar 0}^{-N,N}:=v_i^{-N,N}=\frac{(-1)^N}{2^N}\sum_{|I_N| = i} \mathbb P(\xi_1<\xi_2<\cdots<\xi_N)( g_{i_1} g_{i_2}\cdots g_{i_N})^{-N,N}.
\end{align}

We aim to \textbf{prove} that the following $\alpha'_k\in \A^1((\End\E)^{-1,1})$ and $\beta'_k\in \A^{0,1}((\End\E)^{0,0})$ for any $k=1,\cdots,n$:
\begin{align*}\bar\p_t\circ\mathscr U&=\mathscr U\circ\Big(\pi_\eta''\Ch+\sum_{k=1}^nt^k\beta'_k-\bar\eta(\sum_{k=1}^nt^k\beta'_k)\Big);\\
\theta_t\circ\mathscr U&=\mathscr U\circ\Big(P_\eta'\theta+\sum_{k=1}^nt^k\alpha_k'\Big).
\end{align*}
If so, the gauge transformed bundle $(\E,\mathscr U^{-1}\circ\bar\p_t\circ\mathscr U,\mathscr U^{-1}\circ\theta_t\circ\mathscr U)$ is family of graded stable Higgs bundles and this proves Theorem~\ref{thm_main_tru}. By definition of $\mathscr U$ and $\alpha_k',\ \beta_k'$, we have 
\begin{align}\label{grinduc}\begin{cases}\beta_k+\bar\p v_k+\sum\limits_{l=1}^{k-1}\eta_l(\Ch^{1,0}v_{k-l})-\sum\limits_{l=1}^{k-1}v_{k-l}\beta_l'+\sum\limits_{l=1}^{k-1}\beta_lv_{k-l}=\beta_k';\\
\alpha_k+[\theta,v_k]-\sum\limits_{l=1}^{k-1}v_{k-l}\alpha_l'+\sum\limits_{l=1}^{k-1}\alpha_lv_{k-l}=\alpha_k'.
\end{cases}
\end{align}

In particular, we prove for any positive integer $N$
\begin{align}\label{beta'}\beta_k'^{1,-1}=\beta_k^{1,-1},\text{ and }\ \beta_k'^{-N+1,N-1}=0\end{align} and for any integer $N\geq 2$
\begin{align}\label{alpha'}\alpha_k'^{-1,1}=\alpha_k^{0,0},\text{ and }\ \alpha_k'^{-N+1,N-1}=0.\end{align}

When $k=1$, by \eqref{thmv}, \eqref{grinduc} and \eqref{beta}, we have for any $N\geq 1$
\begin{align*}\beta_1'^{1,-1}&=(\beta_1+\bar\p v_1)^{1,-1}=\beta_1^{1,-1};\\
\beta_1'^{-N+1,N-1}&=(\beta_1+\bar\p v_1)^{-N+1,N-1}=(-\cc[\theta^\st,g_1])^{-N+1,N-1}=0.
\end{align*}

When $k=1$, by \eqref{thmv}, \eqref{grinduc} and \eqref{alpha}, we have for any $N\geq 2$
\begin{align*}\alpha_1'^{0,0}&=(\alpha_1+[\theta,v_1])^{0,0}=\alpha_1^{0,0};\\
\alpha_1'^{-N+1,N-1}&=(\alpha_1+[\theta,v_1])^{-N+1,N-1}=(-\eta_1(\theta)-\cc\Ch^{1,0}g_1)^{-N+1,N-1}=0.
\end{align*}
Hence \eqref{beta'} and \eqref{alpha'} hold for $k=1$. Now we inductively assume \eqref{beta'} and \eqref{alpha'} hold for $k=1,\cdots,n-1$ and prove them for $k=n$.

By \eqref{eq_wt_alphabeta}, \eqref{thmv}, \eqref{grinduc} and the induction, we have 
\begin{align*}\beta_n'^{1,-1}=&\beta_n^{1,-1}+(\bar\p v_n+\sum_{|I_2|=n} \eta_{i_1}(\Ch^{1,0}v_{i_2}))^{1,-1}+\sum_{|I_2|=n} (-v_{i_1}\beta_{i_2}'+\beta_{i_1}v_{i_2})^{1,-1}=\beta_n^{1,-1};\\
\beta_n'^{0,0}=&\beta_n^{0,0}+(\bar\p v_n+\sum_{|I_2|=n} \eta_{i_1}(\Ch^{1,0}v_{i_2}))^{0,0}+\sum_{|I_2|=n} (-v_{i_1}\beta_{i_2}'+\beta_{i_1}v_{i_2})^{0,0}=0.
\end{align*}
We aim to prove the following is zero for $N\geq 2$
\begin{align*}\beta_n'^{-N+1,N-1}=&\beta_n^{-N+1,N-1}+(\bar\p v_n+\sum_{|I_2|=n} \eta_{i_1}(\Ch^{1,0}v_{i_2}))^{-N+1,N-1}+\sum_{|I_2|=n} (-v_{i_1}\beta_{i_2}'+\beta_{i_1}v_{i_2})^{-N+1,N-1}.
\end{align*}
We divide this expression into two parts as follow:
\begin{align*}\textbf{Eq4}:&=\beta_n^{-N+1,N-1}+\sum\limits_{|I_2|=n} (-v_{i_1}\beta_{i_2}'+\beta'_{i_1}v_{i_2})^{-N+1,N-1}+\sum\limits_{j=2}^{N-1}\beta_{i_1}^{-j+1,j-1}v_{i_2}^{-N+j,N-j}.\\
\textbf{Eq5}:&=(\bar\p v_n)^{-N+1,N-1}+\sum\limits_{|I_2|=n} \eta_{i_1}(\Ch^{1,0}v_{i_2})^{-N+1,N-1}+\sum\limits_{|I_2|=n}(\beta_{i_1})^{0,0}(v_{i_2})^{-N+1,N-1}
\end{align*}
 Then by the simplified expressions \eqref{beta1}, \eqref{betared} in Lemma~\ref{lem_simpl_ab}, and \eqref{thmv} and the induction
\begin{align*}
&\textbf{Eq4}=\frac{(-1)^N}{2^{N+1}}\sum_{|I_{N+1}|=n}\big\{\sum_{l=1}^{N+1}(-1)^{-N+l}\mathbb P(\xi_1<\cdots<\xi_l>\cdots>\xi_{N+1})(g_{i_1}\cdots[\theta^\st,g_{i_l}]\cdots g_{i_{N+1}})\\
&+ \mathbb P(\xi_1<\cdots<\xi_N)(g_{i_1}\cdots g_{i_N}[\theta^\st, g_{i_{N+1}}])-\mathbb P(\xi_2<\cdots<\xi_{N+1})([\theta^\st,g_{i_1}]g_{i_2}\cdots g_{i_{N+1}})\\
&+\sum_{j=2}^{N-1}\sum_{l=1}^{j+1}(-1)^{-j+l}\mathbb P(\xi_1<\cdots<\xi_l>\cdots>\xi_{j+1})\mathbb P(\xi_{j+2}<\cdots<\xi_{N+1})
(g_{i_1}\cdots[\theta^\st,g_{i_l}]\cdots g_{i_{N+1}})\big\}^{-N+1,N-1}.
\end{align*}
By a similar argument as in \eqref{altersumprob}, the coefficients of $(g_{i_1}\cdots g_{i_{l-1}}[\theta^\star,g_{i_l}]g_{i_{l+1}}\cdots g_{i_{N+1}})^{-N+1,N-1}$ in \textbf{Eq4} are given by
\[
\frac{(-1)^{N+1}}{2^{N+1}}\cdot
\begin{cases}
\mathbb P(\xi_2<\cdots<\xi_{N+1})+\mathbb P(\xi_1>\xi_2>\xi_3<\cdots<\xi_{N+1}), & l=1,\\[6pt]
\mathbb P(\xi_1<\cdots<\xi_{N+1})-\mathbb P(\xi_1<\xi_2)\mathbb P(\xi_3<\cdots<\xi_{N+1}), & l=2,\\[6pt]
\mathbb P(\xi_1<\cdots<\xi_{N+1}), & 3\le l\le N,\\[6pt]
\mathbb P(\xi_1<\cdots<\xi_{N+1})-\mathbb P(\xi_1<\cdots<\xi_N), & l=N+1.
\end{cases}
\]

Note that by the simplified expression \eqref{beta0} in Lemma~\ref{lem_simpl_ab} and \eqref{thmv}
\begin{align*}
&\textbf{Eq5}=\frac{(-1)^{N}}{2^N}\Big\{\sum_{|I_{N}|=n}\big\{ \mathbb P(\xi_1<\cdots<\xi_N)\bar\p(g_{i_1}\cdots g_{i_N})-\mathbb P(\xi_2<\cdots<\xi_N)(\bar\p g_{i_1})g_{i_2}\cdots g_{i_N}\big\}\\
&+\sum_{|I_{N+1}|=n}\big\{\mathbb P(\xi_1<\cdots<\xi_N)\eta_{i_{N+1}}(\Ch^{1,0} (g_{i_1}g_{i_2}\cdots g_{i_{N}})\big)\\&-\mathbb P(\xi_2<\cdots<\xi_N)\eta_{i_{N+1}}((\Ch^{1,0} g_{i_1})g_{i_2}\cdots g_{i_{N}})\big)+\cc \mathbb P(\xi_3<\cdots<\xi_{N+1})[[\theta^\st,g_{i_1}],g_{i_2}]g_{i_3}\cdots g_{i_{N+1}}\big\}\Big\}^{-N+1,N-1}.
\end{align*}
Using the partial eqaution \eqref{vanisheqst_1}, \textbf{Eq5} can be expressed as a linear combination of $$(g_{i_1}\cdots g_{i_{l-1}}[\theta^\st,g_{i_{l}}]g_{i_{l+1}}\cdots g_{i_{N+1}})^{-N+1,N-1},\quad l=1,2,\cdots,N+1,$$
and we place this complicated computation in Lemma~\ref{betaterms} at the end of this section.
By Lemma~\ref{betaterms} and \textbf{Eq4} above, one can prove \textbf{Eq4}$+$\textbf{Eq5} is zero as desired.

\bigskip

By \eqref{eq_wt_alphabeta}, \eqref{grinduc} and \eqref{thmv}, we have 
\begin{align*}\alpha_n'^{0,0}=\alpha_n^{0,0}+[\theta, v_n]^{0,0}+\sum_{|I_2|=n} (-v_{i_1}\alpha_{i_2}'+\alpha_{i_1}v_{i_2})^{0,0}=\alpha_n^{0,0}.
\end{align*}

We aim to prove the following is zero for $N\geq 2$
\begin{align*}\alpha_n'^{-N+1,N-1}=&\alpha_n^{-N+1,N-1}+[\theta, v_n]^{-N+1,N-1}+\sum_{|I_2|=n} (-v_{i_1}\alpha_{i_2}'+\alpha_{i_1}v_{i_2})^{-N+1,N-1}.\end{align*}
Note that 
\begin{align*}&[\theta,v_n]^{-N+1,N-1}\overset{\eqref{thmv}}=
\frac{(-1)^{N-1}}{2^{N-1}}\sum_{|I_{N-1}|=n}\sum_{l=1}^{N-1} \mathbb P(\xi_1<\cdots<\xi_{N-1})(g_{i_1}\cdots[\theta,g_{i_l}]\cdots g_{i_{N-1}})^{-N+1,N-1}\\
&\overset{\eqref{vanisheqst_2}}=\frac{(-1)^{N-1}}{2^{N}}\sum_{|I_{N}|=n}\sum_{l=1}^{N-1} \mathbb P(\xi_1<\cdots<\max(\xi_l,\xi_{l+1})<\xi_{l+2}<\cdots<\xi_{N})\cdot \frac{i_{l+1}}{i_l+i_{l+1}}\times\\&\makebox[4cm]{}(g_{i_1}\cdots[\Ch^{1,0}g_{i_l},g_{i_{l+1}}]g_{i_{l+2}}\cdots g_{i_{N}})^{-N+1,N-1}.
\end{align*}
Expanding the Lie brackets in the above and using \eqref{probmountain'}, we get
\begin{align*} [\theta,v_n]^{-N+1,N-1}&=\frac{(-1)^{N-1}}{2^N}\sum_{|I_{N}|=n} \sum_{l=1}^{N-1} \mathbb P(\xi_1<\cdots<\xi_N)(g_{i_1}\cdots(\Ch^{1,0}g_{i_l})\cdots g_{i_{N}})^{-N+1,N-1}\\
&+\frac{(-1)^N}{2^N}\sum_{|I_{N}|=n}\mathbb P(\xi_1<\cdots<\xi_{N-1}>\xi_N)(g_{i_1}\cdots g_{i_{N-1}}(\Ch^{1,0}g_{i_N}))^{-N+1,N-1}.
\end{align*}

Using \eqref{thmv}, \eqref{alpha'}, \eqref{alpha-0} and \eqref{vanisheqst_2}, for $N\geq 2$, we have
\begin{align*}&\sum_{|I_2|=n}[\alpha'_{i_1},v_{i_2}]^{-N+1,N-1}=\frac{(-1)^N}{2^N}\Big\{\sum_{|I_{N+1}|=n}\big\{\sum_{l=1}^{N}\mathbb P(\xi_1<\cdots<\xi_{N})\eta_{i_{N+1}}(g_{i_1}\cdots(\Ch^{1,0}g_{i_l})\cdots g_{i_{N}})\\
&-\mathbb P(\xi_2<\cdots<\xi_{N})\eta_{i_{N+1}}((\Ch^{1,0}g_{i_1})g_{i_2}\cdots g_{i_{N}})\big\}+\sum_{|I_{N}|=n}[\Ch^{1,0} g_{i_1},\mathbb P(\xi_2<\cdots<\xi_N)g_{i_2}\cdots g_{i_N}]\Big\}^{-N+1,N-1}.
\end{align*}
Using the above two equations and \eqref{alphared} and \eqref{thmv}, we have, for $N\geq 2$, the $(0,1)$-part and $(1,0)$-part of \begin{align*}\alpha_n'^{-N+1,N-1}=&\alpha_n^{-N+1,N-1}+[\theta, v_n]^{-N+1,N-1}+\sum_{|I_2|=n} (-v_{i_1}\alpha_{i_2}'+\alpha_{i_1}v_{i_2})^{-N+1,N-1}\\
=&\alpha_n^{-N+1,N-1}+[\theta,v_n]^{-N+1,N-1}+\sum_{|I_2|=n}[\alpha_{i_1}',v_{i_2}]^{-N+1,N-1}+\sum_{|I_2|=n}\sum_{j=2}^{N-1}\alpha_{i_1}^{-j+1,j-1}v_{i_2}^{-(N-j),N-j}\end{align*} are 
\begin{align*}&\frac{(-1)^{N+1}}{2^{N}}\sum_{|I_{N+1}|=n}\eta_{i_{N+1}}\Big\{\sum_{l=1}^{N}(-1)^{-N+l}\mathbb P(\xi_1<\cdots<\xi_l>\cdots>\xi_{N})g_{i_1}\cdots(\Ch^{1,0}g_{i_l})\cdots g_{i_{N}}\\
&-\sum_{l=1}^{N}\mathbb P(\xi_1<\cdots<\xi_{N})g_{i_1}\cdots(\Ch^{1,0}g_{i_l})\cdots g_{i_{N}}+\mathbb P(\xi_2<\cdots<\xi_{N})(\Ch^{1,0}g_{i_1})g_{i_2}\cdots g_{i_{N}}\\
&+\sum_{j=2}^{N-1}\sum_{l=1}^{j} (-1)^{l-j}\mathbb P(\xi_1<\cdots<\xi_l>\cdots>\xi_{j})\mathbb P(\xi_{j+1}<\cdots<\xi_{N})g_{i_1}\cdots(\Ch^{1,0}g_{i_l})\cdots g_{i_{N}}\Big\}^{-N+1,N-1}\end{align*} 
and
\begin{align*}
&\frac{(-1)^N}{2^{N}}\sum_{|I_{N}|=n}\Big\{\sum_{l=1}^{N}(-1)^{-N+l}\mathbb P(\xi_1<\cdots<\xi_l>\cdots>\xi_{N})g_{i_1}\cdots(\Ch^{1,0}g_{i_l})\cdots g_{i_{N}}\\
&- \sum_{l=1}^{N-1} \mathbb P(\xi_1<\cdots<\xi_N)g_{i_1}\cdots(\Ch^{1,0}g_{i_l})\cdots g_{i_{N}}\\
&+ \mathbb P(\xi_1<\cdots<\xi_{N-1}>\xi_N)g_{i_1}\cdots g_{i_{N-1}}(\Ch^{1,0}g_{i_N})+ [\Ch^{1,0} g_{i_1},\mathbb P(\xi_2<\cdots<\xi_N)g_{i_2}\cdots g_{i_N}]\\
&+ \sum_{j=2}^{N-1}\sum_{l=1}^{j} (-1)^{l-j}\mathbb P(\xi_1<\cdots<\xi_l>\cdots>\xi_{j})\mathbb P(\xi_{j+1}<\cdots<\xi_{N})g_{i_1}\cdots(\Ch^{1,0}g_{i_l})\cdots g_{i_{N}})\Big\}^{-N+1,N-1}
\end{align*}
respectively, and those terms cancel out by \eqref{altersumprob}. Thus $\alpha_n'^{-N+1,N-1}=0$ for $N\geq2$.
\end{proof}

For $l = 1, \dots, N$, define 
\[
B_l := (\xi_1 < \dots < \xi_{l-1} < \widehat{\xi_l} < \xi_{l+1} < \dots < \xi_N),
\] 
where $\widehat{\xi_l}$ indicates that $\xi_l$ is omitted from the inequality, and let
\begin{align*}D_l:=(\xi_1 < \dots < \xi_{l-1} < \widehat{\xi_l} <\widehat{\xi_{l+1}} <\xi_{l+2}< \dots < \xi_{N+1}).
\end{align*}

\begin{lemma}\label{betaterms}For any integer $N\geq 2$, we have
\begin{equation}\label{beta'II}\begin{aligned}&\Big\{\sum_{|I_{N}|=n}\Big( \mathbb P(\xi_1<\cdots<\xi_N)\bar\p(g_{i_1}\cdots g_{i_N})-\mathbb P(\xi_2<\cdots<\xi_N)(\bar\p g_{i_1})g_{i_2}\cdots g_{i_N}\Big)\\
&+\sum_{|I_{N+1}|=n}\Big(\mathbb P(\xi_1<\cdots<\xi_N)\eta_{i_{N+1}}\big(\Ch^{1,0} (g_{i_1}g_{i_2}\cdots g_{i_{N}})\big)\\&-\mathbb P(\xi_2<\cdots<\xi_N)\eta_{i_{N+1}}(\Ch^{1,0} g_{i_1})g_{i_2}\cdots g_{i_{N}}\Big)\Big\}^{-N+1,N-1}\\
&=-\sum_{|I_{N}|=n}\sum_{l=1}^{N-1} \mathbb P(B_l\land(\xi_{l+1}<\xi_{l}))\big\{g_{i_1}\cdots [\bar\p g_{i_{l}},g_{i_{l+1}}]\cdots g_{i_{N}}\big\}^{-N+1,N-1}\\
&-\sum_{|I_{N+1}|=n}\sum_{l=1}^{N-1} \mathbb P(B_l\land(\xi_{l+1}<\xi_{l}))\big\{g_{i_1}\cdots [\eta_{i_{N+1}}(\Ch^{1,0} g_{i_{l}}),g_{i_{l+1}}]\cdots g_{i_{N}}\big\}^{-N+1,N-1}.
\end{aligned}\end{equation}
Using the partial equations \eqref{vanisheqst_1} and \eqref{vanisheqst_2}, the last two lines in \eqref{beta'II} can be expressed as a linear combination of $$(g_{i_1}\cdots g_{i_{l-1}}[\theta^\st,g_{i_{l}}]g_{i_{l+1}}\cdots g_{i_{N+1}})^{-N+1,N-1},\quad l=1,2,\cdots,N+1,$$
with coefficient \[
\cc\cdot
\begin{cases}
-\mathbb P(D_1\land(\xi_2>\max(\xi_1,\xi_3))), & l=1,\\[6pt]
\mathbb P(D_1\land(\xi_1>\max(\xi_2,\xi_3)))  + \mathbb P((\xi_1<\xi_3<\cdots<\xi_{N+1})\land(\xi_3>\xi_2)), & l=2,\\[6pt]
\mathbb P(\xi_1<\cdots<\xi_{N+1}), & 3\le l\le N,\\[6pt]
\mathbb P(\xi_1<\cdots<\xi_{N+1})-\mathbb P(\xi_1<\cdots<\xi_N), & l=N+1.
\end{cases}
\]
\end{lemma}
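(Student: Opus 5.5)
The plan has two parts, matching the two claims of Lemma~\ref{betaterms}: the identity \eqref{beta'II}, and then the conversion of its right-hand side into $[\theta^\st,\cdot]$-terms.

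\textbf{Step 1: the identity \eqref{beta'II}.} I would prove it by a pure Leibniz-rule expansion. Expand $\bar\p(g_{i_1}\cdots g_{i_N})$ and $\eta_{i_{N+1}}(\Ch^{1,0}(g_{i_1}\cdots g_{i_N}))$ as sums over the slot $l$ that carries the derivative. Since both $\bar\p$ and $\eta_{i_{N+1}}\circ\Ch^{1,0}$ are $\mathbb C$-linear derivations, the two halves of the statement are handled by literally the same bookkeeping. So I treat only a generic derivation $\delta$, and later specialise to $\delta=\bar\p$ and $\delta=\eta_{i_{N+1}}\Ch^{1,0}$.

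After expansion, the coefficient of the monomial $g_{i_1}\cdots(\delta g_{i_l})\cdots g_{i_N}$ on the left is $\mathbb P(\xi_1<\cdots<\xi_N)$ for $l\ge2$, and $\mathbb P(\xi_1<\cdots<\xi_N)-\mathbb P(\xi_2<\cdots<\xi_N)$ for $l=1$.

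On the right I expand each commutator $[\delta g_{i_l},g_{i_{l+1}}]$ into two ordered monomials. The monomial with $\delta$ in slot $l$ then receives:
\begin{itemize}
\item $-\mathbb P(B_l\wedge\xi_{l+1}<\xi_l)$ from the $l$-th commutator, for $l\le N-1$;
\item $+\mathbb P(B_{l-1}\wedge\xi_l<\xi_{l-1})$ from the $(l-1)$-th commutator, after relabelling the two swapped indices $i_{l-1}\leftrightarrow i_l$ inside the sum over $|I_N|=n$, which is symmetric.
\end{itemize}
The identity therefore reduces to a set of probability identities, one per slot. For example, the case $l=1$ reads
\[
\mathbb P(\xi_2<\cdots<\xi_N)-\mathbb P(\xi_1<\cdots<\xi_N)=\mathbb P(B_1\wedge \xi_2<\xi_1),
\]
which holds because the event $B_1$ splits according to whether $\xi_1<\xi_2$ or $\xi_1>\xi_2$. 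The middle slots use the analogous partition of $B_l$ by the position of $\xi_l$ relative to its neighbours. The relabelling must be done while tracking the Beta parameters carried with each index, so that the events stay correctly attached to the indices.

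\textbf{Step 2: conversion to $[\theta^\st,\cdot]$-terms.} Next I substitute \eqref{vanisheqst_1} and \eqref{vanisheqst_2} to eliminate the commutators $[\bar\p g_{i_l},g_{i_{l+1}}]$ and $[\eta(\Ch^{1,0}g_{i_l}),g_{i_{l+1}}]$.

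For the grading piece $(-N+1,N-1)$, the product $g_{i_1}\cdots g_{i_N}$ consists of pieces $g^{-1,1}$ with one factor raised by one. Hence only the $(0,0)$-piece $\cc(\bar\p g_k)^{0,0}-\eta_k(\theta)+\cc\sum\eta_i(\Ch^{1,0}g_{k-i})^{0,0}$ of \eqref{vanisheqst_1} enters, summed over the splittings $i_l=a+b$. The combination $\bar\p g+\sum\eta(\Ch^{1,0}g)$ appears precisely because the two sums on the left of \eqref{beta'II} pair up, with $i_{N+1}$ playing the role of the extra split index. The term $\eta_k(\theta)$ should drop out after commutation with $g^{-1,1}$, or be absorbed via \eqref{vanisheqst_2}; I would check which happens.

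Substituting gives nested brackets $[[\theta^\st,g_a],g_b]$, weighted by $\frac{b}{a+b}$, inside products of length $N+1$. The weight $\frac{b}{a+b}$ is the probability that a $\mathrm{Beta}(b,1)$ variable exceeds an independent $\mathrm{Beta}(a,1)$ variable, which fuses into the product probability; this is the same mechanism as in \eqref{abN} and \eqref{probmountain'}. Expanding the brackets and collecting by the slot of $[\theta^\st,\cdot]$ should yield the four-case table: the middle slots $3\le l\le N$ reproduce $\mathbb P(\xi_1<\cdots<\xi_{N+1})$, and the boundary slots pick up the corrections involving $D_1$.

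\textbf{Main obstacle.} The hard part is the boundary cases $l=1,2,N+1$ in Step 2, where splitting an index of $B_l$ into two interacts with the omitted variables. There I would verify the claimed probabilities by writing them as explicit integrals against the densities $i\,x^{i-1}$ and comparing them term by term with the merged-index formula $\mathbb P(\max(\xi_a,\xi_b)<x)=x^{a+b}$, which turns $\xi_a,\xi_b$ into a single $\mathrm{Beta}(a+b,1)$ variable.
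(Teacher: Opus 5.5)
Your Step 1 is the same as the paper's: expand the commutators and compare the coefficient of each monomial slot by slot. One bookkeeping correction: after the swap $i_{l-1}\leftrightarrow i_l$, the weight landing on slot $l$ is $+\mathbb P(B_l\wedge(\xi_{l-1}<\xi_l))$; what you wrote is its form before relabelling. The middle-slot identity to check is therefore $\mathbb P(B_l\wedge(\xi_{l-1}<\xi_l))-\mathbb P(B_l\wedge(\xi_{l+1}<\xi_l))=\mathbb P(\xi_1<\cdots<\xi_N)$, with only the first term present for $l=N$.

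The gap is in Step 2, which carries the real content of the lemma. No identity eliminates the commutators $[\eta_{i_{N+1}}(\Ch^{1,0}g_{i_l}),g_{i_{l+1}}]$ directly. The whole second line of the right-hand side has to cancel, and the mechanism you sketch does not produce this.

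The two sums do not pair up into $\bar\p g_k+\sum\eta(\Ch^{1,0}g)$ with a common weight. Substituting \eqref{vanisheqst_1} into $[\bar\p g_{i_l},g_{i_{l+1}}]$ and renaming the split index $i_{N+1}$ gives the $\eta$-piece with the merged weight $-\mathbb P(B_l\wedge(\xi_{l+1}<\max(\xi_l,\xi_{N+1})))$. The second line instead carries $\mathbb P(B_l\wedge(\xi_{l+1}<\xi_l))$, which sees only the $\mathrm{Beta}(i_l,1)$ variable.

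The discrepancy is supplied exactly by the $\eta_k(\theta)$-term that you leave undecided. It does not drop out: $2[\eta_{i_l}(\theta),g_{i_{l+1}}]=2\eta_{i_l}([\theta,g_{i_{l+1}}])$ lies in the graded piece in question. Apply \eqref{vanisheqst_2}, then the relabelling $i_l\leftrightarrow i_{N+1}$ (moving the $\eta$-index to the last position), then \eqref{probmountain'}; this converts it into
\[
\sum_{|I_{N+1}|=n}\mathbb P\bigl(B_l\wedge(\xi_l<\xi_{l+1}<\xi_{N+1})\bigr)\,\bigl\{g_{i_1}\cdots\eta_{i_{N+1}}([\Ch^{1,0}g_{i_l},g_{i_{l+1}}])\cdots g_{i_N}\bigr\}^{-N+1,N-1}.
\]
Since $\{\xi_{l+1}<\max(\xi_l,\xi_{N+1})\}=\{\xi_{l+1}<\xi_l\}\sqcup\{\xi_l<\xi_{l+1}<\xi_{N+1}\}$, the two pieces add up to $-\mathbb P(B_l\wedge(\xi_{l+1}<\xi_l))$ times the same monomials. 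Their contribution to the first line therefore cancels the second line exactly. Without the $\eta_k(\theta)$-contribution you would be left with $\eta_{i_{N+1}}([\Ch^{1,0}g,g])$-terms, which are not of the claimed form.

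Only after this cancellation does the right-hand side reduce to the triple-bracket part of \eqref{vanisheqst_1}, namely
\[
-\cc\sum_{|I_{N+1}|=n}\sum_{l=1}^{N-1}\mathbb P\bigl(D_l\wedge(\xi_{l+1}>\max(\xi_l,\xi_{l+2}))\bigr)\bigl(g_{i_1}\cdots g_{i_{l-1}}[[[\theta^\st,g_{i_l}],g_{i_{l+1}}],g_{i_{l+2}}]g_{i_{l+3}}\cdots g_{i_{N+1}}\bigr)^{-N+1,N-1}.
\]
Expanding these brackets slot by slot yields the table. For that last step your Beta-merging or explicit-integral verification is adequate.
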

\begin{proof}By expanding the Lie brackets in \eqref{beta'II} and reordering the summation indices, the coefficient of $((\bar\p g_{i_1})g_{i_2}\cdots g_{i_N})^{-N+1,N-1}$ and $\eta_{i_{N+1}}((\Ch^{1,0} g_{i_1})g_{i_2}\cdots g_{i_N})^{-N+1,N-1}$ on the right hand side of \eqref{beta'II} is 
\begin{align*}-\mathbb P(B_1\land(\xi_2<\xi_1))=\mathbb P(\xi_1<\cdots<\xi_N)-\mathbb P(\xi_2<\cdots<\xi_N).
\end{align*}
For $l=2,\cdots,N$, the coefficient of $(g_{i_1}\cdots (\bar\p g_{i_1})\cdots g_{i_N})^{-N+1,N-1}$ and $\eta_{i_{N+1}}(g_{i_1}\cdots (\Ch^{1,0} g_{i_1})\cdots g_{i_N})^{-N+1,N-1}$ on the right hand side of \eqref{beta'II} is 
\begin{align*}&-\mathbb P(B_l\land(\xi_{l+1}<\xi_l))
+\mathbb P(B_{l}\land(\xi_{l-1}<\xi_{l}))
=\mathbb P(\xi_1<\cdots<\xi_N).
\end{align*}
This proves \eqref{beta'II}. For $l=1,\cdots,N-1$, by \eqref{vanisheqst_1}, we have
\begin{align*}&\sum_{|I_{N}|=n} \mathbb P(B_l\land(\xi_{l+1}<\xi_{l}))\big\{ g_{i_1}\cdots g_{i_{l-1}}[\bar\p g_{i_{l}},g_{i_{l+1}}]g_{i_{l+2}}\cdots g_{i_N}\big\}^{-N+1,N-1}\\
&=\sum_{|I_{N}|=n} \mathbb P(B_l\land(\xi_{l+1}<\xi_{l}))\big\{g_{i_1}\cdots g_{i_{l-1}}[2\eta_{i_l}(\theta),g_{i_{l+1}}]g_{i_{l+2}}\cdots g_{i_N}\big\}^{-N+1,N-1}\\
&\sum_{|I_{N+1}|=n} \Big\{-\mathbb P(B_l\land(\xi_{l+1}<\max(\xi_{l},\xi_{N+1})))\big\{g_{i_1}\cdots g_{i_{l-1}}\eta_{i_{N+1}}([\Ch^{1,0} g_{i_{l}},g_{i_{l+1}}])g_{i_{l+2}}\cdots g_{i_N}\\
&+\cc \mathbb P(B_l\land(\xi_{l+1}<\max(\xi_{l},\xi_{N+1})))\frac{i_l}{i_l+i_{N+1}}\cdot g_{i_1}\cdots g_{i_{l-1}}[[[\theta^\st,g_{i_{N+1}}],g_{i_l}],g_{i_{l+1}}]g_{i_{l+2}}\cdots g_{i_N}\Big\}^{-N+1,N-1},
\end{align*}
denoted by \textbf{Eq6}, \textbf{Eq7}, \textbf{Eq8} for each summation on the right hand side of the above. Hence
\begin{align*}&\textbf{Eq6}\overset{\eqref{vanisheqst_1}}=\sum_{|I_{N+1}|=n} \mathbb P((\xi_1<\cdots<\xi_{l-1}<\max(\xi_{l+1},\xi_{N+1})<\xi_{l+2}<\cdots<\xi_N)\land(\max(\xi_{l+1},\xi_{N+1})<\xi_{l}))\cdot \\
&\makebox[2.6cm]{}\frac{i_{l+1}}{i_{l+1}+i_{N+1}}\cdot\big\{g_{i_1}\cdots\eta_{i_l}([\Ch^{1,0}g_{i_{N+1}},g_{i_{l+1}}])g_{i_{l+2}}\cdots g_{i_N}\big\}^{-N+1,N-1}\\
&=\sum_{|I_{N+1}|=n} \mathbb P((\xi_1<\cdots<\xi_{l-1}<\max(\xi_{l+1},\xi_{l})<\xi_{l+2}<\cdots<\xi_N)\land(\max(\xi_{l+1},\xi_{l})<\xi_{N+1}))\cdot \\
&\makebox[2cm]{}\frac{i_{l+1}}{i_{l}+i_{l+1}}\cdot\big\{g_{i_1}\cdots\eta_{i_{N+1}}([\Ch^{1,0}g_{i_{l}},g_{i_{l+1}}])g_{i_{l+2}}\cdots g_{i_N}\big\}^{-N+1,N-1}\\
&=\sum_{|I_{N+1}|=n} \mathbb P(B_l\land(\xi_l<\xi_{l+1}<\xi_{N+1}))\big\{g_{i_1}\cdots\eta_{i_{N+1}}([\Ch^{1,0}g_{i_{l}},g_{i_{l+1}}])g_{i_{l+2}}\cdots g_{i_N}\big\}^{-N+1,N-1},
\end{align*}
where the last equality follows from \eqref{probmountain'}. Next,
\begin{align*}\textbf{Eq6}+\textbf{Eq7}
=&\sum_{|I_{N+1}|=n} (\mathbb P(B_l\land(\xi_l<\xi_{l+1}<\xi_{N+1}))-\mathbb P(B_l\land(\xi_{l+1}<\max(\xi_{l},\xi_{N+1})))\cdot\\
&\makebox[1.2cm]{}\big\{g_{i_1}\cdots \eta_{i_{N+1}}([\Ch^{1,0} g_{i_{l}},g_{i_{l+1}}])g_{i_{l+2}}\cdots g_{i_N}\big\}^{-N+1,N-1}\\
=&-\sum_{|I_{N+1}|=n} (\mathbb P(B_l\land(\xi_{l+1}<\xi_{l}))\big\{g_{i_1}\cdots \eta_{i_{N+1}}([\Ch^{1,0} g_{i_{l}},g_{i_{l+1}}])g_{i_{l+2}}\cdots g_{i_N}\big\}^{-N+1,N-1}.
\end{align*}

By performing the permutation $(N+1\quad l\quad l+1\quad l+2\quad \cdots\quad N)$ on the summation index of \textbf{Eq8}, 
\begin{align*}&\textbf{Eq8}\\
&=\cc \sum_{|I_{N+1}|=n} \mathbb P(D_l\land(\xi_{l+2}<\max(\xi_{l},\xi_{l+1})))\frac{i_{l+1}}{i_l+i_{l+1}}\cdot\big\{g_{i_1}\cdots [[[\theta^\st,g_{i_{l}}],g_{i_{l+1}}],g_{i_{l+2}}]g_{i_{l+3}}\cdots g_{i_{N+1}}\big\}^{-N+1,N-1}\\
&=\cc\sum_{|I_{N+1}|=n} \mathbb P(D_l\land(\xi_{l+1}>\max(\xi_l,\xi_{l+2})))(g_{i_1}\cdots g_{i_{l-1}}[[[\theta^\st,g_{i_l}],g_{i_{l+1}}],g_{i_{l+2}}]g_{i_{l+3}}\cdots g_{i_{N+1}})^{-N+1,N-1}.
\end{align*}
By summing \textbf{Eq6}, \textbf{Eq7} and \textbf{Eq8}, we get the right hand side of \eqref{beta'II} is \begin{align*}
-\cc\sum_{|I_{N+1}|=n}\sum_{l=1}^{N-1}\mathbb P(D_l\land(\xi_{l+1}>\max(\xi_l,\xi_{l+2})))(g_{i_1}\cdots g_{i_{l-1}}[[[\theta^\st,g_{i_l}],g_{i_{l+1}}],g_{i_{l+2}}]g_{i_{l+3}}\cdots g_{i_{N+1}})^{-N+1,N-1}.
\end{align*}
By expanding the iterated Lie brackets above and reordering the summation indices, we get, in the above,
\item[\ding{172}] the coefficient of $([\theta^\st,g_{i_1}]g_{i_2}\cdots g_{i_{N+1}})^{-N+1,N-1}$ is $$-\cc \mathbb P(D_1\land (\xi_2>\max(\xi_1,\xi_3)));$$
\item[\ding{173}] the coefficient of $(g_{i_1}[\theta^\st,g_{i_2}]g_{i_3}\cdots g_{i_{N+1}})^{-N+1,N-1}$ is 
\begin{align*}&-\cc\big\{-\mathbb P(D_1\land(\xi_1>\max(\xi_2,\xi_3))+\mathbb P(D_2\land(\xi_3>\max(\xi_2,\xi_4))-\mathbb P(D_2\land(\xi_3>\max(\xi_1,\xi_2)))\big\}\\
&=-\cc\big\{-\mathbb P(D_1\land(\xi_1>\max(\xi_2,\xi_3)))-\mathbb P((\xi_1<\xi_3<\cdots<\xi_{N+1})\land(\xi_3>\xi_2))\big\};\end{align*}
\item[\ding{174}] the coefficient of $(g_{i_1}\cdots g_{i_{l-1}}[\theta^\st,g_{i_l}]g_{i_{l+1}}\cdots g_{i_{N+1}})^{-N+1,N-1}$, where $3\leq l\leq N-1$ is 
\begin{align*}&-\cc\big\{\mathbb P(D_l\land(\xi_{l+1}>\max(\xi_l,\xi_{l+2})))-\mathbb P(D_l\land(\xi_{l+1}>\max(\xi_l,\xi_{l-1})))\\
&\quad\quad\quad+\mathbb P(D_{l-1}\land(\xi_{l-1}>\max(\xi_l,\xi_{l-2})))-\mathbb P(D_{l-1}\land(\xi_{l-1}>\max(\xi_l,\xi_{l+1})))\big\}\\
&=-\cc\big\{-\mathbb P((\xi_1<\cdots<\xi_{l-1}\widehat {\xi_l}<\xi_{l+1}<\cdots<\xi_{N+1})\land(\xi_{l+1}>\xi_l))\\
&\quad\quad\quad+\mathbb P((\xi_1<\cdots<\xi_{l-1}\widehat {\xi_l}<\xi_{l+1}<\cdots<\xi_{N+1})\land(\xi_{l-1}>\xi_l))\big\}\\
&=-\cc (-\mathbb P(\xi_1<\cdots<\xi_{N+1}));
\end{align*}
\item[\ding{175}] the coefficient of $(g_{i_1}\cdots g_{i_{N-1}}[\theta^\st,g_{i_N}]g_{i_{N+1}})^{-N+1,N-1}$ is 
\begin{align*}&-\cc\big\{\mathbb P(D_{N-1}\land(\xi_{N-1}>\max(\xi_N,\xi_{N-2})))-\mathbb P(D_{N-1}\land(\xi_{N-1}>\max(\xi_N,\xi_{N+1})))\\
&\quad\quad\quad-\mathbb P(D_{N}\land(\xi_{N+1}>\max(\xi_N,\xi_{N-1})))\big\}\\
&=-\cc\big\{\mathbb P((\xi_1<\cdots<\xi_{N-1}<\xi_{N+1})\land(\xi_{N-1}>\xi_N))-\mathbb P((\xi_1<\cdots<\xi_{N-1})\land(\xi_{N+1}>\max(\xi_N,\xi_{N-1}))\big\}\\
&=-\cc (-\mathbb P(\xi_1<\cdots<\xi_{N+1}));
\end{align*}
\item[\ding{176}] the coefficient of $(g_{i_1}\cdots g_{i_{N}}[\theta^\st,g_{i_{N+1}}])^{-N+1,N-1}$ is 
\[
-\cc \mathbb P(\xi_1<\cdots<\xi_N>\xi_{N+1}).\qedhere 
\]
\end{proof}

\appendix
\newpage

\section{Some combinatorial lemmas}
Let $\xi_{1},\cdots,\xi_{N}$ be independent random variables with distributions $$\operatorname{Beta}(i_1,1),\cdots,\operatorname{Beta}(i_N,1)$$ respectively, where $i_1,\cdots,i_N$ are positive integers. We define the sequence $b_{i_1,\cdots,i_N}$ as in \eqref{bN}. 

\begin{definition}\label{AkN}
Let $N$ be a positive integer.
\begin{enumerate}\item For any integer $1 \le k < N$. Denote 
 \[A_{k;N} := \left\{\sigma \in S_N \,\left|\, 
\begin{aligned}
&\sigma^{-1}(k) <\sigma^{-1}(k-1) <\cdots <\sigma^{-1}(1), \text{ and }\\
&\sigma^{-1}(k+1)<\sigma^{-1}(k+2)<\cdots<\sigma^{-1}(N)
\end{aligned}
\right.\right\}.\]
\item Since, for any $\sigma\in A_{k;N}$, $\sigma^{-1}(k)=1$ or $\sigma^{-1}(k+1)=1$, there is a partition 
\[A_{k;N} = A'_{k;N}\coprod A''_{k;N},\] where
$A'_{k;N} = \{\sigma \in A_{k;N} \mid \sigma(1)=k\}$ and 
$A''_{k;N} = \{\sigma \in A_{k;N} \mid \sigma(1)=k+1\}$. In addition, we define
\begin{align*}A_{N;N}':=\{\sigma\in S_N\mid\sigma(l)=N+1-l\ \text{ for any }\ l=1,2,\cdots,N\}.
\end{align*}
\item For any integer $1 \le k \le N$. Denote $V_{k;N} :=\{\sigma \mid \sigma^{-1} \in A'_{k;N}\}$. More precisely, 
\[V_{k;N} = \left\{\sigma \in S_N \,\left|\, 
\begin{aligned}
&\sigma(1) >\sigma(2) >\cdots >\sigma(k-1),\ \sigma(k)=1, \\
&\text{and }\ \sigma(k+1)<\sigma(k+2)<\cdots<\sigma(N)
\end{aligned}
\right.\right\}.\]\end{enumerate}
\end{definition}

It is easy to see that the cardinality \(\# A_{k;N}=\binom{N}{k}\).

\begin{lemma}\label{partition}
For \(1\leq j\leq N\), the event \(\xi_{1}<\cdots<\xi_{j}>\cdots>\xi_{N}\) can be decomposed into \(\binom{N-1}{j-1}\) mutually exclusive events, and each of these events is a complete ordering inequality (a total order) of the \(N\) random variables. More precisely, we have
\begin{align}\label{2ineq}(\xi_1<\cdots<\xi_j>\cdots>\xi_N)=\lor_{\sigma\in A'_{j;N}}(\xi_{\sigma(1)}>\cdots>\xi_{\sigma(N)}),
\end{align}
(since equality occurs with probability zero, we may neglect such cases.)
Consequently, \(2^{N-1}\cdot b_{i_1,\cdots,i_N}\) is the sum of probabilities of \(2^{N-1}\) mutually exclusive events.
\end{lemma}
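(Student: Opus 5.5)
The statement to prove is Lemma~\ref{partition}: for i.i.d.-type continuous variables, the ``mountain'' event $\xi_1<\cdots<\xi_j>\cdots>\xi_N$ splits into exactly $\binom{N-1}{j-1}$ total orders, namely those indexed by $A'_{j;N}$. The argument is purely combinatorial, since ties have probability zero.

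The plan is to characterize the total orders compatible with the mountain shape. On the event, $\xi_j$ is the maximum. Reading the values in decreasing order, $\xi_{\sigma(1)}>\xi_{\sigma(2)}>\cdots>\xi_{\sigma(N)}$, we therefore get $\sigma(1)=j$. The remaining indices split into the left slope $\{1,\dots,j-1\}$ and the right slope $\{j+1,\dots,N\}$.

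On the left slope the values increase with the index, so in the decreasing list the indices $j-1,j-2,\dots,1$ appear in that order. This gives $\sigma^{-1}(j)<\sigma^{-1}(j-1)<\cdots<\sigma^{-1}(1)$. On the right slope the values decrease with the index, so the indices $j+1,\dots,N$ appear in increasing order, giving $\sigma^{-1}(j+1)<\cdots<\sigma^{-1}(N)$.

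These are exactly the defining conditions of $A_{j;N}$ together with $\sigma(1)=j$, that is, of $A'_{j;N}$. For $j=N$ the right slope is empty and only the reversed order remains, which matches the separate definition of $A'_{N;N}$.

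For the converse, take any $\sigma\in A'_{j;N}$. The total order $\xi_{\sigma(1)}>\cdots>\xi_{\sigma(N)}$ places $\xi_j$ on top and preserves both monotone chains, so it implies the mountain event. Distinct $\sigma$ give disjoint events, and every point off a null set lies in exactly one total order. This proves \eqref{2ineq}.

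For the count: after fixing $\sigma(1)=j$, the element $\sigma\in A'_{j;N}$ is determined by interleaving a chain of length $j-1$ with a chain of length $N-j$ in positions $2,\dots,N$. There are $\binom{N-1}{j-1}$ such interleavings.

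The consequence about $b_{i_1,\dots,i_N}$ follows from the definition \eqref{bN}. The $N$ mountain events with peaks $j=1,\dots,N$ are pairwise disjoint, because the position of the maximum determines $j$. Summing the counts gives $\sum_j\binom{N-1}{j-1}=2^{N-1}$ disjoint total-order events. The only point needing care is the bookkeeping at the edge cases $j=1$ and $j=N$, where one chain is empty; there is no real obstacle.
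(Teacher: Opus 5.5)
Your proof is correct and follows the same route as the paper: on the event the peak $\xi_j$ is the maximum, so each total order consistent with it is an interleaving of the chains $\xi_j>\xi_{j-1}>\cdots>\xi_1$ and $\xi_j>\xi_{j+1}>\cdots>\xi_N$, giving $\binom{N-1}{j-1}$ orders, namely those in $A'_{j;N}$. You spell out the index bookkeeping for $A'_{j;N}$, the edge cases $j=1$ and $j=N$, and the $2^{N-1}$ count that the paper's one-line proof leaves implicit; the only slip is calling the variables ``i.i.d.-type'', since they are independent but not identically distributed, though all you use is that ties have probability zero.
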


\begin{proof}
Combining the inequalities \(\xi_{j}>\xi_{{j-1}}>\cdots>\xi_{1}\) and \(\xi_{j}>\xi_{{j+1}}>\cdots>\xi_{N}\) into a single inequality yields exactly \(\binom{N-1}{j-1}\) possibilities. They are exactly \eqref{2ineq}.
\end{proof}

\begin{proposition}\label{bkbN-k}
Let \(1\leq k<N\). Then we have the identity
\begin{align*}
\sum_{\sigma\in A_{k;N}}b_{i_{\sigma(1)},i_{\sigma(2)},\cdots,i_{\sigma(N)}}= b_{i_1,\cdots,i_k}b_{i_{k+1},\cdots,i_N}.
\end{align*}
\end{proposition}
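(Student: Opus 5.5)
We must prove: for $1\le k<N$,
\[
\sum_{\sigma\in A_{k;N}} b_{i_{\sigma(1)},\dots,i_{\sigma(N)}}=b_{i_1,\dots,i_k}\,b_{i_{k+1},\dots,i_N}.
\]

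The plan is to read both sides as probabilities of events about the independent variables $\xi_1,\dots,\xi_N$ and then match them. By \eqref{bN} and Lemma~\ref{partition}, $2^{N-1}b_{i_1,\dots,i_N}$ is the probability of the union of the $2^{N-1}$ disjoint events $M_j(1,\dots,N):=(\xi_1<\cdots<\xi_j>\cdots>\xi_N)$. These are exactly the \emph{unimodal} orderings of the sequence: increase, then decrease. Equivalently, $2^{N-1}b_{i_1,\dots,i_N}=\mathbb P(\text{the sequence }\xi_1,\dots,\xi_N\text{ is unimodal})$.

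The left side $b_{i_{\sigma(1)},\dots,i_{\sigma(N)}}$ is the same quantity with the variables relabelled. Since the $\xi_j$ are independent, relabelling indices together with their distributions gives
\[
b_{i_{\sigma(1)},\dots,i_{\sigma(N)}}=2^{-(N-1)}\,\mathbb P(\xi_{\sigma(1)},\dots,\xi_{\sigma(N)}\text{ is unimodal}).
\]
The right side is a product. By independence of the two blocks $\{\xi_1,\dots,\xi_k\}$ and $\{\xi_{k+1},\dots,\xi_N\}$,
\[
b_{i_1,\dots,i_k}\,b_{i_{k+1},\dots,i_N}=2^{-(N-2)}\,\mathbb P(\xi_1,\dots,\xi_k\text{ unimodal and }\xi_{k+1},\dots,\xi_N\text{ unimodal}).
\]
The powers of two differ by a factor $2$: the left has $2^{-(N-1)}$, the right $2^{-(N-2)}$. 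So the identity reduces to the event-level statement
\[
\sum_{\sigma\in A_{k;N}}\mathbf 1[\xi_{\sigma(1)},\dots,\xi_{\sigma(N)}\text{ unimodal}]
=2\cdot\mathbf 1[\text{both blocks unimodal}]
\]
almost surely. (If this exact pointwise form fails, it would suffice for the two sides to have equal expectations.)

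The next step is to understand $A_{k;N}$ concretely. A permutation $\sigma\in A_{k;N}$ lists $k,k-1,\dots,1$ in that relative order and $k+1,\dots,N$ in that relative order. So the word $(\sigma(1),\dots,\sigma(N))$ is a shuffle of the reversed first block with the second block. The core of the proof is a bijective count. Fix a generic outcome in which each block is unimodal. Then the reversed first block $\xi_k,\dots,\xi_1$ is also unimodal, and so is $\xi_{k+1},\dots,\xi_N$. The claim to prove is that exactly two shuffles of these two unimodal sequences produce a unimodal merged sequence, and that no shuffle does when either block fails to be unimodal. The second half is immediate, because a subsequence of a unimodal sequence is unimodal.

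For the count, observe that a unimodal arrangement of a fixed set of distinct values is determined by assigning each non-maximal value to the left or the right side of the peak. Each block's own unimodality already fixes the relative left/right assignment of its elements, up to the question of which side the smaller block-maximum sits on relative to the overall maximum. That single binary choice should be the source of the factor $2$.

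The main obstacle is making this count rigorous. Not every shuffle respects the fixed left/right assignments, and I will need to show that exactly two do, handling the edge cases where a block is monotone (its peak sits at an endpoint). If the direct count becomes messy, the fallback is induction on $N$ by removing the global maximum of the $\xi$'s. Deleting it preserves all the constraints and turns both sides into instances of smaller size, so it should reduce the identity to Pascal-type recursions in $A_{k;N}$, using $\#A_{k;N}=\binom Nk$ and the partition $A_{k;N}=A'_{k;N}\sqcup A''_{k;N}$ from Definition~\ref{AkN} as a consistency check.
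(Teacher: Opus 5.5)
Your proposal is correct, and the pointwise identity
\[
\sum_{\sigma\in A_{k;N}}\mathbf 1[\xi_{\sigma(1)},\dots,\xi_{\sigma(N)}\text{ unimodal}]=2\cdot\mathbf 1[\text{both blocks unimodal}]
\]
holds exactly. The step you flag as the main obstacle goes through directly. Let $\mu$ be the global maximum, $P$ the block sequence containing it, and $Q$ the other block sequence, with maximum $q^*$. A unimodal arrangement is determined by the set $L$ of values placed left of $\mu$. Its restriction to $Q$ is automatically unimodal with peak $q^*$, whichever side $q^*$ lies on. The restrictions reproduce $P$ and $Q$ exactly when $L=P_<\cup Q_<$ or $L=P_<\cup Q_<\cup\{q^*\}$, where $P_<,Q_<$ denote the entries preceding $\mu$ in $P$, resp.\ $q^*$ in $Q$. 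This gives exactly two $\sigma$. Monotone or length-one blocks need no separate treatment, so the induction fallback is unnecessary. (Minor slip: there are $N$ events $M_j$, not $2^{N-1}$; the number $2^{N-1}$ counts the total orders in Lemma~\ref{partition}.)

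The underlying mechanism is the paper's as well: its partner permutation $\sigma'$ moves the maximum of the block not containing the peak across the peak. The paper organizes the argument differently. It expands both sides into total-order events and argues that each left-hand event arises from exactly two $\sigma$. It then counts $2^{N-2}\binom Nk$ distinct events on each side and closes with a one-sided inclusion obtained by deleting variables. Your indicator formulation gives the multiplicity two and the vanishing for non-unimodal blocks in one step. It therefore needs neither the cardinality bookkeeping nor the paper's rather informal swap-and-uniqueness procedure, and it is the cleaner route.
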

\begin{proof}
We first study \(2^{N-2}b_{i_1,\cdots,i_k}b_{i_{k+1},\cdots,i_N}\). By Lemma~\ref{partition}, expanding this product gives terms of the form
\begin{align*}
\mathbb P(\text{some total order inequality of }\xi_{1},\cdots,\xi_{k})\cdot \mathbb P(\text{some total order inequality of }\xi_{k+1},\cdots,\xi_{N}).
\end{align*}
Combining the two inequalities into one yields exactly \(\binom{N}{k}\) mutually exclusive events. Hence \(2^{N-2}b_{i_1,\cdots,i_k}b_{i_{k+1},\cdots,i_N}\) is the sum of probabilities of \(2^{N-2}\binom{N}{k}\) mutually exclusive events, each of which is a total order inequality of \(\xi_{1},\cdots,\xi_{N}\). From now on, the “events” for \(b_{\cdots}\) refer to the events appearing in the probability sum of its expansion, see Lemma~\ref{partition}.

We shall prove that \(2^{N-2}\sum_{\sigma\in A_{k;N}}b_{i_{\sigma(1)},i_{\sigma(2)},\cdots,i_{\sigma(N)}}\) also consists of \(2^{N-2}\binom{N}{k}\) mutually exclusive events, each being a total order inequality of \(\xi_{1},\cdots,\xi_{N}\), and each such event also appears exactly in the expansion of \(2^{N-2}b_{i_1,\cdots,i_k}b_{i_{k+1},\cdots,i_N}\).

For \(\sigma\in A_{k;N}\), consider \(2^{N-1}b_{i_{\sigma(1)},i_{\sigma(2)},\cdots,i_{\sigma(N)}}\). By Lemma~\ref{partition}, this is the sum of probabilities of \(2^{N-1}\) mutually exclusive events. Pick any one of these events, say coming from the inequality
\begin{align}\label{ineqbN}
\xi_{{\sigma(1)}}<\cdots<\xi_{{\sigma(j)}}>\cdots>\xi_{{\sigma(N)}}
\end{align}
and for brevity we restate its properties as follows:
\begin{enumerate}
\item \(\sigma\in A_{k;N}\);
\item This event is a total order inequality of \(\xi_{1},\cdots,\xi_{N}\) obtained by combining \(\xi_{{\sigma(j)}}>\cdots>\xi_{{\sigma(1)}}\) and \(\xi_{{\sigma(j)}}>\cdots>\xi_{{\sigma(N)}}\). Write it as \(\xi_{\tau(1)}>\xi_{\tau(2)}>\cdots> \xi_{\tau(N)}\), then \(\tau(1)=\sigma(j)\).
\end{enumerate}
We want to show that there exists exactly one \(\sigma'\ne \sigma\in A_{k;N}\) such that \(2^{N-1}b_{i_{\sigma'(1)},i_{\sigma'(2)},\cdots,i_{\sigma'(N)}}\) also contains this event.

\noindent\textbf{Case 1: }\(\sigma(j)\in\{k,k-1,\cdots,1\}\). Consider the positive integer \(l\) such that \(\sigma(l)\notin\{k,k-1,\cdots,1\}\) and \(\xi_{{\sigma(l)}}=\max_{t=k+1,\cdots,N} \xi_{t}\). When we speak of elements to the left or right of \(\sigma(\cdot)\), we mean the order in the permutation \((\sigma(1),\cdots,\sigma(N))\). If \(l<j\), we define a permutation that moves \(\sigma(l)\) to some position to the right of \(\sigma(j)\); specifically, we successively swap \(\sigma(l)\) with the elements to its right, subject to:
\begin{enumerate}
\item We must move \(\sigma(l)\) to the right of \(\sigma(j)\);
\item During this process we cannot perform swaps of the form \((\sigma(l),\sigma(l'))\) where \(\sigma(l')\in \{k+1,\cdots,N\}\);
\item After these swaps we obtain a permutation \(\gamma\in A_{k;N}\) different from \(\sigma\) (not necessarily unique). Clearly, in the expansion \eqref{bN} of \(2^{N-1}b_{i_{\gamma(1)},i_{\gamma(2)},\cdots,i_{\gamma(N)}}\) there is exactly one event that has \(\xi_{{\sigma(j)}}\) as the maximum;
\item The event corresponds to an inequality, and we require that this inequality be compatible with the inequality \(\xi_{\tau(1)}>\xi_{\tau(2)}>\cdots> \xi_{\tau(N)}\), i.e., no contradiction. This requirement restricts the choice of \(\gamma\).
\end{enumerate}
By definition there exists a unique \(\gamma\) satisfying the above conditions; then \(\gamma\) is the desired \(\sigma'\). We now verify the uniqueness of \(\sigma'\). Since \(\sigma'\ne\sigma\), \(\sigma'\) must either move some element from the left of \(\sigma(j)\) to the right of \(\sigma(j)\), or move some element from the right of \(\sigma(j)\) to the left of \(\sigma(j)\) (it is impossible to keep left elements on the left, otherwise the inequality \eqref{ineqbN} would not hold; similarly for the right side). By the definition of \(l\), \(\sigma'\) must move \(\sigma(l)\), therefore \(\sigma'\) satisfies the four conditions above, proving uniqueness.

If \(l>j\), a similar argument works.

\noindent\textbf{Case 2: }\(\sigma(j)\in\{k+1,\cdots,N\}\). Similar to Case 1.

Thus we have shown that \(2^{N-2}\sum_{\sigma\in A_{k;N}}b_{i_{\sigma(1)},i_{\sigma(2)},\cdots,i_{\sigma(N)}}\) also consists of \(2^{N-2}\binom{N}{k}\) mutually exclusive events, each being a total order inequality of \(\xi_{1},\cdots,\xi_{N}\). Next we prove that each such event also appears in the expansion of \(2^{N-2}b_{i_1,\cdots,i_k}b_{i_{k+1},\cdots,i_N}\).

For \(\sigma\in A_{k;N}\), consider any event \eqref{ineqbN} of \(2^{N-1}b_{i_{\sigma(1)},i_{\sigma(2)},\cdots,i_{\sigma(N)}}\). Deleting those \(X\) whose indices lie in \(\{1,2,\cdots,k\}\) from the inequality \eqref{ineqbN} yields an inequality consequence; this event obviously lies among the events of \(b_{i_{k+1},\cdots,i_N}\). Similarly, deleting those \(X\) whose indices lie in \(\{k+1,\cdots,N\}\) yields an inequality consequence that lies among the events of \(b_{i_{1},\cdots,i_k}\). This completes the proof of the proposition.
\end{proof}

\begin{corollary}\label{altersum}
\begin{enumerate}\item If \(N\) is even, then
\begin{align} \label{eq_alt_sum_1}
2b_{i_1,\cdots,i_N}=b_{i_1,\cdots,i_{N-1}}b_{i_N}-b_{i_1,\cdots,i_{N-2}}b_{i_{N-1},i_N}+b_{i_1,\cdots,i_{N-3}}b_{i_{N-2},i_{N-1},i_N}-\cdots+b_{i_1}b_{i_2,\cdots,i_{N}}.
\end{align}
\item If \(N\) is odd, then
\begin{align}  \label{eq_alt_sum_2}
0=b_{i_1,\cdots,i_{N-1}}b_{i_N}-b_{i_1,\cdots,i_{N-2}}b_{i_{N-1},i_N}+b_{i_1,\cdots,i_{N-3}}b_{i_{N-2},i_{N-1},i_N}-\cdots-b_{i_1}b_{i_2,\cdots,i_{N}}.
\end{align}
\item For any $j=1,2,\cdots,N$, we have
\begin{align}\label{redeqb1}
(-1)^j\sum_{\sigma\in A'_{j;N}}b_{i_{\sigma(1)},\cdots,i_{\sigma(N)}}=-b_{i_1,\cdots,i_N}+\sum_{l=1}^{j-1}(-1)^{l-1}b_{i_1,\cdots,i_l}b_{i_{l+1},\cdots,i_N}.
\end{align}
\end{enumerate}
\end{corollary}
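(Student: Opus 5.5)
The proof reduces all three items to Proposition~\ref{bkbN-k} and Lemma~\ref{partition}, which express every $2^{N-1}b_{\cdots}$ and every product $b_{\cdots}b_{\cdots}$ as sums of probabilities of disjoint total orders. Throughout, write
\[
B_{j,N}:=2^{N-1}b_{i_1,\cdots,i_j}b_{i_{j+1},\cdots,i_N}, \qquad 1\le j<N .
\]
The idea is that every alternating sum in the corollary should telescope once each product of $b$'s is rewritten by Proposition~\ref{bkbN-k}.

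Proposition~\ref{bkbN-k} gives
\[
b_{i_1,\cdots,i_j}b_{i_{j+1},\cdots,i_N}=\sum_{\sigma\in A_{j;N}}b_{i_{\sigma(1)},\cdots,i_{\sigma(N)}} .
\]
Split $A_{j;N}=A'_{j;N}\sqcup A''_{j;N}$. The key observation is that $A''_{j;N}$ and $A'_{j+1;N}$ should match. An element of $A''_{j;N}$ starts with $j+1$, lists $1,\dots,j$ in decreasing order, and lists $j+1,\dots,N$ in increasing order. One would like to identify it with an element of $A'_{j+1;N}$, which starts with $j+1$, lists $j+1,j,\dots,1$ decreasingly, and lists $j+2,\dots,N$ increasingly. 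These two descriptions coincide: in both cases $\sigma(1)=j+1$, the entries $1,\dots,j$ appear decreasingly, and the entries $j+2,\dots,N$ appear increasingly. Hence $A''_{j;N}=A'_{j+1;N}$ as sets. For $j=N-1$, the set $A''_{N-1;N}$ consists of the single permutation $N,N-1,\dots,1$, which is $A'_{N;N}$. Likewise $A'_{1;N}$ is the identity-type singleton starting with $1$ and increasing afterwards, so $\sum_{\sigma\in A'_{1;N}}b_{\sigma}=b_{i_1,\cdots,i_N}$.

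Writing $T_j:=\sum_{\sigma\in A'_{j;N}}b_{i_{\sigma(1)},\cdots,i_{\sigma(N)}}$, this gives
\[
b_{i_1,\cdots,i_j}b_{i_{j+1},\cdots,i_N}=T_j+T_{j+1},\qquad T_1=b_{i_1,\cdots,i_N}.
\]

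For item (3), form the alternating sum $\sum_{l=1}^{j-1}(-1)^{l-1}(T_l+T_{l+1})$. It telescopes to $T_1+(-1)^{j}T_j$, which is exactly \eqref{redeqb1}.

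For items (1) and (2), take $j=N$ in the same telescoping. This gives
\[
\sum_{l=1}^{N-1}(-1)^{l-1}b_{i_1,\cdots,i_l}b_{i_{l+1},\cdots,i_N}=b_{i_1,\cdots,i_N}+(-1)^{N}T_N ,
\]
where $T_N=b_{i_N,\cdots,i_1}$. It remains to check the reversal symmetry $b_{i_N,\cdots,i_1}=b_{i_1,\cdots,i_N}$. From \eqref{bN}, reversing the indices maps the mountain event with peak at $l$ to the mountain event with peak at $N+1-l$. The set of mountain patterns is therefore invariant, and the sum is unchanged. With this, the right-hand side equals $2b_{i_1,\cdots,i_N}$ for $N$ even and $0$ for $N$ odd. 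This matches \eqref{eq_alt_sum_1} and \eqref{eq_alt_sum_2} up to the sign and ordering conventions there, where the last term's sign is $(-1)^{N-2}$.

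The main obstacle is the bookkeeping in the identification $A''_{j;N}=A'_{j+1;N}$, in particular the boundary cases $j=N-1$ and the special definition of $A'_{N;N}$. The sign conventions of the displayed alternating sums in (1) and (2) also have to be checked against the telescoped form. Everything else is formal.
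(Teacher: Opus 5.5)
Your proposal is correct and follows the paper's argument. Proposition~\ref{bkbN-k}, combined with the identification $A''_{j;N}=A'_{j+1;N}$ (the paper writes it as $A'_{k;N}=A''_{k-1;N}$), makes the alternating sums telescope, with $A'_{1;N}$ and $A'_{N;N}$ contributing $b_{i_1,\cdots,i_N}$ and $b_{i_N,\cdots,i_1}$. The differences are cosmetic: you telescope directly in item (3) where the paper inducts on $j$, and you justify the reversal symmetry $b_{i_N,\cdots,i_1}=b_{i_1,\cdots,i_N}$ via peak $l\mapsto N+1-l$, which the paper uses without comment.
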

\begin{proof}
1. First consider the case when \(N\) is even. By Proposition~\ref{bkbN-k}, we have
\begin{align*}
\sum_{k=1}^{N-1}(-1)^{k-1}\sum_{\sigma\in A_{k;N}}b_{i_{\sigma(1)},i_{\sigma(2)},\cdots,i_{\sigma(N)}}= \sum_{k=1}^{N-1}(-1)^{k-1}b_{i_1,\cdots,i_k}b_{i_{k+1},\cdots,i_N}.
\end{align*}
Recall \(A_{k;N}\) can be partitioned into two parts \(A'_{k;N}\coprod A''_{k;N}\) in Definition~\ref{AkN}. It is easy to see that \(A'_{k;N}= A''_{k-1;N}\). Therefore
\begin{align*}
&\sum_{k=1}^{N-1}(-1)^{k-1}\sum_{\sigma\in A_{k;N}}b_{i_{\sigma(1)},i_{\sigma(2)},\cdots,i_{\sigma(N)}}\\
=&\Bigl(\sum_{\sigma\in A'_{1;N}}+\sum_{\sigma\in A''_{1;N}}-\sum_{\sigma\in A'_{2;N}}-\sum_{\sigma\in A''_{2;N}}+\cdots+\sum_{\sigma\in A'_{N-1;N}}+\sum_{\sigma\in A''_{N-1;N}}\Bigr)b_{i_{\sigma(1)},i_{\sigma(2)},\cdots,i_{\sigma(N)}}\\
=&\Bigl(\sum_{\sigma\in A'_{1;N}}+\sum_{\sigma\in A''_{N-1;N}}\Bigr)b_{i_{\sigma(1)},i_{\sigma(2)},\cdots,i_{\sigma(N)}}=b_{i_1,\cdots,i_N}+b_{i_N,i_{N-1},\cdots,i_1}=2b_{i_1,\cdots,i_N}.
\end{align*}
2. The case when \(N\) is odd is proved similarly.

\noindent 3. We prove \eqref{redeqb1} by an induction on $j$. For $j=1$, \eqref{redeqb1} is $-b_{i_1,\cdots,i_N}=-b_{i_1,\cdots,i_N}$. Assume \eqref{redeqb1} holds for $1\leq j<N$, we prove \eqref{redeqb1} holds for $j+1$. By induction and Proposition~\ref{bkbN-k}, we have
\begin{align*}&-(-1)^{j+1}\sum_{\sigma\in A'_{j+1;N}}b_{i_{\sigma(1)},\cdots,i_{\sigma(N)}}-b_{i_1,\cdots,i_N}+\sum_{l=1}^{j}(-1)^{l-1}b_{i_1,\cdots,i_l}b_{i_{l+1},\cdots,i_N}\\=&(-1)^{j}\sum_{\sigma\in A'_{j+1;N}}b_{i_{\sigma(1)},\cdots,i_{\sigma(N)}}+(-1)^j\sum_{\sigma\in A'_{j;N}}b_{i_{\sigma(1)},\cdots,i_{\sigma(N)}}+(-1)^{j-1}b_{i_1,\cdots,i_j}b_{i_{j+1},\cdots,i_N}\\=&(-1)^{j}\sum_{\sigma\in A_{j;N}}b_{i_{\sigma(1)},\cdots,i_{\sigma(N)}}+(-1)^{j-1}b_{i_1,\cdots,i_j}b_{i_{j+1},\cdots,i_N}=0.\qedhere 
\end{align*}
\end{proof}

\begin{lemma}For $l=1,\cdots,N$, we have
\begin{equation}\label{probmountain}\begin{aligned}&\mathbb P(\xi_1<\cdots<\xi_{l-1}<\xi_l>\xi_{l+1}>\cdots>\xi_N)\\
&=(\prod_{j=2}^{l-1}\frac{i_j}{i_j+i_{j-1}+\cdots+i_1})\frac{i_l}{i_1+\cdots+i_N}(\prod_{k=l+1}^{N-1}\frac{i_k}{i_k+i_{k+1}+\cdots+i_N}).
\end{aligned}\end{equation}
For $l=3,\cdots,N$ and $j=2,\cdots,l-1$, we have
\begin{equation}\label{probmountain'}\begin{aligned}&\frac{i_{j}}{i_{j-1}+i_{j}}\cdot\mathbb P(\xi_1<\cdots<\xi_{j-2}<\max(\xi_{j-1},\xi_j)<\xi_{j+1}<\cdots<\xi_l>\cdots>\xi_N)\\
&=\mathbb P((\xi_1<\cdots<\xi_{j-2}<\xi_j<\xi_{j+1}<\cdots<\xi_l>\cdots>\xi_N)\land(\xi_j>\xi_{j-1})).
\end{aligned}\end{equation}
\end{lemma}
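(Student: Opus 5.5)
The whole argument rests on one structural property of the distributions $\operatorname{Beta}(i,1)$: they are the power laws $\mathbb P(\xi\le x)=x^{i}$ on $[0,1]$. I will first establish two facts about them and then derive \eqref{probmountain} and \eqref{probmountain'} from these facts by conditioning.

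\textbf{Fact (a), scaling.} If $\xi\sim\operatorname{Beta}(i,1)$ and $0<m\le 1$, then conditionally on $\xi<m$ the variable $\xi/m$ is again $\operatorname{Beta}(i,1)$, and $\mathbb P(\xi<m)=m^{i}$.

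\textbf{Fact (b), max and argmax.} Let $\xi_a\sim\operatorname{Beta}(a,1)$ and $\xi_b\sim\operatorname{Beta}(b,1)$ be independent. Then $M=\max(\xi_a,\xi_b)\sim\operatorname{Beta}(a+b,1)$. Moreover the event $\{\xi_b>\xi_a\}$ has probability $b/(a+b)$ and is independent of $M$. To see this, compute
\[\mathbb P(M\le x,\ \xi_b>\xi_a)=\int_0^x b\,y^{b-1}y^{a}\,dy=\tfrac{b}{a+b}\,x^{a+b}.\]
More generally, among independent power-law variables the index of the maximum is independent of the value of the maximum, and it is the index $r$ with probability $i_r/\sum_s i_s$.

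\textbf{Step 1: monotone chains.} From these facts I first derive
\[\mathbb P(\xi_1<\cdots<\xi_{m})=\prod_{j=2}^{m}\frac{i_j}{i_1+\cdots+i_j}.\]
The proof is by induction on $m$. The event holds exactly when $\xi_m$ is the maximum of $\xi_1,\dots,\xi_m$, which has probability $i_m/(i_1+\cdots+i_m)$, and the remaining $m-1$ variables are increasing. Conditionally on the value $M$ of the maximum and on $\xi_m$ being the argmax, the other variables divided by $M$ are independent with the same laws, by Fact (a). Hence the increasing-order event for them has its unconditional probability, and the induction closes. The decreasing chain $\xi_{l+1}>\cdots>\xi_N$ is handled symmetrically: peel off $\xi_{l+1}$ as the maximum, with factor $i_{l+1}/(i_{l+1}+\cdots+i_N)$, and continue down to $k=N-1$.

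\textbf{Step 2: proof of \eqref{probmountain}.} The mountain event $\xi_1<\cdots<\xi_l>\cdots>\xi_N$ means three things: $\xi_l$ is the global maximum, the variables to its left are increasing, and the variables to its right are decreasing. By Fact (b), $\xi_l$ is the argmax with probability $i_l/(i_1+\cdots+i_N)$, independently of the maximal value $m$. Conditionally on this and on $m$, the remaining variables scaled by $m$ are independent with their original laws, by Fact (a). So the left increasing event and the right decreasing event are conditionally independent with their unconditional probabilities. Multiplying the three factors gives exactly \eqref{probmountain}. If one prefers a direct computation, the same result comes from writing the probability as the integral $\int_0^1 i_l m^{i_l-1}\cdot m^{\sum_{s\ne l}i_s}\,dm$ times the two chain probabilities.

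\textbf{Step 3: proof of \eqref{probmountain'}.} Put $M=\max(\xi_{j-1},\xi_j)$. On the event $\{\xi_j>\xi_{j-1}\}$ we have $M=\xi_j$. So the right-hand event equals the intersection of $\{\xi_j>\xi_{j-1}\}$ with the mountain event in which $\xi_{j-1}$ is dropped and $\xi_j$ is replaced by $M$. By Fact (b), $\{\xi_j>\xi_{j-1}\}$ is independent of $M$, and it is independent of all the other $\xi_s$ as well. The mountain event with $M$ in place of the pair depends only on $M$ and the other variables, so the probability factorizes as $\frac{i_j}{i_{j-1}+i_j}$ times the left-hand probability.

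The only point that needs care is the independence of the argmax from the max together with the conditional rescaling (Facts (a) and (b)). Once these are stated precisely, both identities are short consequences.
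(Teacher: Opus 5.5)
Your proof is correct. For \eqref{probmountain} it uses the same mechanism as the paper, but fills in a step the paper skips. The paper writes the mountain probability as the product of the probabilities of the record events $\{\xi_j>\max(\xi_1,\dots,\xi_{j-1})\}$ ($2\le j\le l-1$), $\{\xi_l>\max_{s\ne l}\xi_s\}$ and $\{\xi_k>\max(\xi_{k+1},\dots,\xi_N)\}$ ($l<k<N$). It then evaluates each factor via $\max(\xi_1,\dots,\xi_{j-1})\sim\operatorname{Beta}(i_1+\cdots+i_{j-1},1)$, but never justifies that these events are independent. Your Facts (a) and (b) — the argmax is independent of the maximal value, and conditioning below the maximum rescales back to the original power laws — are exactly what proves that factorization. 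You organize it top-down by conditioning on the peak rather than as a product of records.

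For \eqref{probmountain'} your route genuinely differs. The paper only says it follows ``directly'' from \eqref{probmountain}, i.e.\ by evaluating both sides with the closed formula and comparing. The left side goes through the merged variable $\max(\xi_{j-1},\xi_j)\sim\operatorname{Beta}(i_{j-1}+i_j,1)$. The right side, where $\xi_{j-1}$ is only required to lie below $\xi_j$, needs a further record factorization or a sum over the possible positions of $\xi_{j-1}$. You instead identify the right-hand event with $\{\xi_j>\xi_{j-1}\}\cap A$, where $A$ is the left-hand event written in terms of $M=\max(\xi_{j-1},\xi_j)$ and the remaining variables. You then split off $\frac{i_j}{i_{j-1}+i_j}$ by independence. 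This needs no computation with the closed formula and holds for any event depending only on $M$ and the other $\xi_s$. The paper's sketch is shorter on the page; yours is self-contained and supplies the independence the paper takes for granted.

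One phrasing point in Step 3. Independence of $\{\xi_j>\xi_{j-1}\}$ from $M$, and separately from the other $\xi_s$, does not by itself give independence from the pair $\bigl(M,(\xi_s)_{s\ne j-1,j}\bigr)$. What you need, and what holds, is this: the indicator and $M$ are both functions of $(\xi_{j-1},\xi_j)$, which is independent of the remaining variables. Together with Fact (b), that makes the indicator, $M$ and the rest mutually independent.
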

\begin{proof}Firstly, we have
\begin{align*}&\mathbb P(\xi_1<\cdots<\xi_{l-1}<\xi_l>\xi_{l+1}>\cdots>\xi_N)\\
=&(\prod_{j=2}^{l-1}\mathbb P(\max(\xi_1,\cdots,\xi_{j-1})<\xi_j))\mathbb P(\xi_l>\max(\xi_1,\cdots,\xi_{l-1},\xi_{l+1},\cdots,\xi_N))\cdot\\
&(\prod_{k=l+1}^{N-1}\mathbb P(\xi_k>\max(\xi_{k+1},\cdots,\xi_N))).
\end{align*}
Then we get \eqref{probmountain} by the following facts
\begin{align*}\max(\xi_1,\cdots,\xi_{j-1})&\sim \operatorname{Beta}(i_1+\cdots+i_{j-1},1);\quad 
\mathbb P(\xi_2>\xi_1)=\frac{i_2}{i_2+i_1}.
\end{align*}
One may derive \eqref{probmountain'} from \eqref{probmountain} directly.
\end{proof}
\begin{lemma}For any positive integers $i_1,\cdots,i_{N+1}$, we have
\begin{align}\label{recursiveb}
b_{i_{N+1}+i_1,i_2,\cdots,i_{N}}-{b}_{i_1,i_2,\cdots,i_{N}}+ \frac{i_2}{i_2+i_1}\cdot(\cc\cdot b_{{i_1+i_2},i_3,\cdots,i_{N}}-b_{i_{N+1},i_1+i_2,i_3,\cdots,i_{N}})=0.
\end{align}
\end{lemma}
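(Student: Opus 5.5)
The plan is to turn \eqref{recursiveb} into an identity between indicator functions of events built from a single family $\xi_1,\dots,\xi_{N+1}$ of independent variables with $\xi_j\sim\operatorname{Beta}(i_j,1)$. The only probabilistic input is that, for Beta variables with second parameter $1$, merging two indices corresponds to taking a maximum.

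\textbf{Step 1: a closed form for $b$.} By Lemma~\ref{partition}, the events in \eqref{bN} are the mutually exclusive events ``the sequence rises up to position $j$ and falls afterwards''. Their union is the event that $(\xi_1,\dots,\xi_N)$ is \emph{unimodal}, i.e.\ increasing then decreasing. Writing $\mathrm{Uni}(x_1,\dots,x_N)$ for this event, we get
\[
b_{i_1,\dots,i_N}=2^{-(N-1)}\,\mathbb P\bigl(\mathrm{Uni}(\xi_1,\dots,\xi_N)\bigr).
\]

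\textbf{Step 2: merging indices via maxima.} The law of $\xi_j$ has CDF $x^{i_j}$ on $[0,1]$. Hence $\max(\xi_a,\xi_b)\sim\operatorname{Beta}(i_a+i_b,1)$, and it is independent of the remaining variables. Moreover, the event $\{\xi_b>\xi_a\}$ is independent of $\max(\xi_a,\xi_b)$, with $\mathbb P(\xi_b>\xi_a)=\frac{i_b}{i_a+i_b}$. This follows from $\mathbb P(\xi_a<\xi_b\le x)=\frac{i_b}{i_a+i_b}x^{i_a+i_b}$.

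Applying these facts, and setting $m:=\max(\xi_1,\xi_{N+1})$, the four terms become:
\begin{align*}
b_{i_{N+1}+i_1,i_2,\dots,i_N}&=2^{-(N-1)}\,\mathbb P\bigl(\mathrm{Uni}(m,\xi_2,\dots,\xi_N)\bigr),\\
\tfrac{i_2}{i_1+i_2}\cdot\tfrac12\, b_{i_1+i_2,i_3,\dots,i_N}&=2^{-(N-1)}\,\mathbb P\bigl(\{\xi_2>\xi_1\}\cap\mathrm{Uni}(\xi_2,\dots,\xi_N)\bigr),\\
\tfrac{i_2}{i_1+i_2}\, b_{i_{N+1},i_1+i_2,i_3,\dots,i_N}&=2^{-(N-1)}\,\mathbb P\bigl(\{\xi_2>\xi_1\}\cap\mathrm{Uni}(\xi_{N+1},\xi_2,\dots,\xi_N)\bigr).
\end{align*}
In the last two lines we use that, on $\{\xi_2>\xi_1\}$, the maximum $\max(\xi_1,\xi_2)$ equals $\xi_2$. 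The independence statement is exactly what lets us replace the factor $\frac{i_2}{i_1+i_2}$ times a probability in the merged variable by a probability of an intersection. The identity \eqref{recursiveb} is therefore equivalent, after multiplying by $2^{N-1}$, to the vanishing of an alternating sum of four probabilities. I will prove that the corresponding alternating sum of indicator functions vanishes pointwise, off the null set where two $\xi$'s coincide.

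\textbf{Step 3: the pointwise identity.} Let $W:=\mathrm{Uni}(\xi_2,\dots,\xi_N)$ and $D:=\{\xi_2>\xi_3>\dots>\xi_N\}\subset W$. For any extra first entry $x$,
\[
\mathbf 1_{\mathrm{Uni}(x,\xi_2,\dots,\xi_N)}=\mathbf 1_W\,\mathbf 1_{x<\xi_2}+\mathbf 1_D\,\mathbf 1_{x>\xi_2}.
\]
Substituting this for $x=m$, $x=\xi_1$ and $x=\xi_{N+1}$ splits the alternating sum into a $W$-part and a $D$-part.

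The $W$-part is
\[
\mathbf 1_W\bigl(\mathbf 1_{m<\xi_2}-\mathbf 1_{\xi_1<\xi_2}+\mathbf 1_{\xi_2>\xi_1}-\mathbf 1_{\xi_2>\xi_1}\mathbf 1_{\xi_{N+1}<\xi_2}\bigr).
\]
It vanishes because $\{m<\xi_2\}=\{\xi_1<\xi_2\}\cap\{\xi_{N+1}<\xi_2\}$.

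The $D$-part is
\[
\mathbf 1_D\bigl(\mathbf 1_{m>\xi_2}-\mathbf 1_{\xi_1>\xi_2}-\mathbf 1_{\xi_1<\xi_2}\mathbf 1_{\xi_{N+1}>\xi_2}\bigr).
\]
It vanishes because $\{m>\xi_2\}$ is the disjoint union of $\{\xi_1>\xi_2\}$ and $\{\xi_1<\xi_2<\xi_{N+1}\}$. Taking expectations then gives \eqref{recursiveb}.

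\textbf{Main obstacle.} The delicate point is Step 2: checking carefully that the independence of $\{\xi_b>\xi_a\}$ from $\max(\xi_a,\xi_b)$ legitimately converts the $\frac{i_2}{i_1+i_2}$-weighted $b$'s into probabilities of intersections. In the term $b_{i_{N+1},i_1+i_2,\dots}$ the merged variable sits in the \emph{second} slot, so I will verify this explicitly via the joint CDF rather than rely on the heuristic. I must also treat the small cases $N=2,3$, where $W$ and $D$ degenerate (for instance, when $N=2$ both $W$ and $D$ are the sure event). These are consistent with the conventions $b_{i_1}=1$ and $b_{i_1,i_2}=\tfrac12$, and I will check them by hand.
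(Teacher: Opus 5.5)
Your proposal is correct and follows essentially the paper's route: both realize all four $b$'s in a single $\operatorname{Beta}(\cdot,1)$ model, merge indices by taking maxima, absorb the factor $\frac{i_2}{i_2+i_1}$ via the independence of $\{\xi_2>\xi_1\}$ from $\max(\xi_1,\xi_2)$ (this is exactly what \eqref{probmountain'} encodes), and then cancel events according to whether the first entry lies below or above $\xi_2$. Your $W$/$D$ indicator identity is a cleaner bookkeeping of the paper's peak-by-peak cancellation. The ``second slot'' concern is not a real obstacle, because the independence argument does not depend on where the merged variable sits.
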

\begin{proof}Let $\xi_{1},\cdots,\xi_{N+1}$ be independent random variables with distributions $$\operatorname{Beta}(i_1,1),\cdots,\operatorname{Beta}(i_{N+1},1)$$ respectively, where $i_1,\cdots,i_{N+1}$ are positive integers. Then
\begin{align*}&2^{N-1}\frac{i_2}{i_2+i_1}\cdot(\cc\cdot b_{{i_1+i_2},i_3,\cdots,i_{N}}-{b}_{i_{N+1},i_1+i_2,i_3,\cdots,i_{N}})\\
=&\sum_{l=3}^{N}\frac{i_2}{i_2+i_1}\cdot(\mathbb P(\max(\xi_1,\xi_2)<\xi_3<\cdots<\xi_l>\cdots>\xi_{N})\\
&\makebox[.8cm]{}-\mathbb P(\xi_{N+1}<\max(\xi_1,\xi_2)<\xi_3<\cdots<\xi_l>\cdots>\xi_{N}))\\
\stackrel{\eqref{probmountain'}}{=}&\sum_{l=3}^{N}(\mathbb P(\xi_1<\xi_2<\xi_3<\cdots<\xi_l>\cdots>\xi_{N})-\mathbb P(\max(\xi_{N+1},\xi_1)<\xi_2<\cdots<\xi_l>\cdots>\xi_{N}))\\
=&2^{N-1}(b_{i_1,\cdots,i_{N}}-b_{i_{N+1}+i_1,i_2,\cdots,i_{N}}).
\end{align*}

As a byproduct of this proof, we have
\begin{equation}\begin{aligned}\label{abN}&\frac{i_2}{i_2+i_1}\cdot 2^{N-2}b_{i_1+i_2,i_3,\cdots,i_{N}}\\
=&\sum_{l=3}^{N}\mathbb P(\xi_1<\xi_2<\cdots<\xi_l>\cdots>\xi_{N})+\frac{i_2}{i_2+i_1}\cdot\mathbb P(\max(\xi_1,\xi_2)>\xi_3>\cdots>\xi_{N})\\
=&b_{i_1,i_2,\cdots,i_{N}}-\mathbb P(\xi_2>\cdots>\xi_{N})+\frac{i_2}{i_2+i_1}\cdot\mathbb P(\max(\xi_1,\xi_2)>\xi_3>\cdots>\xi_{N})\\
=&b_{i_1,i_2,\cdots,i_{N}}-\mathbb P(\xi_1>\xi_2>\cdots>\xi_{N}),
\end{aligned}\end{equation}
because $\frac{i_2}{i_2+i_1}\cdot\mathbb P(\max(\xi_1,\xi_2)>\xi_3>\cdots>\xi_{N})=\mathbb P((\xi_2>\xi_1)\land(\xi_2>\xi_3>\cdots>\xi_{N}))$.
\end{proof}
\begin{lemma}For $l=2,\cdots,N$, we have
\begin{align}\label{altersumprob}\mathbb P(\xi_1<\cdots<\xi_{N+1})=\sum_{j=l-1}^N (-1)^{j-l+1}\mathbb P(\xi_1<\cdots<\xi_l>\cdots>\xi_{j+1})\mathbb P(\xi_{j+2}<\cdots<\xi_{N+1}).
\end{align}
(When $j+2\geq N+1$, we denote $\mathbb P(\xi_{j+2}<\cdots<\xi_{N+1}):=1$.)
\end{lemma}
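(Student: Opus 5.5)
The plan is to interpret each product on the right-hand side of \eqref{altersumprob} as the probability of a single event, and then show that these events telescope. The $\xi_i$ are independent, and the two factors involve the disjoint sets of variables $\{\xi_1,\dots,\xi_{j+1}\}$ and $\{\xi_{j+2},\dots,\xi_{N+1}\}$. So for $l-1\le j\le N$ we have
\[
\mathbb P(\xi_1<\cdots<\xi_l>\cdots>\xi_{j+1})\,\mathbb P(\xi_{j+2}<\cdots<\xi_{N+1})=\mathbb P(A_j\cap B_j),
\]
where $A_j:=(\xi_1<\cdots<\xi_l>\cdots>\xi_{j+1})$ and $B_j:=(\xi_{j+2}<\cdots<\xi_{N+1})$, with $B_N$ and $B_{N-1}$ the sure event. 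The convention $\mathbb P(\cdots)=1$ in the statement matches this. For $j=l-1$ the event $A_{l-1}$ is just $(\xi_1<\cdots<\xi_l)$, since the descending part is empty.

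Next I introduce, for $l-1\le j\le N-1$, the refined events
\[
G_j:=A_j\cap B_j\cap(\xi_{j+1}<\xi_{j+2}),
\]
and set $G_N:=A_N$. These are ``mountain up to $j+1$, then a valley at $j+1$, then increasing''. The key observation is that, up to the null event $\xi_{j+1}=\xi_{j+2}$ (which has probability zero by continuity, as in the Assumption of Section~\ref{sec_holo_gauge_tran}),
\[
A_j\cap B_j=G_j\ \sqcup\ \bigl(A_j\cap B_j\cap(\xi_{j+1}>\xi_{j+2})\bigr),
\]
and the second piece equals $G_{j+1}$. Indeed, $A_j\cap(\xi_{j+1}>\xi_{j+2})=A_{j+1}$ extends the descending slope by one step. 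What remains of $B_j$ is $(\xi_{j+2}<\xi_{j+3}<\cdots<\xi_{N+1})$, which is $B_{j+1}\cap(\xi_{j+2}<\xi_{j+3})$. For $j=N-1$ this reads $A_{N-1}\cap B_{N-1}=G_{N-1}\sqcup A_N$, with $G_N=A_N$. At the endpoint $j=N$ there is nothing to split, so $A_N\cap B_N=G_N$. Finally, $G_{l-1}=(\xi_1<\cdots<\xi_l)\cap(\xi_l<\xi_{l+1})\cap(\xi_{l+1}<\cdots<\xi_{N+1})$ is exactly the totally increasing event $(\xi_1<\cdots<\xi_{N+1})$.

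With this in hand the right-hand side of \eqref{altersumprob} becomes
\[
\sum_{j=l-1}^{N}(-1)^{j-l+1}\bigl(\mathbb P(G_j)+\mathbb P(G_{j+1})\bigr),
\]
where $\mathbb P(G_{N+1}):=0$. This telescopes to $\mathbb P(G_{l-1})=\mathbb P(\xi_1<\cdots<\xi_{N+1})$, which is the claim.

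The main point requiring care is the bookkeeping at the boundary indices. I would check separately the case $j=l-1$, where the mountain has no descending part, and the cases $j=N-1$ and $j=N$, where $B_j$ is trivial and the split must be read as above. I would also confirm that $l\ge 2$ guarantees the peak is a genuine constraint, so that $G_{l-1}$ is indeed the fully increasing event. No distributional input beyond independence and continuity is needed; the Beta parameters $i_1,\dots,i_{N+1}$ play no role.
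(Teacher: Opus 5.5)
Your proof is correct and follows essentially the same route as the paper. The paper also reads each product as the probability of an intersection via independence, and splits $A_j\cap B_j$ according to $\xi_{j+1}<\xi_{j+2}$ or $\xi_{j+1}>\xi_{j+2}$ into the two pieces you call $G_j$ and $G_{j+1}$; the alternating sum then telescopes to $\mathbb P(\xi_1<\cdots<\xi_{N+1})$, and you just make the boundary cases $j=l-1$, $j=N-1$, $j=N$ more explicit than the paper does.
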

\begin{proof}Note that
\begin{align*}&(\xi_1<\cdots<\xi_l)\land(\xi_{l+1}<\cdots<\xi_{N+1})\\=&(\xi_1<\cdots<\xi_{N+1})\lor \big\{(\xi_1<\cdots<\xi_l>\xi_{l+1})\land(\xi_{l+1}<\cdots<\xi_{N+1})\big\};\\
&(\xi_1<\cdots<\xi_l>\xi_{l+1})\land(\xi_{l+2}<\cdots<\xi_{N+1})\\
=&\big\{(\xi_1<\cdots<\xi_l>\xi_{l+1})\land(\xi_{l+1}<\cdots<\xi_{N+1})\big\}\lor\\&\big\{ (\xi_1<\cdots<\xi_l>\xi_{l+1}>\xi_{l+2})\land(\xi_{l+2}<\cdots<\xi_{N+1})\big\},\quad\cdots,\\
&(\xi_1<\cdots<\xi_l>\cdots>\xi_{N})\\
=&\big\{(\xi_1<\cdots<\xi_l>\cdots>\xi_N)\land(\xi_N<\xi_{N+1})\big\}\lor(\xi_1<\cdots<\xi_l>\cdots>\xi_{N+1}).
\end{align*}
Take the alternative sum of their probabilities and we get the desired identity.
\end{proof}

\section{NHC of relative moduli spaces is real analytic} 
Let
\[
  \pi:X\longrightarrow S
\]
be a smooth projective morphism of complex manifolds.

Fix a topological type of complex vector bundle such that the usual non-abelian Hodge correspondence applies.  In our applications, this means that the rational Chern classes vanish.  Let
\[
  M_{\Dol}(X/S)
  \qquad\text{and}\qquad
  M_{\Betti}(X/S)
\]
be respectively the relative Dolbeault moduli space and the relative Betti moduli space.

In this section, we prove the following result, which has been proven by \cite[Theorem 4.23]{CTW} when any fiber of $\pi$ is a compact Riemann surface.

\begin{thm}[Relative real analyticity]
\label{R-analyticity}
The relative non-abelian Hodge correspondence
\[
  \NHC:M_{\Dol}(X/S)
  \longrightarrow
  M_{\Betti}(X/S)
\]
is a real analytic isomorphism near their smooth points. Moreover, for any holomorphic family of flat bundles over $X$, the corresponding family of Higgs bundles over $X$ by taking $NHC$ is a real analytic family of Higgs bundles.
\end{thm}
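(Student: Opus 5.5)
The statement has two parts. First, the relative correspondence $\NHC$ is real analytic near smooth points. Second, holomorphic families of flat bundles go to real analytic families of Higgs bundles. My plan is to reduce both to the analytic dependence of the harmonic metric on parameters, obtained from the implicit function theorem in the real analytic category. Near a point $0\in S$, Ehresmann's theorem lets me trivialize $X\to S$ differentiably as $X_0\times S$, with a fixed smooth bundle $\E$. A holomorphic family of flat bundles then becomes a family of flat connections $D_s$ on $\E$ depending holomorphically on $s$, after a gauge choice along a local slice in $M_{\Betti}$, where the monodromy representation varies holomorphically. The complex structure $J_s$ of $X_s$ depends real analytically on $s$ (it is holomorphic in $s$ through the Beltrami differential $\eta(s)$), and so does the Kähler form $\omega_s$.

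Next I will set up the harmonic metric equation. Fix a reference metric $h$ on $\E$ and write candidate metrics as $h e^{u}$ with $u$ $h$-selfadjoint. Harmonicity of $h_s$ for $(D_s,J_s,\omega_s)$ is the equation
\[
\mathcal F(s,u):=\Lambda_{\omega_s}\bigl(D_s''\,\theta_{h e^u,s}\bigr)^{\perp}=0 ,
\]
equivalently $d^{*}_{\omega_s}\Psi=0$ with $\Psi=-\tfrac12 (he^u)^{-1}D_s(he^u)$ and the $(1,0)/(0,1)$ splitting taken with respect to $J_s$. I will treat this as a map between Banach spaces of Hölder sections $C^{k+2,\alpha}\to C^{k,\alpha}$, modulo scalars. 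Its coefficients are polynomial or exponential expressions in $u$ and in the real analytic data $(D_s,J_s,\omega_s)$, so $\mathcal F$ is a real analytic map of Banach manifolds.

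The key step, and the main obstacle, is invertibility of $\partial_u\mathcal F$ at $(0,u_0)$. The linearization is the Laplacian-type operator $\dot u\mapsto \Ch^{1,0}\bar\p\dot u+[\theta^{\st},[\theta,\dot u]]+(\text{adjoint terms})$. This is exactly the operator in Lemma~\ref{vanlem}, and that integration-by-parts argument shows its kernel consists of $\mathbb H^0$ of the Higgs bundle. For a stable (or simple) point this is just the scalars. The operator is elliptic and selfadjoint, hence Fredholm of index zero, so it is an isomorphism on trace-free sections. Here I must assume smooth points correspond to stable, irreducible objects; that is exactly the hypothesis "near smooth points", and I would note that at polystable points the argument only works modulo the automorphism group.

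The analytic implicit function theorem then gives $u(s)$, hence $h_s$, real analytic in $s$. By elliptic regularity it is smooth on $X_0$. The Higgs data $\bar\p_s=\pi''_{\eta}(D_s)-\theta_s^{\star_{h_s}}$ and $\theta_s=\pi'_\eta(\Psi_s)$ therefore vary real analytically, which gives the second statement. For the first statement I will apply the same argument with $s$ replaced by local coordinates on $M_{\Betti}(X/S)$, which are analytic since $M_{\Betti}$ is locally a product of $S$ with the character variety. This shows $\NHC$ is real analytic. Running the inverse construction (Higgs $\to$ harmonic metric via the Hitchin equation, with the same linearized operator) shows $\NHC^{-1}$ is also real analytic, so $\NHC$ is a real analytic isomorphism at smooth points.
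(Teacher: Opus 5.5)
Your proof is correct in outline, but it obtains the parameter dependence by a different mechanism than the paper. The paper works on a finite-dimensional real analytic slice $\Sigma$ and writes $h_q=h_0k_q$. It then cites Morrey's analytic regularity for analytic elliptic systems (Proposition~\ref{prop:analytic-dependence}), applied ``by uniqueness of the harmonic metric'', to get $k_q(x)$ jointly real analytic in $(q,x)$. The flat connection \eqref{eq:flat-connection-from-Higgs}, or the Higgs data for the inverse direction, is then read off.

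You instead apply the analytic implicit function theorem on H\"older spaces. This forces you to prove that the linearized operator is an isomorphism on trace-free sections. You do so with the K\"ahler-identity argument of Lemma~\ref{vanlem}, which gives kernel $\mathbb H^0(X_0,(\End E,\operatorname{ad}(\theta)))=\C\cdot\id$ at a stable point.

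What your route buys:
\begin{itemize}
\item It verifies explicitly the non-degeneracy that any analytic-dependence theorem actually needs, which the paper leaves implicit behind the word ``uniqueness''.
\item Corlette's and Hitchin's equations make sense for arbitrary, not necessarily flat or integrable, operators. So the same argument could be run on the ambient Banach space, with no smooth slice required.
\end{itemize}
What it costs: you get real analyticity in the parameter with values in $C^{k,\alpha}$, plus smoothness in $x$ by elliptic regularity. You do not get joint real analyticity in $(q,x)$. This is enough for the $t,\bar t$-Taylor expansions with smooth coefficients used in Section~\ref{sec_isom_defm_Higgs}, but it is weaker than what the paper asserts.

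Two small corrections:
\begin{itemize}
\item The Ehresmann trivialization must be chosen real analytic, or in a gauge where the Beltrami differential is holomorphic in $s$; ``differentiably'' is not enough. With a merely smooth trivialization, $J_s$ and $\omega_s$ need not depend analytically on $s$, and the analytic implicit function theorem then gives nothing.
\item Smoothness of the moduli space and stability are different conditions; stable points can be singular when the quadratic obstruction is nonzero. Like the paper, which simply assumes stability, what you actually prove is the statement at stable points.
\end{itemize}
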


The key analytic input used in the proof is the analytic regularity theorem of Morrey (cf. \cite{MorreyNonlinear}), which is reviewed here in the form we need:

\begin{proposition}[Analytic dependence for elliptic systems]
\label{prop:analytic-dependence}
Let $M$ be a compact real analytic manifold and let $Q$ be a finite-dimensional real analytic parameter space.  Suppose
\[
  \mathcal F(q,u)=0
\]
is a second-order nonlinear strongly elliptic system for a section $u$ of a real analytic vector bundle over $M$.  Assume that, in real analytic local coordinates and trivializations, the coefficients of $\mathcal F$ are real analytic in
\[
  (q,x,u,Du,D^2u).
\]
Then, near any solution $(q_0,u_0)$, the solution $u_q$ is real analytic as a function of $q$, and the corresponding section
\[
  (q,x)\longmapsto u_q(x)
\]
is real analytic on $Q\times M$.
\end{proposition}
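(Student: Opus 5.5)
The plan is to reduce the statement to two ingredients: an analytic implicit function theorem in Banach spaces, which gives analyticity in $q$, and Morrey's interior analytic regularity with constants uniform in the parameter, which gives analyticity in $x$. Joint analyticity is then assembled by a Cauchy-estimate argument.

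Throughout, I read ``the solution $u_q$'' as the locally unique branch through $(q_0,u_0)$. This makes sense under the implicit nondegeneracy hypothesis that the linearization
\[
L:=\partial_u\mathcal F(q_0,u_0):C^{2,\alpha}\to C^{0,\alpha}
\]
is an isomorphism. In our application this holds: it is the linearized harmonic-metric equation, which is invertible modulo constants by stability and the ``harmonicity'' Lemma~\ref{vanlem}. Since $L$ is strongly elliptic on a compact manifold, it is Fredholm, so invertibility amounts to injectivity plus index zero.

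\textbf{Step 1 (analyticity in $q$ as a Banach-valued map).} Because the coefficients of $\mathcal F$ are real analytic in $(q,x,u,Du,D^2u)$, the map
\[
\mathcal F:Q\times C^{2,\alpha}(M)\to C^{0,\alpha}(M)
\]
is real analytic: it is locally a convergent power series in $(q,u)$ with values in bounded multilinear maps, and composing with an analytic function of $u,Du,D^2u$ preserves this in Hölder spaces.

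Complexifying $q$ and the fibre variables $u$, the analytic implicit function theorem yields a unique real analytic map $q\mapsto u_q\in C^{2,\alpha}(M)$ near $q_0$. In particular every $q$-derivative $\partial_q^a u_q$ exists in $C^{2,\alpha}$, and for $q$ in a compact neighbourhood $K$ of $q_0$
\[
\|\partial_q^a u_q\|_{C^{2,\alpha}}\le C R^{-|a|}a! .
\]

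\textbf{Step 2 (uniform analyticity in $x$).} Fix $q$. Then $u_q$ solves a nonlinear elliptic system with analytic coefficients, so Morrey's theorem \cite{MorreyNonlinear} makes $u_q$ analytic in $x$.

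The point to extract from Morrey's proof (the nested-ball, Friedrichs-type estimates) is that the constants in
\[
|\partial_x^b u_q(x)|\le C' \rho^{-|b|}b!
\]
depend only on the ellipticity constants, on bounds and analyticity radii of the coefficients near the graph of $(x,u_q,Du_q,D^2u_q)$, and on $\|u_q\|_{C^{2,\alpha}}$. All of these are uniform for $q\in K$ by Step 1 and continuity.

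\textbf{Step 3 (joint estimates).} I would prove
\[
|\partial_q^a\partial_x^b u(q,x)|\le C'' \rho^{-|a|-|b|}(|a|+|b|)!
\]
by induction on $|a|$.

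Differentiating $\mathcal F(q,u_q)=0$ $a$ times in $q$ gives a \emph{linear} elliptic system for $w_a=\partial_q^a u_q$:
\[
\partial_u\mathcal F(q,u_q)\,w_a=G_a .
\]
Here $G_a$ is a polynomial in the lower derivatives $w_{a'}$, $a'<a$, and their $x$-derivatives up to order two, with analytic coefficients. The right-hand side $G_a$ is controlled by the induction hypothesis via Faà di Bruno-type majorant series.

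Next apply the linear analytic interior estimates, the linear case of Morrey's estimates, which have explicit factorial dependence. These upgrade the $C^{2,\alpha}$ bound on $w_a$ from Step 1 to analytic bounds in $x$. With the majorant constants chosen so the induction closes, the resulting bound is geometric in $|a|$ and $|b|$ with a common radius, and therefore gives joint real analyticity of $(q,x)\mapsto u_q(x)$ on $K\times M$.

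An alternative route to Step 3 is to extend $u$ holomorphically separately in complexified $q$ (by Step 1) and in complexified $x$ (by Step 2, uniformly), then apply Hartogs' theorem. The difficulty there is that Step 2 is only known for real $q$, so one would need Morrey's estimates for the complexified equation. That is why I prefer the estimate-based induction.

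\textbf{Main obstacle.} The hardest part is Step 3: making the combinatorics of the majorants close so that the nonlinear terms in $G_a$ do not spoil the factorial growth. I would handle this exactly as in Morrey's nonlinear argument, by treating $q$ as additional ``tangential'' variables in which no ellipticity is needed because $q$-derivatives are controlled a priori by Step 1. I would first carry out this bookkeeping for a single parameter $q\in\mathbb R$ and then pass to several parameters.
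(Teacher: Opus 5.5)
The paper gives no proof of this proposition. It quotes it from Morrey's analytic regularity theory and uses it as a black box. Your argument is therefore a genuinely different and more explicit route: the analytic implicit function theorem in H\"older spaces for the $q$-dependence, a parameter-uniform reading of Morrey's interior estimates for the $x$-dependence, and then a joint-analyticity step. It also buys something real. The nondegeneracy hypothesis you add (invertibility of $\partial_u\mathcal F(q_0,u_0)$) is not a convenience; the statement is false without it. For example, $\operatorname{div}\nabla u=u^{3}-q$ on a compact $M$ forces, by the maximum principle, the unique solution $u\equiv q^{1/3}$, which is not analytic at $q=0$. Correspondingly, what makes the proposition applicable to harmonic metrics is the invertibility modulo scalars of the linearized harmonic-metric operator that you point out, not mere uniqueness of the harmonic metric as the paper says.

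The one soft spot is Step 3. Closing the majorant induction, with second $x$-derivatives of the $w_{a'}$ entering $G_a$ and the nested-domain losses, is the entire content of that step, and you only assert that it closes. You can avoid it altogether. Steps 1 and 2 already give, with uniform bounds:
\begin{itemize}
\item a holomorphic extension of $u$ in $q$ to a fixed complex neighbourhood of $K$, for every real $x$;
\item a holomorphic extension in $x$ to a fixed complex neighbourhood of each real coordinate cube, for every real $q\in K$.
\end{itemize}
Consider a function that is separately holomorphic on such a ``cross'' $(E_1\times D_2)\cup(D_1\times E_2)$, with $E_1,E_2$ real cubes. By Siciak's cross theorem it extends holomorphically to a neighbourhood of $E_1\times E_2$, which gives joint real analyticity near every point of $K\times M$ directly. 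So the objection you raise against the Hartogs-type route (needing Morrey's estimates for complex $q$) does not actually arise.
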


\begin{proof}[Proof of Theorem~\ref{R-analyticity}]

Note that the assertion is local on the source and target.
Let $\Delta\subset S$ be a sufficiently small coordinate polydisc. After shrinking $\Delta$, choose a real analytic Ehresmann trivialization
\[
  X_\Delta:=\pi^{-1}(\Delta)\cong X_0\times \Delta
\]
where $X_0$ is a fixed compact smooth manifold. Let $J_s$ denote the induced complex structure on $X_0$ and let $\omega_s$ be a fiberwise K\"ahler form obtained from a relative polarization.  We regard both $J_s$ and $\omega_s$ as real analytic functions of $s\in \Delta$.

Now fix a point
\[
  (s_0,E_0,\theta_0)\in M_{\Dol}(X_\Delta/\Delta)
\]
and work after shrinking $\Delta$.  Choose a smooth bundle $\E\to X_0$ representing the fixed topological type, and choose a reference Hermitian metric $h_0$ on $\E$.

Without loss of generality we may assume that $(E_0,\theta_0)$ is stable. There is a finite-dimensional real analytic slice $\Sigma$ for the relative Dolbeault moduli problem near $(s_0,E_0,\theta_0)$ (i,e, $\Sigma$ is transversal to the fiber of $M_{\Dol}(X_\Delta/\Delta) \to \Delta$).  A point $q\in\Sigma$ determines
\[
  q=(s,\bar\partial_q,\theta_q),
\]
where $s\in \Delta$, $\bar\partial_q$ is a holomorphic structure on the fixed smooth bundle over $(X_0,J_s)$, and $\theta_q$ is a Higgs field satisfying
\[
  \bar\partial_q\theta_q=0,
  \qquad
  \theta_q\wedge\theta_q=0.
\]

By construction of the Kuranishi slice, and by the real analytic trivialization of the family, the coefficients of $J_s$, $\omega_s$, $\bar\partial_q$ and $\theta_q$ depend real analytically on $q$ and on the fiber variable.

For each $q\in\Sigma$, non-abelian Hodge theory supplies a harmonic metric $h_q$.  Write
\[
  h_q=h_0 k_q,
\]
where $k_q$ is a positive $h_0$-self-adjoint endomorphism.  If the structure group has a nontrivial center, fix the determinant metric, or equivalently work modulo the central scalar ambiguity, so that $k_q$ is uniquely determined near $q_0$.

Recall the harmonic metric equation for $h_q$ (cf. \cite{Hit,Simp92})
\begin{equation}
\label{eq:higher-dimensional-HYM-Higgs}
  \sqrt{-1}\Lambda_{\omega_s}
  \left(
    F_{h_q,\bar\partial_q}
    +[\theta_q,\theta_q^{\star_{h_q}}]
  \right)
  = \lambda_q\,\id_{\E}.
\end{equation}

Here $\lambda_q$ is the topological constant, which is zero in the vanishing-Chern-class case.  Rewriting \eqref{eq:higher-dimensional-HYM-Higgs} in terms of $k_q$ and $h_0$ gives
\begin{equation}
\label{eq:k-equation}
  \sqrt{-1}\Lambda_{\omega_s}
  \left(
    \bar\partial_q\bigl(k_q^{-1}\partial_{0,q}k_q\bigr)
    +F_{h_0,\bar\partial_q}
    +[\theta_q,k_q^{-1}\theta_q^{\st}k_q]
  \right)
  =\lambda_q\,\id_{\E},
\end{equation}
where $\partial_{0,q}$ is the $(1,0)$ part of the Chern connection determined by $(\bar\partial_q,h_0)$.

Equation \eqref{eq:k-equation} is a second-order nonlinear elliptic system for $k_q$.  Its principal part is the Laplacian-type operator
\[
  \sqrt{-1}\Lambda_{\omega_s}\bar\partial_q\partial_{0,q},
\]
acting on Hermitian endomorphisms, and is strongly elliptic because $\omega_s$ is positive.  Moreover, the coefficients of \eqref{eq:k-equation} are real analytic in $(q,x)$ and analytic in $k_q$ and its derivatives as long as $k_q$ remains positive and invertible.  By uniqueness of the harmonic metric, Proposition~\ref{prop:analytic-dependence} applies.  Therefore
\[
  (q,x)\longmapsto k_q(x)
\]
is real analytic.  Hence the harmonic metric $h_q=h_0k_q$ is real analytic in $q$ and in the fiber variable.

The flat connection associated with the harmonic bundle is
\begin{equation}
\label{eq:flat-connection-from-Higgs}
  D_q
  =\bar\partial_q+\partial_{h_q,\bar\partial_q}
   +\theta_q+\theta_q^{\star_{h_q}}.
\end{equation}
Since $\bar\partial_q$, $\theta_q$ and $h_q$ depend real analytically on $q$, the connection one-form of $D_q$ in any real analytic trivialization depends real analytically on $(q,x)$.

The inverse direction is proved by the same argument.  Start with a real analytic slice in the Betti moduli space, represented by a real analytic family of flat connections $D_q$ on the fixed smooth bundle over $X_0$.  The Corlette--Simpson harmonic metric equation for $D_q$ (cf. \cite{Simp92, Corl}), with respect to the complex structure $J_s$ and Kahler metric $\omega_s$, is again a second-order nonlinear strongly elliptic analytic system for the metric endomorphism $k_q$.  After the same determinant or central normalization, uniqueness of the harmonic metric and Proposition~\ref{prop:analytic-dependence} imply that $k_q$ depends real analytically on $q$.  Decomposing $D_q$ using this harmonic metric and the complex structure $J_s$ gives
\[
  D_q=(\bar\partial_{E,q}+\theta_q)+(\partial_{E,q}^{h_q}+\theta_q^{\star_{h_q}}),
\]
so the resulting holomorphic structure and Higgs field are real analytic in $q$.
\end{proof}

\section{Real analytic manifolds and real analytic deformations}
\label{sec_complexification}

In this appendix we spell out the convention used in the paper for
real analytic deformations of holomorphic objects.  The guiding principle is
that a real analytic family is obtained by restricting a holomorphic family on
the complexification of the real analytic base to the diagonal.

\subsection{Real analytic functions and complexification}

Let \(\mathbb D_{\epsilon}\subset \mathbb R^2\) be a sufficiently small disk
centered at the origin.  A function
\[
  f:\mathbb D_{\epsilon}\longrightarrow \C
\]
is said to be real analytic near \(0\) if, after writing
\[
  z=x+\sqrt{-1}y,\qquad
  \bar z=x-\sqrt{-1}y,
\]
it admits a convergent expansion
\[
  f(x,y)=\sum_{i,j\geq 0}a_{ij}z^i\bar z^j
\]
near \(0\).  Two such functions define the same germ at \(0\) if they agree on
some smaller neighborhood of \(0\).

Let $\mathcal R_0$ be the ring of germs of \(\C\)-valued real analytic functions at
\(0\in\mathbb R^2\), and let \(\mathcal R\) be the corresponding sheaf on
\(\mathbb R^2\).

We regard $\C=\mathbb R^2$ as the complex line with its standard complex structure, and $\overline{\C}$ as the same real vector space with the opposite complex structure.  We write
\(z\) for the holomorphic coordinate on \(\C\), and write \(\zeta\) for the
holomorphic coordinate on \(\overline{\C}\).  Under the diagonal embedding
\[
  i:\mathbb R^2\hookrightarrow \C\times\overline{\C},
  \qquad
  (x,y)\longmapsto (z,\bar z),
\]
one has
\[
  i^*z=z,\qquad i^*\zeta=\bar z.
\]

\begin{lemma}
\label{lem:real-analytic-functions-complexification}
There is a natural isomorphism of sheaves of \(\C\)-algebras
\[
  \mathcal R\simeq i^*\mathcal O_{\C\times\overline{\C}} .
\]
Equivalently, every real analytic germ in the variables \((z,\bar z)\) is the
restriction of a holomorphic germ in the independent variables \((z,\zeta)\).
\end{lemma}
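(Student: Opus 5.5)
We need to prove Lemma: $\mathcal R\simeq i^*\mathcal O_{\C\times\overline{\C}}$: every real analytic germ in $(z,\bar z)$ is the restriction of a holomorphic germ in independent variables $(z,\zeta)$.

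The plan is to construct the comparison map stalkwise and show that it is bijective, using only power series. First define a morphism of sheaves of $\C$-algebras
\[
  i^{-1}\mathcal O_{\C\times\overline{\C}}\longrightarrow \mathcal R,\qquad F\longmapsto F\circ i,
\]
that is, $F(z,\zeta)\mapsto F(z,\bar z)$. Here $i^*$ is read as the topological inverse image sheaf; since $\mathcal O$ and $\mathcal R$ are rings of functions, no tensor product is involved. This map is well defined: if $F$ is holomorphic on a polydisc $|z-z_0|<r$, $|\zeta-\bar z_0|<r$ around $i(p)$, then $F$ has a convergent expansion $\sum a_{ij}(z-z_0)^i(\zeta-\bar z_0)^j$. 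Substituting $\zeta=\bar z$ gives a convergent expansion of the required form on the disc $|z-z_0|<r$. The map is clearly compatible with restriction, sums and products, so it is a morphism of sheaves of $\C$-algebras. It then suffices to check that it is an isomorphism on every stalk; by translation we reduce to the stalk at $0$.

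Surjectivity comes from absolute convergence. Let $f=\sum a_{ij}z^i\bar z^j$ converge near $0$. The expansion is meant as an absolutely convergent double series on some disc $|z|<\epsilon$. Evaluating at a real point $z=\rho$ with $0<\rho<\epsilon$ gives $\sum|a_{ij}|\rho^{i+j}<\infty$, so by Abel's lemma $\sum a_{ij}z^i\zeta^j$ converges absolutely and uniformly on compact subsets of the bidisc $|z|,|\zeta|<\rho$. It therefore defines a holomorphic $F$ with $F\circ i=f$.

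This is the step I expect to need care: a merely conditionally convergent reading of the expansion would not suffice. If the definition is instead taken as ``real analytic in $(x,y)$'', I would first pass through the linear substitution $x=(z+\zeta)/2$, $y=(z-\zeta)/(2\sqrt{-1})$ applied to the absolutely convergent series in $(x,y)$. This again yields a holomorphic germ near $(0,0)$.

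Injectivity is the identity principle on the totally real diagonal. Suppose $F$ is holomorphic near $(0,0)$ with $F(z,\bar z)\equiv0$ near $0$. Write $F=\sum a_{ij}z^i\zeta^j$; then $\sum a_{ij}z^i\bar z^j\equiv0$. Apply $\partial_z^i\partial_{\bar z}^j$ at $0$, which is legitimate termwise by uniform convergence. Since $\partial_z\bar z=0$ and $\partial_{\bar z}z=0$, this gives $i!\,j!\,a_{ij}=0$ for all $i,j$, so $F=0$ as a germ. Equivalently, the coefficients of a real analytic germ are uniquely determined by $f$ via Wirtinger derivatives, which also shows that the expansion in the definition of $\mathcal R_0$ is unique.

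Combining the three steps, the stalk map is a bijective algebra homomorphism at every point. Hence $\mathcal R\simeq i^*\mathcal O_{\C\times\overline{\C}}$, with naturality coming from the fact that the map is literally restriction of functions.
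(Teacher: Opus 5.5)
Your proposal is correct and takes essentially the same route as the paper: both directions come from the coefficientwise substitution $\zeta\leftrightarrow\bar z$ in convergent power series, and locality turns this into the sheaf statement. You are more careful than the paper on two points it leaves implicit: you justify convergence of $\sum a_{ij}z^i\zeta^j$ on a bidisc via absolute convergence, and you prove injectivity (uniqueness of the coefficients) using Wirtinger derivatives.
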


\begin{proof}
The assertion is local.  A holomorphic function on a sufficiently small
polydisc in \(\C\times\overline{\C}\) has a convergent expansion
\[
  F(z,\zeta)=\sum_{i,j\geq 0}a_{ij}z^i\zeta^j.
\]
Restricting to the diagonal \(\zeta=\bar z\) gives the real analytic function
\[
  i^*F(z,\bar z)=\sum_{i,j\geq 0}a_{ij}z^i\bar z^j .
\]
Conversely, if
\[
  f(z,\bar z)=\sum_{i,j\geq 0}a_{ij}z^i\bar z^j
\]
is a convergent real analytic germ, then
\[
  F(z,\zeta)=\sum_{i,j\geq 0}a_{ij}z^i\zeta^j
\]
is a convergent holomorphic germ on a sufficiently small polydisc in
\(\C\times\overline{\C}\), and \(i^*F=f\).  These local constructions are
compatible with restrictions, hence glue to the desired isomorphism.

By a similar argument, this complexification lemma also holds similarly for real analytic germs at $0\in\mathbb R^{2k}$.
\end{proof}

Let \(M\) be a complex manifold and let \(\overline M\) be the conjugate
complex manifold.  We write \(M^\circ\) for the underlying real analytic
manifold.  The diagonal embedding is
\[
  i_{M^\circ}:M^\circ\hookrightarrow M\times\overline M .
\]
We define the sheaf of \(\C\)-valued real analytic functions on \(M^\circ\) by
\[
  \mathcal R_{M^\circ}:=i_{M^\circ}^*\mathcal O_{M\times\overline M}.
\]
Thus the associated real analytic space is the locally ringed space
\[
  (M^\circ,\mathcal R_{M^\circ}).
\]

The complexified tangent bundle of \(M^\circ\) decomposes as
\[
  \C TM^\circ
  =
  TM^\circ\otimes_{\mathbb R}\C
  =
  T^{1,0}M^\circ\oplus T^{0,1}M^\circ .
\]
Under the above complexification, this is naturally identified with
\[
  \C TM^\circ
  \cong
  i_{M^\circ}^*T_M\oplus i_{M^\circ}^*T_{\overline M}.
\]

\subsection{Infinitesimal real analytic disks}

For \(n\geq 1\), define
\[
  A_n:=\C[t]/(t^{n+1}),
  \qquad
  \overline A_n:=\C[\bar t]/(\bar t^{n+1}),
\]
and
\[
  B_n:=\C[t,\bar t]/(t,\bar t)^{n+1}.
\]
Here \(\bar t\) is a formal variable.  Geometrically, \(t\) is the holomorphic
coordinate on the first factor of the complexified disk, while \(\bar t\) is
the holomorphic coordinate on the conjugate factor.

For \(n=1\), we have
\[
  B_1=\C[t,\bar t]/(t,\bar t)^2
      =\C\oplus \C t\oplus \C \bar t,
\]
and its maximal ideal
\[
  I_{B_1}:=(t,\bar t)
\]
satisfies \(I_{B_1}^2=0\).  Thus
\[
  I_{B_1}\cong \C t\oplus \C\bar t.
\]
Equivalently,
\[
  B_1\simeq A_1\times_{\C}\overline A_1.
\]

\begin{proposition}
\label{prop:real-analytic-first-jets}
Let \(x\in M^\circ\).  Then the set of first order real analytic arcs in
\(M^\circ\) through \(x\) is naturally identified with the complexified tangent
space:
\[
  \mathrm{Hom}_{\C\text{-alg}}
  \bigl(\mathcal R_{M^\circ,x},B_1\bigr)
  \cong
  \C T_xM^\circ .
\]
More explicitly,
\[
  \mathrm{Hom}_{\C\text{-alg}}
  \bigl(\mathcal R_{M^\circ,x},B_1\bigr)
  \cong
  T_{M,x}\oplus T_{\overline M,x}.
\]
For general \(n\), we define the space of \(n\)-jets of real analytic arcs
through \(x\) by
\[
  J_n^{\mathbb R\mathrm{an}}(M^\circ)_x
  :=
  \mathrm{Hom}_{\C\text{-alg}}
  \bigl(\mathcal R_{M^\circ,x},B_n\bigr).
\]
\end{proposition}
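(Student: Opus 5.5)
The statement to prove is Proposition~\ref{prop:real-analytic-first-jets}. The plan is to reduce it to the complexification isomorphism of Lemma~\ref{lem:real-analytic-functions-complexification}. By definition $\mathcal R_{M^\circ,x}=(i_{M^\circ}^*\mathcal O_{M\times\overline M})_x=\mathcal O_{M\times\overline M,(x,x)}$. So a $\C$-algebra homomorphism $\mathcal R_{M^\circ,x}\to B_n$ is the same thing as a local homomorphism $\mathcal O_{M\times\overline M,(x,x)}\to B_n$, i.e.\ a morphism $\operatorname{Spec}B_n\to M\times\overline M$ sending the closed point to $(x,x)$. Any $\C$-algebra map into the local Artinian ring $B_n$ is automatically local: the residue map composed with evaluation at $(x,x)$ is the unique character, since $f-f(x,x)$ is not a unit. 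The definition of $J_n^{\mathbb R\mathrm{an}}(M^\circ)_x$ for general $n$ is a definition and needs no argument; only the case $n=1$ has content.

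For $n=1$, a local homomorphism $\phi:\mathcal O_{(x,x)}\to B_1$ kills $\mathfrak m_{(x,x)}^2$, because $I_{B_1}^2=0$. It is therefore determined by a $\C$-linear map $\mathfrak m/\mathfrak m^2\to I_{B_1}=\C t\oplus\C\bar t$. Conversely, every such linear map defines a homomorphism $f\mapsto f(x,x)+\ell(\overline{f-f(x,x)})$; multiplicativity follows from $I_{B_1}^2=0$. This gives
\[\mathrm{Hom}(\mathcal R_{M^\circ,x},B_1)\cong \mathrm{Hom}_\C(\mathfrak m/\mathfrak m^2,\C)\otimes_\C(\C t\oplus\C\bar t).\]
Next I would not take this naive tensor product. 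Instead I would use the convention fixed before the proposition: $t$ is the coordinate on the first factor and $\bar t$ on the conjugate factor. Concretely, I regard $\operatorname{Spec}B_1$ as the first-order thickening of the origin in $\operatorname{Spec}(A_1\otimes\overline A_1)$, with $B_1\simeq A_1\times_\C\overline A_1$. An arc is then required to be compatible with the product structure: it is a pair of a first-order arc in $M$ parametrized by $t$ and one in $\overline M$ parametrized by $\bar t$. Writing $\mathfrak m/\mathfrak m^2\cong T^*_{M,x}\oplus T^*_{\overline M,x}$ via local holomorphic coordinates $(z,\zeta)$, these arcs are $\mathrm{Hom}(\mathcal O_{M,x},A_1)\times\mathrm{Hom}(\mathcal O_{\overline M,x},\overline A_1)\cong T_{M,x}\oplus T_{\overline M,x}$, using the standard identification of first-order arcs with tangent vectors. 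Finally I invoke the identification $\C TM^\circ\cong i^*T_M\oplus i^*T_{\overline M}$ stated above to obtain $\C T_xM^\circ$. In the original coordinates $i^*z=z$ and $i^*\zeta=\bar z$, so the two summands are $T^{1,0}$ and $T^{0,1}$.

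The main obstacle is precisely this bookkeeping. A bare $\mathrm{Hom}$ into $B_1$ gives $(\C TM^\circ)^{\oplus2}$, so the statement implicitly requires the arc to be of product type with respect to $B_1\simeq A_1\times_\C\overline A_1$, equivalently compatible with conjugation $t\leftrightarrow\bar t$. I would spell out this convention first. Then I would check on coordinates that $z_j\mapsto a_jt$ and $\zeta_j\mapsto b_j\bar t$ recovers $v=\sum a_j\partial_{z_j}+\sum b_j\partial_{\bar z_j}$. Finally I would confirm that the correspondence is independent of coordinates by the chain rule on $\mathfrak m/\mathfrak m^2$.
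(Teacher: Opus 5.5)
Your argument is the paper's coordinate computation made precise, and it is correct under the convention you adopt. The paper's proof writes $\phi(z_i)=a_it+b_i\bar t$ and $\phi(\zeta_i)=c_it+d_i\bar t$. That is visibly $4m$ free parameters, i.e.\ $(\C T_xM^\circ)^{\oplus 2}$, exactly as your $\mathfrak m/\mathfrak m^2$ computation shows. The paper then reaches $T_{M,x}\oplus T_{\overline M,x}$ only through the unexplained sentence about ``restricting to the diagonal real analytic structure''. You are right that the bare $\mathrm{Hom}$ set is too large and that a restriction must be stated. Your product-type condition $b_i=c_i=0$ (arcs factoring through $\operatorname{Spec}A_1\times\operatorname{Spec}\overline A_1\to M\times\overline M$ as a product) gives an intrinsic, complex-linear identification $(a,d)\mapsto\sum a_i\partial_{z_i}+\sum d_i\partial_{\bar z_i}$. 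Your chain-rule check of coordinate independence completes it.

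The one thing to correct is the word ``equivalently''. Compatibility with conjugation is a different condition from product type. Here conjugation means the antilinear involution $f(z,\zeta)\mapsto\overline{f(\bar\zeta,\bar z)}$ on $\mathcal O_{M\times\overline M,(x,x)}$, together with $t\leftrightarrow\bar t$ on $B_1$.
\begin{itemize}
\item This condition forces $c_i=\bar b_i$ and $d_i=\bar a_i$, with $(a_i,b_i)$ free.
\item These arcs are in real-linear bijection with $\C T_xM^\circ$ via $\phi\mapsto d\gamma(\partial_t)=\sum a_i\partial_{z_i}+\sum\bar b_i\partial_{\bar z_i}$.
\item Imposing both conditions at once leaves only the holomorphic arcs $z_i\mapsto a_it$, $\zeta_i\mapsto\bar a_i\bar t$, which form a copy of $T_{M,x}$.
\end{itemize}
So the two conventions select different subsets of $\mathrm{Hom}_{\C\text{-alg}}(\mathcal R_{M^\circ,x},B_1)$ and give different identifications with $\C T_xM^\circ$. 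The paper's phrase ``diagonal real analytic structure'' can be read either way, which is exactly why the choice has to be stated. Either convention proves the proposition once made explicit, but you should pick one and drop the claimed equivalence.

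A minor point on locality: the right argument is that if the induced character sent $f$ to some $c\neq f(x,x)$, then $f-c$ would be a unit mapped to $0$, which is impossible.
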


\begin{proof}
Choose holomorphic coordinates \(z_1,\ldots,z_m\) on \(M\) centered at \(x\),
and let \(\zeta_1,\ldots,\zeta_m\) be the corresponding holomorphic
coordinates on \(\overline M\).  Then
\[
  \mathcal R_{M^\circ,x}
  \cong
  \C\{z_1,\ldots,z_m,\zeta_1,\ldots,\zeta_m\}.
\]
A \(\C\)-algebra homomorphism
\[
  \phi:\mathcal R_{M^\circ,x}\longrightarrow B_1
\]
whose residue is \(x\) is determined by
\[
  \phi(z_i)=a_i t+b_i\bar t,
  \qquad
  \phi(\zeta_i)=c_i t+d_i\bar t.
\]
Restricting to the diagonal real analytic structure amounts to recording the
two independent tangent directions along the two factors \(M\) and
\(\overline M\).  Hence the first order part is precisely an element of
\[
  T_{M,x}\oplus T_{\overline M,x}
  \cong
  \C T_xM^\circ.
\]
The statement for \(B_n\) is the same construction with higher order
coefficients retained up to total degree \(n\).
\end{proof}

\subsection{Real analytic deformations of complex manifolds}

Let \(X\) be a complex manifold.  A real analytic deformation of \(X\) over
\(M^\circ\) is a diagram
\[
\begin{tikzcd}
X \arrow[r, hook] \arrow[d]
&
\mathcal X^\circ \arrow[r, hook] \arrow[d]
&
\mathcal X \arrow[d]
\\
\operatorname{Spec}\C \arrow[r, hook]
&
M^\circ \arrow[r, hook, "i_{M^\circ}"]
&
M\times\overline M
\end{tikzcd}
\]
such that \(\mathcal X\to M\times\overline M\) is a holomorphic deformation,
and
\[
  \mathcal X^\circ
  =
  \mathcal X\times_{M\times\overline M}M^\circ .
\]
Equivalently, a real analytic deformation is the restriction to the diagonal
of a holomorphic deformation over the complexification of the base.

For an infinitesimal real analytic deformation over \(B_n\), we have a
cartesian diagram
\[
\begin{tikzcd}
X \arrow[r, hook] \arrow[d]
&
X_n \arrow[d, "\pi"]
\\
\operatorname{Spec}\C \arrow[r, hook]
&
\operatorname{Spec}B_n .
\end{tikzcd}
\]

Assume now \(n=1\).  The cotangent sequence for
\(\pi:X_1\to\operatorname{Spec}B_1\), after restricting to the central fiber,
gives
\[
  0
  \longrightarrow
  I_{B_1}^{\vee}\otimes_{\C}\mathcal O_X
  \longrightarrow
  \Omega^1_{X_1}\big|_X
  \longrightarrow
  \Omega_X^1
  \longrightarrow
  0.
\]
Its extension class is the real analytic Kodaira--Spencer class
\[
  \mathrm{KS}^{\mathbb R\mathrm{an}}(X_1)
  \in
  \mathrm{Ext}^1_X
  \bigl(\Omega_X^1,I_{B_1}^{\vee}\otimes_{\C}\mathcal O_X\bigr).
\]
Since \(X\) is smooth,
\[
  \mathrm{Ext}^1_X(\Omega_X^1,\mathcal O_X)
  =
  H^1(X,T_X).
\]
Therefore
\[
  \mathrm{KS}^{\mathbb R\mathrm{an}}(X_1)
  \in
  H^1(X,T_X)\otimes_{\C}I_{B_1}^{\vee}.
\]
Using
\[
  I_{B_1}^{\vee}\cong \C\,dt\oplus\C\,d\bar t,
\]
we obtain a decomposition
\[
  \mathrm{KS}^{\mathbb R\mathrm{an}}(X_1)
  =
  \mathrm{KS}_t(X_1)\,dt+\mathrm{KS}_{\bar t}(X_1)\,d\bar t
\]
with
\[
  \mathrm{KS}_t(X_1),\mathrm{KS}_{\bar t}(X_1)\in H^1(X,T_X).
\]

If one records the \(\bar t\)-direction after conjugating the central fiber,
then the same decomposition is written as
\[
  \mathrm{KS}^{\mathbb R\mathrm{an}}(X_1)
  \in
  H^1(X,T_X)
  \oplus
  H^1(\overline X,T_{\overline X}).
\]
The first summand is the holomorphic Kodaira--Spencer direction, and the
second summand is the anti-holomorphic direction.

\begin{proposition}
\label{prop:KS-real-analytic-manifold}
First order real analytic deformations of \(X\) over
\(\operatorname{Spec}B_1\) are classified by
\[
  H^1(X,T_X)\otimes_{\C}I_{B_1}^{\vee}
  \cong
  H^1(X,T_X)\oplus H^1(X,T_X).
\]
Equivalently, after conjugating the \(\bar t\)-direction, this classification
may be written as
\[
  H^1(X,T_X)\oplus H^1(\overline X,T_{\overline X}).
\]
\end{proposition}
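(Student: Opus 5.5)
The approach is to run the standard Kodaira--Spencer argument for first-order deformations. The only new ingredient is the identification $B_1\simeq A_1\times_{\C}\overline A_1$. Since $I_{B_1}^2=0$, a deformation $X_1\to\operatorname{Spec}B_1$ is a square-zero extension of $X$ by the module $I_{B_1}\otimes_{\C}\mathcal O_X$. The plan is to show that the extension class of the restricted cotangent sequence
\[
0\to I_{B_1}^{\vee}\otimes\mathcal O_X\to\Omega^1_{X_1}|_X\to\Omega^1_X\to 0
\]
gives a bijection from isomorphism classes of such deformations onto $\Ext^1_X(\Omega^1_X,\mathcal O_X)\otimes I_{B_1}^\vee=H^1(X,T_X)\otimes I_{B_1}^\vee$.

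\textbf{Step 1: local triviality.} Cover $X$ by Stein (or affine) opens $U_\alpha$. Smoothness of $X$ gives that every square-zero deformation over $U_\alpha$ is trivial, $X_1|_{U_\alpha}\cong U_\alpha\times\operatorname{Spec}B_1$. The automorphisms of the trivial deformation restricting to the identity are $\id+D$, where $D$ is a derivation $\mathcal O_{U_\alpha}\to I_{B_1}\otimes\mathcal O_{U_\alpha}$. These form the group $\Gamma(U_\alpha,T_X)\otimes I_{B_1}$, which we identify with $\Gamma(U_\alpha,T_X)\otimes I_{B_1}^{\vee}$ via the basis $t,\bar t$ and its dual basis $dt,d\bar t$.

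\textbf{Step 2: the cocycle.} Comparing local trivializations on overlaps produces a \v{C}ech $1$-cocycle $(\theta_{\alpha\beta})$ with values in $T_X\otimes I_{B_1}^\vee$. Changing trivializations alters it by a coboundary, and any cocycle can be used to glue trivial pieces into a deformation. This yields the bijection with $H^1(X,T_X)\otimes I_{B_1}^\vee$. To match it with the cotangent-sequence description, one checks that this class coincides with the extension class $\mathrm{KS}^{\mathbb R\mathrm{an}}(X_1)$ defined above, via $\Ext^1(\Omega^1_X,\mathcal O_X)=H^1(T_X)$. This is the routine Illusie/Sernesi comparison.

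\textbf{Step 3: splitting.} Since $I_{B_1}^\vee=\C\,dt\oplus\C\,d\bar t$, the class splits as $\KS_t\,dt+\KS_{\bar t}\,d\bar t$, which gives $H^1(X,T_X)\oplus H^1(X,T_X)$. Equivalently, the deformation decomposes via $B_1\simeq A_1\times_\C\overline A_1$ as a fibre product of an $A_1$-deformation and an $\overline A_1$-deformation; this uses that deformation functors of smooth proper $X$ satisfy Schlessinger's condition and are linear on tangent spaces.

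\textbf{Step 4: conjugation.} Complex conjugation $\omega\mapsto\bar\omega$ sends Dolbeault representatives in $\A^{0,1}(T_X)$ to $\A^{1,0}(\overline{T_X})=\A^{0,1}_{\overline X}(T_{\overline X})$. It commutes with $\bar\partial_X\leftrightarrow\bar\partial_{\overline X}=\overline{\bar\partial_X}$, hence induces a $\C$-antilinear isomorphism $H^1(X,T_X)\cong H^1(\overline X,T_{\overline X})$. Applying it to the $d\bar t$-component gives the second form. This step matches the $\bar t$-direction with the conjugate family, consistent with Example~\ref{eg_isomo}, where $\bar\eta_i$ appears as $\overline{\eta_i}$.

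I expect Step 4 to be the main point of care. The identification is only antilinear, so one must make sure the $\bar t$-component really corresponds to the $\overline{\C}$-factor of the complexification $M\times\overline M$. This is the sense in which Lemma~\ref{lem:real-analytic-functions-complexification} is applied: the $\zeta$-coordinate is holomorphic on $\overline{\C}$, so the $\bar t$-deformation is a holomorphic deformation of $\overline X$ over $\overline A_1$. Its class therefore naturally lies in $H^1(\overline X,T_{\overline X})$, and the conjugate-linear map above is what relates it to $H^1(X,T_X)$.
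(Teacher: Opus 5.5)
Your proposal is correct and takes essentially the paper's route: a \v{C}ech cocycle argument on a Stein cover producing one class in $H^1(X,T_X)$ for each of the directions $t$ and $\bar t$, followed by complex conjugation of the $\bar t$-part. Your comparison with the cotangent extension class and your explicit Dolbeault description of the antilinear isomorphism $H^1(X,T_X)\cong H^1(\overline X,T_{\overline X})$ are extra detail the paper omits. One caution about your last paragraph: in this framework the $\bar t$-direction still deforms $X$ itself (the paper's $w_{ij}$ are holomorphic vector fields on $X$), so its class lands in $H^1(\overline X,T_{\overline X})$ only through that conjugation map, not because the $\zeta$-coordinate is holomorphic on $\overline{\C}$.
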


\begin{proof}
Take a sufficiently fine Stein open cover \(\{U_i\}\) of \(X\).  A first order
deformation over \(B_1\) is obtained by gluing the trivial thickenings
\[
  U_i\times\operatorname{Spec}B_1
\]
by transition functions of the form
\[
  z_i
  =
  f_{ij}(z_j)
  +t\,v_{ij}(z_j)
  +\bar t\,w_{ij}(z_j),
\]
where \(f_{ij}\) are the original transition functions of \(X\), and
\(v_{ij},w_{ij}\) are holomorphic vector fields on \(U_{ij}\).  The cocycle
condition modulo \((t,\bar t)^2\) says precisely that
\[
  \{v_{ij}\}\in Z^1(\{U_i\},T_X),
  \qquad
  \{w_{ij}\}\in Z^1(\{U_i\},T_X).
\]
Changing the local trivializations modifies these cocycles by coboundaries.
Thus the isomorphism class of the deformation is determined by
\[
  \bigl([\{v_{ij}\}],[\{w_{ij}\}]\bigr)
  \in
  H^1(X,T_X)\oplus H^1(X,T_X).
\]
Conversely, any such pair of cocycles defines a first order deformation by the
above gluing formula.  This proves the classification.  The final formulation
with
\[
  H^1(\overline X,T_{\overline X})
\]
is obtained by applying complex conjugation to the \(\bar t\)-part.
\end{proof}

\subsection{Real analytic deformations of coherent sheaves on a fixed space}

Let \(X\) be a fixed complex manifold or, more generally, a fixed complex
scheme, and let \(\mathcal F\) be a coherent \(\mathcal O_X\)-module.

\begin{definition}
\label{def:ra-sheaf-fixed}
A real analytic deformation of \(\mathcal F\) over \(M^\circ\), with the space
\(X\) fixed, is a diagram
\[
\begin{tikzcd}
\mathcal F \arrow[r] \arrow[d]
&
\mathcal F^\circ \arrow[r] \arrow[d]
&
\mathcal F^{\mathbb C} \arrow[d]
\\
X\times \operatorname{Spec}\C \arrow[r, hook]
&
X\times M^\circ \arrow[r, hook]
&
X\times M\times\overline M
\end{tikzcd}
\]
such that \(\mathcal F^{\mathbb C}\) is a coherent sheaf on
\(X\times M\times\overline M\), flat over \(M\times\overline M\), and
\[
  \mathcal F^\circ
  =
  \mathcal F^{\mathbb C}
  \big|_{X\times M^\circ}.
\]
\end{definition}

For the first order base \(B_1\), this is the same as a coherent
\[
  \mathcal O_X\otimes_{\C}B_1
\]
module \(\mathcal F_1\), flat over \(B_1\), together with an isomorphism
\[
  \mathcal F_1\otimes_{B_1}\C\simeq \mathcal F.
\]
Because \(I_{B_1}^2=0\), every such deformation fits into an exact sequence of
\(\mathcal O_X\)-modules
\[
  0
  \longrightarrow
  \mathcal F\otimes_{\C}I_{B_1}
  \longrightarrow
  \mathcal F_1
  \longrightarrow
  \mathcal F
  \longrightarrow
  0.
\]
The extension class is
\[
  \mathrm{KS}^{\mathbb R\mathrm{an}}(\mathcal F_1)
  \in
  \mathrm{Ext}^1_X
  \bigl(
    \mathcal F,
    \mathcal F\otimes_{\C}I_{B_1}
  \bigr).
\]
Since
\[
  I_{B_1}\cong \C t\oplus \C\bar t,
\]
we have
\[
  \mathrm{Ext}^1_X
  \bigl(
    \mathcal F,
    \mathcal F\otimes_{\C}I_{B_1}
  \bigr)
  \cong
  \mathrm{Ext}^1_X(\mathcal F,\mathcal F)
  \oplus
  \mathrm{Ext}^1_X(\mathcal F,\mathcal F).
\]

\begin{proposition}
\label{prop:ra-fixed-sheaf-KS}
First order real analytic deformations of a coherent sheaf \(\mathcal F\) on a
fixed \(X\) over \(\operatorname{Spec}B_1\) are classified by
\[
  \mathrm{Ext}^1_X(\mathcal F,\mathcal F)\otimes_{\C}I_{B_1}
  \cong
  \mathrm{Ext}^1_X(\mathcal F,\mathcal F)
  \oplus
  \mathrm{Ext}^1_X(\mathcal F,\mathcal F).
\]
The two components are the \(t\)- and \(\bar t\)-Kodaira--Spencer classes of
the real analytic family.
\end{proposition}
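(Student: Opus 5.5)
The statement to prove is the last one displayed: Proposition~\ref{prop:ra-fixed-sheaf-KS}, classifying first order real analytic deformations of a fixed coherent sheaf $\mathcal F$ over $\operatorname{Spec}B_1$. The plan is to run the standard extension argument, exactly parallel to the proof of Proposition~\ref{prop:KS-real-analytic-manifold}, using only that $I_{B_1}^2=0$ and $I_{B_1}\cong\C t\oplus\C\bar t$.

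\emph{From a deformation to a class.} Let $\mathcal F_1$ be flat over $B_1$ with $\mathcal F_1\otimes_{B_1}\C\simeq\mathcal F$. Tensoring $0\to I_{B_1}\to B_1\to\C\to0$ with $\mathcal F_1$ and using flatness gives the exact sequence of $\mathcal O_X$-modules
\[
0\to \mathcal F_1\otimes_{B_1}I_{B_1}\to\mathcal F_1\to\mathcal F\to 0 .
\]
Since $I_{B_1}$ is killed by the maximal ideal, $\mathcal F_1\otimes_{B_1}I_{B_1}=\mathcal F\otimes_\C I_{B_1}$. This produces the class $\KS^{\mathbb R\mathrm{an}}(\mathcal F_1)\in\Ext^1_X(\mathcal F,\mathcal F\otimes_\C I_{B_1})$. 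Isomorphic deformations give equivalent extensions, so the map on isomorphism classes is well defined.

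\emph{From a class to a deformation.} Start with an extension $0\to\mathcal F\otimes I_{B_1}\to\mathcal G\to\mathcal F\to0$ of $\mathcal O_X$-modules. Give $\mathcal G$ a $B_1$-module structure by letting $t$ (resp.\ $\bar t$) act as the composite
\[
\mathcal G\to\mathcal F\xrightarrow{\ \cdot t\ }\mathcal F\otimes I_{B_1}\hookrightarrow\mathcal G
\]
(and similarly for $\bar t$). These operators commute, and their products vanish because the composites factor through $\mathcal F\otimes I_{B_1}\to\mathcal G\to\mathcal F$, which is zero. So $\mathcal G$ is an $\mathcal O_X\otimes B_1$-module.

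\emph{Flatness.} This is the one point needing care. Use the local flatness criterion over the Artinian ring $B_1$: a module $\mathcal G$ is flat iff $I_{B_1}\otimes_{B_1}\mathcal G\to\mathcal G$ is injective, equivalently $\operatorname{Tor}_1^{B_1}(\C,\mathcal G)=0$. By construction the multiplication map $I_{B_1}\otimes_\C(\mathcal G/I_{B_1}\mathcal G)\to \mathcal G$ is the given injection $\mathcal F\otimes I_{B_1}\hookrightarrow\mathcal G$, with $\mathcal G/I_{B_1}\mathcal G=\mathcal F$. Coherence of $\mathcal G$ follows because it is an extension of coherent sheaves.

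\emph{The two constructions are inverse.} Going from deformation to extension and back recovers the original $B_1$-action, since in a flat deformation $t\cdot$ factors exactly as above. Going from extension to deformation and back returns the same extension. Under these maps, isomorphisms of deformations correspond to equivalences of extensions.

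\emph{Conclusion.} Finally, $\Ext^1_X$ is additive in its second argument, so the splitting $I_{B_1}=\C t\oplus\C\bar t$ yields
\[
\Ext^1_X(\mathcal F,\mathcal F\otimes I_{B_1})\cong\Ext^1_X(\mathcal F,\mathcal F)t\oplus\Ext^1_X(\mathcal F,\mathcal F)\bar t .
\]
Pushing out along the projections of $I_{B_1}$ onto $\C t$ and $\C\bar t$ identifies the two components with the restrictions of $\mathcal F_1$ to $A_1$ and $\overline A_1$, that is, with the $t$- and $\bar t$-Kodaira--Spencer classes, using $B_1\simeq A_1\times_\C\overline A_1$.

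Optionally, to match Definition~\ref{def:ra-sheaf-fixed}: such an $\mathcal F_1$ is the restriction of a deformation over the first-order thickening of $M\times\overline M$ at the point, which is $\operatorname{Spec}(A_1\times_\C\overline A_1)$. No step beyond flatness looks like a genuine obstacle.
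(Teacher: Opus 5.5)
Your argument is correct, but it takes a different route from the paper. The paper first appeals to ``the standard deformation theory'' of square-zero extensions. It then recalls a \v{C}ech gluing construction: it picks a cover on which the deformation is locally trivial, writes the transition automorphisms as $1+t\,a_{ij}+\bar t\,b_{ij}$ with $a_{ij},b_{ij}$ local endomorphisms of $\mathcal F$, and reads off two cocycles modulo coboundaries, in parallel with Proposition~\ref{prop:KS-real-analytic-manifold}. You instead prove the standard fact directly with Yoneda extensions of $\mathcal O_X$-modules. Flatness gives $0\to\mathcal F\otimes_{\C}I_{B_1}\to\mathcal F_1\to\mathcal F\to0$. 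Conversely, an arbitrary extension acquires a $B_1$-structure by letting $t,\bar t$ act through the subsheaf, and flatness follows from the local criterion for the nilpotent ideal $I_{B_1}$.

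The gluing picture is concrete and makes the two components visible as $\{a_{ij}\}$ and $\{b_{ij}\}$. However, it presupposes local triviality, which holds when $\mathcal F$ is locally free but fails in general. For coherent $\mathcal F$, locally trivial deformations only account for the subgroup $H^1(X,\mathcal{E}nd(\mathcal F))\subset\mathrm{Ext}^1_X(\mathcal F,\mathcal F)$. They miss every class with nonzero image in $H^0(X,\mathcal{E}xt^1(\mathcal F,\mathcal F))$; a skyscraper sheaf moving with its point is the basic example. So the paper's ``or more generally'' clause is not really supported by its construction.

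Your extension argument needs neither a cover nor local triviality, so it covers arbitrary coherent $\mathcal F$. Your pushout along the two projections of $I_{B_1}$, together with $B_1\simeq A_1\times_{\C}\overline A_1$, also justifies more cleanly than the paper that the two components are the Kodaira--Spencer classes of the restrictions to $A_1$ and $\overline A_1$.
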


\begin{proof}
The standard deformation theory of coherent sheaves over a square-zero
extension gives the classification by
\[
  \mathrm{Ext}^1_X
  \bigl(
    \mathcal F,
    \mathcal F\otimes_{\C}I_{B_1}
  \bigr).
\]
For completeness, we recall the elementary construction.  Choose an open cover
\(\{U_i\}\) on which the deformation is locally trivial.  Then the local
trivial deformations glue by automorphisms of the form
\[
  1+t\,a_{ij}+\bar t\,b_{ij},
\]
where \(a_{ij},b_{ij}\) are local endomorphisms of \(\mathcal F\).  The gluing
condition modulo \((t,\bar t)^2\) says that
\[
  \{a_{ij}\},\{b_{ij}\}
\]
are \(1\)-cocycles with values in \(\mathcal End(\mathcal F)\), or more
generally represent classes in
\[
  \mathrm{Ext}^1_X(\mathcal F,\mathcal F)
\]
when \(\mathcal F\) is not locally free.  Changing the local trivializations
changes these cocycles by coboundaries.  Therefore the isomorphism class of the
first order deformation is determined by the pair
\[
  \bigl([\{a_{ij}\}],[\{b_{ij}\}]\bigr)
  \in
  \mathrm{Ext}^1_X(\mathcal F,\mathcal F)
  \oplus
  \mathrm{Ext}^1_X(\mathcal F,\mathcal F).
\]
Conversely, a pair of such extension classes defines the required gluing data,
hence a first order real analytic deformation.
\end{proof}

\begin{remark}
If one also conjugates the fixed holomorphic space \(X\) in the
\(\bar t\)-direction, then the second summand may equivalently be written as
\[
  \mathrm{Ext}^1_{\overline X}
  (\overline{\mathcal F},\overline{\mathcal F}).
\]
In the fixed-space convention of Definition~\ref{def:ra-sheaf-fixed},
however, the two summands are both naturally Ext-groups on \(X\); the second
one is anti-holomorphic only with respect to the parameter.
\end{remark}

\subsection{Real analytic deformations of coherent sheaves on a moving space}\label{sec_joint_realanalytic}

We now discuss the case in which the ambient complex space also varies.

Let
\[
  f:\mathcal X\longrightarrow S
\]
be a holomorphic family of complex manifolds or smooth complex schemes, and let
\(0\in S\).  Put
\[
  X_0:=f^{-1}(0).
\]
Let \(\mathcal G\) be a coherent \(\mathcal O_{X_0}\)-module.

A real analytic deformation of \(\mathcal G\) along the family
\(\mathcal X\to S\) is obtained by restricting a holomorphic deformation on the
complexification.  Thus it is represented by a diagram
\[
\begin{tikzcd}
\mathcal G \arrow[r] \arrow[d]
&
\mathcal G^\circ \arrow[r] \arrow[d]
&
\mathcal G^{\mathbb C} \arrow[d]
\\
X_0 \arrow[r, hook] \arrow[d]
&
\mathcal X^\circ \arrow[r, hook] \arrow[d]
&
\mathcal X\times\overline{\mathcal X}
  \arrow[d, "{(f,\bar f)}"]
\\
\operatorname{Spec}\C \arrow[r, hook]
&
S^\circ \arrow[r, hook]
&
S\times\overline S ,
\end{tikzcd}
\]
where \(\mathcal G^{\mathbb C}\) is flat over \(S\times\overline S\), and
\[
  \mathcal G^\circ
  =
  \mathcal G^{\mathbb C}\big|_{\mathcal X^\circ}.
\]

For first order deformation theory, let \(I\) be a finite-dimensional
\(\C\)-vector space with \(I^2=0\).  Let $X_I$ be a square-zero deformation of \(X_0\) with ideal \(I\otimes_{\C}\mathcal O_{X_0}\).
Its Kodaira--Spencer class is
\[
  \kappa(X_I)
  \in
  \Ext^1_{X_0}
  \bigl(
    \Omega^1_{X_0},
    I\otimes_{\C}\mathcal O_{X_0}
  \bigr)
  \cong
  H^1(X_0,T_{X_0})\otimes_{\C}I.
\]

Let
\[
  \operatorname{At}(\mathcal G)
  \in
  \mathrm{Ext}^1_{X_0}
  \bigl(
    \mathcal G,
    \mathcal G\otimes\Omega^1_{X_0}
  \bigr)
\]
be the Atiyah class of \(\mathcal G\).  Contracting the Atiyah class with the
Kodaira--Spencer class gives
\[
  \operatorname{At}(\mathcal G)\cup\kappa(X_I)
  \in
  \mathrm{Ext}^2_{X_0}
  \bigl(
    \mathcal G,
    \mathcal G\otimes_{\C} I
  \bigr).
\]

\begin{thm}
\label{thm:ra-moving-sheaf-KS}
Let \(X_I\) be a square-zero deformation of \(X_0\) with ideal
\(I\otimes_{\C}\mathcal O_{X_0}\).

\begin{enumerate}
\item The obstruction to lifting \(\mathcal G\) to a coherent sheaf
\(\mathcal G_I\) on \(X_I\), flat over \(\C\oplus I\), is
\[
  \operatorname{At}(\mathcal G)\cup\kappa(X_I)
  \in
  \mathrm{Ext}^2_{X_0}
  \bigl(
    \mathcal G,
    \mathcal G\otimes_{\C}I
  \bigr).
\]

\item If this obstruction vanishes, the set of isomorphism classes of such
lifts is a torsor under
\[
  \mathrm{Ext}^1_{X_0}
  \bigl(
    \mathcal G,
    \mathcal G\otimes_{\C}I
  \bigr)
  \cong
  \mathrm{Ext}^1_{X_0}(\mathcal G,\mathcal G)\otimes_{\C}I.
\]

\item The infinitesimal automorphisms of a fixed lift are given by
\[
  \mathrm{Ext}^0_{X_0}
  \bigl(
    \mathcal G,
    \mathcal G\otimes_{\C}I
  \bigr).
\]
\end{enumerate}
\end{thm}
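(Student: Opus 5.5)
This is standard infinitesimal deformation theory, and the plan is to run the usual argument with \v{C}ech cochains. The ideal $J:=I\otimes_{\C}\mathcal O_{X_0}$ is square-zero, so $\mathcal O_{X_I}$ is an extension of $\mathcal O_{X_0}$ by $J$.

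\textbf{Local lifts.} Choose a fine Stein cover $\{U_i\}$ of $X_0$, and on each $U_i$ a finite locally free resolution $L^\bullet_i\to\mathcal G|_{U_i}$. Since $X_0$ is smooth, such resolutions exist; if $\mathcal G$ is locally free (the case used in the paper), one just takes $\mathcal G|_{U_i}$ itself. Over a Stein open set, $X_I|_{U_i}$ is isomorphic to the trivial thickening $U_i\times\operatorname{Spec}(\C\oplus I)$. Using this, lift the free modules and differentials to a complex $\tilde L^\bullet_i$ over $\mathcal O_{X_I}|_{U_i}$. Its $H^0$ is a flat lift $\mathcal G_i$. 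Flatness follows from the local criterion of flatness: $\mathcal G_i\otimes_{\C\oplus I}\C\cong\mathcal G|_{U_i}$ and the Tor term vanishes.

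\textbf{Comparing lifts and the obstruction (i).} On $U_{ij}$ the lifts $\mathcal G_i$ and $\mathcal G_j$ are isomorphic. There are two sources of discrepancy: the identification of the trivializations of $X_I$ on $U_i$ and $U_j$, which differ by the Kodaira--Spencer cocycle $\kappa_{ij}\in\Gamma(U_{ij},T_{X_0})\otimes I$, and the choice of lifted differentials. Fix isomorphisms $\phi_{ij}:\mathcal G_j|_{U_{ij}}\to\mathcal G_i|_{U_{ij}}$ reducing to the identity. On triple overlaps, $\phi_{ij}\phi_{jk}\phi_{ki}=1+c_{ijk}$ with $c_{ijk}\in\Gamma(U_{ijk},\mathcal End(\mathcal G))\otimes I$. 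In the non-locally-free case this holds in the derived sense, via a \v{C}ech--resolution double complex.

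Next, compute $c$ explicitly. Writing $\phi_{ij}=1+\kappa_{ij}\lrcorner\nabla_i$ locally, where $\nabla_i$ is a local holomorphic connection on $\mathcal G|_{U_i}$, the cocycle $c$ is the cup product of $\{\kappa_{ij}\}$ with the Atiyah cocycle $\{\nabla_i-\nabla_j\}$. Therefore $[c]=\operatorname{At}(\mathcal G)\cup\kappa(X_I)$. The lift exists iff $c$ is a coboundary, since correcting $\phi_{ij}$ by $1+a_{ij}$ changes $c$ by $\delta a$.

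\textbf{Torsor (ii) and automorphisms (iii).} Given two lifts, both are locally isomorphic to the chosen $\mathcal G_i$. Their gluing data therefore differ by a \v{C}ech $1$-cocycle $\{a_{ij}\}$ with values in $\mathcal End(\mathcal G)\otimes I$ (or in $\mathrm{Ext}$ terms via the resolution). Isomorphisms of lifts change $\{a_{ij}\}$ by coboundaries. This gives a free, transitive action of $\mathrm{Ext}^1(\mathcal G,\mathcal G)\otimes I$. An automorphism of a lift that reduces to the identity has the form $1+u$ with $u\in\mathrm{Hom}(\mathcal G,\mathcal G\otimes I)=\mathrm{Ext}^0$, because $I^2=0$ makes compositions additive.

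\textbf{Main obstacle.} The hardest step is the explicit identification $[c]=\operatorname{At}\cup\kappa$, including signs, for non-locally-free $\mathcal G$. There I would rely on Illusie's cotangent-complex formalism, or equivalently the Huybrechts--Thomas description of the obstruction as the composition $\mathcal G\to\mathcal G\otimes\mathbb L_{X_0}[1]\to\mathcal G\otimes J[2]$, rather than on cochains. The remaining steps are formal.
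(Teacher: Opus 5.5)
Your proposal is correct and takes essentially the paper's route. The paper also argues by lifting the transition data of $\mathcal G$ to $\mathcal O_{X_I}$, names the resulting class as the splice of the Kodaira--Spencer and Atiyah extensions, and reads off (ii) and (iii) from $1$- and $0$-cocycles. Your explicit \v{C}ech computation with $\phi_{ij}=1+\kappa_{ij}\lrcorner\nabla_i$ simply makes concrete the paper's assertion that this product ``measures precisely the failure'' of the transition data to lift.

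One correction concerns the steps ``$\mathcal G_i\cong\mathcal G_j$ on $U_{ij}$'' and ``any two lifts are locally isomorphic'': these hold only for locally free $\mathcal G$, and a ``derived sense'' does not rescue them. At points where $\mathcal G$ is not locally free, its local Atiyah class in $H^0(X_0,\mathcal{E}xt^1(\mathcal G,\mathcal G\otimes\Omega^1_{X_0}))$ is nonzero. So local lifts need not be isomorphic on overlaps, and $\mathcal{E}nd$-valued cocycles miss the $H^1(X_0,\mathcal{E}xt^1)$ part of the obstruction and the $H^0(X_0,\mathcal{E}xt^1)$ part of $\mathrm{Ext}^1$. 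Your appeal to Illusie is therefore genuinely needed in the coherent case; the paper's own sketch glosses over this point as well.
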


\begin{proof}
The statement is the standard Atiyah--Kodaira--Spencer obstruction theory for
a sheaf on a varying space.  We recall the construction.

The square-zero deformation \(X_I\) is classified by the extension
\[
  0
  \longrightarrow
  I\otimes_{\C}\mathcal O_{X_0}
  \longrightarrow
  \Omega^1_{X_I}\big|_{X_0}
  \longrightarrow
  \Omega^1_{X_0}
  \longrightarrow
  0,
\]
whose class is \(\kappa(X_I)\).  The Atiyah class of \(\mathcal G\) is the
extension class of the first jet sequence
\[
  0
  \longrightarrow
  \mathcal G\otimes\Omega^1_{X_0}
  \longrightarrow
  P^1(\mathcal G)
  \longrightarrow
  \mathcal G
  \longrightarrow
  0.
\]
Splicing these two extensions gives the Yoneda product
\[
  \operatorname{At}(\mathcal G)\cup\kappa(X_I)
  \in
  \mathrm{Ext}^2_{X_0}
  \bigl(
    \mathcal G,
    \mathcal G\otimes_{\C}I
  \bigr).
\]
This product measures precisely the failure of the transition functions of
\(\mathcal G\) to be lifted compatibly to the deformed structure sheaf
\(\mathcal O_{X_I}\).  Hence it is the obstruction to the existence of a lift.

If the obstruction vanishes, choices of lifted transition data differ by
\(1\)-cocycles with values in
\[
  \mathcal Hom(\mathcal G,\mathcal G\otimes_{\C}I),
\]
or, for an arbitrary coherent sheaf, by classes in
\[
  \mathrm{Ext}^1_{X_0}
  \bigl(
    \mathcal G,
    \mathcal G\otimes_{\C}I
  \bigr).
\]
Thus the set of lifts is a torsor under this Ext-group.  Automorphisms of a
fixed lift are similarly given by \(0\)-cocycles, namely by
\[
  \mathrm{Ext}^0_{X_0}
  \bigl(
    \mathcal G,
    \mathcal G\otimes_{\C}I
  \bigr).
\]
\end{proof}

Applying Theorem~\ref{thm:ra-moving-sheaf-KS} to
\[
  I=I_{B_1}=\C t\oplus \C\bar t
\]
gives the real analytic first order deformation theory.

Let
\[
  X_1^{\mathbb R\mathrm{an}}
  \longrightarrow
  \operatorname{Spec}B_1
\]
be the first order real analytic deformation of \(X_0\).  Its
Kodaira--Spencer class decomposes as
\[
  \kappa(X_1^{\mathbb R\mathrm{an}})
  =
  \kappa_t\,t+\kappa_{\bar t}\,\bar t,
\]
where
\[
  \kappa_t,\kappa_{\bar t}\in H^1(X_0,T_{X_0}).
\]
Equivalently, after conjugating the \(\bar t\)-direction, one writes
\[
  \kappa_t\in H^1(X_0,T_{X_0}),
  \qquad
  \kappa_{\bar t}\in H^1(\overline{X_0},T_{\overline{X_0}}).
\]

\begin{corollary}
\label{cor:ra-moving-sheaf-first-order}
The obstruction to a first order real analytic deformation of
\(\mathcal G\) over \(X_1^{\mathbb R\mathrm{an}}\) is the pair
\[
  \bigl(
    \operatorname{At}(\mathcal G)\cup\kappa_t,\,
    \operatorname{At}(\mathcal G)\cup\kappa_{\bar t}
  \bigr)
  \in
  \mathrm{Ext}^2_{X_0}(\mathcal G,\mathcal G)
  \oplus
  \mathrm{Ext}^2_{X_0}(\mathcal G,\mathcal G).
\]
If this pair vanishes, then the set of first order real analytic deformations
of \(\mathcal G\) over the fixed deformation
\(X_1^{\mathbb R\mathrm{an}}\) is a torsor under
\[
  \mathrm{Ext}^1_{X_0}(\mathcal G,\mathcal G)
  \oplus
  \mathrm{Ext}^1_{X_0}(\mathcal G,\mathcal G).
\]
\end{corollary}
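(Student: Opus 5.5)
The plan is to read this off Theorem~\ref{thm:ra-moving-sheaf-KS} with $I=I_{B_1}=\C t\oplus\C\bar t$, and then split every group involved along this two-dimensional ideal.

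First I will check that the hypotheses of Theorem~\ref{thm:ra-moving-sheaf-KS} hold. Since $I_{B_1}^2=0$ and $X_1^{\mathbb R\mathrm{an}}\to\operatorname{Spec}B_1$ is flat with central fiber $X_0$, the kernel of $\mathcal O_{X_1^{\mathbb R\mathrm{an}}}\to\mathcal O_{X_0}$ is $I_{B_1}\otimes_{\C}\mathcal O_{X_0}$. So $X_1^{\mathbb R\mathrm{an}}$ is a square-zero deformation with ideal $I\otimes_{\C}\mathcal O_{X_0}$. Its Kodaira--Spencer class is $\kappa=\kappa_t\,t+\kappa_{\bar t}\,\bar t\in H^1(X_0,T_{X_0})\otimes_{\C}I$, as already recorded before the statement.

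Part (1) of Theorem~\ref{thm:ra-moving-sheaf-KS} then says the obstruction is $\operatorname{At}(\mathcal G)\cup\kappa\in\Ext^2_{X_0}(\mathcal G,\mathcal G\otimes_{\C}I)$. Because $I$ is a finite-dimensional vector space, this group is canonically $\Ext^2_{X_0}(\mathcal G,\mathcal G)\otimes_{\C}I$, which with the basis $t,\bar t$ becomes $\Ext^2_{X_0}(\mathcal G,\mathcal G)^{\oplus2}$. The Yoneda product $\operatorname{At}(\mathcal G)\cup(-)$ is $\C$-linear in the Kodaira--Spencer argument and natural in $I$. Hence
\[
\operatorname{At}(\mathcal G)\cup\kappa=(\operatorname{At}(\mathcal G)\cup\kappa_t)\,t+(\operatorname{At}(\mathcal G)\cup\kappa_{\bar t})\,\bar t,
\]
which is the claimed pair.

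For the torsor statement I will use part (2) of Theorem~\ref{thm:ra-moving-sheaf-KS}. It gives a torsor under $\Ext^1_{X_0}(\mathcal G,\mathcal G)\otimes_{\C}I$, and this equals $\Ext^1_{X_0}(\mathcal G,\mathcal G)\oplus\Ext^1_{X_0}(\mathcal G,\mathcal G)$ because $\Ext$ commutes with finite direct sums in the second argument.

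The only step needing care is the naturality just used: the decomposition of the obstruction must match the coordinate decomposition of $I$. I would verify this by pushing forward along the two projections $I\to\C t$ and $I\to\C\bar t$. These correspond to the base changes $B_1\to A_1$ and $B_1\to\overline A_1$, and the obstruction class of Theorem~\ref{thm:ra-moving-sheaf-KS} is functorial under such pushouts of the square-zero extension. Under each base change the Kodaira--Spencer class maps to $\kappa_t$, respectively $\kappa_{\bar t}$. Finally, the alternative description with $\kappa_{\bar t}\in H^1(\overline{X_0},T_{\overline{X_0}})$ is only a relabelling by complex conjugation, so it does not affect the argument.
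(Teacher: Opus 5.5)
Your proposal is correct and follows essentially the same route as the paper: specialize Theorem~\ref{thm:ra-moving-sheaf-KS} to $I=I_{B_1}=\C t\oplus\C\bar t$ and split the $\mathrm{Ext}^2$ obstruction group and the $\mathrm{Ext}^1$ torsor group along this decomposition. Your extra check that $\operatorname{At}(\mathcal G)\cup\kappa$ splits as $(\operatorname{At}(\mathcal G)\cup\kappa_t,\operatorname{At}(\mathcal G)\cup\kappa_{\bar t})$, using the base changes $B_1\to A_1$ and $B_1\to\overline A_1$, spells out a step the paper leaves implicit.
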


\begin{proof}
This is Theorem~\ref{thm:ra-moving-sheaf-KS} applied to the square-zero ideal
\[
  I_{B_1}=\C t\oplus\C\bar t.
\]
The obstruction group splits according to this decomposition:
\[
  \mathrm{Ext}^2_{X_0}
  \bigl(
    \mathcal G,
    \mathcal G\otimes_{\C}I_{B_1}
  \bigr)
  \cong
  \mathrm{Ext}^2_{X_0}(\mathcal G,\mathcal G)
  \oplus
  \mathrm{Ext}^2_{X_0}(\mathcal G,\mathcal G).
\]
The same splitting holds for the torsor of lifts, giving the Ext\(^1\)
statement.
\end{proof}

It is often useful to package the deformation theory of the pair
\((X_0,\mathcal G)\) into a single complex.  The Atiyah class induces a
morphism in the derived category
\[
  T_{X_0}
  \longrightarrow
  R\mathcal Hom_{X_0}(\mathcal G,\mathcal G)[1].
\]
Define the Atiyah--Kodaira--Spencer complex of the pair by
\[
  \mathcal K_{\mathcal G}
  :=
  \operatorname{Cone}
  \left(
    T_{X_0}
    \longrightarrow
    R\mathcal Hom_{X_0}(\mathcal G,\mathcal G)[1]
  \right)[-1].
\]
Then first order deformations of the pair \((X_0,\mathcal G)\) over a
square-zero ideal \(I\) are governed by
\[
  \mathbb H^1(X_0,\mathcal K_{\mathcal G})\otimes_{\C}I.
\]
The natural long exact sequence contains
\[
\begin{aligned}
  \mathrm{Ext}^1_{X_0}(\mathcal G,\mathcal G)\otimes I
  &\longrightarrow
  \mathbb H^1(X_0,\mathcal K_{\mathcal G})\otimes I
  \longrightarrow
  H^1(X_0,T_{X_0})\otimes I
  \\
  &\xrightarrow{\operatorname{At}(\mathcal G)\cup -}
  \mathrm{Ext}^2_{X_0}(\mathcal G,\mathcal G)\otimes I .
\end{aligned}
\]
Thus the image of a deformation of the pair in \(H^1(X_0,T_{X_0})\otimes I\)
is the Kodaira--Spencer class of the moving space, and the connecting map is
the Atiyah obstruction to lifting the sheaf.

For \(I=I_{B_1}\), this becomes
\[
  \mathbb H^1(X_0,\mathcal K_{\mathcal G})
  \oplus
  \mathbb H^1(X_0,\mathcal K_{\mathcal G}).
\]
After conjugating the \(\bar t\)-direction, one may equivalently write the
real analytic tangent space of the pair as
\[
  \mathbb H^1(X_0,\mathcal K_{\mathcal G})
  \oplus
  \mathbb H^1(\overline{X_0},\mathcal K_{\overline{\mathcal G}}).
\]

\begin{definition}
\label{def:ra-KS-map-pair}
Let
\[
  f:\mathcal X\to S
\]
be a holomorphic family and let \(\mathcal G^\circ\) be a real analytic
deformation of \(\mathcal G\) along \(S^\circ\).  The real analytic
Kodaira--Spencer map of the pair \((\mathcal X^\circ,\mathcal G^\circ)\) at
\(0\in S\) is the linear map
\[
  \KS^{\mathbb R\mathrm{an}}_{\mathcal G}
  :
  \C T_0S^\circ
  \longrightarrow
  \mathbb H^1(X_0,\mathcal K_{\mathcal G})
\]
obtained by pulling the family back to first order real analytic arcs
\[
  \operatorname{Spec}B_1\longrightarrow S^\circ.
\]
Under the splitting
\[
  \C T_0S^\circ
  \cong
  T_{S,0}\oplus T_{\overline S,0},
\]
this map decomposes into its holomorphic and anti-holomorphic components.
\end{definition}

\begin{remark}
When the ambient space \(X\) is fixed, the Kodaira--Spencer class of the space
is zero.  Therefore the Atiyah obstruction vanishes automatically, and the
complex \(\mathcal K_{\mathcal F}\) reduces to
\(R\mathcal Hom_X(\mathcal F,\mathcal F)\).  Hence the moving-space theory
recovers Proposition~\ref{prop:ra-fixed-sheaf-KS}.
\end{remark}

\end{document}